\newtheorem{thm}{Theorem}[section]
\newtheorem{cor}[thm]{Corollary}
\newtheorem{lem}[thm]{Lemma}
\newtheorem{prop}[thm]{Proposition}
\theoremstyle{definition}
\newtheorem{defn}[thm]{Definition}
\newtheorem{rmk}[thm]{Remark}
\numberwithin{equation}{section}
\newcommand{\Fo}{{F_{\bullet}}}
\newcommand{\qo}{{q_{\bullet}}}
\newcommand{\Eo}{{E_{\bullet}}}
\newcommand{\po}{{\mathfrak{p}_{\bullet}}}
\newcommand{\psio}{{\psi_{\bullet}}}
\newcommand{\tr}{{\mathrm{tr}}}
\newcommand{\tV}{{\tilde{\mathfrak{V}}}}
\newcommand{\sgn}{{\mathrm{sgn}}}
\newcommand{\Ind}{{\mathrm{Ind}}}
\newcommand{\cInd}{{\mathrm{cInd}}}
\newcommand{\Res}{{\mathrm{Res}}}
\newcommand{\tame}{{\mathrm{tm}}}
\newcommand{\wild}{{\mathrm{wd}}}
\newcommand{\diag}{{\mathrm{diag}}}
\newcommand{\Asai}{{\mathrm{Asai}}}
\begin{document}

\title[Endoscopic classification of very cuspidal ...]{Endoscopic classification of very cuspidal representations of quasi-split unitary groups}


\author{Kam Fai Tam}
\address{Dept. of Math. \& Stat.,
McMaster University,
1280 Main Street West,
Hamilton, Ontario,
Canada L8S 4K1}
\email{geotam@math.mcmaster.ca}





\begin{abstract}
Let $\mathbf{G}$ be an unramified quasi-split unitary group over a p-adic field of odd residual characteristic. The goal of this paper is to describe the supercuspidal representations within certain L-packets of $\mathbf{G}$, which are classified by Arthur and Mok using the theory of endoscopy. The description is given in terms of the cuspidal types constructed by Bushnell-Kutzko and Stevens. As a starting example, we require the parameters of our packets to satisfy certain regularity conditions, such that these packets consist of very cuspidal representations in the sense of Adler and Reeder. To achieve our goal, we first interpret the question as to study the reducibilities of some parabolically induced representations, using a theory of M{\oe}glin and Shahidi; we then apply a relation, given by Blondel, between these reducibilities and the structures of some Hecke algebras, where the latter can be computed using a Theorem of Lusztig. We can interpret our final result as explicitly describing the local Langlands correspondence for $\mathbf{G}$.
\end{abstract}


 \maketitle


\tableofcontents


\newpage\section{Introduction}


As mentioned in the abstract, we describe in this paper the supercuspidal representations within certain endoscopic L-packets of a quasi-split unitary group over a p-adic field, and this description is given by inducing from cuspidal types. We therefore obtain a relation between the theory of endoscopy and the theory of cuspidal types, both of which are of fundamental interest.

We first recall the basics we need from the two theories. We then state our main Theorem \ref{intro-thm}, which summarizes the two detailed theorems (Theorems \ref{theorem-inertial-class} and \ref{theorem-stable-packet}) of this paper. After that, we detail briefly the methodology for proving the theorem. Finally, we provide several remarks related to the literature.

\subsection{Endoscopic classification}\label{intro-endo-class}

Let $\Fo$ be a p-adic field whose residual characteristic is odd, and let $F/\Fo$ be a field extension, either trivial or quadratic. Let $\mathbf{G}$ be a quasi-split connected classical group over $\Fo$, which is either a special orthogonal group or a symplectic group when $F=\Fo$, or a unitary group when $F/\Fo$ is quadratic. The tempered representations of $G=\mathbf{G}(\Fo)$ and their stable packets, or simply packets, have been classified by \cite[Theorem 1.5.1]{Arthur-new-book} and \cite[Theorem 2.5.1]{Mok-unitary} using the theory of endoscopy, among many important results.

In this paper, we only focus on unitary groups. We hence denote ${G}=\mathrm{U}_n(F/\Fo)$. Under the endoscopic classification, the discrete packets (those consist of discrete series representations) can be parametrized by $n$-dimensional complex representations of the Weil-Deligne group
$\mathcal{WD}_{F}:=\mathcal{W}_F\times \mathrm{SU}_2$ of $F$, without multiplicity and whose components are  conjugate-self-dual. We call these representations \emph{discrete parameters}.

The classification of discrete packets can be described more explicitly using \cite{Moeg-base-change}, \cite{Moeg-endosc-L-param}. Suppose that $\tilde{\phi}$ is a discrete parameter and $\Pi_{\tilde{\phi}}$ is the corresponding packet, then the \emph{extended cuspidal support} of each representation in $\Pi_{\tilde{\phi}}$ coincides with the cuspidal support of the representation $\tilde{\pi}_{\tilde{\phi}}$ corresponding to ${\tilde{\phi}}$ under the local Langlands correspondence for $\mathrm{GL}_n$ (\cite{HT}, \cite{Hen-simple}, \cite{Scholze-LLC}). 

For the precise notion of extended cuspidal support, we refer to \cite{Moeg-base-change} to avoid the details, but we provide an example which will be adopted throughout our paper. We assume that $\tilde{\phi}$ is trivial on $\mathrm{SU}_2$. By the definition in \cite[5.3]{Moeg-base-change}, that a supercuspidal representation $\tilde{\pi}_\circ$ of $\mathrm{GL}_{n_\circ}(F)$ is in the extended cuspidal support of a representation $\pi$ of $G$ is equivalent to that the parabolically induced representation
\begin{equation}\label{parabolic-induced-classical-gp}
\tilde{\pi}_\circ|\det|^s\rtimes \pi:=\Ind_P^{{G}_W}(\tilde{\pi}_\circ|\det|^s\boxtimes \pi)
\end{equation}
is reducible at $s=1$. Here ${G}_W$ is a larger classical group of the same type as ${G}$ and contains a parabolic subgroup $P$ whose Levi component $M$ is isomorphic to $\mathrm{GL}_{n_\circ}(F)\times {G}$. Hence $\pi\in \Pi_{\tilde{\phi}}$ if and only if the Langlands parameter of $\tilde{\pi}_\circ$ is one of the components of $\tilde{\phi}$. As a remark, it is also known that $\Pi_{\tilde{\phi}}$ consists of generic supercuspidal representations.

The reducibility of (\ref{parabolic-induced-classical-gp}) for quasi-split unitary groups was studied in \cite{Moeg-base-change}, some of whose ideas was originated from \cite{silberger-special}, \cite{shahidi-complementary}. We refer to \cite{Shahidi-Cogdell-2014} for a comprehensive study for quasi-split classical groups. For representations induced from generic supercuspidal representations, their reducibilities are related to the poles of certain intertwining operators, given in terms of Langlands-Shahidi or Asai-Shahidi L-functions \cite{shahidi-complementary}. In \cite{Shahidi-Cogdell-2014}, we call $\tilde{\pi}_{\tilde{\phi}}$ the \emph{local transfer} of (the representations in) the packet ${\Pi}_{\tilde{\phi}}$. For unitary groups, we follow the terminology in \cite{Moeg-base-change} and call $\tilde{\pi}_{\tilde{\phi}}$ the \emph{base change} of ${\Pi}_{\tilde{\phi}}$.


The goal of this paper is to describe the representations in $\Pi_{\tilde{\phi}}$, where this description will be made more precise in the next section. As a starting example, we only focus on the following parameters in this paper. We require that
\begin{enumerate}[(i)]
  \item $F/\Fo$ is unramified,
  \item $\mathbf{G}$ is a quasi-split unitary group $\mathrm{U}_n=\mathrm{U}_{n,F/\Fo}$, \label{intro-very-cusp-unitary}
  \item $\tilde{\phi}$ is trivial on the $\mathrm{SU}_2$ factor of $\mathcal{WD}_F$,
  \item  each component $\tilde{\phi}_i$ of $\tilde{\phi}$, with $i$ belongs to an index set $I$, is of the form $\Ind_{E_i/F}\tilde{\xi}_i$, where $E_i/F$ is an unramified extension (necessarily of odd degree by conjugate-self-duality), and $\tilde{\xi}_i$ is a character of $E^\times_i$, viewed as a character of $\mathcal{W}_{E_i}$ by class field theory, \label{intro-very-cusp-char}
  \item all characters $\tilde{\xi}_i$ have the same level, say $d$, and
  \item all restrictions $\tilde{\xi}_i|_{U_{E_i}^d}$, where $i$ ranges over $I$, are not Galois-conjugate to each other.
\end{enumerate}

When $d=0$, these parameters are examples of the depth-zero parameters in \cite{DR} for general unramified reductive groups. In this case when $\mathbf{G}$ is odd special orthogonal or symplectic, the extended cuspidal supports of the corresponding representations can be determined by the results in \cite{Savin-level-0}. When $d$ is positive, these parameters are examples of those in \cite{reeder-pos-depth}, where the corresponding representations are called very cuspidal representations. We follow this terminology and call $\tilde{\phi}$ a \emph{very cuspidal parameter}.

\subsection{Supercuspidal representations}\label{intro-construct-supercusp}

In this section, we describe the supercuspidal representations by constructing their underlying cuspidal types using the above parameters. Then we can state our main result, Theorem \ref{intro-thm}.

We first summarize how to construct supercuspidal representations of a general linear group. Suppose that $\tilde{\phi}_\circ$ is a parameter of the form $\tilde{\phi}_\circ=\Ind_{\mathcal{W}_{E_\circ}}^{\mathcal{W}_F}\tilde{\xi}_\circ$ satisfying the conditions in (\ref{intro-very-cusp-char}) above. In section \ref{section Maximal types for GL}, we construct a supercuspidal representation $\tilde{\pi}_{\tilde{\xi}_\circ}$ of $\mathrm{GL}_{n_\circ}(F)$, where $n_\circ=[E_\circ:F]$, compactly induced from an extension of a maximal simple type, a notion defined \cite{BK}. This extended maximal simple type is constructed from the character $\tilde{\xi}_\circ$ if we regard $E_\circ^\times$ as an unramified elliptic maximal torus in $\mathrm{GL}_{n_\circ}(F)$. By \cite{Hen-unram}, the representation $\tilde{\pi}_{\tilde{\xi}_\circ}$ is parametrized by $\tilde{\phi}_\circ$ under the local Langlands correspondence for $\mathrm{GL}_{n_\circ}$.

When $G=\mathbf{G}(\Fo)$ is an unramified quasi-split unitary group, we construct supercuspidal representations of $G$ using a similar process. We know that every unramified elliptic maximal torus of $\mathbf{G}=\mathrm{U}_n$ is of the form $\prod_{i\in I}\mathrm{U}_{1,E_i/\Eo_i}$, where $\Eo_i/\Fo$ is an unramified extension of odd degree $n_i$, $E_i/\Eo_i$ is quadratic unramified, and $\sum_{i\in I}n_i=n$. Let $\mathbf{T}$ be a torus of this form. In contrast to the $\mathrm{GL}_n$-case, the $\Fo$-embeddings of $\mathbf{T}$ in $\mathbf{G}$ are in general not conjugate to each other in ${G}$. We use an index $x$ to stand for a ${G}$-conjugacy class of an embedding of ${T}=\mathbf{T}(\Fo)$ in ${G}$, and denote the image in $G$ (up to conjugacy) by $T_x$.

Now suppose that $\{\tilde{\xi}_i\}_{i\in I}$ is a set of skew-characters, which means that each $\tilde{\xi}_i$ is a conjugate-self-dual character of $E_i^\times$ and $\tilde{\xi}_i|_{\Eo_i^\times}$ is trivial. By descending the character $\tilde{\xi}:=\boxtimes_{i\in I}\tilde{\xi}_i$ of $\prod_{i\in I}E_i^\times$ to a character $\xi:=\boxtimes_{i\in I}\xi_i$ of $T_x$ canonically, we construct a maximal semi-simple type (\cite{stevens-supercusp}) from $\tilde{\xi}$, which is a \emph{cuspidal type} as defined in \cite{Stevens-Miya}. 
This cuspidal type is then compactly-induced to a supercuspidal representation $\pi_{x,\tilde{\xi}}$ of $G$.

Our construction of the cuspidal type of $\pi_{x,\tilde{\xi}}$ is a combination of \cite{stevens-supercusp} and \cite{adler-cusp}: we construct from $\tilde{\xi}$ a character, called a semi-simple character in \cite{stevens-ss-char}, of a compact subgroup of $G$ using \cite{adler-cusp} (see also \cite[Section 3.2]{reeder-pos-depth}), and extend this character to a cuspidal type using the method of beta-extension in \cite[Section 4]{stevens-supercusp}.

We emphasize that it is important to consider different conjugacy classes of $\Fo$-embeddings $x$ of $T$ in $G$. From many examples (e.g. the depth-0 case \cite{DR}) it is known that different embeddings give rise to non-isomorphic supercuspidal representations $\pi_{x,\tilde{\xi}}$, for a fixed $\tilde{\xi}$. 

With the descriptions of the above supercuspidal representations, we now state our main result, summarizing Theorems \ref{theorem-inertial-class} and \ref{theorem-stable-packet} in a simpler way.
\begin{thm}\label{intro-thm}
Let $n$ be an odd integer, $\tilde{\phi}$ be a very cuspidal parameter, and $\tilde{\xi}$ be the associated character. There exists a unique quadratic (hence tamely ramified) character $\tilde{\nu}_{x,\tilde{\xi}}^{y}$ of $\prod_{i\in I}E_i^\times$ such that
  $$\Pi_{\tilde{\phi}}=\{\pi_{x,\tilde{\xi}\cdot\tilde{\nu}_{x,\tilde{\xi}}^{y}}\}_{x}.$$
  In other words, the base change of the above supercuspidal representations is $\tilde{\pi}_{\tilde{\xi}}$, the representation of $\mathrm{GL}_n(F)$ whose cuspidal support is $\{\tilde{\pi}_{\tilde{\xi}_i}\}_{i\in I}$.
\end{thm}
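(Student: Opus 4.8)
The plan is to reduce the identification of the packet $\Pi_{\tilde\phi}$ to a reducibility computation, following the strategy outlined in the introduction. First I would recall, via \cite{Moeg-base-change}, that $\Pi_{\tilde\phi}$ is characterized by the property that each $\tilde\pi_{\tilde\xi_i}$ lies in the extended cuspidal support of every member, which by the displayed equivalence \eqref{parabolic-induced-classical-gp} means precisely that $\tilde\pi_{\tilde\xi_i}|\det|^s\rtimes\pi$ is reducible at $s=1$. So the task becomes: for each conjugacy class $x$ of embedding of $T$ and each twist of $\tilde\xi$ by a tame character $\tilde\nu$, determine the value $s_\pi(\tilde\pi_{\tilde\xi_i})$ at which the induced representation $\tilde\pi_{\tilde\xi_i}|\det|^s\rtimes\pi_{x,\tilde\xi\cdot\tilde\nu}$ becomes reducible, and show there is a unique choice of $\tilde\nu=\tilde\nu_{x,\tilde\xi}^y$ (quadratic, hence tame) making this happen at $s=1$ simultaneously for all $i\in I$. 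The uniqueness of $\tilde\nu$ should come out of the fact that twisting by a wildly ramified character would change the wild part of the semisimple character and hence the inertial class, so only tame twists are available, and among tame twists only one (up to the relevant equivalence) lands the reducibility point at $1$.

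The key steps, in order, are: (1) Translate the reducibility point into Hecke algebra data. Since $\pi_{x,\tilde\xi\cdot\tilde\nu}$ is compactly induced from a cuspidal type and $\tilde\pi_{\tilde\xi_i}$ from a maximal simple type, the induced representation on $G_W$ is governed by a Hecke algebra attached to a cover of the type on the Levi $M\cong\mathrm{GL}_{n_i}(F)\times G$; here I would invoke Blondel's result relating the reducibility points to the parameters $(q_1,q_2)$ of a two-parameter affine Hecke algebra of rank one. (2) Compute those parameters. Because our types are semisimple types built from the torus $T_x$ and the characters $\tilde\xi_i$, the relevant Hecke algebra should be identifiable, after passing to the associated reductive quotient, with a Hecke algebra of a (possibly twisted) finite reductive group — a unitary group over the residue field — and its parameters can be read off using Lusztig's theorem on Hecke algebras of cuspidal unipotent (or, here, more general cuspidal) representations. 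The output is an explicit formula for the reducibility point in terms of $x$ and the sign data of the character $\tilde\xi_i\cdot\tilde\nu_i$ restricted to the relevant residual torus. (3) Match with the endoscopic side. On the parameter side, the base change of $\Pi_{\tilde\phi}$ has cuspidal support $\{\tilde\pi_{\tilde\xi_i}\}$ by the Mœglin description, so the required reducibility point is $s=1$; setting the formula from step (2) equal to $1$ and solving for $\tilde\nu$ gives the quadratic character $\tilde\nu_{x,\tilde\xi}^y$, with the superscript $y$ recording which sign/conjugacy-class datum on the finite-group side it corresponds to. (4) Account for all embeddings $x$: show the map $x\mapsto\pi_{x,\tilde\xi\cdot\tilde\nu_{x,\tilde\xi}^y}$ is a bijection onto $\Pi_{\tilde\phi}$ by a cardinality count — the number of $G$-conjugacy classes of embeddings of $T$ equals $|\Pi_{\tilde\phi}|$, which by Arthur–Mok equals the order of the relevant component group $\mathcal{S}_{\tilde\phi}$, a product of copies of $\mathbb{Z}/2$ indexed by $I$ — together with the injectivity already noted (different embeddings yield non-isomorphic supercuspidals).

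The main obstacle I expect is step (2): correctly identifying the Hecke algebra attached to the cover of the semisimple type on $M$ and pinning down its two parameters. The subtlety is that Stevens' semisimple types and the beta-extension machinery produce a type whose $G$-intertwining is controlled by a finite unitary group over the residue field, but the relevant module is not the trivial representation — it is a cuspidal representation of that finite group coming from the descended character $\xi_i$ — so Lusztig's parameter formulas must be applied in the generality of (not necessarily unipotent) cuspidal representations, and one must track carefully how the tame twist $\tilde\nu_i$ enters the finite-group character and hence shifts the exponents. Getting the exact exponents (as opposed to just "some quadratic twist works") is what forces the work, and is where the regularity hypotheses (v) and (vi) on the parameter are used: they guarantee that the finite reductive quotient is as simple as possible (a product of rank-one pieces, one per $i\in I$, with no extra intertwining between distinct $i$), so that the Hecke algebra genuinely factors and each reducibility point depends only on the datum at $i$. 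A secondary, more bookkeeping-type difficulty is ensuring the normalizations of $|\det|^s$, of the beta-extensions, and of the local Langlands correspondence for $\mathrm{GL}_n$ are all consistent, so that "reducibility at $s=1$" on the Hecke-algebra side matches "component of $\tilde\phi$" on the Galois side without an off-by-one or a stray unramified twist.
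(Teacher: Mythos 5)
Your overall architecture matches the paper's: reduce membership in $\Pi_{\tilde{\phi}}$ to reducibility of $\tilde{\pi}_{\tilde{\xi}_i}|\det|^s\rtimes\pi$ at $s=1$ via M{\oe}glin, convert that to the two parameters of a rank-one affine Hecke algebra attached to a covering type via Blondel and Stevens, compute those parameters by descending to finite unitary groups and applying Lusztig, and finish with a cardinality count against $\#\mathcal{D}=2^{\#I-1}$. However, there is a genuine gap in your step (3). The Hecke-algebra computation only determines the \emph{inertial class} of the $\mathrm{GL}_{n_i}(F)$-factor and the multiset of real parts $\{\pm(r_y^P\pm r_z^P)/2n_{i}\}$ of the reducibility points; it is blind to the unramified quadratic twist. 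Inside the relevant inertial class there are two conjugate-self-dual supercuspidals, differing by $\tilde{\chi}_-\circ\det$, and in the ``matching'' case one of them is reducible at real part $1$ and the other at real part $1/2$ --- the Hecke algebra cannot tell you which is which, since replacing $\tilde{\pi}_\circ$ by its unramified quadratic twist leaves the parameters $r_w^P$ unchanged. So ``setting the formula equal to $1$ and solving for $\tilde{\nu}$'' does not close the argument: you could end up with the wrong member of the inertial class and hence the wrong base change. The paper resolves this with a separate analytic input: the criterion that reducibility at $s=1$ is equivalent to a pole at $s=0$ of the Asai--Shahidi $L$-function $L(\tilde{\pi}_{i},\mathrm{AS}_{n-1},s)$, combined with Henniart's identification of Asai--Shahidi with Galois-side Asai $L$-functions and the computation that $\Ind_{E_i/F}\tilde{\xi}_i$ is $\epsilon$-skew exactly when $\tilde{\xi}_i$ is. This is where the parity of $n$ and the $+$-skewness hypothesis actually bite, and it cannot be extracted from Lusztig's parameters alone.

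Two smaller points. First, to conclude that the base change is exactly $\tilde{\pi}_{\tilde{\xi}}$ you must show the extended cuspidal support is not larger than $\{\tilde{\pi}_{\tilde{\xi}_i}\}_{i\in I}$; the paper does this with M{\oe}glin's estimate $\sum n_i\le n$, which your sketch omits. Second, your uniqueness argument for $\tilde{\nu}$ is only heuristic; the paper's proof uses Zelevinsky's classification together with the $d$-regularity of $\tilde{\xi}$ to rule out a nontrivial Galois permutation of the components, and some such argument is needed since a priori two different tame twists could produce Galois-conjugate data.
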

The Theorem also holds for even $n$, except that we need to modify $\tilde{\xi}$ by multiplying to it a character related to an endoscopic datum. This will be explained in Section \ref{section endos classification}.

We call $\tilde{\nu}_{x,\tilde{\xi}}^{y}$ an \emph{amending character}. In the next section, we will explain the notation $\tilde{\nu}_{x,\tilde{\xi}}^{y}$ for this character and provide a brief idea of computing it, together with the methodology for proving Theorem \ref{intro-thm}.

\subsection{Methodology}

Using M{\oe}glin's theory discussed in Section \ref{intro-endo-class}, our work is to determine which data $({\tilde{\xi}_\circ},x,\tilde{\xi})$ give rise to the reducibility of \begin{equation}\label{intro-testing-reducibility}
    \tilde{\pi}_{\tilde{\xi}_\circ}|\det|^s\rtimes {\pi}_{x,\tilde{\xi}}
  \end{equation}
at $s=1$. We use a result of \cite{Blondel-Weil}: the real-parts of $s$ at which (\ref{intro-testing-reducibility}) is reducible are determined by the structure of a Hecke algebra of a (non-cuspidal) type in the larger unitary group ${G}_W$, such that this type is a \emph{covering type} of the cuspidal type of the supercuspidal representation $\tilde{\pi}_{\tilde{\xi}_\circ}\boxtimes \pi_{x,\tilde{\xi}}$ of the Levi subgroup $M\cong \mathrm{GL}_{n_\circ}(F)\times {G}$. The notion of covering types is defined in \cite{BK-cover} for general reductive group, which implies the following statement in terms of category theory: the category of right modules over this Hecke algebra determines a Bernstein component, the full-subcategory of smooth representations of ${G}_W$ whose irreducible subquotients have supports in the inertial class of $\tilde{\pi}_{\tilde{\xi}_\circ}\boxtimes\pi_{x,\tilde{\xi}}$. (For a similar statement specific for classical groups, see \cite{Heiermann-Hecke-Bernstein}.)

We then combine the machineries of \cite{BK-cover}, \cite{Blondel-Weil}, \cite{stevens-supercusp} in Sections \ref{section repres} and \ref{section Hecke algebra}, which is too technical to explain in this Introduction. To put it simply, they suggest that $E_\circ$ is one of the $E_i$, for $i\in I$, and
the two characters $\tilde{\xi}_\circ$ and $\tilde{\xi}_{i}$ differ from one another by a quadratic character $\tilde{\nu}_{i,x,\tilde{\xi}}^{y}$. We will compute in Section \ref{section amending} this quadratic character using the aforementioned Hecke algebra, which is reduced to consider certain Hecke algebras for finite unitary groups, whose structures are determined by a Theorem of Lusztig \cite[Theorem 8.6.2]{Lusztig-book} (see also \cite{Morris-tame-ram-intertwining-alg} for the depth zero situation).

The notation $y$ of the amending character stands for a "hyperspecial" vertex in an appropriate Bruhat-Tits building, so that we obtain a finite unitary group of the largest possible rank in order to apply Lusztig's theorem to compute the parameters of the Hecke algebras.

Now the inertial class of $\tilde{\pi}_{\tilde{\xi}_\circ}$ is determined. Within this class there are two conjugate-self-dual representations, the possible candidates for the reducibility of (\ref{intro-testing-reducibility}) at $s=1$, different from each other by the quadratic unramified character of $\mathrm{GL}_{n_\circ}(F)$. An application on the equality between Asai-Shahidi L-function \cite{shahidi-complementary} and Asai L-function, proved by Henniart \cite{Hen-ext-sym}, determines the one that gives rise to the reducibility.

We hence obtain a collection of conjugate-self-dual supercuspidal representations which belong to the extended cuspidal support of $\Pi_{\tilde{\phi}}$. By applying M{\oe}glin's inequality \cite[4. Proposition]{Moeg-base-change} on estimating the ranks of the underlying unitary groups, we claim that our collection exhausts the entire extended cuspidal support.

Finally, we set
\begin{equation*}
  \tilde{\nu}_{x,\tilde{\xi}}^{y}=\boxtimes_{i\in I}\tilde{\nu}_{i,x,\tilde{\xi}}^{y}
\end{equation*}
 to obtain our amending character of $\prod_{i\in I}E_i^\times$ in Theorem \ref{intro-thm}. We will compute this character in Corollary \ref{character-quad-or-not}. Actually, this character is the sign character of the $(\prod_{i\in I}E_i^\times)$-action on a finite quotient of two Moy-Prasad filtration subgroups (\cite{Moy-Prasad}) of $G_W$. These compact subgroups will be described using the language of lattices and hereditary orders from \cite{stevens-supercusp}, as we will use his results heavily in our methodology.

We remark that a similar kind of amending characters has already appeared in \cite[Section 3.3]{Blondel-Weil}, where she studied the reducibility of (\ref{intro-testing-reducibility}) in the "disjoint" case; in our language, this means that $\tilde{\xi}_\circ$ is not Galois conjugate to any $\tilde{\xi}_i$ for $i\in I$. This is reduced to study the reducibility of $\Ind_{}^{}\tilde{\pi}_{\tilde{\xi}_\circ}|\det|^s$, a parabolically induced representation of $\mathrm{U}_{2n_\circ}$ from the Siegel parabolic containing the Levi isomorphic to $\mathrm{GL}_{n_\circ}$. This kind of parabolically induced representations is previously studied in \cite{Goldberg-Siegel-case-U22}, \cite{Goldberg-Siegel-case-unitary}, \cite{MR-unitary} for unitary groups, in \cite{Shahidi-twisted-endos}, \cite{MR-classical} for split classical groups, and in \cite{GKS} for general classical groups using the theory of covering types.

If $z$ is a vertex adjacent to $y$, we can define another amending character $\tilde{\nu}_{x,\tilde{\xi}}^{z}=\boxtimes_{i\in I}\tilde{\nu}_{i,x,\tilde{\xi}}^{z}$ analogously for $z$ in place of $y$. We describe in Proposition \ref{transition-with-amending} a relation between our amending characters $\tilde{\nu}_{i,x,\tilde{\xi}}^{y}$ and $\tilde{\nu}_{i,x,\tilde{\xi}}^{z}$ with a character in \cite[Corollary 6.13]{stevens-supercusp}. In our paper, this character is denoted by $\tilde{\chi}_{y,i}^z$ defined on $\mathfrak{o}^\times_{E_i}$, and is called the transition character between beta-extensions relative to the parahoric subgroups associated to $y$ and $z$. Our Proposition \ref{transition-with-amending} simply says that
$$\tilde{\chi}_{y,i}^z=(\tilde{\nu}_{i,x,\tilde{\xi}}^{y}\tilde{\nu}_{i,x,\tilde{\xi}}^{z})|_{\mathfrak{o}^\times_{E_i}}.$$
A similar relation between these characters is also known in the Siegel case \cite{Blondel-Weil} (see the discussion before and in Proposition 3.17 \emph{loc. cit.}).

To pick out the correct representations for the reducibility in (\ref{intro-testing-reducibility}), we view $\mathbf{G}$ as an elliptic twisted endoscopic group of $\tilde{\mathbf{G}}=\Res_{F/\Fo}\mathrm{GL}_n$. In general, an elliptic twisted endoscopic datum of $\tilde{\mathbf{G}}$ is of the form
$$\mathbf{H}=\mathrm{U}_{n_1,F/\Fo}\times \mathrm{U}_{n_2,F/\Fo},\,\qquad n_1,n_2\in\mathbb{Z}_{\geq0}\text{ and }n_1+n_2=n,$$
attached with a choice among two sign-data that determines the embedding of the L-group ${}^L\mathbf{H}$ into $ {}^L\tilde{\mathbf{G}}$ (except when $n_1=n_2$, in which case the two data are equivalent in an appropriate sense). We can also study the base changes of the packets of $\mathbf{H}(\Fo)$ to the representations of $\tilde{\mathbf{G}}(\Fo)=\mathrm{GL}_n(F)$. In Corollary \ref{theorem-H-stable-packet}, we generalize Theorem \ref{intro-thm}, the Theorem for $\mathbf{H}=\mathbf{G}$, to a statement for arbitrary elliptic twisted endoscopic group $\mathbf{H}$ of $\tilde{\mathbf{G}}$.

\subsection{Remarks related to the literature}

\begin{enumerate}[(i)]

  \item The quasi-split condition for a classical group is applied because, in order to describe the local transfers, we view the group as a twisted endoscopic group of $\mathrm{GL}_n$. This is the view-point in \cite{Moeg-base-change}, \cite{Arthur-new-book}, \cite{Mok-unitary}. However, as pointed out in \cite{Moeg-endosc-L-param}, this condition is only for simplicity. Now the endoscopic classification for inner forms of classical groups is available (the essential idea is in \cite[Chapter 9]{Arthur-new-book} for special orthogonal and symplectic groups, and in \cite{unitary-inner} for unitary groups). The author expects that the same method in this paper applies to describe the local transfers (whenever defined) for inner forms without significant modification.

\item The above constructions of extended maximal types and cuspidal types from characters of elliptic maximal tori are well-known, where the origin can be traced back to \cite{Howe} for general linear groups (and similar constructions in \cite{Gerardin-unram}, \cite{Carayol-cusp}). The method was then generalized to \cite{BK}, where we allow the underlying types to be directly extended from characters of compact subgroups,
    without appealing to characters of elliptic maximal tori, and hence obtain an exhaustive construction. Similar exhaustive methods were then applied to special linear groups \cite{BK-SLn-2} and to classical groups \cite{stevens-supercusp}, \cite{Stevens-Miya} (when the residual characteristic is odd). There are other similar methods \cite{adler-cusp}, \cite{Yu-cusp}, \cite{Kim-exhaust} and recently some novel methods \cite{Gross-Reeder}, \cite{Reeder-Yu} aiming at constructing supercuspidal representations for general connected reductive groups.

  \item Attempts to describing local transfers include
  \begin{itemize}
    \item the depth-0 case, for $\mathbf{G}=\mathrm{U}_2$ and $\mathrm{U}_3$, by \cite{Adler-Lansky-unram} when $F/\Fo$ is unramified, and by \cite{Adler-Lansky-ram} when $F/\Fo$ is ramified;
        \item the positive level case, by \cite{Blasco-u2} for $\mathbf{G}=\mathrm{U}_2$, and by \cite{Blasco-u3} for $\mathbf{G}=\mathrm{U}_3$ when the packet is a singleton.
  \end{itemize}
  In the cases above, they directly computed the representations in a packet by applying the twisted trace formula \cite{Row-u3}. For the use of extended cuspidal support, we have the following.
  \begin{itemize}
    \item For $\mathbf{G}$ being odd special orthogonal or symplectic, \cite{Savin-level-0} describes the local transfers for depth-0 parameters from \cite{DR}, and proves that the local transfers are the same as in \cite{Jiang-Soudry-SO-odd}, \cite{Cogdell-Kim-PS-Shahidi};
        \item \cite{Lust-GSP4} adopts the same method for depth-0 $\mathrm{GSP}_4$ and proves that the local transfers are the same as in \cite{Gan-Takeda-GSP4}.
  \end{itemize}

  It seems to the author that our paper is the first place to apply the method of extended cuspidal support for positive level supercuspidal representations.

      \item Although we only study the endoscopic classification for parameters under the restrictive very cuspidal condition above, the author expects that the methodology should apply to more general discrete parameters, as long as their components are induced from characters.

      Furthermore, the author was informed that Blondel's ongoing work (jointed with Henniart and Stevens) has obtained some results on the relation between covering types and packets of representations whose underlying cuspidal types are not necessarily constructed from characters. When combined with the endoscopic classification, their results should lead to describing explicitly the local transfers of packets for arbitrary classical groups.

\end{enumerate}

\newpage\section{Unitary groups}

\subsection{Notations}

Let $\Fo$ be a p-adic field of odd residual characteristic $p$. Let $F/\Fo$ be the unramified quadratic extension. We denote by $\mathfrak{o}_{F_\bullet}$ and $\mathfrak{p}_{F_\bullet}$ (resp. $\mathfrak{o}_{F}$ and $\mathfrak{p}_{F}$) the ring of integers and its maximal ideal of $\Fo$ (resp. $F$). We choose a uniformizer $\varpi$ such that $\varpi\mathfrak{o}_{F_\bullet}=\mathfrak{p}_{F_\bullet}$. We denote by $\mathbf{k}_{\Fo}$ (resp. $\mathbf{k}_{F}$) the residual field of $\Fo$ (resp. $F$) with cardinality $\qo$ (resp. $q$).

The multiplicative group $\Fo^\times$ decomposes into a product of subgroups $$\left<\varpi\right>\times{\boldsymbol{\mu}}_\Fo\times U_\Fo^1 .$$
These subgroups are respectively the group generated by the uniformizer $\varpi$, the group ${\boldsymbol{\mu}}_\Fo$ of order $\qo-1$ containing the roots of unity of order prime to $p$, and the 1-unit group $U_\Fo^1:=1+\mathfrak{p}_\Fo$. We will identify ${\boldsymbol{\mu}}_\Fo$ with $\mathbf{k}_\Fo^\times$ by the natural projection.

We fix, once and for all, an additive character $\psio$ of $\Fo$ whose kernel is $\po$, so that its composition with the trace $\mathrm{tr}_{F/\Fo}$ is an additive character $\psi_F$ of $F$ whose kernel is $\mathfrak{p}$.

We denote by $\delta_{F/\Fo}$ the character associated to the quadratic extension $F/\Fo$ by local class field theory, which is the unique quadratic character on $\Fo^\times$ trivial on the norm group $N_{F/\Fo}(F^\times)$.

We denote by $\Gamma_{\Fo} $ the absolute Galois group of $\Fo$ and by $c$ a Frobenius element in  $\Gamma_{\Fo} $ such that
$${}^{c}x\equiv x^{\qo}\mod\mathfrak{p}_{F_\bullet}\text{, for all }x\in\mathfrak{o}_{F_\bullet}.$$
We denote the non-trivial element in $\Gamma_{F/\Fo}=\Gamma_{\Fo}/\Gamma_{F}$ also by ${c}$.


Some notations on general group theory. If $G$ acts on a set $X$ and $X'$ is a subset of $X$, we denote by $N_G(X')$ the subgroup of elements in $G$ that leave $X'$ invariant, and by $Z_G(X')$ the subgroup of elements in $G$ that act trivially on $X'$. If $\pi$ and $\pi'$ are representations of two groups $G$ and $G'$ respectively, we denote by $\pi\boxtimes \pi'$ their exterior tensor product, which is a representation of $G\times G'$. If $H$ is a subgroup of $G$ and $\pi$, $\pi'$ are representations of $H$, we denote by $I_G(\pi,\pi')$ the subset of elements in $G$ that intertwines $\pi$ and $\pi'$, and by $I_G(\pi)$ for $\pi=\pi'$.

\subsection{Unitary groups and subgroups}

Let $V$ be a vector space of $F$-dimension $n$, equipped with a non-degenerate Hermitian form $h_V$. The group $G_V$ of isometries is the unitary group $\mathrm{U}_{n}(F/\Fo)$, which is the subgroup of the $\Fo$-points of an algebraic group $\mathbf{G}_V=\mathrm{U}_{n,F/\Fo}$ over $\Fo$, an $\Fo$-form of the split group $\mathrm{GL}_{n,\Fo}$. Explicitly, if we set
\begin{equation}\label{J-matrix}
  J=J_V=\begin{pmatrix}
    &&&1
    \\
    &&-1&
    \\
    &\dots&&
    \\
    (-1)^n&&&
  \end{pmatrix},
\end{equation}
then we let $\Gamma_F$ act on ${G}_V$ trivially and $c\in \Gamma_{\Fo}-\Gamma_F$ act on $\mathrm{G}_V$ by
\begin{equation}\label{sigma-action}
  {}^{\sigma}x=J{}^c({}^{t}x^{-1})J^{-1}
\end{equation}
where ${}^cx$ is the $c$-conjugate on all entries of $x$, and ${}^tx$ is the transpose of a matrix $x$.

Let $\tilde{\mathbf{G}}_V$ be the algebraic group $\Res_{F/\Fo}\mathrm{GL}_{n,F}$ over $\Fo$, where $c\in \Gamma_{\Fo}-\Gamma_{F}$ acts as
$$(x,y)\mapsto ({}^{\sigma}y,{}^{\sigma}x)$$
 The subgroup of the $\Fo$-points is
 $$\tilde{G}_V=\tilde{{G}}_V(\Fo)={G}_V(F)=\mathrm{GL}_{n}(F).$$
  The action (\ref{sigma-action}) then descends to an involution $\sigma$ on $\tilde{G}_V$, and so $(\tilde{G}_V)^\sigma=G_V$.

Note that if $\dim V=1$, then $G_V\cong \ker {N}_{F/\Fo}$, which decomposes into a product of subgroups $${\boldsymbol{\mu}}_{F/\Fo}\times U_{F/\Fo}^1,$$
where ${\boldsymbol{\mu}}_{F/\Fo}$ is the group of order $\qo+1$ containing the roots of unity of order prime to $p$ and whose norm is trivial, and $U_{F/\Fo}^1:=\{x\in U_F^1,\,{}^cx=x^{-1}\}$. Note that we can identify ${\boldsymbol{\mu}}_{F/\Fo}$ with $\mathrm{U}_1(\mathbf{k}_F/\mathbf{k}_{\Fo})$ by the natural projection.

We also denote by $\tilde{A}_V=\mathrm{End}_F(V)$ the algebra of $F$-endomorphisms of $V$, which is the $F$-points of $\mathbf{A}_V=\mathrm{End}_{\bar{F}}(V\otimes_F\bar{F})$ as an $\bar{F}$-variety. We also define an action of $\Gamma_{F/\Fo}$ on $\tilde{A}_V$ by ${}^\sigma X=-J{}^c({}^{t}X) J^{-1}$, and write $A_V=(\tilde{A}_V)^\sigma$.


In this paper, we only consider our unitary group $\mathbf{G}_V$ to be quasi-split over $\Fo$, which means that $\mathbf{G}_V$ contains a Borel subgroup invariant under the action of $\Gamma_{\Fo}$. From the chosen matrix $J$, the Borel subgroup is the group of upper triangular unitary matrices.

Moreover, we only consider the Hermitian space $V$ admitting an orthogonal decomposition $V=\oplus_{i\in I}V_i$, where $I$ is a finite index set. We then have an embedding
\begin{equation}\label{Levi-embedding-F}
  \prod_{i\in I}\tilde{\mathbf{G}}_{V_i}\hookrightarrow \tilde{\mathbf{G}}_V
\end{equation}
 as a Levi subgroup defined over $F$. If such embedding is defined over $F$, then it restricts to an embedding
 $ \prod_{i\in I}\tilde{{G}}_{V_i}\hookrightarrow \tilde{{G}}_V$, and all such embeddings are conjugate by $\tilde{G}_V$. We now require that the involution $\sigma$ of $ \tilde{G}_V$ restricts to the corresponding involution $\sigma_i$ on each $\tilde{G}_{V_i}$, so that the $\Fo$-embedding
\begin{equation}\label{Levi-embedding-Fo}
  \prod_{i\in I}{\mathbf{G}}_{V_i}\hookrightarrow {\mathbf{G}}_V
\end{equation}
restricts to an embedding
 $ \prod_{i\in I}{G}_{V_i}\hookrightarrow {G}_V$. However, different embeddings may not be conjugate to each other under ${G}_V$, and there may not be any $\Fo$-parabolic subgroup containing this Levi subgroup.

We will also consider unitary groups of higher ranks and their subgroups. Fix $i_\circ\in I$ and let $V_{i_\circ,-} $ and $V_{i_\circ,+}$ be two vector spaces isomorphic to $V_{i_\circ}$, then write
$$W=V_{i_\circ,-}\oplus V\oplus V_{i_\circ,+}.$$
By choosing a basis $\mathcal{B}_-=\{\mathbf{e}_{-1},\dots,\mathbf{e}_{-n_{i_\circ}}\}$ for $V_{i_\circ,-}$ and another $\mathcal{B}_+=\{\mathbf{e}_1,\dots,\mathbf{e}_{n_{i_\circ}}\}$ for $ V_{i_\circ,+}$, we equip $W$ with a Hermitian form $h_W$ defined by
\begin{equation*}
h_W|_{V\times V }=h_V\text{ and }V\perp (V_{i_\circ,-}\oplus V_{i_\circ,+})
\end{equation*}
and
\begin{equation}\label{even-Hermitian}
\begin{split}
  &h_{W}(\mathbf{e}_{i},\mathbf{e}_{ j})=h_{W}(\mathbf{e}_{- i},\mathbf{e}_{-j})=0, \text{ and }
  \\
  &h_{W}(\mathbf{e}_{-i},\mathbf{e}_{j})=\varpi\delta_{ij}\text{, for }i,j=1,\dots,n_{i_\circ}.
\end{split}
\end{equation}
For convenience, we still denote the involution on $\tilde{G}_W$ by $\sigma$ and let $G_W=(\tilde{G}_W)^\sigma$ be the corresponding unitary group. We denote by $P$ the parabolic subgroup of $G_W$ stabilizing the flag
$$0\subset V_{i_\circ,-} \subset V_{i_\circ,-}\oplus V\subset W,$$
and by $M$ the Levi subgroup of $P$. If we write $\tilde{G}_{V_{i_\circ}}=\mathrm{GL}_F(V_{i_\circ,-})$ (or $\mathrm{GL}_F(V_{i_\circ,+})$), then $M\cong \tilde{G}_{V_{i_\circ}}\times G_V$ given by an embedding
$$\mathcal{I}_+^M:\tilde{G}_{V_{i_\circ}}\times G_V\rightarrow G_W,\,(g,h)\mapsto({}^{\sigma_{i_\circ}} g,h,g).$$
We also denote by $U$ the unipotent subgroup of $P$, and by $U^-$ the opposite of $U$.

\subsection{Embeddings of Tori}

Let $I$ be a finite set, and for $i\in I$, let $\Eo_i$ be the unramified extension of $\Fo$ of degree $n_i$, such that $E_i:=\Eo_i\otimes_\Fo F$ is the unramified extension of $F$ of the same degree.  Hence each $n_i$ is necessarily odd. We denote by $c_i$ the non-trivial automorphism in $\Gamma_{E_i/\Eo_i}$.

Suppose that the Hermitian space $V$ has an orthogonal decomposition $V=\oplus_{i\in I}V_i$ such that $\dim V_i=n_i$. By identifying $V_i$ with $E_i$ as an $F$-vector space, we have an $F$-embedding of an elliptic torus
 \begin{equation*}
  \tilde{\mathbf{T}}_i=\Res_{E_i/\Fo}\mathrm{GL}_{1,E_i}\hookrightarrow \tilde{\mathbf{G}}_{V_i},
\end{equation*}
and hence an $F$-embedding of the product torus
\begin{equation*}
  \tilde{\mathbf{T}}=\prod_{i\in I}\tilde{\mathbf{T}}_i\hookrightarrow  \tilde{\mathbf{G}}_{V}
\end{equation*}
 using the embedding (\ref{Levi-embedding-F}) of Levi subgroup.

 We assume that $\tilde{T}= \tilde{\mathbf{T}}(\Fo)$ is embedded into $ \tilde{{G}}_{V}$ such that ${c}$ acts on each factor $\tilde{{T}}_i:=\tilde{\mathbf{T}}_i(\Fo)$ as ${c}_i$, the non-trivial automorphism in $\Gamma_{E_i/\Eo_i}$. Using the embedding (\ref{Levi-embedding-Fo}), if $\mathbf{T}_i=\Res_{\Eo_i/\Fo}\mathrm{U}_{1,E_i/\Eo_i}$, then there is an $\Fo$-embedding
\begin{equation*}
  \mathbf{T}=\prod_{i\in I}\mathbf{T}_i\hookrightarrow  {\mathbf{G}}_{V}
\end{equation*}
as an elliptic unramified maximal torus, and every such torus is of this form. On the subgroup of $\Fo$-points, we have ${T}=\tilde{{T}}^\sigma=\prod_{i\in I}\tilde{{T}}_i^{\sigma_i}$, where ${\sigma_i}$ is the corresponding involution on $\tilde{G}_{V_i}$ such that ${\sigma_i}|_{\tilde{{T}}_i}(t_i)={}^{{{c}}_i}t_i^{-1}$.

 An embedding $\mathcal{I}:\mathbf{T}\hookrightarrow \mathbf{G}_V$ considered above is defined over $\Fo$, which means that
$$\mathcal{I}\circ \gamma_\mathbf{T}=\gamma_{\mathbf{G}_V}\circ \mathcal{I}\text{, for all }\gamma\in \Gamma_\Fo,$$
where $\gamma_\mathbf{T}$ (resp. $\gamma_{\mathbf{G}_V}$) is the automorphism of $\mathbf{T}$ (resp. ${\mathbf{G}_V}$) defined by $\gamma$. Fix one $\Fo$-embedding $\mathcal{I}_0$ with image $\mathbf{T}_0\subset \mathbf{G}_V$. Suppose that $\mathcal{I}:\mathbf{T}\rightarrow \mathbf{G}_V$ is another $\Fo$-embedding, which is necessarily conjugate under $\mathbf{G}_V=\mathbf{G}_V(\bar{\Fo})$, i.e., there is $g\in \mathbf{G}_V$ such that $g^{-1}\mathbf{T}_0(\Fo)g=\mathcal{I}(\mathbf{T})(\Fo)$. For any $\gamma\in \Gamma_{\Fo}$ and $t\in \mathbf{T}(\Fo)$, the relation
$${}^\gamma (g^{-1}\mathcal{I}_0(t) g)={}^\gamma g^{-1}\mathcal{I}_0(t){}^\gamma g={}^\gamma \mathcal{I}(t)=\mathcal{I}(t)= g^{-1}\mathcal{I}_0(t)g$$
implies that $g({}^\gamma g^{-1})\in \mathbf{T}_0$. This defines a cocycle $\Gamma_\Fo\rightarrow \mathbf{T}$ whose class lies in
\begin{equation*}
  \mathcal{D}(\mathbf{G}_V,\mathbf{T},\Fo):=\ker(H^1(\Gamma_\Fo,\mathbf{T})\xrightarrow{(\mathcal{I}_0)_*} H^1(\Gamma_\Fo,\mathbf{G}_V)).
\end{equation*}

\begin{prop}
  The set of $G_V$-conjugacy classes of $\Fo$-embeddings of $\mathbf{T}$ in $\mathbf{G}_V$ within the $\mathbf{G}_V$-conjugacy class is bijectively parameterized by $\mathcal{D}(\mathbf{G}_V,\mathbf{T},\Fo)$.
\end{prop}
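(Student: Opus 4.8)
The plan is to write down an explicit bijection, continuing the cocycle computation carried out just above the statement. Keeping the base point $\mathcal{I}_0$ with image $\mathbf{T}_0$, I attach to an $\Fo$-embedding $\mathcal{I}$ lying in the $\mathbf{G}_V$-conjugacy class of $\mathcal{I}_0$ --- that is, $\mathcal{I}=\mathrm{Int}(g^{-1})\circ\mathcal{I}_0$ for some $g\in\mathbf{G}_V(\bar{\Fo})$ --- the class in $\mathcal{D}(\mathbf{G}_V,\mathbf{T},\Fo)$ of the cocycle $\gamma\mapsto \mathcal{I}_0^{-1}(g\,{}^\gamma g^{-1})$; this is a locally constant $1$-cocycle, since $g$ is defined over a finite extension, and the computation above already shows its class lies in $\mathcal{D}(\mathbf{G}_V,\mathbf{T},\Fo)$. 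I would then verify, in order, that this recipe $(i)$ descends to a well-defined map on $G_V$-conjugacy classes of such embeddings, $(ii)$ is injective, and $(iii)$ is surjective onto $\mathcal{D}(\mathbf{G}_V,\mathbf{T},\Fo)$.

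The one structural fact driving all three steps is that $\mathbf{T}_0$, being a maximal torus of the connected group $\mathbf{G}_V$ over the perfect infinite field $\Fo$, equals its own centralizer, and that $\mathbf{T}(\Fo)$ is Zariski-dense in $\mathbf{T}$; consequently any element of $\mathbf{G}_V(\bar{\Fo})$ commuting with $\mathcal{I}_0(t)$ for all $t\in\mathbf{T}(\Fo)$ already lies in $\mathbf{T}_0$. This is what puts $g\,{}^\gamma g^{-1}$ into $\mathbf{T}_0$ to begin with. It also gives $(i)$: a second conjugator $g'$ with $\mathrm{Int}(g'^{-1})\circ\mathcal{I}_0=\mathrm{Int}(g^{-1})\circ\mathcal{I}_0$ satisfies $g'=zg$ with $z\in\mathbf{T}_0(\bar{\Fo})$, which changes the cocycle by the coboundary $\gamma\mapsto z\,{}^\gamma z^{-1}$; while replacing $\mathcal{I}$ by $\mathrm{Int}(h^{-1})\circ\mathcal{I}$ with $h\in G_V$ replaces $g$ by $gh$ and, since ${}^\gamma h=h$, leaves the cocycle $\gamma\mapsto g\,{}^\gamma g^{-1}$ literally unchanged.

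For injectivity, if $\mathcal{I}_j=\mathrm{Int}(g_j^{-1})\circ\mathcal{I}_0$ for $j=1,2$ produce cohomologous cocycles, say $g_2\,{}^\gamma g_2^{-1}=(zg_1)\,{}^\gamma(zg_1)^{-1}$ for some $z\in\mathbf{T}_0(\bar{\Fo})$, then $h:=(zg_1)^{-1}g_2$ is fixed by $\Gamma_{\Fo}$, hence $h\in\mathbf{G}_V(\Fo)=G_V$, and because $z$ centralizes $\mathbf{T}_0$ one gets $\mathcal{I}_2=\mathrm{Int}(h^{-1})\circ\mathcal{I}_1$, so $\mathcal{I}_1$ and $\mathcal{I}_2$ are $G_V$-conjugate. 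For surjectivity, represent a given class in $\mathcal{D}(\mathbf{G}_V,\mathbf{T},\Fo)$ by $v\in Z^1(\Gamma_{\Fo},\mathbf{T})$ whose image under $(\mathcal{I}_0)_*$ is a coboundary in $\mathbf{G}_V$; then $\gamma\mapsto\mathcal{I}_0(v_\gamma)$ has the form $\gamma\mapsto g\,{}^\gamma g^{-1}$ for some $g\in\mathbf{G}_V(\bar{\Fo})$, and $\mathcal{I}:=\mathrm{Int}(g^{-1})\circ\mathcal{I}_0$ is the embedding sought: using $g\,{}^\gamma g^{-1}\in\mathbf{T}_0$ one checks directly that ${}^\gamma(\mathcal{I}(t))=\mathcal{I}({}^\gamma t)$, so $\mathcal{I}$ is defined over $\Fo$, lies in the $\mathbf{G}_V$-conjugacy class of $\mathcal{I}_0$, and maps to the prescribed class.

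I do not anticipate a genuine obstacle here: this is the standard identification of the rational forms inside a stable conjugacy class with a kernel in nonabelian Galois cohomology, and the only care needed is the bookkeeping of cocycle conventions and the verification of the descent identity in the surjectivity step. The point deserving a careful word is the clause ``within the $\mathbf{G}_V$-conjugacy class'': it is exactly what guarantees the conjugator $g$ exists, and it is why the bijection is canonical only after fixing the base point $\mathcal{I}_0$ --- like $\mathcal{D}(\mathbf{G}_V,\mathbf{T},\Fo)$ itself it is naturally a pointed set, with distinguished element the class of $\mathcal{I}_0$.
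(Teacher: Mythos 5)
Your argument is correct and is exactly the standard non-abelian cohomology argument that the paper's preceding paragraph sets up; the paper in fact states this proposition without proof, having only constructed the cocycle map, so your write-up simply supplies the omitted (i)--(iii) verifications in the intended way. The one fact you rightly isolate --- that a maximal torus is its own centralizer and $\mathbf{T}(\Fo)$ is Zariski-dense, so the conjugator is determined up to left multiplication by $\mathbf{T}_0$ --- is indeed the only input beyond bookkeeping.
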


From now on we abbreviate $\mathcal{D}(\mathbf{G}_V,\mathbf{T},\Fo)$ to $\mathcal{D}$. For computation, we provide a well-known description of $\mathcal{D}$. It is known that
$$H^1(\Gamma_\Fo,\mathbf{G}_V)\cong \mathbb{Z}/2.$$
The following facts are well-known.

\begin{prop}
  There is an isomorphism
   $$H^1(\Gamma_\Fo,\mathbf{T})\cong(\mathbb{Z}/2)^{\#I},$$
   and we can parametrize the map $\mathcal{I}_0$ such that
   $$\mathcal{D}\cong \{(e_i)_{i\in I}\in (\mathbb{Z}/2)^{\#I}\text{, where }\sum_{i\in I} e_i=0\}.$$
\end{prop}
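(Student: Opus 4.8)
The plan is to compute the two groups appearing in $\mathcal{D}=\ker\left((\mathcal{I}_0)_*:H^1(\Gamma_\Fo,\mathbf{T})\to H^1(\Gamma_\Fo,\mathbf{G}_V)\right)$ separately, and then make the map $(\mathcal{I}_0)_*$ explicit. Since $\mathbf{T}=\prod_{i\in I}\mathbf{T}_i$ and Galois cohomology commutes with finite products, $H^1(\Gamma_\Fo,\mathbf{T})=\prod_{i\in I}H^1(\Gamma_\Fo,\mathbf{T}_i)$, so the first task is to show each $H^1(\Gamma_\Fo,\mathbf{T}_i)\cong\mathbb{Z}/2$. As $\mathbf{T}_i=\Res_{\Eo_i/\Fo}\mathrm{U}_{1,E_i/\Eo_i}$, Shapiro's lemma gives $H^1(\Gamma_\Fo,\mathbf{T}_i)\cong H^1(\Gamma_{\Eo_i},\mathrm{U}_{1,E_i/\Eo_i})$. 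I then feed the exact sequence of $\Eo_i$-tori $1\to\mathrm{U}_{1,E_i/\Eo_i}\to\Res_{E_i/\Eo_i}\mathbb{G}_m\xrightarrow{N}\mathbb{G}_m\to1$ into $\Eo_i$-Galois cohomology; since $H^1(\Gamma_{\Eo_i},\Res_{E_i/\Eo_i}\mathbb{G}_m)=H^1(\Gamma_{E_i},\mathbb{G}_m)=0$ by Shapiro and Hilbert 90, the long exact sequence identifies $H^1(\Gamma_{\Eo_i},\mathrm{U}_{1,E_i/\Eo_i})$ with $\Eo_i^\times/N_{E_i/\Eo_i}(E_i^\times)$, which by local class field theory is cyclic of order $2$; because $E_i/\Eo_i$ is unramified, the class of $x$ is recorded by $v_{\Eo_i}(x)\bmod 2$. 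Taking the product over $i\in I$ yields $H^1(\Gamma_\Fo,\mathbf{T})\cong(\mathbb{Z}/2)^{\#I}$.

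It remains to compute $(\mathcal{I}_0)_*$. Recall that $H^1(\Gamma_\Fo,\mathbf{G}_V)\cong\mathbb{Z}/2$, with the nontrivial class detected by the discriminant (equivalently, the isometry class) of the Hermitian form. For each $i$ the restriction of $(\mathcal{I}_0)_*$ to the factor $H^1(\Gamma_\Fo,\mathbf{T}_i)$ factors through $H^1(\Gamma_\Fo,\mathbf{G}_{V_i})$ via the block-diagonal inclusion $\mathbf{G}_{V_i}\hookrightarrow\prod_{j\in I}\mathbf{G}_{V_j}\hookrightarrow\mathbf{G}_V$; twisting $V=\bigoplus_{j}V_j$ by a class supported in the $i$-th block alters only $h_{V_i}$ and preserves the orthogonal decomposition, so it multiplies $\mathrm{disc}(V)$ by $\mathrm{disc}(V_i')/\mathrm{disc}(V_i)$. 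The standard transfer-of-Hermitian-forms computation — writing $h_{V_i}$ on $V_i\cong E_i$ as $\mathrm{Tr}_{E_i/F}(\lambda\,x\,{}^{c_i}y)$ with $\lambda\in\Eo_i^\times$, so that an $E_i^\times$-scaling changes $\lambda$ by a norm from $E_i/\Eo_i$ — shows this ratio is $N_{\Eo_i/\Fo}(t)\bmod N_{F/\Fo}(F^\times)$, where $t\in\Eo_i^\times$ represents the given class. Since $n_i=[\Eo_i:\Fo]$ is odd, $N_{\Eo_i/\Fo}$ multiplies valuations by $n_i$ and hence is the identity on valuations modulo $2$; thus this composite $\mathbb{Z}/2\to\mathbb{Z}/2$ is an isomorphism. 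A homomorphism $(\mathbb{Z}/2)^{\#I}\to\mathbb{Z}/2$ that is nonzero on each standard basis vector must be $(e_i)_i\mapsto\sum_{i\in I}e_i$; so, after fixing these identifications and choosing $\mathcal{I}_0$ so that $\bigoplus_{i\in I}V_i$ is isometric to the given quasi-split $V$ (possible because each $N_{\Eo_i/\Fo}$ surjects onto $\Fo^\times/N_{F/\Fo}(F^\times)$), we obtain $(\mathcal{I}_0)_*=\sum$ and therefore $\mathcal{D}\cong\{(e_i)_{i\in I}:\sum_{i\in I}e_i=0\}$.

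The Shapiro/Hilbert 90 identification and the counting of orders are routine. The step I expect to require the most care is the explicit identification of the composite $H^1(\Gamma_\Fo,\mathbf{T}_i)\to H^1(\Gamma_\Fo,\mathbf{G}_V)$: tracking how the cocycle class twists the Hermitian form on $V_i$ and verifying that the discriminant changes by exactly $N_{\Eo_i/\Fo}$ of the cocycle parameter, with the oddness of $n_i$ being precisely what makes this change nontrivial. For the statement as written one only needs this composite to be nonzero rather than the precise formula, so it would in fact suffice to exhibit a single twist of the torus embedding whose image does not remain $G_V$-conjugate into $\mathbf{G}_V$ for the original form; but the discriminant bookkeeping is the cleanest route, and it also pins down the normalization of $\mathcal{I}_0$ hidden in the phrase "we can parametrize the map $\mathcal{I}_0$".
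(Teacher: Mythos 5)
Your argument is correct. The paper itself gives no proof of this proposition beyond a citation to Rogawski (\cite[Proposition 3.5.2(c)]{Row-u3}), and your computation --- Shapiro's lemma plus the norm sequence and Hilbert 90 to get $H^1(\Gamma_{\Eo_i},\mathrm{U}_{1,E_i/\Eo_i})\cong \Eo_i^\times/N_{E_i/\Eo_i}(E_i^\times)\cong\mathbb{Z}/2$, then tracking the discriminant of the twisted Hermitian form and using that $[\Eo_i:\Fo]=n_i$ is odd so that $N_{\Eo_i/\Fo}$ preserves valuations mod $2$ --- is exactly the standard argument underlying that citation. In particular you correctly handle the only delicate point, namely that $H^1(\Gamma_\Fo,\mathbf{G}_V)$ is only a pointed set, by showing the composite is computed by the (multiplicative) discriminant and is therefore the sum homomorphism.
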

\proof
See \cite[Proposition 3.5.2(c)]{Row-u3}.
\qed

\begin{cor}\label{partition-as-embeddings}
  We can parametrize $\mathcal{D}$ by partitions of $I$ into two subsets
  $$I=I_\mathrm{o}\sqcup I_\mathrm{e}$$
  such that the cardinality of one of the subsets (say $\#I_\mathrm{e}$) is even.
\end{cor}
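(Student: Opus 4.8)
The plan is to read the statement off directly from the preceding proposition, which supplies the identification
$$\mathcal{D}\cong \{(e_i)_{i\in I}\in (\mathbb{Z}/2)^{\#I}\ :\ \textstyle\sum_{i\in I}e_i=0\}.$$
Given a tuple $(e_i)_{i\in I}$ in the right-hand side, I would set $I_{\mathrm{e}}=\{i\in I:e_i=1\}$ and $I_{\mathrm{o}}=I\setminus I_{\mathrm{e}}=\{i\in I:e_i=0\}$, obtaining an ordered partition $I=I_{\mathrm{o}}\sqcup I_{\mathrm{e}}$; conversely an ordered partition determines a tuple through the indicator function of its second part. Under this dictionary the defining relation $\sum_{i\in I}e_i=0$ in $\mathbb{Z}/2$ is precisely the condition that $\#I_{\mathrm{e}}=\#\{i:e_i=1\}$ be even, so the assignment is a bijection from $\mathcal{D}$ onto the set of ordered partitions of $I$ into two subsets the second of which is of even cardinality. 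This is the content of the Corollary.

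The only point that requires care -- and the only place where the preceding propositions genuinely enter -- is the normalization of the base embedding $\mathcal{I}_0$. Its parametrization has to be chosen, as in the previous proposition, so that the trivial class $0\in H^1(\Gamma_\Fo,\mathbf{T})$ lies in $\mathcal{D}$ and corresponds to the zero tuple, that is, to the partition $I=I\sqcup\emptyset$; for any other base point the vanishing condition would turn into an affine condition $\sum_{i\in I}e_i=\mathrm{const}$ and the roles of $I_{\mathrm{o}}$ and $I_{\mathrm{e}}$ would have to be relabelled. Beyond that nothing new is needed: the computations $H^1(\Gamma_\Fo,\mathbf{T})\cong(\mathbb{Z}/2)^{\#I}$ and $H^1(\Gamma_\Fo,\mathbf{G}_V)\cong\mathbb{Z}/2$ and the description of $\mathcal{D}$ as a kernel have already been established, so there is no serious obstacle.

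For the way the Corollary will be used later I would also spell out the geometric meaning of the coordinates: after identifying $\mathbb{Z}/2$ with $\Fo^\times/N_{F/\Fo}(F^\times)$ via $\delta_{F/\Fo}$, the entry $e_i$ is (relative to the fixed base embedding) the discriminant class of the Hermitian form $h_{V_i}$ on the $i$-th summand, so that $I_{\mathrm{e}}$ marks the summands carrying the ``other'' Hermitian space of dimension $n_i$, and the relation $\sum_{i}e_i=0$ says exactly that the orthogonal sum $\bigoplus_{i}V_i$ is isometric to the ambient quasi-split space $V$. Since every $n_i$ is odd one has $\#I\equiv n\pmod 2$; in particular when $n$ is odd the two parts of the partition have opposite parity, so the labelling of $I_{\mathrm{e}}$ versus $I_{\mathrm{o}}$ is forced, whereas for even $n$ it must be retained as part of the datum.
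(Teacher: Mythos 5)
Your argument is correct as a proof of the literal statement: given the preceding proposition's identification $\mathcal{D}\cong\{(e_i)_{i\in I}\in(\mathbb{Z}/2)^{\#I}:\sum_{i\in I}e_i=0\}$, sending $(e_i)$ to the ordered partition with $I_{\mathrm{e}}=\{i:e_i=1\}$ is a bijection onto the partitions with $\#I_{\mathrm{e}}$ even, and your remarks about the normalization of $\mathcal{I}_0$ and about the parity of $\#I$ versus $n$ are accurate. The route, however, is genuinely different from the paper's. The paper does not treat the corollary as a purely combinatorial reformulation; its proof is an explicit (non-unique) realization of the parametrization in the style of Waldspurger, via pairs $(z,x)=((z_i),(x_i))_{i\in I}$ with $z_i$ generating $E_i/F$ and $x_i\in\Eo_i^\times$ subject to $\prod_{i\in I}\delta_{E_i/\Eo_i}(x_i)=1$. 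Choosing representatives $x_i\in\{1,\varpi\}$ produces the partition ($I_{\mathrm{e}}$ being the indices with $x_i=\varpi$, so the product condition is exactly the evenness of $\#I_{\mathrm{e}}$), and the proof then writes down the Hermitian form $h_{E_i}(w_i,w_i')=\tr_{E_i/F}(x_i\,{}^{c_i}w_iw_i')$ on $\oplus_i E_i$, checks it is isometric to $(V,h_V)$, and thereby constructs the actual embedding $\mathcal{I}_x:\mathbf{T}\to\mathbf{G}_V$ in each class. That concrete output — the form, the lattices it distinguishes, and the embedding $\mathcal{I}_x$ — is what the rest of the paper uses (e.g.\ the definition of $\Lambda_x$ via $L_{I_{\mathrm{o}}}^*=\varpi L_{I_{\mathrm{o}}}$, $L_{I_{\mathrm{e}}}^*=L_{I_{\mathrm{e}}}$), whereas your proof buys brevity but leaves the classes $x\in\mathcal{D}$ unrealized as explicit tori in explicit Hermitian spaces. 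Your closing paragraph about discriminant classes gestures at exactly this realization, so to match the paper's use of the corollary you would need to upgrade that sketch into the explicit construction.
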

\proof

 We can explicitly describe such a (non-unique) parametrization using the idea in \cite[Section 1.7]{walds-nil-orbit}. Suppose that we are given a pair $(z,x)=((z_i),(x_i))_{i\in I}$, where
\begin{enumerate}[(i)]
  \item each $z_i$ generates the unramified field extension $E_i/F$ such that ${}^{\sigma_i}z_i=z_i$;
      \item $x_i\in E_i$ such that ${}^{c_i}x_i=x_i$, or equivalently $x_i\in \Eo_i$, and $\prod_{i\in I}\delta_{E_i/\Eo_i}(x_i)=1$;
      \label{condition on xi}
      \item $n=\sum_{i\in I}n_i$, where $n_i=[E_i:F]$ is odd.
\end{enumerate}
   Hence $z\in {T}$ is embedded into ${G}_V$ by $\mathcal{I}_0$ as a regular element, such that $Z_{G_V}(\mathcal{I}_0(z))=T_0:=\mathbf{T}_0(\Fo)$. Two pairs $(z,x)$ and $(z',x')$ are called $\mathbf{G}_V$-conjugate (or stably conjugate) if $z_i\in \Gamma_{\Fo}z'_i$ for all $i\in I$, and called $G_V$-conjugate if furthermore $\delta_{E_i/\Eo_i}(x_i)=\delta_{E_i/\Eo_i}(x_i')$ for all $i\in I$.  
 If we identify the element $x=(x_i)_{i\in I}$ to the element in $x\in \mathcal{D}$, then $T_x$ and $T_{x'}$ are $\mathbf{G}_V$-conjugate (resp. $G_V$-conjugate) if and only if $(z,x)$ and $(z',x')$ are $\mathbf{G}_V$-conjugate (resp. $G_V$-conjugate).

 Note that, for fixed $z$, we can choose an $x$ with $x_i\in \{1,\varpi\}$ to represent a $G_V$-conjugacy class of $(z,x)$. If we define $I_{\mathrm{o}}$ (resp. $I_{\mathrm{e}}$) to be the subset consisting of $i\in I$ such that $x_i=1$ (resp. $x_i=\varpi$), then the last condition of $x=(x_i)_{i\in I}$ in (\ref{condition on xi}) implies that $\#I_{\mathrm{e}}$ must be even.

From the orthogonal decomposition $V=\oplus_{i\in I}V_i$, we identify each $V_i$ with $E_i$ as an $F$-vector space. If we define a Hermitian form on $V_E:=\oplus_{i\in I}E_i$ by $h_E=\perp_{i\in I}h_{E_i}$ where
 $$h_{E_i}(w_i,w'_i)=tr_{E_i/F}(x_i{}^{c_i}w_iw_i')$$
 for all $w_i,\,w'_i\in E_i$, then the last condition of $x=(x_i)_{i\in I}$ in (\ref{condition on xi}) implies that $(V_E,h_{E})$ is isometrically isomorphic to $(V,h_V)$ as an Hermitian space, and the corresponding unitary group $\mathbf{G}_{V_E}$ is $\Fo$-isomorphic to $\mathbf{G}_V$. The torus $\mathbf{T}$ is embedded into $\mathbf{G}_{V_E}$ as a diagonal torus, and the embedding $\mathcal{I}_x:\mathbf{T}\rightarrow \mathbf{G}_{V_E}$ is defined by composing the torus embedding with the isomorphism from $\mathbf{G}_{V_E}$ to $\mathbf{G}_V$.

\qed

\subsection{Lattice sequences}\label{section lattice seq}

Let $\Lambda$ be an $\mathfrak{o}_F$-lattice sequence in $V$, defined as a function $\Lambda$ from $\mathbb{Z}$ to the set of $\mathfrak{o}_F$-lattice in $V$ such that
\begin{enumerate}[(i)]
  \item $\Lambda(k)\subseteq \Lambda(j)$ if $j\leq k$;
  \item there exists a positive integer $e=e(\Lambda/\mathfrak{o}_F)$ such that $\varpi\Lambda(k)=\Lambda(k+e)$ for all $k\in \mathbb{Z}$.
\end{enumerate}
The integer $e(\Lambda/\mathfrak{o}_F)$ is called the $\mathfrak{o}_F$-period of $\Lambda$.

Recall that $h_V$ is an Hermitian form on $V$ that defines the unitary group $G_V$. Given an $\mathfrak{o}_F$-lattice  $L$ in $V$, we define the conjugate-dual of $L$ as
$$L^*:=\{v\in V,\,h_V(v,L)\subseteq \mathfrak{p}_F\}.$$
An $\mathfrak{o}_F$-lattice sequence $\Lambda$ in $V$ is called conjugate-self-dual if there exists $j\in \mathbb{Z}$ such that $\Lambda(k)^*=\Lambda(j-k)$  for all $k\in \mathbb{Z}$. As in \cite{stevens-supercusp}, we may scale the indices of $\Lambda$ such that
\begin{equation}\label{even-period-assumption}
  j=1\text{ and the period of }\Lambda\text{ is always even}.
\end{equation}
 For example, if $\Lambda$ is the lattice sequence representing $\{\mathfrak{p}_F^k\}_{k\in \mathbb{Z}}$ in $V=F$, and $h_V$ is defined by $(x,y)\mapsto x{}^{c}y$, then $\Lambda$ is enumerated as
$$\cdots,\,\Lambda(-1)=\Lambda(0)=\mathfrak{o}_F,\,\Lambda(1)=\Lambda(2)=\mathfrak{p}_F,\cdots$$
and so on. We then extend the domain of $\Lambda$ from $\mathbb{Z}$ to $\mathbb{R}$ by defining $\Lambda(r)=\Lambda(\lceil r\rceil)$ for all $r\in \mathbb{R}$, and write $\Lambda(r+)=\cup_{s>r}\Lambda(s)$.

As in \cite[Section 2.8]{BK-ss-type}, we define the direct sum of two lattices. Given $F$-vector spaces $V_i$, for $i=1,2$, and $\mathfrak{o}_F$-lattice sequences $\Lambda_i$ of $V_i$ of period $e_i$, we define $\Lambda=\Lambda_1\oplus\Lambda_2$ to be an $\mathfrak{o}_F$-lattice sequence of $V_1\oplus V_2$ of period $e=\mathrm{lcm}(e_1,e_2)$ by
$$\Lambda(r)=\Lambda_1(e_1r/e)\oplus\Lambda_2(e_2r/e),\,\text{for all }r\in \mathbb{R}.$$
We can check that $(\Lambda_1\oplus\Lambda_2)\oplus\Lambda_3= \Lambda_1\oplus (\Lambda_2\oplus\Lambda_3)$ and $\Lambda_1\oplus\Lambda_2\cong \Lambda_2\oplus\Lambda_1$, so that we can define the notion of direct sum inductively.

In the sequel, we will study the conjugate-self-dual lattice sequences $\Lambda_x,\,x\in\mathcal{D}$, defined as follows. Suppose $I=I_\mathrm{o}\sqcup I_\mathrm{e}$ is a partition $I=I_\mathrm{o}\sqcup I_\mathrm{e}$ corresponding to an embedding $x\in \mathcal{D}
$ as in Corollary \ref{partition-as-embeddings}. We write $V_{I_\mathrm{o}}=\oplus_{i\in I_\mathrm{o}}V_i$ and  $V_{I_\mathrm{e}}=\oplus_{i\in I_\mathrm{e}}V_i$, such that the decomposition $V=V_{I_\mathrm{o}}\oplus V_{I_\mathrm{e}}$ is orthogonal. We choose a basis $\mathcal{B}_{I_\mathrm{o}}$ for $V_{I_\mathrm{o}}$ such that the matrix of the Hermitian form $h_{I_\mathrm{o}}$ has an expression similar to (\ref{J-matrix}) and, since $n_\mathrm{e}=\dim V_{I_\mathrm{e}}$ is even, we choose a basis $\mathcal{B}_{I_\mathrm{e}}$ for $V_{I_\mathrm{e}}$, enumerated as $\{\mathbf{e}_1,\dots\mathbf{e}_{n_\mathrm{e}/2},\mathbf{e}_{-n_\mathrm{e}/2},\dots,\mathbf{e}_{-1}\}$, such that the relations similar to (\ref{even-Hermitian}) hold for $h_{I_\mathrm{e}}$ (in place of $h_W$). It is clear that $(V,h_V)$ and $(V_{I_\mathrm{o}}\oplus V_{I_\mathrm{e}},h_{I_\mathrm{o}}\perp h_{I_\mathrm{e}})$ are isometrically isomorphic. If we now define $L_{I_\mathrm{o}}$ and $L_{I_\mathrm{e}}$ be the $\mathfrak{o}_F$-lattices generated by $\mathcal{B}_{I_\mathrm{o}}$ and $\mathcal{B}_{I_\mathrm{e}}$ respectively, then we check that
$$L_{I_\mathrm{o}}^*=\varpi L_{I_\mathrm{o}}\text{ and }L_{I_\mathrm{e}}^*=L_{I_\mathrm{e}}.$$
We define two lattice sequences $\Lambda_{x,I_\mathrm{o}}$ and $\Lambda_{x,I_\mathrm{e}}$ containing $L_{I_\mathrm{o}}$ and $L_{I_\mathrm{e}}$ respectively, both of period 2, with the indices shifted such that
\begin{equation*}
  \begin{split}
    &\Lambda_{x,I_\mathrm{o}}(0)\supsetneq\Lambda_{x,I_\mathrm{o}}(1)=\Lambda_{x,I_\mathrm{o}}(2),
    \\
    &\Lambda_{x,I_\mathrm{e}}(0)=\Lambda_{x,I_\mathrm{e}}(1)\supsetneq\Lambda_{x,I_\mathrm{e}}(2).
  \end{split}
\end{equation*}
We then define
$$\Lambda_x=\Lambda_{x,I_\mathrm{o}}\oplus\Lambda_{x,I_\mathrm{e}}.$$
Hence $\Lambda_x$ is conjugate-self-dual and its period is 2.

\subsection{Orders and subgroups}\label{section orders}

Let $\tilde{G}=\tilde{G}_V$ or $\tilde{G}_W$, and write ${G}=\tilde{G}^\sigma$. For any lattice sequence $\Lambda$ in $V$ or $W$, we associate to $\Lambda$ a hereditary order $\mathfrak{A}_{\Lambda}$ in $\tilde{A}_V=\mathrm{End}_F(V)$ or  $\mathrm{End}_F(W)$ and its Jacobson radical $\mathfrak{P}_{\Lambda}$. We then define, for $r\in\mathbb{R}_{\geq 0}$,
$$\mathfrak{P}_{\Lambda}^{r}=\bigcup_{\begin{smallmatrix}
  n\in \mathbb{Z}_{\geq0}
  \\
  n\geq r
\end{smallmatrix}}\mathfrak{P}_{\Lambda}^n\text{ and }\mathfrak{P}_{\Lambda}^{r+}=\bigcup_{\begin{smallmatrix}
  n\in \mathbb{Z}_{>0}
  \\
  n> r
\end{smallmatrix}}\mathfrak{P}_{\Lambda}^n.$$
We then define a decreasing filtration $\{U_\Lambda^r\}_{r\geq 0}$ of compact open subgroups of $\tilde{G}$ by
\begin{equation*}
U_{\Lambda}=U^0_{\Lambda}=\mathfrak{A}_{\Lambda}^\times,\,U^r_{\Lambda}=1+\mathfrak{P}_{\Lambda}^r\text{, and }U^{r+}_{\Lambda}=1+\mathfrak{P}_{\Lambda}^{r+},
\end{equation*}
for $r\in\mathbb{R}_{>0}$. If $\Lambda$ is conjugate-self-dual, then $\mathfrak{P}_{\Lambda}^{r}$ and $U^r_{\Lambda}$ are conjugate-self-dual, which means that they are $\sigma$-invariant. we define
$$U^r_{\Lambda,F/\Fo}=U^r_{\Lambda}\cap G\text{ and }U^{r+}_{\Lambda,F/\Fo}=U^{r+}_{\Lambda}\cap G$$
for all $r\in\mathbb{R}_{\geq0}$.

Write $E=\oplus_{i\in I}E_i$ and denote $\mathfrak{o}_E=\oplus_{i\in I}\mathfrak{o}_{E_i}$, $E^\times=\prod_{i\in I}E^\times_i\cong \tilde{T}$ and
$(E^\times)^\sigma=\prod_{i\in I}\mathrm{U}_1(E_i/\Eo_i)\cong  {T}$. For a fixed $i_\circ\in I$, we view $E_{i_\circ}^\times$ as an elliptic maximal torus in $\tilde{G}_{V_{i_\circ}}$. Given an embedding $x\in \mathcal{D}$, we define a morphism $\mathcal{I}_{i_\circ,x}: E^\times \rightarrow G_W$ by
\begin{equation}\label{image-of-E}
\begin{split}
 \mathcal{I}_{i_\circ,x}: &E^\times \rightarrow \tilde{T}_{i_\circ}\times {T}\rightarrow M\cong\tilde{G}_{V_{i_\circ}}\times G_V\subset  G_W,
  \\
  &t=(t_i)_{i\in I}\mapsto (t_{i_\circ},t({}^ct^{-1}))\mapsto ({}^{\sigma_{i_\circ}}t_{i_\circ},\mathcal{I}_x(t({}^ct^{-1})),t_{i_\circ}).
\end{split}
\end{equation}
Denote the image by $\mathcal{I}_{i_\circ,x}(E^\times)$ for a moment, and regard it as a subtorus of $\tilde{T}_{i_\circ}\times {T}_x$.

Suppose that $\Lambda$ is a lattice sequence in $W$ such that each $\Lambda(r)$ is invariant by $\mathcal{I}_{i_\circ,x}(\prod_{i\in I}\mathfrak{o}^\times_{E_i})$. In the sequel, we will say for short that $\Lambda$ is an $\mathfrak{o}_{E}$-lattice sequence, and write  $\mathfrak{P}^r_{\Lambda,E}$ the subset of elements in  $\mathfrak{P}^r_{\Lambda}$ centralized by $\mathcal{I}_{i_\circ,x}(\prod_{i\in I}\mathfrak{o}^\times_{E_i})$. Note that we write $\mathfrak{A}_{\Lambda,E}=\mathfrak{P}^r_{\Lambda,E}$. We define, for $r\in\mathbb{R}_{\geq0}$,
\begin{equation*}
    U^r_{\Lambda,E}=U^r_{\Lambda}\cap Z_{\tilde{G}_W}(\mathcal{I}_{i_\circ,x}(E^\times))=1+\mathfrak{P}^r_{\Lambda,E}
  \end{equation*}
  and if $\Lambda$ is furthermore conjugate-self-dual, we define
\begin{equation*}
    U^r_{\Lambda,E/\Eo}=U^r_{\Lambda}\cap Z_{G_W}(\mathcal{I}_{i_\circ,x}(E^\times))
  \end{equation*}
 and similarly for $U^{r+}_{\Lambda,E}$ and $U^{r+}_{\Lambda,E/\Eo}$.

Finally, as $T=\mathbf{T}(\Fo)$ and $\tilde{T}=\mathbf{T}(F)$, we denote the image $\mathcal{I}_x(T)$ in ${G}_V$ by $T_x$ and $\mathcal{I}_x(\tilde{T})$ of $\tilde{G}_V$ by $\tilde{T}_x$, for $x\in \mathcal{D}$. For $r\in \mathbb{R}_{\geq0}\cup\{t+\text{, where }t\in\mathbb{R}_{\geq0} \}$, if we denote
$$\tilde{T}^r=\prod_{i\in I}\tilde{T}_i^r\text{ and }{T}^r=\prod_{i\in I}{T}_i^r,$$
where
$$\tilde{T}_i^r=U_{E_i}^r\text{ and }{T}_i^r=\mathrm{U}_1(E_i/\Eo_i)\cap U_{E_i}^r,$$
then we set
$$\tilde{T}_x^r=\mathcal{I}_x(\tilde{T}^r)\text{ and }{T}_x^r=\mathcal{I}_x({T}^r).$$

\subsection{Lattices of higher ranks}\label{section lattice higher ranks}

Given $i_\circ\in I$ and $x\in \mathcal{D}$, we define a lattice sequence $\Lambda_{i_\circ,x}$ in $W=V_{i_\circ,-}\oplus V\oplus V_{i_\circ,+}$ as follows. Let $\Lambda_{i_\circ,-}$ and $\Lambda_{i_\circ,+}$ be two lattice sequences isomorphic to $\Lambda_{i_\circ}$. Define a conjugate-self-dual lattice sequence $\Lambda_{i_\circ,x}$ in $W$ of period 6 by
$$\Lambda_{i_\circ,x}(r)=\Lambda_{i_\circ,-}(\frac{r-1}{3})\oplus \Lambda_x(\frac{r}{3})\oplus \Lambda_{i_\circ,+}(\frac{r+1}{3}),$$
for all $r\in \mathbb{R}_{\geq 0}$.

Following \cite[Section 7.2.2]{stevens-supercusp}, we define two conjugate-self-dual lattice sequences in $W$, $\mathfrak{M}^y_{i_\circ,x}$
and $\mathfrak{M}^z_{i_\circ,x}$, each of period 2, by assigning each of them to one of the following two lattice sequences:
$$0\mapsto\Lambda_{i_\circ,x}(-2)\text{ and }1\mapsto \Lambda_{i_\circ,x}(3)$$
or
$$0\mapsto \Lambda_{i_\circ,x}(0)\text{ and }1\mapsto \Lambda_{i_\circ,x}(1),$$
such that the reductive quotient
\begin{equation*}
  U_{\mathfrak{M}^w_{i_\circ,x,E/\Eo}}/U^1_{\mathfrak{M}^w_{i_\circ,x},E/\Eo}\cong \begin{cases}
  \mathrm{U}_3(\mathbf{k}_{E_{i_\circ}}/\mathbf{k}_{\Eo_{i_\circ}})\times \prod_{i\in I-\{i_\circ\}}\mathrm{U}_1(\mathbf{k}_{E_{i}}/\mathbf{k}_{\Eo_{i}})&\text{ when }w=y,
  \\
   \mathrm{U}_2(\mathbf{k}_{E_{i_\circ}}/\mathbf{k}_{\Eo_{i_\circ}})\times \prod_{i\in I}\mathrm{U}_1(\mathbf{k}_{E_{i}}/\mathbf{k}_{\Eo_{i}})&\text{ when }w=z.
\end{cases}
\end{equation*}

Alternatively, we can define $\Lambda_{i_\circ,x}$, $\mathfrak{M}^y_{i_\circ,x}$ and $\mathfrak{M}^z_{i_\circ,x}$ as follows. Define the following conjugate-self-dual lattice sequences in $V_{\pm i_\circ}:=V_{i_\circ,-}\oplus V_{i_\circ}\oplus V_{i_\circ,+}$,
$$\Lambda_{\pm i_\circ}(r)=\Lambda_{i_\circ,-}(\frac{r-1}{3})\oplus \Lambda_{i_\circ}(\frac{r}{3})\oplus \Lambda_{i_\circ,+}(\frac{r+1}{3}),$$
and  $\mathfrak{M}^y_{\pm i_\circ}$ and $\mathfrak{M}^z_{\pm i_\circ}$ by assigning each of them to one of the following two lattice sequences:
$$0\mapsto\Lambda_{\pm i_\circ}(-2)\text{ and }1\mapsto \Lambda_{\pm i_\circ}(3)$$
or
$$0\mapsto \Lambda_{\pm i_\circ}(0)\text{ and }1\mapsto \Lambda_{\pm i_\circ}(1),$$
such that the reductive quotient
\begin{equation*}
  U_{\mathfrak{M}^w_{\pm i_\circ},E/\Eo}/U^1_{\mathfrak{M}^w_{\pm i_\circ},E/\Eo}\cong \begin{cases}
  \mathrm{U}_3(\mathbf{k}_{E_{i_\circ}}/\mathbf{k}_{\Eo_{i_\circ}}) &\text{ when }w=y,
  \\
   \mathrm{U}_2(\mathbf{k}_{E_{i_\circ}}/\mathbf{k}_{\Eo_{i_\circ}})\times \mathrm{U}_1(\mathbf{k}_{E_{i_\circ}}/\mathbf{k}_{\Eo_{i_\circ}})&\text{ when }w=z.
\end{cases}
\end{equation*}
We then define
$$\Lambda_{i_\circ,x}=\Lambda_{\pm i_\circ}\oplus_{i\neq i_\circ}\Lambda_{i}\text{ and }\mathfrak{M}^w_{i_\circ,x}=\mathfrak{M}^w_{\pm i_\circ}\oplus_{i\neq i_\circ}\Lambda_{i},$$
for $w=y$ or $z$, such that
$$\Lambda_{i_\circ,x}\cap \mathrm{End}_F(V)=\mathfrak{M}^w_{i_\circ,x}\cap \mathrm{End}_F(V)=\Lambda_x.$$

\newpage\section{Representations}\label{section repres}

\subsection{Characters}\label{section characters}

Let ${Z_{F/\Fo}}$ be the set of skew characters of $F^\times$ relative to the quadratic extension $F/\Fo$, defined by
$${Z_{F/\Fo}}=\{\text{characters }\tilde{\chi}:F^\times\rightarrow\mathbb{C}^\times,\,{}^{c}\tilde{\chi}^{-1}=\tilde{\chi}\}.$$
We then have a partition ${Z_{F/\Fo}}={Z_{F/\Fo}^+}\sqcup {Z_{F/\Fo}^-}$, where
$${Z_{F/\Fo}^+}=\{\tilde{\chi}\in {Z_{F/\Fo}},\,\tilde{\chi}|_{\Fo^\times}=1\},\text{and } {Z_{F/\Fo}^-}=\{\tilde{\chi}\in {Z_{F/\Fo}},\,\tilde{\chi}|_{\Fo^\times}=\delta_{F/\Fo}\}.$$
We call these characters $+$-skew and $-$-skew respectively. We denote $\tilde{\chi}_+\in  {Z_{F/\Fo}^+}$ the trivial character and fix, once and for all, a character $\tilde{\chi}_-\in {Z_{F/\Fo}^-}$. For example, we may choose $\tilde{\chi}_-$ to be the unramified quadratic character of $F^\times$.

\begin{prop}\label{U1-base-change}
The base-change map
$$(\mathrm{U}_1(F/\Fo))^\wedge\rightarrow (F^\times)^\wedge,\,\xi\mapsto(\tilde{\xi}=\xi\circ(1-c):x\mapsto \xi(x\cdot{}^{c}x^{-1}))$$
is injective, and its image is ${Z_{F/\Fo}^+}$.
\end{prop}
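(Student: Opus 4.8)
The plan is to identify the base-change map $\xi\mapsto\tilde\xi$ with the Pontryagin dual of an explicit surjection of abelian groups. The key observation is that $1-c\colon x\mapsto x\cdot{}^{c}x^{-1}$ is a homomorphism $F^\times\to F^\times$ whose image lies in $\mathrm{U}_1(F/\Fo)=\ker N_{F/\Fo}$ (the norm is $c$-invariant), whose kernel is the fixed group $(F^\times)^{c}=\Fo^\times$, and which is \emph{onto} $\ker N_{F/\Fo}$ by Hilbert's Theorem~90 for the cyclic extension $F/\Fo$ (in the concrete form: every norm-one element is of the shape $x\cdot{}^{c}x^{-1}$). So first I would record the short exact sequence
\begin{equation*}
1\longrightarrow\Fo^\times\longrightarrow F^\times\xrightarrow{\,x\mapsto x\cdot{}^{c}x^{-1}\,}\mathrm{U}_1(F/\Fo)\longrightarrow1,
\end{equation*}
noting that, since $\varpi\in\Fo$, the induced continuous bijection $F^\times/\Fo^\times\to\mathrm{U}_1(F/\Fo)$ is a homeomorphism (source and target are compact and Hausdorff), so triviality-on-$\Fo^\times$ characters really do descend to $\mathrm{U}_1(F/\Fo)$.

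Injectivity of $\xi\mapsto\tilde\xi$ is then immediate: two characters of $\mathrm{U}_1(F/\Fo)$ that agree after pullback along a surjection must coincide. For the image I would argue both inclusions: any $\tilde\xi=\xi\circ(1-c)$ is trivial on $\Fo^\times=\ker(1-c)$; conversely a character of $F^\times$ trivial on $\Fo^\times$ factors through $F^\times/\Fo^\times\cong\mathrm{U}_1(F/\Fo)$ and hence is of the form $\tilde\xi$. Thus the image is exactly $\{\tilde\chi\in(F^\times)^\wedge:\tilde\chi|_{\Fo^\times}=1\}$.

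It then remains to match this set with $Z_{F/\Fo}^{+}$. One containment is the very definition of $Z_{F/\Fo}^{+}$. For the reverse, given $\tilde\chi$ trivial on $\Fo^\times$, I would use that $x\cdot{}^{c}x=N_{F/\Fo}(x)\in\Fo^\times$ for every $x\in F^\times$, whence $\tilde\chi(x)\,\tilde\chi({}^{c}x)=1$, i.e.\ ${}^{c}\tilde\chi^{-1}=\tilde\chi$; so $\tilde\chi$ is skew and lies in $Z_{F/\Fo}^{+}$. Every step is either Hilbert~90 or a formal manipulation with characters, so there is no serious obstacle; the one point deserving a moment's care is exactly this last observation $x\cdot{}^{c}x\in\Fo^\times$, which forces a character trivial on $\Fo^\times$ to be automatically skew, together with the bookkeeping that $\mathrm{U}_1(F/\Fo)$ is literally $\ker N_{F/\Fo}$ and that $1-c$ surjects onto it.
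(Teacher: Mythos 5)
Your proof is correct and follows the same route as the paper, which simply cites the exactness of $1\to \Fo^\times\to F^\times\xrightarrow{1-c}\mathrm{U}_1(F/\Fo)\to 1$ (Hilbert 90) and leaves the rest implicit. You have merely made explicit the details the paper omits, including the useful observation that triviality on $\Fo^\times$ automatically forces skewness because $x\cdot{}^c x=N_{F/\Fo}(x)\in\Fo^\times$.
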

\proof
 The statement follows from the fact that
\begin{equation}\label{base=change-seq-exact}
  1\rightarrow \Fo^\times \rightarrow F^\times  \xrightarrow{1-{c}} \mathrm{U}_1(F/\Fo)\rightarrow1
\end{equation}
is exact.
\qed\\



\begin{prop}\label{U1-base-change-compact}
 \begin{enumerate}[(i)]
   \item The morphism $ U_F^1  \xrightarrow{1-c} U^1_{F/\Fo}$ is surjective.
   \item The base-change
$$(U^1_{F/\Fo})^\wedge\xrightarrow{(1-c)^*} (U_F^1  )^\wedge  ,\,\theta\mapsto\tilde{\theta}=\theta\circ(1-c)$$
is injective.
\item The image of this base-change is the set of skew characters (satisfying ${}^c\tilde{\theta}^{-1}=\tilde{\theta}$) of $U_F^1$.
 \end{enumerate}
\end{prop}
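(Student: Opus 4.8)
The plan is to deduce all three parts from one structural fact about the homomorphism $1-c\colon U_F^1\to U_F^1$, $x\mapsto x\cdot{}^{c}x^{-1}$: its kernel is $U_F^1\cap\Fo^{\times}=U_\Fo^1$ (if $x={}^{c}x$ then $x\in\Fo$, and $\Fo\cap\mathfrak p_F=\mathfrak p_\Fo$ since $F/\Fo$ is unramified), and its image is exactly $U^1_{F/\Fo}$. The inclusion $\operatorname{im}(1-c)\subseteq U^1_{F/\Fo}$ is immediate from $c^2=\mathrm{id}$ on $F$, so the real content of (i) is surjectivity, and it is the only step that is not formal.

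For surjectivity I would use that $p$ is odd: squaring is bijective on each finite $p$-group $U_F^1/U_F^{k}$, hence on $U_F^1$, so $U_F^1$ is uniquely $2$-divisible and therefore cohomologically trivial for the action of $\langle c\rangle\cong\mathbb{Z}/2$. Then $\widehat H^{-1}(\langle c\rangle,U_F^1)=\ker(1+c)/\operatorname{im}(1-c)=0$, and since $\ker(1+c)=\{x\in U_F^1:{}^{c}x=x^{-1}\}=U^1_{F/\Fo}$ this gives the surjectivity. As a self-contained alternative one can argue by successive approximation along the filtration $\{U_F^{k}\}$: on the graded piece $U_F^{k}/U_F^{k+1}\cong\mathbf{k}_F$ the map induced by $1-c$ is $a\mapsto a-{}^{c}a$, whose image is the $(-1)$-eigenspace of $c$ on $\mathbf{k}_F$, which for $p$ odd coincides with the image of $U^{k}_{F/\Fo}/U^{k+1}_{F/\Fo}$; one lifts a target element level by level and takes the convergent product of the lifts in the complete abelian group $U_F^1$.

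Part (ii) is then immediate: if $\theta\circ(1-c)=1$ on $U_F^1$ then $\theta$ is trivial on $\operatorname{im}(1-c)=U^1_{F/\Fo}$, hence $\theta=1$. For (iii) I would check the two inclusions. For ``$\subseteq$'', with $\tilde{\theta}=\theta\circ(1-c)$ and using $c^2=\mathrm{id}$ together with the commutativity of $U_F^1$,
\[
{}^{c}\tilde{\theta}(x)=\tilde{\theta}({}^{c}x)=\theta\bigl({}^{c}x\cdot x^{-1}\bigr)=\theta\bigl((x\cdot{}^{c}x^{-1})^{-1}\bigr)=\tilde{\theta}(x)^{-1},
\]
so ${}^{c}\tilde{\theta}^{-1}=\tilde{\theta}$, i.e.\ $\tilde{\theta}$ is skew. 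For ``$\supseteq$'', given a skew character $\tilde{\theta}$ of $U_F^1$, evaluating ${}^{c}\tilde{\theta}=\tilde{\theta}^{-1}$ at $u\in U_\Fo^1$ (where ${}^{c}u=u$) gives $\tilde{\theta}(u)^2=1$, so $\tilde{\theta}|_{U_\Fo^1}$ has order dividing $2$; since $U_\Fo^1$ is pro-$p$ with $p$ odd it admits no nontrivial character of order $2$, so $\tilde{\theta}$ is trivial on $U_\Fo^1=\ker(1-c)$. By (i) the map $1-c$ identifies $U^1_{F/\Fo}$ with $U_F^1/U_\Fo^1$, so $\tilde{\theta}$ descends to the desired character $\theta$ of $U^1_{F/\Fo}$ with $\theta\circ(1-c)=\tilde{\theta}$.

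The main (and mild) obstacle is only how to package (i): everything downstream is formal, and the one genuine arithmetic input beyond the surjectivity of $1-c$ is the elementary fact that a pro-$p$ group with $p$ odd has no character of order $2$, which is precisely where the odd residual characteristic is used.
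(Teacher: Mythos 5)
Your proposal is correct, but your route to part (i) is genuinely different from the paper's. The paper deduces surjectivity of $1-c$ on $U_F^1$ from the exact sequence $1\to\Fo^\times\to F^\times\xrightarrow{1-c}\mathrm{U}_1(F/\Fo)\to 1$ already recorded in Proposition \ref{U1-base-change}: given $y\in U^1_{F/\Fo}$ it picks a preimage $x\in F^\times$, writes $x=\varpi^k z u$ with $z\in\boldsymbol{\mu}_F$ and $u\in U_F^1$, and observes that since $y$ is pro-$p$ while $(1-c)(\varpi^k z)$ lies in the prime-to-$p$ part $\boldsymbol{\mu}_{F/\Fo}$ of $\mathrm{U}_1(F/\Fo)$, the latter must be trivial, so $(1-c)u=y$. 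You instead stay entirely inside $U_F^1$: unique $2$-divisibility of this pro-$p$ group ($p$ odd) forces $\widehat H^{-1}(\langle c\rangle,U_F^1)=0$, identifying $\ker(1+c)=U^1_{F/\Fo}$ with $\operatorname{im}(1-c)$; your filtration/successive-approximation variant is likewise sound (and is the same style of argument the paper uses later in Lemma \ref{compact groups sigma invariant}). The paper's argument is shorter given the sequence (\ref{base=change-seq-exact}) is already in hand; yours is self-contained and makes explicit that the only arithmetic inputs are $p$ odd and the eigenspace decomposition on graded pieces. For (ii) and (iii) the paper says only that they "follow easily"; your verification — injectivity from surjectivity of $1-c$, the skewness computation, and the observation that a skew character is trivial on the pro-$p$ group $U_\Fo^1=\ker(1-c)$ because it would have to be quadratic there — is exactly the intended easy argument and is correct.
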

\begin{proof}
  From (\ref{base=change-seq-exact}), for each $y\in U^1_{F/\Fo}$, there is $x\in F^\times$ such that $(1-c)x=y$. Write $x=\varpi^kzu$, then $(1-c)(\varpi^kz)\cdot (1-c)(u)=y$. Since $y$ is a pro-p element, we have $(1-c)(\varpi^kz)=1$ and $(1-c)u=y$. This proves the first statement, and the second and third statement follows easily.
\end{proof}

Let $E_\circ/F$ be a finite unramified extension. For $\epsilon=+$ or $-$, we define $\tilde{\chi}^{E_\circ}_\epsilon=\tilde{\chi}_\epsilon\circ N_{E_\circ/F}$. If we choose $\tilde{\chi}_\epsilon$ as above, then $\tilde{\chi}^{E_\circ}_+$ is again trivial, and $\tilde{\chi}^{E_\circ}_-$ is again unramified quadratic, since $E_\circ/F$ is odd unramified.


\begin{prop}\label{wild-part-tilde}
Suppose that $\tilde{\xi}_\circ$ is a character of $E_\circ^\times$. There is a unique character  $\tilde{\xi}_{\circ,\mathrm{wd}}$ of $E_\circ^\times$ such that
  \begin{enumerate}[(i)]
    \item $\tilde{\xi}_{\circ,\mathrm{wd}}|_{U_{E_\circ}^1}=\tilde{\xi}_\circ|_{U_{E_\circ}^1}$;
    \item $\tilde{\xi}_{\circ,\mathrm{wd}}(\varpi)=1$;
    \item $\tilde{\xi}_{\circ,\mathrm{wd}}$ has a finite order of a $p$-power.
  \end{enumerate}
\end{prop}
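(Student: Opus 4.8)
The plan is to read off the character from the usual direct product decomposition of the multiplicative group of $E_\circ$: since $E_\circ/F$ is a finite unramified (hence discretely valued) extension, we have
\[
E_\circ^\times = \langle \varpi \rangle \times \boldsymbol{\mu}_{E_\circ} \times U_{E_\circ}^1,
\]
the exact analogue for $E_\circ$ of the decomposition of $\Fo^\times$ recalled in Section~2.1. Here $\boldsymbol{\mu}_{E_\circ}$ is cyclic of order $q_{E_\circ}-1$, which is prime to $p$, and $U_{E_\circ}^1 = 1+\mathfrak{p}_{E_\circ}$ is a pro-$p$ group. A character of $E_\circ^\times$ is the same thing as a triple of characters of the three factors, so I would simply \emph{define} $\tilde{\xi}_{\circ,\mathrm{wd}}$ to be trivial on $\langle\varpi\rangle$, trivial on $\boldsymbol{\mu}_{E_\circ}$, and equal to $\tilde{\xi}_\circ|_{U_{E_\circ}^1}$ on $U_{E_\circ}^1$. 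Conditions (i) and (ii) then hold by construction.

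For condition (iii), I would argue that since $\tilde{\xi}_\circ$ is smooth, its restriction to the pro-$p$ group $U_{E_\circ}^1$ factors through a finite quotient $U_{E_\circ}^1/U_{E_\circ}^k$ for some $k$, which is a finite abelian $p$-group; hence the image $\tilde{\xi}_\circ(U_{E_\circ}^1)\subset\mathbb{C}^\times$ is a finite cyclic group of $p$-power order. Therefore $\tilde{\xi}_\circ|_{U_{E_\circ}^1}$, and with it $\tilde{\xi}_{\circ,\mathrm{wd}}$ (which is trivial on the remaining two factors), has order a power of $p$. For uniqueness, suppose $\chi$ is another character of $E_\circ^\times$ satisfying (i)--(iii). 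By (ii) it agrees with $\tilde{\xi}_{\circ,\mathrm{wd}}$ on $\langle\varpi\rangle$, and by (i) it agrees on $U_{E_\circ}^1$, so $\chi$ and $\tilde{\xi}_{\circ,\mathrm{wd}}$ can differ only on the tame part $\boldsymbol{\mu}_{E_\circ}$. But $\chi|_{\boldsymbol{\mu}_{E_\circ}}$ has order dividing $\#\boldsymbol{\mu}_{E_\circ}=q_{E_\circ}-1$, which is prime to $p$, while by (iii) it also has $p$-power order; hence $\chi|_{\boldsymbol{\mu}_{E_\circ}}$ is trivial and $\chi=\tilde{\xi}_{\circ,\mathrm{wd}}$.

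I do not expect any genuine obstacle: the statement is a bookkeeping consequence of the structure of $E_\circ^\times$, in the same spirit as Propositions~\ref{U1-base-change} and~\ref{U1-base-change-compact}. The only point worth isolating is that condition (iii) is precisely what kills the tame torsion: it is what forces triviality on $\boldsymbol{\mu}_{E_\circ}$ and hence pins down $\tilde{\xi}_{\circ,\mathrm{wd}}$ uniquely, since without (iii) any twist by a character of $\boldsymbol{\mu}_{E_\circ}$ trivial on $\varpi$ would still satisfy (i) and (ii).
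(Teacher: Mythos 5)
Your proof is correct, and it is exactly the standard argument the paper implicitly relies on: the proposition is stated without proof, being a routine consequence of the decomposition $E_\circ^\times=\langle\varpi\rangle\times\boldsymbol{\mu}_{E_\circ}\times U_{E_\circ}^1$ together with the facts that a smooth character of the pro-$p$ group $U_{E_\circ}^1$ has $p$-power order and that $\#\boldsymbol{\mu}_{E_\circ}$ is prime to $p$. Your observation that condition (iii) is precisely what forces triviality on the tame part, and hence uniqueness, is the right way to frame it.
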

The character $\tilde{\xi}_{\circ,\mathrm{tm}}=\tilde{\xi}^{-1}_{\circ,\mathrm{wd}}\tilde{\xi}_\circ$ is tamely ramified. If $\tilde{\xi}_\circ$ is a skew character (only when $[E_\circ:F]$ is odd), then $\tilde{\xi}_{\circ,\mathrm{wd}}$ is always $+$-skew by construction, and so
$$\tilde{\xi}_\circ\text{ is }\epsilon\text{-skew if and only if }\tilde{\xi}_{\circ,\mathrm{tm}}\text{ is }\epsilon\text{-skew}.$$

\subsubsection{Regular characters}\label{section regular char}

Suppose now ${\tilde{\xi}}_\circ$ is a character of $E_\circ^\times$ of positive level $d$, i.e., $\tilde{\xi}_\circ|_{U_{E_\circ}^{2d}}\neq 1$ and $\tilde{\xi}_\circ|_{U_{E_\circ}^{2d+}}\equiv 1$. (Beware of the convention we used in (\ref{even-period-assumption}).) We only consider $d$-regular characters, which means that ${}^\gamma({\tilde{\xi}}_\circ|_{U_{E_\circ}^{2d}})$ are all different for $\gamma$ ranging over $\Gamma_{E_\circ/F}$. (When we say ${\tilde{\xi}}_\circ$ is $d$-regular, we already know that the level of ${\tilde{\xi}}_\circ$ is $d$.)

We can choose an element $b_\circ \in  E_\circ $ such that
 \begin{equation}\label{element-b}
   {\tilde{\xi}}_\circ(1+x)=\psi_F(\mathrm{tr}_{E_\circ/F}(b_\circ x)),\text{ for all }x\in \mathfrak{p}_{E_\circ}^{ d+}.
 \end{equation}
 This $b_\circ$ is unique in $ \mathfrak{p}_{E_\circ}^{ -2d}$ modulo $ \mathfrak{p}_{E_\circ}^{ -d}$. Note that the $d$-regularity of ${\tilde{\xi}}_\circ$ implies that $E_\circ=F[b_{\circ,-d}]$, where $b_{\circ,-d}\in {\boldsymbol{\mu}}_{E_\circ}$ is the $(-d)$th coefficient in the $\varpi$-expansion of $b_\circ$.


More generally, we describe regular characters of an elliptic unramified maximal torus $G_V$. For each $i\in I$, let $E_i/F$ be the unramified extension of odd degree $n_i$. Take ${\tilde{\xi}}=\boxtimes_{i\in I}{\tilde{\xi}}_i$ a character of $\tilde{T}=\prod_{i\in I}E_i^\times$. We impose the following conditions.
\begin{itemize}
\item All $\tilde{\xi}_i$ have the same level $d$, and are $d$-regular.
  \item All $\tilde{\xi}_i$ are $+$-skew.
  \item All $\tilde{\xi}|_{U_{E_i}^{2d}}$ are not Galois conjugate to each other, for $i$ ranging over $I$.
\end{itemize}
 We call this kind of characters \emph{$d$-regular +-skew characters}. For each $\tilde{\xi}_i$, there exists a character $\xi_i$ of $\mathrm{U}_1(E_i/\Eo_i)$ such that $\tilde{\xi}_i=\xi_i\circ(1-{c}_i)$. Write $\xi=\boxtimes_{i\in I}\xi_i$ as a character of $T=\prod_{i\in I}\mathrm{U}_1(E_i/\Eo_i)$, then $\xi$  is a regular character considered in \cite[Sec. 3.6]{reeder-pos-depth}.

\subsection{Maximal types for GL}\label{section Maximal types for GL}

Let $E_\circ/F$ be a finite unramified extension of degree $n$, and $E_\circ^\times$ is embedded into $\tilde{G}_{V_\circ}\cong \mathrm{GL}_{n_\circ}(F)$ as an elliptic unramified maximal torus (where all such embeddings are conjugate with each other.) Given a regular character of $E_\circ^\times$, we construct a maximal type of $\tilde{G}_{V_\circ}$ in three steps.

\subsubsection{First step: simple characters}

Let $\Lambda_\circ$ be an $E_\circ^\times$-invariant lattice sequence in $V$, and ${\tilde{\xi}}_\circ$ be a $d$-regular character of level $d>0$. We define the following subgroups
\begin{equation*}
\begin{split}
   & \tilde{H}_{\Lambda_\circ}^1=U_{E_\circ}^1U^{d+}_{\Lambda_\circ},\, \tilde{J}_{\Lambda_\circ}^1=U_{E_\circ}^1U^{d}_{\Lambda_\circ},
  \\
  & \tilde{J}_{\Lambda_\circ}=U_{E_\circ}U^{d}_{\Lambda_\circ}\text{, and } \tilde{\mathbf{J}}_{\Lambda_\circ}=E_\circ^\times U^{d}_{\Lambda_\circ}.
\end{split}
\end{equation*}

We construct a character ${\tilde{\theta}}_{\Lambda_\circ}={\tilde{\theta}}_{{\Lambda_\circ},\tilde{\xi}_{\circ}}$ of $\tilde{H}_{\Lambda_\circ}^1$, whose restriction ${\tilde{\theta}}_{\Lambda_\circ}|_{U_{E_{\circ}}^1}$ is ${\tilde{\xi}}_\circ|_{U_{E_\circ}^1}$. We first note that
$$U_{E_\circ}^{d+}/U_{E_\circ}^{2d+}\cong \mathfrak{p}_{E_\circ}^{d+}/\mathfrak{p}_{E_\circ}^{2d+}\text{ and }U_{\Lambda_\circ}^{d+}/U_{\Lambda_\circ}^{2d+}\cong \mathfrak{P}_{\Lambda_\circ}^{d+}/\mathfrak{P}_{\Lambda_\circ}^{2d+}.$$
From the restriction $\tilde{\xi}_\circ|_{U_{E_\circ}^{d+}}$, we obtain a character, still denoted by $\tilde{\xi}_\circ$, on the additive group $\mathfrak{p}_{E_\circ}^{d+}$ trivial on $\mathfrak{p}_{E_\circ}^{2d+}$. From the root-space decomposition
\begin{equation}\label{root-space-decomp-GL}
  \mathfrak{P}_{\Lambda_\circ}^{d+}/\mathfrak{P}_{\Lambda_\circ}^{2d+}\cong  \mathfrak{p}_{E_\circ}^{d+}/\mathfrak{p}_{E_\circ}^{2d+}\oplus \mathfrak{n}_{\Lambda_\circ}^{d+}/ \mathfrak{n}_{\Lambda_\circ}^{2d+},
\end{equation}
where $\mathfrak{n}_{\Lambda_\circ}$ is the sum of root spaces in $\tilde{A}_{V_\circ}$ (on which ${E_\circ}^\times$ acts non-trivially by conjugation) and $\mathfrak{n}_{\Lambda_\circ}^{r}=\mathfrak{n}_{\Lambda_\circ}\cap \mathfrak{P}_{\Lambda_\circ}^r$, we can extend $\tilde{\xi}_\circ$ trivially to $ \mathfrak{n}_{\Lambda_\circ}^{d+}$, and hence to a character
$\tilde{\theta}_{\Lambda_\circ}$ on $U_{\Lambda_\circ}^{d+}$ trivial on $U_{\Lambda_\circ}^{2d+}$,
which agrees with $\tilde{\xi}_\circ$ on $U_{\Lambda_\circ}^{d+}\cap U_{E_\circ}^1=U_{E_\circ}^{d+}$. Finally, we extend $\tilde{\theta}_{\Lambda_\circ}$ to a character, also denoted by $\tilde{\theta}_{\Lambda_\circ}$, of $\tilde{H}^1=U_{E_\circ}^1U^{d+}_{\Lambda_\circ}$ which agrees with $\tilde{\xi}_\circ$ on $ U_{E_\circ}^1$.

\begin{rmk}
The simple character can be constructed alternatively as follows. Recall from Section \ref{section regular char} that
$${\tilde{\xi}}_\circ(1+x)=\psi_F(\tr_{E_\circ/F}(b_\circ x))\qquad\text{ for all }x\in \mathfrak{p}_{E_\circ}^{d+}$$
for a fixed $b_\circ\in \mathfrak{p}_{E_\circ}^{-2d}$ modulo $\mathfrak{p}_{E_\circ}^{-d}$. We define
$$\psi_{b_\circ}(1+X)=\psi_F(\tr_{\tilde{A}_{V_\circ}/F}(b_\circ X))\qquad\text{ for all }1+X\in U^{d+}_{\Lambda_\circ},$$
 and define ${\tilde{\theta}}_\circ'$ on $\tilde{H}^1_{}=U^1_{E_\circ}U^{d+}_{\Lambda_\circ}$ by
$${\tilde{\xi}}_\circ\text{ on }U_{E_\circ}^1\qquad\text{and}\qquad\psi_{b_\circ}\text{ on }U^{d+}_{\Lambda_\circ}.$$
We now check that $\tilde{\theta}_\circ'=\tilde{\theta}_\circ$. Clearly $\tilde{\theta}_\circ'|_{U_{E_\circ}^1}=\tilde{\theta}_\circ|_{U_{E_\circ}^1}=\tilde{\xi}_\circ|_{U_{E_\circ}^1}$. If $1+X\in {U}_{\Lambda_\circ}^{d+}$, then
$$\tilde{\theta}_\circ'(1+X)=\psi_F(\tr_{A_\circ/F}(b_\circ X))=\psi_F(\tr_{E_\circ/F}(b_\circ s_{E_\circ}^{\tilde{A}_{V_\circ}}(X))),$$
where $s_{E_\circ}^{\tilde{A}_{V_\circ}}:\mathfrak{P}_{\Lambda_\circ}^{d+}/\mathfrak{P}_{\Lambda_\circ}^{2d+}\rightarrow \mathfrak{p}_{E_\circ}^{d+}/\mathfrak{p}_{E_\circ}^{2d+}$ is the natural projection defined by the root-space decomposition (\ref{root-space-decomp-GL}), and can also be defined by a tame co-restriction
\begin{equation}\label{tame-cores}
  s_{E_\circ}^{\tilde{A}_{V_\circ}}:\mathfrak{P}_{\Lambda_\circ}^{r}\rightarrow \mathfrak{p}_{E_\circ}^{r}\text{, for all }r\in \mathbb{R},
\end{equation}
on ${\tilde{A}_{V_\circ}}$ relative to $E_\circ/F$ (see its definition and properties in \cite[(1.3.3)-(1.3.8)]{BK}). The above is then equal to
$\tilde{\xi}_\circ(1+s_{E_\circ}^{\tilde{A}_{V_\circ}}(X))$, which is equal to $\tilde{\theta}_\circ(1+X)$ by its construction.

\qed\end{rmk}

According to \cite[Section 3.2]{BK}, the simple character $\tilde{\theta}_{\Lambda_\circ}$ is based on a simple stratum of the form
$$\tilde{\mathfrak{s}}_\circ=[\Lambda_\circ,d,0,\beta_\circ],$$
where $\Lambda_\circ$ is an $\mathfrak{o}_F$-lattice chain, and $\beta_\circ\in \mathfrak{P}_{\Lambda_\circ}^{-2d}$ such that $E_\circ=F[\beta_\circ]$ is a field and $\Lambda_\circ$ can be regarded as an $\mathfrak{o}_{E_\circ}$-lattice chain. We can view $\beta_\circ$ is an approximation of the element $b_\circ$ defined in (\ref{element-b}), in the sense that
\begin{equation}\label{b-approx}
  b_\circ\equiv \beta_\circ \mod \mathfrak{p}_{E_\circ}^{-d}.
\end{equation}
  The $d$-regularity implies that $\tilde{\mathfrak{s}}_\circ$ is minimal over $F$ \cite[(1.4.14)]{BK}, or equivalently, $E$ is generated by $b_{\circ,-d}$ over $F$ as known in Section \ref{section regular char}. The stratum $\tilde{\mathfrak{s}}_\circ$ is called skew if  $\Lambda_\circ$ is conjugate-self-dual and ${}^\sigma\beta_\circ=-{}^c\beta_\circ=\beta_\circ$.

\subsubsection{Second step: Heisenberg representations}

The quotient $\tV=\tilde{J}^1/\tilde{H}^1$ is naturally a $\mathbf{k}_F$-vector space. Since $\tilde{\theta}_{\Lambda_\circ}$ is a simple character, the space $\mathfrak{V}$ is equipped with a non-degenerate alternating form \cite[Sec 3.5]{BK}
$$h_{\tilde{\theta}_{\Lambda_\circ}}:(1+x,1+y)\mapsto \tilde{\theta}_{\Lambda_\circ}([x,y]).$$
Using the theory of Heisenberg representation, there exists an irreducible representation $\tilde{\eta}_{\Lambda_\circ}=\tilde{\eta}_{\Lambda_\circ,\tilde{\xi}_{\circ}}$ of $\tilde{J}_{\Lambda_\circ}^1$, unique up to isomorphism, whose restriction to $\tilde{H}_{\Lambda_\circ}^1$ is a multiple of $\tilde{\theta}_{\Lambda_\circ}$.

\subsubsection{Third step: beta extensions}

\begin{prop}\label{beta-extension-tilde}
\begin{enumerate}[(i)]
 \item There is a unique irreducible representation $\tilde{\kappa}_{\tilde{\xi}_\circ}$ of $\tilde{J}_{\Lambda_\circ}$ such that
  \begin{enumerate}[(a)]
    \item $\tilde{\kappa}_{\tilde{\xi}_\circ}|_{\tilde{J}_{\Lambda_\circ}^1}\cong \tilde{\eta}_{\Lambda_\circ}$;
    \item $\tilde{\kappa}_{\tilde{\xi}_\circ}$ is intertwined by every element in ${{E}_\circ}^\times$;
    \item the character $\det\tilde{\kappa}_{\tilde{\xi}_\circ}$ has a finite order of a $p$-power.
  \end{enumerate}
  \item All extensions satisfying (a) and (b) above are of the form $\tilde{\kappa}_{\tilde{\xi}_\circ}\cdot\tilde{\chi}$ for a character $\tilde{\chi}$ of ${U}_{{E}_\circ}/U_{{E}_\circ}^1$.
  \item All $\tilde{\kappa}_{\tilde{\xi}_\circ}\cdot\tilde{\chi}$ do not intertwine with each other.
\end{enumerate}
 \end{prop}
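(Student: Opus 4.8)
The plan is to reduce everything to the classical beta-extension theory of \cite{BK} for simple characters in $\mathrm{GL}_n$, applied to the skew simple stratum $\tilde{\mathfrak{s}}_\circ=[\Lambda_\circ,d,0,\beta_\circ]$ underlying $\tilde{\theta}_{\Lambda_\circ}$. First I would recall from \cite[Section 5.2]{BK} that, since $\Lambda_\circ$ is (up to the rescaling fixed in our conventions) an $\mathfrak{o}_F$-lattice chain which is simultaneously an $\mathfrak{o}_{E_\circ}$-lattice chain, and since $E_\circ/F$ is totally unramified so that $U_{E_\circ}U^d_{\Lambda_\circ}=\tilde{J}_{\Lambda_\circ}$ is precisely the group denoted $J(\beta_\circ,\Lambda_\circ)$ there, the set of representations of $\tilde{J}_{\Lambda_\circ}$ restricting to a multiple of $\tilde{\eta}_{\Lambda_\circ}$ on $\tilde{J}^1_{\Lambda_\circ}$ is non-empty; these are exactly the $\beta$-extensions, and by \cite[(5.2.2)]{BK} the extensions that are additionally intertwined by all of $\mathbf{J}(\beta_\circ,\Lambda_\circ)=E_\circ^\times U^d_{\Lambda_\circ}=\tilde{\mathbf{J}}_{\Lambda_\circ}$ — equivalently by a set of coset representatives of $E_\circ^\times$ modulo $U_{E_\circ}U^d_{\Lambda_\circ}\cap E_\circ^\times$, i.e. by a uniformizer $\varpi$ — form a torsor under the character group $(U_{E_\circ}/U^1_{E_\circ})^\wedge=\mathbf{k}_{E_\circ}^\times{}^\wedge$ inflated to $\tilde{J}_{\Lambda_\circ}/\tilde{J}^1_{\Lambda_\circ}U^d_{\Lambda_\circ}$. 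This establishes parts (ii) and, granting existence, the uniqueness half of part (i).

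Next I would nail down existence and uniqueness of the normalized extension $\tilde\kappa_{\tilde\xi_\circ}$ satisfying (a), (b), (c). Given one $\beta$-extension $\tilde\kappa_0$ satisfying (a) and (b), every other is $\tilde\kappa_0\tilde\chi$ with $\tilde\chi$ a character of $U_{E_\circ}/U^1_{E_\circ}\cong\mathbf{k}_{E_\circ}^\times$, and $\det(\tilde\kappa_0\tilde\chi)=\det(\tilde\kappa_0)\cdot\tilde\chi^{n_\circ}$ on $\tilde J_{\Lambda_\circ}$. Writing $\det\tilde\kappa_0=\mu_{\mathrm{wd}}\mu_{\mathrm{tm}}$ as a product of a character of $p$-power order and a tamely ramified character of order prime to $p$ (possible since $\tilde J_{\Lambda_\circ}/\tilde J^1_{\Lambda_\circ}U^d_{\Lambda_\circ}$ is a finite abelian group, the prime-to-$p$ part being $\mathbf{k}_{E_\circ}^\times$), the condition (c) that $\det(\tilde\kappa_0\tilde\chi)$ has $p$-power order forces $\tilde\chi^{n_\circ}=\mu_{\mathrm{tm}}^{-1}$ on $\mathbf{k}_{E_\circ}^\times$; since $n_\circ=[E_\circ:F]$ is prime to $p=\mathrm{char}\,\mathbf{k}$ and $\mathbf{k}_{E_\circ}^\times$ is cyclic of order prime to $p$, raising to the $n_\circ$-th power is an automorphism of $\mathbf{k}_{E_\circ}^\times{}^\wedge$, so this equation has a unique solution $\tilde\chi$. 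Hence $\tilde\kappa_{\tilde\xi_\circ}:=\tilde\kappa_0\tilde\chi$ exists and is unique, proving (i). I would also remark that the construction is independent of the auxiliary choice $\tilde\kappa_0$, as any two differ by a character and the normalization is canonical.

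Finally, part (iii): the extensions $\tilde\kappa_{\tilde\xi_\circ}\cdot\tilde\chi$, for $\tilde\chi$ ranging over $(\mathbf{k}_{E_\circ}^\times)^\wedge$, pairwise do not intertwine in $\tilde G_{V_\circ}$. Here I would invoke \cite[(5.3.2)]{BK}: for $\beta$-extensions the intertwining set is computed by $I_{\tilde G}(\tilde\kappa)=\tilde J^1_{\Lambda_\circ}\,\mathbf{B}^\times\,\tilde J^1_{\Lambda_\circ}$ where $\mathbf{B}$ is the centralizer of $E_\circ$, which here is just $E_\circ$ itself so $\mathbf{B}^\times=E_\circ^\times\subset\tilde J_{\Lambda_\circ}$ together with $\varpi$; thus $I_{\tilde G}(\tilde\kappa)\cap(\text{complement})$ already detects that $\tilde\kappa_{\tilde\xi_\circ}\tilde\chi$ and $\tilde\kappa_{\tilde\xi_\circ}\tilde\chi'$ restricted to $\tilde J_{\Lambda_\circ}$ are intertwined by $g$ only if $\tilde\chi$ and $\tilde\chi'$ agree after transport by $g$, and since all the intertwining lives inside $\tilde{\mathbf J}_{\Lambda_\circ}$, which normalizes $\tilde J_{\Lambda_\circ}$ and acts trivially on the quotient $U_{E_\circ}/U^1_{E_\circ}$ (conjugation by $E_\circ^\times$ is trivial there, conjugation by $\varpi$ likewise as $E_\circ/F$ is unramified), one gets $\tilde\chi=\tilde\chi'$. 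The main obstacle, and the step that needs care, is precisely this last point: matching our explicitly-normalized $\tilde\kappa_{\tilde\xi_\circ}$ and our notation $\tilde J_{\Lambda_\circ}$, $\tilde{\mathbf J}_{\Lambda_\circ}$, $\tilde H^1_{\Lambda_\circ}$, $\tilde J^1_{\Lambda_\circ}$ with the groups $J$, $\mathbf J$, $H^1$, $J^1$ of \cite{BK} attached to the stratum $[\Lambda_\circ,d,0,\beta_\circ]$, checking that the $d$-regularity hypothesis guarantees the stratum is simple (indeed minimal) over $F$ so that their machinery applies verbatim, and verifying the behaviour of the determinant under twisting — all of which are bookkeeping but must be done consistently with the scaling convention \eqref{even-period-assumption}.
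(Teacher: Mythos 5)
Your overall route is the same as the paper's: the paper disposes of Proposition \ref{beta-extension-tilde} by citing Bushnell--Henniart (Lemma 1 of Section 2.3 of \emph{The essentially tame local Langlands correspondence I}), which in turn rests on \cite[(5.2.2)]{BK}, so your proposal is essentially an unfolding of that citation --- existence of $\beta$-extensions for the minimal skew stratum $[\Lambda_\circ,d,0,\beta_\circ]$, the torsor structure under characters of $U_{E_\circ}/U^1_{E_\circ}\cong{\boldsymbol{\mu}}_{E_\circ}$, normalization via the determinant condition (c), and non-intertwining via $I_{\tilde{G}}(\tilde{\eta}_{\Lambda_\circ})=\tilde{J}^1_{\Lambda_\circ}E_\circ^\times\tilde{J}^1_{\Lambda_\circ}$ together with the triviality of the conjugation action of $E_\circ^\times$ on $U_{E_\circ}/U^1_{E_\circ}$. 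Parts (ii) and (iii) and the bookkeeping you flag are fine.

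There is, however, a concrete error in the normalization step for (i)(c). For a character twist one has $\det(\tilde{\kappa}_0\cdot\tilde{\chi})=\det(\tilde{\kappa}_0)\cdot\tilde{\chi}^{N}$ where $N=\dim\tilde{\kappa}_0=\dim\tilde{\eta}_{\Lambda_\circ}=[\tilde{J}^1_{\Lambda_\circ}:\tilde{H}^1_{\Lambda_\circ}]^{1/2}$, which is a power of $p$; it is not $n_\circ$. Moreover your justification (``$n_\circ=[E_\circ:F]$ is prime to $p$'') is not a hypothesis anywhere in the paper, and even where it holds it is irrelevant: the obstruction to $x\mapsto x^{n_\circ}$ being an automorphism of $({\boldsymbol{\mu}}_{E_\circ})^\wedge$ is a common factor of $n_\circ$ with $q^{n_\circ}-1$, which can certainly occur (e.g.\ $n_\circ=3$, $q=4$). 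So as written the equation $\tilde{\chi}^{n_\circ}=\mu_{\mathrm{tm}}^{-1}$ need not have a unique solution, and existence and uniqueness in (i) would not follow. The repair is immediate: since the true exponent $N$ is a $p$-power and ${\boldsymbol{\mu}}_{E_\circ}$ is cyclic of order $q^{n_\circ}-1$ prime to $p$, raising to the $N$-th power is an automorphism of the character group, so the tame part of $\det\tilde{\kappa}_0$ is cancelled by a unique $\tilde{\chi}$. With that correction the argument is sound and matches the content of the cited lemma.
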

\proof
This is \cite[Lemma 1 of Sec. 2.3]{BH-ET1}, which follows from \cite[(5.2.2)]{BK}. Note that the reductive quotient $\tilde{J}_{\Lambda_\circ}/\tilde{J}_{\Lambda_\circ}^1$ is isomorphic to $U_{E_\circ}/U_{E_\circ}^1\cong {\boldsymbol{\mu}}_{E_\circ}$.\qed

\begin{prop}
  There is a unique irreducible representation $\tilde{\boldsymbol{\kappa}}_{\tilde{\xi}_\circ}$ of $\tilde{\mathbf{J}}_{\Lambda_\circ}$ such that
  \begin{enumerate}[(i)]
    \item $\tilde{\boldsymbol{\kappa}}_{\tilde{\xi}_\circ}|_{\tilde{J}_{\Lambda_\circ}}\cong \tilde{\kappa}_{\tilde{\xi}_\circ}$;
    \item $\varpi\in \ker \tilde{\boldsymbol{\kappa}}_{\tilde{\xi}_\circ}$;
    \item the character $\det\tilde{\boldsymbol{\kappa}}_{\tilde{\xi}_\circ}$ has a finite order of a $p$-power.
  \end{enumerate}
\end{prop}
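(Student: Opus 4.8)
The plan is to exploit that $\varpi$ is a central element of $\tilde{G}_{V_\circ}\cong\mathrm{GL}_{n_\circ}(F)$, so that extending $\tilde{\kappa}_{\tilde{\xi}_\circ}$ from $\tilde{J}_{\Lambda_\circ}$ to $\tilde{\mathbf{J}}_{\Lambda_\circ}$ reduces to prescribing the (necessarily scalar) action of $\varpi$. First I would record the shape of $\tilde{\mathbf{J}}_{\Lambda_\circ}$. Since $E_\circ/F$ is unramified, $\varpi$ is also a uniformizer of $E_\circ$, so $E_\circ^\times=\langle\varpi\rangle\times U_{E_\circ}$ and hence
\[
\tilde{\mathbf{J}}_{\Lambda_\circ}=E_\circ^\times U^d_{\Lambda_\circ}=\langle\varpi\rangle\cdot U_{E_\circ}U^d_{\Lambda_\circ}=\langle\varpi\rangle\cdot\tilde{J}_{\Lambda_\circ},
\]
where $\varpi$ acts as a scalar on $V_\circ$, hence centrally. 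The intersection $\langle\varpi\rangle\cap\tilde{J}_{\Lambda_\circ}$ is trivial: $\tilde{J}_{\Lambda_\circ}$ is compact, so $\det(\tilde{J}_{\Lambda_\circ})$ is a compact subgroup of $F^\times$ and therefore lies in $\mathfrak{o}_F^\times$, whereas $\det(\varpi^k)=\varpi^{kn_\circ}$ lies in $\mathfrak{o}_F^\times$ only for $k=0$. Thus $\tilde{\mathbf{J}}_{\Lambda_\circ}=\langle\varpi\rangle\times\tilde{J}_{\Lambda_\circ}$ is an internal direct product.

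Next I would apply the elementary representation theory of such a product. Using that $\langle\varpi\rangle$ is abelian and that $\tilde{J}_{\Lambda_\circ}$ is compact (so its irreducible representations are finite-dimensional), any irreducible representation of $\tilde{\mathbf{J}}_{\Lambda_\circ}$ is of the form $\tilde{\chi}\boxtimes\sigma$ for a character $\tilde{\chi}$ of $\langle\varpi\rangle$ and an irreducible representation $\sigma$ of $\tilde{J}_{\Lambda_\circ}$, and its restriction to $\tilde{J}_{\Lambda_\circ}$ is $\sigma$. Hence any $\rho$ satisfying (i) must have $\sigma\cong\tilde{\kappa}_{\tilde{\xi}_\circ}$, and (ii) is equivalent to $\tilde{\chi}(\varpi)=1$, i.e.\ $\tilde{\chi}=\mathbf{1}$. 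This proves both existence and uniqueness of the extension satisfying (i) and (ii), namely $\tilde{\boldsymbol{\kappa}}_{\tilde{\xi}_\circ}=\mathbf{1}\boxtimes\tilde{\kappa}_{\tilde{\xi}_\circ}$.

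Finally I would check that (iii) holds automatically, so it is only a convenient normalization to state: on $\tilde{J}_{\Lambda_\circ}$ the character $\det\tilde{\boldsymbol{\kappa}}_{\tilde{\xi}_\circ}$ agrees with $\det\tilde{\kappa}_{\tilde{\xi}_\circ}$, which has finite $p$-power order by Proposition \ref{beta-extension-tilde}, and $\det\tilde{\boldsymbol{\kappa}}_{\tilde{\xi}_\circ}(\varpi)=1$; since $\tilde{\mathbf{J}}_{\Lambda_\circ}$ is generated by $\tilde{J}_{\Lambda_\circ}$ and $\varpi$, the character $\det\tilde{\boldsymbol{\kappa}}_{\tilde{\xi}_\circ}$ has finite $p$-power order. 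I do not expect a real obstacle in this argument; the one place where the hypotheses genuinely intervene is the internal direct product decomposition $\tilde{\mathbf{J}}_{\Lambda_\circ}=\langle\varpi\rangle\times\tilde{J}_{\Lambda_\circ}$, which uses that $E_\circ/F$ is unramified (so that $\varpi$ remains a uniformizer) together with the compactness of $\tilde{J}_{\Lambda_\circ}$.
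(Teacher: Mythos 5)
Your argument is correct, and it supplies in full the proof that the paper leaves to a citation (\cite[Lemma 2 of Sec.~2.3]{BH-ET1}). The key structural point you isolate --- that $E_\circ/F$ unramified forces $\tilde{\mathbf{J}}_{\Lambda_\circ}=\langle\varpi\rangle\times\tilde{J}_{\Lambda_\circ}$ as an internal direct product, with $\varpi$ central and $\langle\varpi\rangle\cap\tilde{J}_{\Lambda_\circ}=\{1\}$ by the determinant/valuation argument --- is exactly what makes the extension problem trivial here: any $\rho$ satisfying (i) has $\rho(\varpi)$ scalar by Schur's lemma, so (ii) alone pins it down, and (iii) follows for free from Proposition \ref{beta-extension-tilde}(i)(c). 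One remark worth keeping in mind: in the general setting of \cite{BH-ET1}, where $E_\circ/F$ may be ramified, the quotient $\tilde{\mathbf{J}}_{\Lambda_\circ}/\tilde{J}_{\Lambda_\circ}$ is generated by a prime element of $E_\circ$ rather than by $\varpi$ itself, so condition (ii) only constrains the $e(E_\circ/F)$-th power of the relevant scalar and condition (iii) does genuine work in singling out the extension; your observation that (iii) is automatic is therefore a feature of the unramified case rather than of the lemma in general. Strictly speaking, your claim that \emph{every} irreducible representation of $\langle\varpi\rangle\times\tilde{J}_{\Lambda_\circ}$ factors as $\tilde{\chi}\boxtimes\sigma$ is more than you need and is most safely phrased via Schur's lemma applied to a representation already assumed to satisfy (i), as you in effect do; with that reading there is no gap.
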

\proof
This is \cite[Lemma 2 of Sec. 2.3]{BH-ET1}.
\qed\\

 Now the character $\tilde{\xi}_{\circ,\mathrm{tm}}=\tilde{\xi}^{-1}_{\circ,\mathrm{wd}}\tilde{\xi}_\circ$ is tamely ramified. We inflate it to a character $\tilde{\xi}_{\circ,\mathrm{tm}}$ of $\tilde{\mathbf{J}}_{\Lambda_\circ}=E_\circ^\times U^{d/2}_{\Lambda_\circ}$ which is trivial on $\tilde{J}_{\Lambda_\circ}^1$ and define
\begin{equation*}
\tilde{\boldsymbol{\lambda}}_{\tilde{\xi}_{\circ}}=\tilde{\boldsymbol{\kappa}}_{\tilde{\xi}_{\circ}}\cdot \tilde{\xi}_{{\circ},\mathrm{tm}}.
\end{equation*}

\begin{prop}\label{repres-determines-char-GL}
\begin{enumerate}[(i)]
 \item The representation $$\tilde{\pi}_{\tilde{\xi}_{\circ}}=\mathrm{cInd}_{\tilde{\mathbf{J}}_{\Lambda_{\circ}}}^{\tilde{G}_{V_\circ}}\tilde{\boldsymbol{\lambda}}_{\tilde{\xi}_{\circ}}$$
  is an irreducible supercuspidal representation of $\tilde{G}_{V_\circ}$.
  \item $\tilde{\pi}_{\tilde{\xi}_{\circ}}\cong \tilde{\pi}_{\tilde{\xi}_{\circ}'}$ if and only if ${\tilde{\xi}}_{\circ}'={}^\gamma {\tilde{\xi}}_{\circ}$ for some $\gamma\in \Gamma(E_{\circ}/F)$.
\end{enumerate}
\end{prop}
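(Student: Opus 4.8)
\emph{Proof proposal.} The plan is to follow the standard theory of maximal simple types for $\mathrm{GL}$, the constituents of $\tilde{\boldsymbol{\lambda}}_{\tilde{\xi}_\circ}$ having been assembled precisely so that \cite[Chapters~5--6]{BK} and \cite[Section~2.3]{BH-ET1} apply; indeed both assertions are essentially contained in \cite[Sec.~2.3]{BH-ET1}, and I would present the argument as an outline rather than redo that material.

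For (i) I would apply the usual irreducibility criterion for compact induction from an open subgroup that is compact modulo the centre. Since $E_\circ/F$ is unramified, $E_\circ^\times=F^\times U_{E_\circ}$, so $\tilde{\mathbf{J}}_{\Lambda_\circ}=F^\times\tilde{J}_{\Lambda_\circ}$ with $\tilde{J}_{\Lambda_\circ}=U_{E_\circ}U^d_{\Lambda_\circ}$ compact and $F^\times$ the centre of $\tilde{G}_{V_\circ}$; hence $\tilde{\pi}_{\tilde{\xi}_\circ}$ is irreducible, and then automatically supercuspidal, as soon as $\tilde{\boldsymbol{\lambda}}_{\tilde{\xi}_\circ}$ is intertwined in $\tilde{G}_{V_\circ}$ only by elements of $\tilde{\mathbf{J}}_{\Lambda_\circ}$. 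To check this I would go through the three layers of the construction. The intertwining formula for simple characters gives, in the present maximal unramified case (where the $F$-centraliser of $\beta_\circ$ is the field $E_\circ$ itself), $I_{\tilde{G}_{V_\circ}}(\tilde{\theta}_{\Lambda_\circ})=\tilde{J}^1_{\Lambda_\circ}E_\circ^\times\tilde{J}^1_{\Lambda_\circ}$, and the corresponding formula for Heisenberg representations yields the same set for $\tilde{\eta}_{\Lambda_\circ}$. Since $\tilde{\kappa}_{\tilde{\xi}_\circ}$, and hence $\tilde{\boldsymbol{\kappa}}_{\tilde{\xi}_\circ}$ and its tame twist $\tilde{\boldsymbol{\lambda}}_{\tilde{\xi}_\circ}$, restrict to $\tilde{\eta}_{\Lambda_\circ}$ on $\tilde{J}^1_{\Lambda_\circ}$, any $g$ intertwining $\tilde{\boldsymbol{\lambda}}_{\tilde{\xi}_\circ}$ already intertwines $\tilde{\eta}_{\Lambda_\circ}$, so $I_{\tilde{G}_{V_\circ}}(\tilde{\boldsymbol{\lambda}}_{\tilde{\xi}_\circ})\subseteq \tilde{J}^1_{\Lambda_\circ}E_\circ^\times\tilde{J}^1_{\Lambda_\circ}\subseteq\tilde{\mathbf{J}}_{\Lambda_\circ}$; the reverse inclusion is clear, so $I_{\tilde{G}_{V_\circ}}(\tilde{\boldsymbol{\lambda}}_{\tilde{\xi}_\circ})=\tilde{\mathbf{J}}_{\Lambda_\circ}$ and (i) follows.

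For the ``if'' half of (ii): all $F$-embeddings of $E_\circ^\times$ into $\tilde{G}_{V_\circ}$ as an elliptic unramified maximal torus are $\tilde{G}_{V_\circ}$-conjugate (as recalled in Section~\ref{section Maximal types for GL}) and $N_{\tilde{G}_{V_\circ}}(E_\circ^\times)/E_\circ^\times\cong\Gamma(E_\circ/F)$, so a given $\gamma\in\Gamma(E_\circ/F)$ lifts to some $n_\gamma\in\tilde{G}_{V_\circ}$, which one may take to normalise the $E_\circ^\times$-invariant sequence $\Lambda_\circ$. Because every step of the construction --- the simple character $\tilde{\theta}_{\Lambda_\circ,\tilde{\xi}_\circ}$, the Heisenberg representation, the $\beta$-extensions normalised by the $p$-power-determinant conditions of Proposition~\ref{beta-extension-tilde} and of the construction of $\tilde{\boldsymbol{\kappa}}_{\tilde{\xi}_\circ}$, and the tame twist by $\tilde{\xi}_{\circ,\mathrm{tm}}$ --- is canonical in its input, conjugating by $n_\gamma$ turns the datum for $\tilde{\xi}_\circ$ into the one for ${}^\gamma\tilde{\xi}_\circ$, so ${}^{n_\gamma}\tilde{\boldsymbol{\lambda}}_{\tilde{\xi}_\circ}\cong\tilde{\boldsymbol{\lambda}}_{{}^\gamma\tilde{\xi}_\circ}$ and therefore $\tilde{\pi}_{\tilde{\xi}_\circ}\cong\tilde{\pi}_{{}^\gamma\tilde{\xi}_\circ}$. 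For the ``only if'' half, suppose $\tilde{\pi}_{\tilde{\xi}_\circ}\cong\tilde{\pi}_{\tilde{\xi}_\circ'}$; by Frobenius reciprocity and Mackey theory there is $g\in\tilde{G}_{V_\circ}$ intertwining $\tilde{\boldsymbol{\lambda}}_{\tilde{\xi}_\circ'}$ with $\tilde{\boldsymbol{\lambda}}_{\tilde{\xi}_\circ}$ (after conjugating, both may be assumed to live on the same $\tilde{\mathbf{J}}_{\Lambda_\circ}$). The intertwining analysis of (i), applied to the simple characters $\tilde{\theta}_{\Lambda_\circ,\tilde{\xi}_\circ}$ and $\tilde{\theta}_{\Lambda_\circ,\tilde{\xi}_\circ'}$, forces $g\in\tilde{\mathbf{J}}_{\Lambda_\circ}\,n_\gamma\,\tilde{\mathbf{J}}_{\Lambda_\circ}$ for some representative $n_\gamma$ of a $\gamma\in\Gamma(E_\circ/F)$, whence $\tilde{\boldsymbol{\lambda}}_{\tilde{\xi}_\circ'}\cong{}^{n_\gamma}\tilde{\boldsymbol{\lambda}}_{\tilde{\xi}_\circ}\cong\tilde{\boldsymbol{\lambda}}_{{}^\gamma\tilde{\xi}_\circ}$. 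Finally, the assignment $\tilde{\xi}_\circ\mapsto\tilde{\boldsymbol{\lambda}}_{\tilde{\xi}_\circ}$ is injective: $\tilde{\boldsymbol{\lambda}}_{\tilde{\xi}_\circ}$ recovers $\tilde{\xi}_\circ|_{U^1_{E_\circ}}$ through $\tilde{\theta}_{\Lambda_\circ,\tilde{\xi}_\circ}|_{U^1_{E_\circ}}$, its central character recovers $\tilde{\xi}_\circ(\varpi)$, and the $p$-power normalisations of $\tilde{\boldsymbol{\kappa}}_{\tilde{\xi}_\circ}$ together with the tame twist recover $\tilde{\xi}_{\circ,\mathrm{tm}}$; hence $\tilde{\xi}_\circ'={}^\gamma\tilde{\xi}_\circ$.

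The step I expect to be the main obstacle is this last piece of bookkeeping in the ``only if'' direction: making precise that $\tilde{G}_{V_\circ}$-conjugacy of the extended maximal simple types corresponds exactly to $\Gamma(E_\circ/F)$-conjugacy of the underlying characters. This requires the identification $N_{\tilde{G}_{V_\circ}}(\tilde{\mathbf{J}}_{\Lambda_\circ})/\tilde{\mathbf{J}}_{\Lambda_\circ}\cong\Gamma(E_\circ/F)$ to be compatible with the canonical dependence of $\tilde{\boldsymbol{\lambda}}_{\tilde{\xi}_\circ}$ on $\tilde{\xi}_\circ$, and the verification that the determinant normalisations genuinely force injectivity of $\tilde{\xi}_\circ\mapsto\tilde{\boldsymbol{\lambda}}_{\tilde{\xi}_\circ}$; for both points I would lean on \cite{BH-ET1}. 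Everything else is a direct transcription of \cite[Chapters~5--6]{BK}.
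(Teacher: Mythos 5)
Your proposal is correct and follows the same route as the paper: the paper's entire proof is the citation ``This is \cite[Theorem 2.3]{BH-ET1}'', and your outline is a faithful expansion of exactly that argument (intertwining computation for the simple character/Heisenberg representation giving $I_{\tilde{G}_{V_\circ}}(\tilde{\boldsymbol{\lambda}}_{\tilde{\xi}_\circ})=\tilde{\mathbf{J}}_{\Lambda_\circ}$, then the $N_{\tilde{G}_{V_\circ}}(E_\circ^\times)/E_\circ^\times\cong\Gamma(E_\circ/F)$ bookkeeping). Nothing further is needed.
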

\proof
This is \cite[Theorem 2.3]{BH-ET1}.
\qed

Suppose now that $\tilde{\xi}_{\circ}$ is skew, i.e., ${}^\sigma\tilde{\xi}_{\circ}={}^{{c}}\tilde{\xi}^{-1}_{\circ}=\tilde{\xi}_{\circ}$, then $\tilde{\xi}_{\circ}|_{{\boldsymbol{\mu}}_{E_{\circ}}}$ has order $\qo^{n_\circ}+1$ and $\tilde{\xi}_{\circ}(\varpi)=\pm1$. The supercuspidal representation $\tilde{\pi}_{\tilde{\xi}_{\circ}}$ is therefore $\sigma$-invariant because of the $\sigma$-invariance of the subgroups $\tilde{H}_{\Lambda_\circ}^1,\,\tilde{J}_{\Lambda_\circ}^1,\, \tilde{J}_{\Lambda_\circ}$ and $\tilde{\mathbf{J}}_{\Lambda_\circ}$, and the uniqueness of $\tilde{\boldsymbol{\kappa}}_{\tilde{\xi}_{\circ}}$ and $\tilde{\xi}_{{\circ},\wild}$.

\subsection{Cuspidal types for unitary groups}\label{section Cuspidal types for U}

\subsubsection{Compact subgroups}

Before we construct cuspidal types for the unitary group $G_V$, we construct some compact subgroups in a more general setting. Given $i_\circ\in I$ and $x\in \mathcal{D}$, we consider the lattice sequences defined in Sections \ref{section lattice seq} and \ref{section lattice higher ranks}:
\begin{equation}\label{one-of-the-lattices}
  \Lambda_{x}\text{ for }G_V,\,
  \Lambda_{i_\circ,x}, \mathfrak{M}^y_{i_\circ,x}\text{, or }\mathfrak{M}^z_{i_\circ,x}\text{ for }G_W.
\end{equation}
Note that the periods $e_{\Lambda_{x}}$ and $e_{\mathfrak{M}^w_{i_\circ,x}}$ are both 2, while $e_{\Lambda_{i_\circ,x}}=6$.

We define the following subgroups of $G_V$,
\begin{equation}\label{compact-subgp-GV}
\begin{split}
   &{H}^1_{\Lambda_x}=T_x^1U^{d+}_{\Lambda_x,F/\Fo}
  \\
  &{J}^1_{\Lambda_x}=T_x^1U^{d}_{\Lambda_x,F/\Fo},
  \\
  &{J}_{\Lambda_x}=T_x U^{d}_{\Lambda_x,F/\Fo},
  \end{split}
\end{equation}
such that the reductive quotient is
$${J}_{\Lambda_{x}}/{J}^1_{\Lambda_{x}}\cong T_x/T_x^1\cong \prod_{i\in I}\mathrm{U}_1({\mathbf{k}}_{E_{i}}/{\mathbf{k}}_{\Eo_{i}}). $$
Fix $i_\circ\in I$, we define the following subgroups of $G_W$,
\begin{equation}\label{compact-subgp-GW-Lambda}
\begin{split}
   &{H}^1_{\Lambda_{i_\circ,x}}=(\tilde{T}^1_{i_\circ}\times T_x^1)U^{3d+}_{\Lambda_{i_\circ,x},F/\Fo},
  \\
  &{J}^1_{\Lambda_{i_\circ,x}}=(\tilde{T}^1_{i_\circ}\times T_x^1)U^{3d}_{\Lambda_{i_\circ,x},F/\Fo},
  \\
  &{J}_{\Lambda_{i_\circ,x}}=(\tilde{T}^0_{i_\circ}\times T_x)U^{3d}_{\Lambda_{i_\circ,x},F/\Fo},
  \end{split}
\end{equation}
such that the reductive quotient is
$${J}_{\Lambda_{i_\circ,x}}/{J}^1_{\Lambda_{i_\circ,x}}\cong {\mathbf{k}}^\times_{E_{i_\circ}} \times\left(\prod_{i\in I}\mathrm{U}_1({\mathbf{k}}_{E_{i}}/{\mathbf{k}}_{\Eo_{i}})\right). $$
We also define the following subgroups of $G_W$,
\begin{equation}\label{compact-subgp-GW-M}
\begin{split}
   &{H}^1_{{\mathfrak{M}^w_{i_\circ,x}}}=U^{1}_{{\mathfrak{M}^w_{i_\circ,x}},E/\Eo}U^{d+}_{{\mathfrak{M}^w_{i_\circ,x}},F/\Fo},
  \\
  &{J}^1_{{\mathfrak{M}^w_{i_\circ,x}}}
  =U^{1}_{{\mathfrak{M}^w_{i_\circ,x}},E/\Eo}U^{d}_{{\mathfrak{M}^w_{i_\circ,x}},F/\Fo},
  \\
  &{J}_{{\mathfrak{M}^w_{i_\circ,x}}}
  =U^{}_{{\mathfrak{M}^w_{i_\circ,x}},E/\Eo}U^{d}_{{\mathfrak{M}^w_{i_\circ,x}},F/\Fo},
  \end{split}
\end{equation}
such that the reductive quotient is
\begin{equation*}
  {J}_{{\mathfrak{M}^w_{i_\circ,x}}}/{J}^1_{{\mathfrak{M}^w_{i_\circ,x}}}\cong
  \begin{cases}
    \mathrm{U}_3({\mathbf{k}}_{E_{i_\circ}}/{\mathbf{k}}_{\Eo_{i_\circ}}) \times\left(\prod_{i\in I-\{i_\circ\}}\mathrm{U}_1({\mathbf{k}}_{E_{i}}/{\mathbf{k}}_{\Eo_{i}})\right)&\text{ when }w=y,
    \\
    \mathrm{U}_2({\mathbf{k}}_{E_{i_\circ}}/{\mathbf{k}}_{\Eo_{i_\circ}}) \times\left(\prod_{i\in I}\mathrm{U}_1({\mathbf{k}}_{E_{i}}/{\mathbf{k}}_{\Eo_{i}})\right)&\text{ when }w=z.
  \end{cases}
\end{equation*}

Each of the subgroups in (\ref{compact-subgp-GV}), (\ref{compact-subgp-GW-Lambda}), (\ref{compact-subgp-GW-M}) can be defined as the $\sigma$-fixed points of a compact subgroup in $\tilde{G}_V$ or $\tilde{G}_W$, using the following lemma.

\begin{lem}\label{compact groups sigma invariant}
Let $\mathfrak{L}$ be one of the lattice sequences in (\ref{one-of-the-lattices}), and let $r,s$ be two real numbers such that $s>r\geq 0$, then $$(U_{\mathfrak{L},E}^rU_{\mathfrak{L}}^s)^\sigma=(U_{\mathfrak{L},E}^r)^\sigma(U_{\mathfrak{L}}^s)^\sigma=U_{\mathfrak{L},E/\Eo}^rU_{\mathfrak{L},F/\Fo}^s.$$
\end{lem}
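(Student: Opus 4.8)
The plan is to handle the two displayed equalities separately, the second being essentially bookkeeping and the first carrying the content. For the second one, since $G=\tilde G^\sigma$ one has $(U^s_{\mathfrak L})^\sigma=U^s_{\mathfrak L}\cap G=U^s_{\mathfrak L,F/\Fo}$ straight from the definitions in Section~\ref{section orders}, and similarly $(U^r_{\mathfrak L,E})^\sigma=U^r_{\mathfrak L}\cap Z_{\tilde G}(\mathcal I_{i_\circ,x}(E^\times))\cap G=U^r_{\mathfrak L}\cap Z_{G}(\mathcal I_{i_\circ,x}(E^\times))=U^r_{\mathfrak L,E/\Eo}$. The only thing to check here is that the $E$-part being centralized is $\sigma$-stable as a subset, so that its centralizer in $\tilde G$ is $\sigma$-stable; for $G_W$ this is immediate because $\mathcal I_{i_\circ,x}(E^\times)$ lies inside the Levi $M\subseteq G_W=\tilde G_W^\sigma$ and is therefore even pointwise $\sigma$-fixed, while in general it follows because the centralized subgroup arises from $\Fo$-rational data; and $\sigma$ acts on $U^r_{\mathfrak L}$ because $\mathfrak L$ is conjugate-self-dual. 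So the real work is the first equality.

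For the first equality, the inclusion ``$\supseteq$'' is formal once one knows that $P:=U^r_{\mathfrak L,E}U^s_{\mathfrak L}$ is a $\sigma$-stable group: indeed $s>r\ge 0$ forces $s>0$, so $U^s_{\mathfrak L}=1+\mathfrak P^s_{\mathfrak L}$ is normal in $U^0_{\mathfrak L}=\mathfrak A_{\mathfrak L}^\times$ and hence normalized by $U^r_{\mathfrak L,E}\subseteq U^0_{\mathfrak L}$, and then $U^s_{\mathfrak L}$ is normal in $P$ with $P/U^s_{\mathfrak L}\cong U^r_{\mathfrak L,E}/(U^r_{\mathfrak L,E}\cap U^s_{\mathfrak L})=U^r_{\mathfrak L,E}/U^s_{\mathfrak L,E}$ $\sigma$-equivariantly (using $s>r$ to identify $U^r_{\mathfrak L,E}\cap U^s_{\mathfrak L}=U^s_{\mathfrak L,E}$). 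For ``$\subseteq$'' I would take $g\in P^\sigma$, write $g=ab$ with $a\in U^r_{\mathfrak L,E}$ and $b\in U^s_{\mathfrak L}$, and note that the class $\bar g$ of $g$ in $P/U^s_{\mathfrak L}\cong U^r_{\mathfrak L,E}/U^s_{\mathfrak L,E}$ is $\sigma$-fixed; if $\bar g$ lifts to a $\sigma$-fixed $a'\in U^r_{\mathfrak L,E}$, then $b':=(a')^{-1}g$ lies in $U^s_{\mathfrak L}$ and is $\sigma$-fixed, and $g=a'b'$ is the factorization wanted. So everything reduces to surjectivity of $(U^r_{\mathfrak L,E})^\sigma\to(U^r_{\mathfrak L,E}/U^s_{\mathfrak L,E})^\sigma$, i.e.\ to triviality of the connecting map into the pointed set $H^1(\langle\sigma\rangle,U^s_{\mathfrak L,E})$ attached to $1\to U^s_{\mathfrak L,E}\to U^r_{\mathfrak L,E}\to U^r_{\mathfrak L,E}/U^s_{\mathfrak L,E}\to 1$.

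The hard part, then, is $H^1(\langle\sigma\rangle,U^s_{\mathfrak L,E})=\{1\}$, and this is where odd residual characteristic enters. The group $U^s_{\mathfrak L,E}=1+\mathfrak P^s_{\mathfrak L,E}$ is pro-$p$ — a closed subgroup of the pro-$p$ group $1+\mathfrak P^s_{\mathfrak L}$, once one checks that $\mathfrak P^s_{\mathfrak L,E}$ is a two-sided ideal of the order $\mathfrak A_{\mathfrak L,E}$ — whereas $\langle\sigma\rangle\cong\mathbb Z/2$ has order prime to $p$. I would filter $U^s_{\mathfrak L,E}$ by its $\sigma$-stable congruence subgroups $U^t_{\mathfrak L,E}$, $t>s$, whose successive quotients are finite $\mathbb F_p$-vector spaces with a linear $\mathbb Z/2$-action, so that $H^i(\mathbb Z/2,-)$ vanishes on them for all $i\ge 1$ (in particular for $i=1,2$), and then conclude by the standard inductive argument over the finite quotients $U^s_{\mathfrak L,E}/U^t_{\mathfrak L,E}$ (the filtration is central, so the central-extension cohomology sequences apply) followed by passage to the limit; alternatively one simply invokes the general fact that a profinite group of pro-order prime to the order of the acting finite group has trivial cohomology in positive degrees. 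Note the fibration picture is essential: it is $U^s_{\mathfrak L,E}$ that must be pro-$p$, while $U^r_{\mathfrak L,E}$ itself has a finite reductive quotient when $r=0$. I expect this cohomological step, together with the routine but necessary verification that $\mathfrak P^s_{\mathfrak L,E}$ is an ideal — which rests on describing $\mathfrak A_{\mathfrak L,E}$ as a product of hereditary $\mathfrak o_{E_i}$-orders in the manner of \cite{stevens-supercusp} — to be the only non-formal ingredients.
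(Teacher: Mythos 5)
Your proposal is correct and follows essentially the same route as the paper: the paper also reduces to splitting the $\sigma$-cocycle $t^{-1}\,{}^{\sigma}t=(1+u)\,{}^{\sigma}(1+u)^{-1}$ inside the pro-$p$ group $U^{s}_{\mathfrak L,E}$, and its ``argument similar to Hensel's Lemma'' using $\ker(1+\sigma)=\mathrm{image}(1-\sigma)$ on the graded pieces is exactly your vanishing of $H^1(\langle\sigma\rangle,-)$ by successive approximation, valid because $p$ is odd. If anything your version is slightly tidier in insisting that the correcting element be taken in $U^{s}_{\mathfrak L,E}$ (so that $1+u'$ really stays in $U^{s}_{\mathfrak L}$), where the paper only records it in $U^{r+}_{\mathfrak L,E}$.
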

Note that if $\mathfrak{L}=\Lambda_{x}$, then $U^r_{\mathfrak{L},E}$ and $U^r_{\mathfrak{L},E/\Eo}$ are compact tori $\tilde{T}^r$ and ${T}_x^r$.
\proof
Suppose $t\in U_{\mathfrak{L},E}^r$ and $1+u\in U_{\mathfrak{L}}^s$ such that ${}^\sigma(t(1+u))=t(1+u)$, then $t^{-1}\cdot{}^\sigma t=(1+x)\cdot {}^\sigma(1+x)^{-1}$. Suppose that $t^{-1}\cdot{}^\sigma t=1+y\in U_{\mathfrak{L},E}^{r+}$, we want to solve for $1+x\in U_{\mathfrak{L},E}^{r+}$ such that $1+y=(1+x)\cdot {}^\sigma(1+x)^{-1}$. We obtain
$${}^\sigma(1+y)(1+y)=(t^{-1}\cdot{}^\sigma t)(t^{-1}\cdot{}^\sigma t)=1,$$
so that ${}^\sigma y+y+{}^\sigma yy=0$ and ${}^{{\sigma}} \bar{y}+\bar{y}\equiv 0\mod \mathfrak{P}_{\mathfrak{L},E}^{r++}$, where $\mathfrak{P}_{\mathfrak{L},E}^{r++}$ is the largest lattice $\mathfrak{P}_{\mathfrak{L},E}^{t}$, with $t\in \mathbb{R}$, such that $\mathfrak{P}_{\mathfrak{L},E}^{t}\subsetneq\mathfrak{P}_{\mathfrak{L},E}^{r+}$, and $\bar{y}$ is the image of $y$ in $\mathbf{k}_{\mathfrak{L},E}\cong \mathfrak{P}_{\mathfrak{L},E}^{r+}/\mathfrak{P}_{\mathfrak{L},E}^{r++}$ such that ${\sigma}$ acts on $\mathbf{k}_{\mathfrak{L},E}$ accordingly. Since $\ker(1+\sigma)=\mathrm{image}(1-\sigma)$ on $\mathbf{k}_{\mathfrak{L},E}$, we can solve for $\bar{x}\in \mathbf{k}_{\mathfrak{L},E}$ such that $\bar{y}=(1-\sigma)\bar{x}$. An argument similar to Hensel's Lemma implies that
$1+y=(1+x)\cdot {}^\sigma(1+x)^{-1}$ for some $x\in \mathfrak{P}_{\mathfrak{L},E}^{r+}$ whose image in
$\mathbf{k}_{\mathfrak{L},E}\cong \mathfrak{P}_{\mathfrak{L},E}^{r+}/\mathfrak{P}_{\mathfrak{L},E}^{r++}$ is $\bar{x}$. We then replace $t$ by $t'=t(1+x)$ and $1+u$ by $1+u'=(1+x)^{-1}(1+u)$, so that ${}^\sigma t'=t'$, ${}^\sigma (1+u')=1+u'$, and
$t(1+u)=t'(1+u')\in U_{\mathfrak{L},,E/\Eo}^rU_{\mathfrak{L},F/\Fo}^s$. We have proved that $(U_{\mathfrak{L},E}^rU_{\mathfrak{L}}^s)^\sigma\subseteq U_{\mathfrak{L},,E/\Eo}^rU_{\mathfrak{L},F/\Fo}^s$, and the converse is clear.
\qed

For example, if we define
\begin{equation*}
  \tilde{{H}}^1_{\Lambda_{x}}=\tilde{T}^1_xU^{d+}_{\Lambda_{x}}\text{ and }\tilde{H}^1_{\Lambda_{i_\circ,x}}=(\tilde{T}^1_{i_\circ}\times \tilde{T}^1_{i_\circ}\times \tilde{T}_x^1)U^{3d+}_{\Lambda_{i_\circ,x}},
\end{equation*}
then ${{H}}^1_{\Lambda_{x}}=(\tilde{{H}}^1_{\Lambda_{x}})^\sigma$ and ${H}^1_{\Lambda_{i_\circ,x}}=(\tilde{H}^1_{\Lambda_{i_\circ,x}})^\sigma$. Also if $\tilde{H}^1_{{\mathfrak{M}^w_{i_\circ,x}}}=U^{1}_{{\mathfrak{M}^w_{i_\circ,x}},E}U^{d+}_{{\mathfrak{M}^w_{i_\circ,x}}}$, then
${H}^1_{{\mathfrak{M}^w_{i_\circ,x}}}=(\tilde{H}^1_{{\mathfrak{M}^w_{i_\circ,x}}})^\sigma$.

\subsubsection{First step: semi-simple characters}

Take $\mathfrak{L}$ be one of the lattices in (\ref{one-of-the-lattices}), and let $\tilde{T}_\mathfrak{L}$ be the torus
$$\tilde{T}_x\text{ when }\mathfrak{L}=\Lambda_x\text{, and }\tilde{T}_{i_\circ}\times \tilde{T}_{i_\circ}\times \tilde{T}_x\text{ when }\mathfrak{L}=\Lambda_{i_\circ,x}\text{ or }\mathfrak{M}_{i_\circ,x}^w,$$
embedded into $\tilde{G}$, where $\tilde{G}=\tilde{G}_V$ or $\tilde{G}_W$ respectively, such that its action on $V$ or $W$ leaves $\mathfrak{L}$ invariant. This implies that $\tilde{T}_\mathfrak{L}\cap G_V={T}_x$ and $\tilde{T}_\mathfrak{L}\cap G_W= \tilde{T}_{i_\circ}\times {T}_x$. We also set $\tilde{T}^r_\mathfrak{L}$ to be $\tilde{T}^r_x$ and $\tilde{T}^r_{i_\circ}\times \tilde{T}^r_{i_\circ}\times \tilde{T}^r_x$ in the two respective cases, for all $r\in \mathbb{R}_{\geq0}\cup\{t+\text{, where }t\in\mathbb{R}_{\geq0} \}$.

Let ${\tilde{\xi}}$ be a $d$-regular $+$-skew character of $\tilde{T}_x$ and ${\tilde{\xi}}={{\xi}}\circ(1-{c})$ for a character ${{\xi}}$ of $T_x$. Define a character $\tilde{\xi}_\mathfrak{L}$ on $\tilde{T}_\mathfrak{L}$ by
$$\tilde{\xi}\text{ when }\mathfrak{L}=\Lambda_x\text{, and }\tilde{\xi}_{i_\circ}\boxtimes \tilde{\xi}_{i_\circ}\boxtimes \tilde{\xi}\text{ when }\mathfrak{L}=\Lambda_{i_\circ,x}\text{ or }\mathfrak{M}_{i_\circ,x}^w.$$

We first construct a character $\tilde{{\theta}}_\mathfrak{L}=\tilde{{\theta}}_{\mathfrak{L},\tilde{\xi}}$ of $\tilde{H}^1_\mathfrak{L}$ for $\mathfrak{L}=\Lambda_{x}$ or $\mathfrak{L}=\Lambda_{i_\circ,x}$, whose restriction on $\tilde{T}^1_\mathfrak{L}$ is $\tilde{\xi}_\mathfrak{L}|_{\tilde{T}^1_\mathfrak{L}}$. We first note that
$$\tilde{T}_\mathfrak{L}^{d+}/\tilde{T}_\mathfrak{L}^{2d+}\cong \tilde{\mathfrak{t}}_\mathfrak{L}^{d+}/\tilde{\mathfrak{t}}_\mathfrak{L}^{2d+}\text{ and }U_{\mathfrak{L}}^{e_\mathfrak{L} d+}/U_{\mathfrak{L}}^{e_\mathfrak{L} 2d+}\cong \mathfrak{P}_{\mathfrak{L}}^{e_\mathfrak{L} d+}/\mathfrak{P}_{\mathfrak{L}}^{e_\mathfrak{L} 2d+}$$
where $\tilde{\mathfrak{t}}_\mathfrak{L}$ is the Lie-algebra of $\tilde{T}_\mathfrak{L}$ and $\tilde{\mathfrak{t}}^r_\mathfrak{L}=\tilde{\mathfrak{t}}_\mathfrak{L}\cap \mathfrak{P}_\mathfrak{L}^r$. From the restriction $\tilde{\xi}_\mathfrak{L}|_{\tilde{T}_\mathfrak{L}^{d+}}$, we obtain a character, still denoted by $\tilde{\xi}_\mathfrak{L}$, on $\tilde{\mathfrak{t}}_\mathfrak{L}^{d+}$ trivial on $\tilde{\mathfrak{t}}_\mathfrak{L}^{2d+}$. From the root-space decomposition
$$\mathfrak{P}_{\mathfrak{L}}^{e_\mathfrak{L} d+}/\mathfrak{P}_{\mathfrak{L}}^{e_\mathfrak{L} 2d+}\cong  \tilde{\mathfrak{t}}_\mathfrak{L}^{d+}/\tilde{\mathfrak{t}}_\mathfrak{L}^{2d+}\oplus \mathfrak{n}_{\mathfrak{L}}^{e_\mathfrak{L} d+}/ \mathfrak{n}_{\mathfrak{L}}^{e_\mathfrak{L} 2d+},$$
where $\mathfrak{n}_{\mathfrak{L}}$ is the sum of root spaces in $\tilde{A}_{V}$ (on which $T_{\mathfrak{L}}$ acts non-trivially by conjugation) and $\mathfrak{n}_{\mathfrak{L}}^{r}=\mathfrak{n}_{\mathfrak{L}}\cap \mathfrak{P}_{\mathfrak{L}}^r$, we then extend $\tilde{\xi}_\mathfrak{L}$ trivially to a character
$\tilde{\theta}_\mathfrak{L}$ on $U_{\mathfrak{L}}^{e_\mathfrak{L} d+}$ trivial on $U_{\mathfrak{L}}^{e_\mathfrak{L} 2d+}$,
which agrees with $\tilde{\xi}_\mathfrak{L}$ on $U_{\mathfrak{L}}^{e_\mathfrak{L} d+}\cap \tilde{T}_\mathfrak{L}=\tilde{T}_\mathfrak{L}^{d+}$. Finally, we extend $\tilde{\theta}_\mathfrak{L}$ to a character, still denoted by $\tilde{\theta}_\mathfrak{L}$, of $\tilde{H}_\mathfrak{L}^1=\tilde{T}_\mathfrak{L}^1U^{e_\mathfrak{L} d+}_{\mathfrak{L}}$.

For $\mathfrak{L}=\mathfrak{M}_{i_\circ,x}^w$, where $w=y$ or $z$, we construct similarly a character $\tilde{{\theta}}_{\mathfrak{M}_{i_\circ,x}^w}=\tilde{{\theta}}_{\mathfrak{M}_{i_\circ,x}^w,\tilde{\xi}}$ of $\tilde{H}^1_{\mathfrak{M}_{i_\circ,x}^w}$ such that $\tilde{{\theta}}_{\mathfrak{M}_{i_\circ,x}^w}|_{\tilde{T}^1_{\mathfrak{M}_{i_\circ,x}^w}}
=\tilde{\xi}_{\mathfrak{M}_{i_\circ,x}^w}|_{\tilde{T}^1_{\mathfrak{M}_{i_\circ,x}^w}}$. The process is similar to the one above, but we need to first extend the character $\tilde{\xi}_{\mathfrak{M}_{i_\circ,x}^w}|_{\tilde{T}^1_{\mathfrak{M}_{i_\circ,x}^w}}$ to
$$\tilde{\xi}':=\left(\tilde{\xi}_{i_\circ}|_{U^{1}_{E_{i_\circ}}}\circ\mathrm{det}_{i_\circ}\right)\boxtimes_{i\in I-\{i_\circ\}}\tilde{\xi}_i|_{U^{1}_{E_i}}$$
on
$$U^{1}_{{\mathfrak{M}^w_{i_\circ,x}},E}=U^{1}_{{\mathfrak{M}^w_{\pm i_\circ}},E_{i_\circ}}\times \prod_{i\in I-\{i_\circ\}}U^{1}_{E_i},$$
where $\mathrm{det}_{i_\circ}$ is the determinant map on $\mathrm{GL}_{3}(E_{i_\circ})$. We then extend $\tilde{\xi}'|_{U_{\mathfrak{M}_{\pm i_\circ}^w}^{ d+}\tilde{T}_{\mathfrak{M}_{i_\circ,x}^w}^{ d+}}$ to a character $\tilde{\theta}'$ on to $U_{\mathfrak{M}_{i_\circ,x}^w}^{ d+}$ using the root-space decomposition
$$\mathfrak{P}_{\mathfrak{M}_{i_\circ,x}^w}^{ d+}/\mathfrak{P}_{\mathfrak{M}_{i_\circ,x}^w}^{ 2d+}\cong  \left(\mathfrak{P}_{\mathfrak{M}_{\pm i_\circ}^w,E_{i_\circ}}^{ d+}/\mathfrak{P}_{\mathfrak{M}_{\pm i_\circ}^w,E_{i_\circ}}^{ 2d+}\bigoplus_{i\neq i_\circ}\mathfrak{p}_{E_i}^{d+}/\mathfrak{p}_{E_i}^{2d+}\right)\oplus \mathfrak{n}_{\mathfrak{M}_{i_\circ,x}^w}^{ d+}/ \mathfrak{n}_{\mathfrak{M}_{i_\circ,x}^w}^{ 2d+}$$
where $\mathfrak{n}_{\mathfrak{M}_{i_\circ,x}^w}$ is the nilpotent radical of the parabolic sub-algebra in $\tilde{A}_W$ whose Levi-component is $\tilde{A}_{V_{\pm i_\circ},E}\oplus_{i\neq i_\circ}E_i$, and $\mathfrak{n}_{\mathfrak{M}_{i_\circ,x}^w}^r=\mathfrak{n}_{\mathfrak{M}_{i_\circ,x}^w}\cap \mathfrak{P}_{\mathfrak{M}_{i_\circ,x}^w}^{ r}$. Finally, we multiply $\tilde{\xi}'$ and $\tilde{\theta}'$ to obtain our desired character $\tilde{\theta}$.

Finally, let $\mathfrak{L}$ be one of the lattice sequences in (\ref{one-of-the-lattices}). Since  $(\tilde{H}^1_\mathfrak{L})^\sigma=H^1_\mathfrak{L}$ , we define
  $$\theta_{\mathfrak{L}}=(\tilde{\theta}_{\mathfrak{L}}|_{H_{\mathfrak{L}}^1})^{1/2}.$$
   Note that the $1/2$-power on characters is well-defined. It is because $H^1_\mathfrak{L}$ is a $p$-group and $p$ is odd, so that $x\mapsto x^2$ is a bijection on $H^1_\mathfrak{L}$, and $\theta\mapsto \theta^2$ is a bijection on the set of characters (also semi-simple characters and skew semi-simple characters) of ${H}^1_\mathfrak{L}$.

\begin{prop}
$\theta_{\mathfrak{L}}$ is a skew semi-simple character of $H^1_\mathfrak{L}$ in the sense of \cite[Definition 3.13]{stevens-ss-char}.
\end{prop}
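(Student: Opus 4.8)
The plan is to deduce the statement from the theory of semi-simple characters of \cite{stevens-ss-char}, by recognizing $\tilde{\theta}_{\mathfrak{L}}$ as a $\sigma$-invariant semi-simple character --- in the sense of \cite{BK} --- attached to a skew semi-simple stratum supported on $\mathfrak{L}$. Granting this, \cite[Definition 3.13]{stevens-ss-char} gives that the restriction $\tilde{\theta}_{\mathfrak{L}}|_{H^1_{\mathfrak{L}}}$ to $H^1_{\mathfrak{L}}=(\tilde{H}^1_{\mathfrak{L}})^\sigma$ is a skew semi-simple character, and then so is its square root $\theta_{\mathfrak{L}}$, since squaring is a bijection on the set of such characters, as noted just before the statement. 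Thus there are three points to establish: (a) produce a skew semi-simple stratum with lattice sequence $\mathfrak{L}$; (b) identify $\tilde{\theta}_{\mathfrak{L}}$ with a semi-simple character (over $F$) attached to it; and (c) check that $\tilde{\theta}_{\mathfrak{L}}$ is $\sigma$-invariant. I argue uniformly for the three choices of $\mathfrak{L}$ in \eqref{one-of-the-lattices}, using for $\mathfrak{L}=\Lambda_{i_\circ,x}$ and $\mathfrak{L}=\mathfrak{M}^w_{i_\circ,x}$ the alternative descriptions of Section \ref{section lattice higher ranks}, which group the three copies of $E_{i_\circ}$ carried by $V_{\pm i_\circ}=V_{i_\circ,-}\oplus V_{i_\circ}\oplus V_{i_\circ,+}$ into a single block.

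\emph{The stratum.} For $i\in I$, let $b_i\in E_i$ be attached to $\tilde{\xi}_i$ by \eqref{element-b} and let $\beta_i$ be an approximation of $b_i$ as in \eqref{b-approx}; since the construction is $\Gamma_{F/\Fo}$-equivariant and $p$ is odd, $\beta_i$ may be chosen skew, i.e. ${}^{\sigma}\beta_i=-{}^{c}\beta_i=\beta_i$. Put $\beta=\sum_i\beta_i$ relative to the orthogonal decomposition whose blocks are the $V_i$, except that $V_{\pm i_\circ}$ replaces the three copies of $V_{i_\circ}$ in the larger group. By $d$-regularity, $F[\beta_i]=E_i$ and the stratum on the $i$-th block is simple and minimal over $F$ (cf. Section \ref{section regular char}), $\mathfrak{L}$ being an $\mathfrak{o}_{E_i}$-lattice sequence there. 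The hypothesis that the restrictions $\tilde{\xi}_i|_{U^{2d}_{E_i}}$ are pairwise non-Galois-conjugate forces the residues $b_{i,-d}$ to lie in pairwise distinct Galois orbits, hence the minimal polynomials of the $\beta_i$ over $F$ are pairwise distinct; together with minimality this is precisely the condition that the stratum $[\mathfrak{L},\cdot,0,\beta]$ be semi-simple. Finally $\mathfrak{L}$ is conjugate-self-dual and $\beta$ is skew, so, the block decomposition being orthogonal hence $\sigma$-stable, the stratum is skew semi-simple.

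\emph{Identification of $\tilde{\theta}_{\mathfrak{L}}$.} The set $\mathcal{C}(\mathfrak{L},0,\beta)$ of semi-simple characters attached to this stratum is by construction the family of characters of $\tilde{H}^1_{\mathfrak{L}}$ built blockwise from the simple characters of the blocks and glued through the root-space decomposition of $\mathfrak{P}^{e_{\mathfrak{L}}d+}_{\mathfrak{L}}/\mathfrak{P}^{e_{\mathfrak{L}}2d+}_{\mathfrak{L}}$. This is exactly the recipe defining $\tilde{\theta}_{\mathfrak{L}}$: it restricts to $\tilde{\xi}_{\mathfrak{L}}$ on $\tilde{T}^1_{\mathfrak{L}}$, while on $U^{e_{\mathfrak{L}}d+}_{\mathfrak{L}}$ it is obtained from $x\mapsto\psi_F(\tr(bx))$ on $\tilde{T}^{d+}_{\mathfrak{L}}$ by trivial extension along $\mathfrak{n}_{\mathfrak{L}}$, which --- by the same computation with tame corestrictions relative to the $E_i/F$ as in the Remark following \eqref{root-space-decomp-GL} --- coincides with $\psi_\beta$. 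In the two larger-group cases the preliminary enlargement by $\tilde{\xi}_{i_\circ}|_{U^1_{E_{i_\circ}}}\circ\det_{i_\circ}$ on the $\GL_3(E_{i_\circ})$-factor only fixes a value compatible with $\mathcal{C}(\mathfrak{L},0,\beta)$. Hence $\tilde{\theta}_{\mathfrak{L}}\in\mathcal{C}(\mathfrak{L},0,\beta)$.

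\emph{Skewness and conclusion.} The group $\tilde{H}^1_{\mathfrak{L}}$ and the root-space decomposition are $\sigma$-stable; $\beta$ is skew, so $\psi_\beta$ is $\sigma$-invariant; and $\tilde{\xi}$ is $+$-skew, i.e. $\tilde{\xi}=\xi\circ(1-c)$ for a character $\xi$ of $T_x$, so $\tilde{\xi}_{\mathfrak{L}}$ is $\sigma$-invariant. Therefore $\tilde{\theta}_{\mathfrak{L}}$ is $\sigma$-invariant, and by the reduction above $\theta_{\mathfrak{L}}$ is a skew semi-simple character of $H^1_{\mathfrak{L}}$. The one step that is more than bookkeeping is (b): one has to match the hands-on gluing defining $\tilde{\theta}_{\mathfrak{L}}$ with the intrinsic description of $\mathcal{C}(\mathfrak{L},0,\beta)$, and in particular verify that the interblock part --- where distinct $E_i$ interact --- behaves correctly, which is exactly where the non-intertwining of the blocks (guaranteed by $d$-regularity together with the non-Galois-conjugacy hypothesis) enters.
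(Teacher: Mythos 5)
Your proposal is correct and follows essentially the same route as the paper: both exhibit the skew semi-simple stratum $[\mathfrak{L},e_{\mathfrak{L}}d,0,\beta]$ with $\beta=\sum_{i}\beta_i$ built from the approximations of the $b_i$, identify $\tilde{\theta}_{\mathfrak{L}}$ blockwise (using the tame corestrictions and, on the $V_{\pm i_\circ}$ block, the character $\tilde{\xi}_{i_\circ}|_{U^1_{E_{i_\circ}}}\circ\det_{i_\circ}$) with a character satisfying the conditions of \cite[Definition 3.13]{stevens-ss-char}, and then pass to $H^1_{\mathfrak{L}}$ and the square root. The only additions are your explicit justifications that $\beta$ may be taken skew and that the non-Galois-conjugacy of the $\tilde{\xi}_i|_{U^{2d}_{E_i}}$ keeps the blocks from merging, both of which the paper leaves implicit.
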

\proof

For each component $\tilde{\xi}_i$, let $b_i\in  \mathfrak{p}_{E_i}^{-2d}\mod \mathfrak{p}_{E_i}^{-d}$ be the element defined as in (\ref{element-b}), and $\beta_i$ be an element in $\mathfrak{p}_{E_i}^{-2d}$ approximating $b_i$ as in (\ref{b-approx}). We will show that the character $\tilde{\theta}_\mathfrak{L}$ is based on a skew semi-simple stratum of the form
$${\mathfrak{s}}=[\mathfrak{L},e_\mathfrak{L} d,0,\beta].$$
Here $\beta=\sum_{i\in I}\beta_i$ belongs to $\mathfrak{P}_{\mathfrak{L},F/\Fo}^{-2d}$ such that $\mathfrak{L}$ is $\mathcal{I}_{i_\circ,x}(E^\times)$-invariant, where $\mathcal{I}_{i_\circ,x}(E^\times)$ is defined in (\ref{image-of-E}). (As a remark, the $d$-regularity implies that each $[\mathfrak{L}_i,d,0,\beta_i]$ is a minimal stratum over $F$.)

We now check the two conditions in \cite[Definition 3.13]{stevens-ss-char}.
\begin{enumerate}[(i)]
  \item
  When  $i\neq i_\circ$, or when $i= i_\circ$ and $\mathfrak{L}=\Lambda_x$, the character $\tilde{\theta}_\mathfrak{L}|_{\tilde{H}^1_\mathfrak{L}\cap \tilde{G}_{V_i}}$ is equal to $\tilde{\theta}_{\Lambda_i}$ by construction. When $i= i_\circ$ and $\mathfrak{L}=\Lambda_{i_\circ,x}$ or $\mathfrak{M}_{i_\circ,x}^w$, then $\tilde{G}_{V_{\pm i_\circ}}=\mathrm{GL}_{3n_\circ}(F)$ and $$\tilde{H}^1_{\Lambda_{i_\circ,x}}\cap \tilde{G}_{V_{\pm i_\circ}}=\tilde{H}^1_{\Lambda_{\pm i_\circ}}=(\tilde{T}^1_{i_\circ})^3 U^{3d+}_{\Lambda_{\pm i_\circ}},$$
   and
   $$\tilde{H}^1_{\mathfrak{M}_{i_\circ,x}^w}\cap \tilde{G}_{V_{\pm i_\circ}}=\tilde{H}^1_{\mathfrak{M}_{\pm i_\circ}^w}=U^{1}_{\mathfrak{M}_{\pm i_\circ}^w,E_{i_\circ}}U^{3d+}_{\mathfrak{M}_{\pm i_\circ}^w}.$$
   Denote $\mathfrak{L}_\pm=\Lambda_{\pm i_\circ}$ or $\mathfrak{M}_{\pm i_\circ}^w$, then in both cases we have
      $$\tilde{\theta}_{\mathfrak{M}_{i_\circ,x}^w}|_{\tilde{H}^1_{\mathfrak{L}}\cap \tilde{G}_{V_{\pm i_\circ}}}
      =\tilde{\xi}_{i_\circ}|_{U^1_{E_{i_\circ}}}\circ\mathrm{det}_{i_\circ}|_{\tilde{H}^1_{\mathfrak{L}_{\pm}}},$$
      which is a semi-simple character based on $[\mathfrak{L}_\pm,e_\mathfrak{L}d,0,\beta_{i_\circ}]$.

 \item  We also check that, when $\mathfrak{L}=\Lambda_x$, then
      \begin{equation*}
        \begin{split}
        &\tilde{\theta}_{\Lambda_x}|_{\tilde{H}^{d+}_{\Lambda_x}}=\psi_F(tr_{\tilde{A}_V/F}(\beta X))=\psi_F(tr_{E/F}(\beta s^{\tilde{A}_V}_{E}X))
        \\
        =&\psi_F\left(\sum_{i\in I}tr_{\tilde{E_i}/F}(\beta_i s^{\tilde{A}_V}_{E_i}(X))\right)
      \end{split}
      \end{equation*}
      where $s^{\tilde{A}_V}_{E}$ and $s^{\tilde{A}_V}_{E_i}$ are suitable tame co-restrictions as in (\ref{tame-cores}) (see also \cite[(1.3)]{BK}), and when $\mathfrak{L}=\Lambda_{i_\circ,x}$ or $\mathfrak{M}_{i_\circ,x}^w$, then
      \begin{equation*}
        \begin{split}
        &\tilde{\theta}_{\mathfrak{L}}|_{\tilde{H}^{d+}_{\mathfrak{L}}}=\psi_F(tr_{\tilde{A}_W/F}(\beta X))=\psi_F(tr_{\tilde{B}/F}(\beta s^{\tilde{A}_W}_{\tilde{B}}X))
        \\
        =&\psi_F\left(tr_{\tilde{A}_{V_{\pm i_\circ}}/F}(\beta_{i_\circ} s^{\tilde{A}_W}_{\tilde{A}_{V_{\pm i_\circ}}}(X))+\sum_{i\neq i_\circ}tr_{\tilde{E_i}/F}(\beta_i s^{\tilde{A}_W}_{E_i}(X))\right)
      \end{split}
      \end{equation*}
            where $s^{\tilde{A}_W}_{\tilde{A}_{V_{\pm i_\circ}}}$ is a tame co-restriction and $\tilde{B}=\tilde{A}_{V_{\pm i_\circ}}\oplus_{i\neq i_\circ}E_i$.

\end{enumerate}

\qed


\begin{rmk}
Indeed, we can check that $\tilde{\theta}_\mathfrak{L}={\theta}_\mathfrak{L}\circ N_{\tilde{G}/G}$, where $ N_{\tilde{G}/G}$ is the norm map, a bijection from the set of stable $\sigma$-conjugacy classes of $\tilde{G}$ to the set of stable conjugacy classes of $G$ \cite[Propposition 3.11.1(c)]{Row-u3}, which restricts to a bijection from the set of $\sigma$-conjugacy classes of $\tilde{H}^1_\mathfrak{L}$ to the set of conjugacy classes of ${H^1_\mathfrak{L}}$. The proof is similar to \cite[Lemma 3.1(ii)]{Blasco-u3}. Although the paper concerns only $\mathrm{U}_n$ for $n=3$, the proof of this Lemma holds for arbitrary $n$ without significant modification.
\qed\end{rmk}


We provide some relations between semi-simple characters on compact subgroups associated to various lattice sequences. First note that $\tilde{H}^1_{\Lambda_{i_\circ}} \times {H}^1_{\Lambda_{x}}\hookrightarrow {H}^1_{\Lambda_{i_\circ,x}}$ and
$$\theta_{\Lambda_{i_\circ,x}}|_{\tilde{H}^1_{\Lambda_{i_\circ}} \times {H}^1_{\Lambda_{x}}}= \tilde{\theta}_{\Lambda_{i_\circ}}\boxtimes\theta_{\Lambda_x},$$
where $\tilde{\theta}_{\Lambda_{i_\circ}}$ is a simple character constructed in Section \ref{section Maximal types for GL} and $\theta_{\Lambda_x}$ is constructed just above.

Moreover, we recall the transfer property for semi-simple characters from \cite[Section 3.5]{stevens-ss-char}. Suppose that $\mathfrak{L}$ and $\mathfrak{L}'$ are two lattice sequences in $W$, and that  $\theta_{\mathfrak{L}}=\tilde{\theta}_{\mathfrak{L}}|_{H_\mathfrak{L}^1}$ and $\theta_{\mathfrak{L}'}=\tilde{\theta}_{\mathfrak{L}'}|_{H_{\mathfrak{L}'}^1}$ are two semi-simple characters associated respectively to the strata of the forms $[\mathfrak{L},n,0,\beta]$ and $[\mathfrak{L}',n',0,\beta']$ such that $\beta'=\beta$, then we say that $\theta_{\mathfrak{L}}$ is the transfer of $\theta_{\mathfrak{L}'}$ if
$$Z_{\tilde{G}_W}(\mathcal{I}_{i_\circ,x}(E^\times))\cap I_{\tilde{G}_{W}}(\tilde{\theta}_{\mathfrak{L}},\tilde{\theta}_{\mathfrak{L}'})\neq\emptyset,$$
where $\mathcal{I}_{i_\circ,x}(E^\times)$ is the image of $E^\times$ embedded in $\tilde{G}_W$, as defined in Section \ref{section orders}.

We now set $\mathfrak{L}={\Lambda_{i_\circ,x}}$ and $\mathfrak{L}'={\mathfrak{M}}={{\mathfrak{M}^w_{i_\circ,x}}}$, for $w=y$ or $z$, and consider the semi-simple characters $\theta_{\Lambda}$ and $\theta_{{\mathfrak{M}}}$, both constructed from $\tilde{\xi}|_{U_E^1}$ and associated to the strata $[{\Lambda},n_\Lambda,0,\beta]$ and $[{{\mathfrak{M}}},n_\mathfrak{M},0,\beta]$ respectively.
\begin{prop}
  The simple characters $\theta_{\Lambda}$ and $\theta_{{\mathfrak{M}}}$ are transfer of each other.
\end{prop}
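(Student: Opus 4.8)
The plan is to reduce the assertion to the general linear group $\tilde{G}_{W}$ and to recognise $\tilde{\theta}_{\mathfrak{M}}$ as the Stevens transfer $\tau_{\Lambda,\mathfrak{M},\beta}(\tilde{\theta}_{\Lambda})$. Indeed, by the definition recalled just above, the statement to be proved is precisely that $Z_{\tilde{G}_{W}}(\mathcal{I}_{i_\circ,x}(E^{\times}))\cap I_{\tilde{G}_{W}}(\tilde{\theta}_{\Lambda},\tilde{\theta}_{\mathfrak{M}})\neq\emptyset$, a condition living entirely on $\tilde{G}_{W}$ (so the passage $\theta_{\mathfrak{L}}=(\tilde{\theta}_{\mathfrak{L}}|_{H^{1}_{\mathfrak{L}}})^{1/2}$ and the norm map play no role here). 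I would show that the identity element of $\tilde{G}_{W}$ already lies in this set, by observing that $\tilde{\theta}_{\Lambda}$ and $\tilde{\theta}_{\mathfrak{M}}$ are the semi-simple characters attached to one and the same element $\beta=\sum_{i\in I}\beta_{i}$ and built from one and the same datum $\tilde{\xi}|_{U^{1}_{E}}$, the two constructions differing only through the choice of lattice sequence.

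Concretely, the argument would run in three steps. First, unwinding the definitions of Sections \ref{section orders} and \ref{section lattice higher ranks} and using that $\mathfrak{M}^{w}_{i_\circ,x}$ is assembled from members of $\Lambda_{i_\circ,x}$, one checks that $\tilde{H}^{1}_{\Lambda}\subseteq\tilde{H}^{1}_{\mathfrak{M}}$ (or at least that the two groups share a large common subgroup, which is all that is needed), the torus $\tilde{T}^{1}_{\Lambda_{i_\circ,x}}$ sitting inside the $E$-centraliser factor $U^{1}_{\mathfrak{M}^{w}_{i_\circ,x},E}$ of $\tilde{H}^{1}_{\mathfrak{M}}$. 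Second, on $\tilde{T}^{1}_{\Lambda_{i_\circ,x}}$ the character $\tilde{\theta}_{\mathfrak{M}}$ is by construction the restriction of $\tilde{\xi}'=(\tilde{\xi}_{i_\circ}|_{U^{1}_{E_{i_\circ}}}\circ\det_{i_\circ})\boxtimes_{i\neq i_\circ}\tilde{\xi}_{i}|_{U^{1}_{E_{i}}}$, and since $\tilde{\xi}'$ was chosen precisely to extend $\tilde{\xi}_{\mathfrak{M}^{w}_{i_\circ,x}}|_{\tilde{T}^{1}}=\tilde{\xi}_{\Lambda_{i_\circ,x}}|_{\tilde{T}^{1}}$, it agrees there with $\tilde{\theta}_{\Lambda}$. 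Third, on the transverse filtration subgroups both characters are given by the single prescription $1+X\mapsto\psi_{F}(\tr_{\tilde{A}_{W}/F}(\beta X))$, as recorded in the alternative description of the simple character and in the computation carried out in the proof of the preceding Proposition; this prescription is manifestly independent of the lattice sequence used to present the group, being the tame co-restriction formalism of \cite[(1.3.3)--(1.3.8)]{BK}. Putting the three steps together, $\tilde{\theta}_{\Lambda}$ and $\tilde{\theta}_{\mathfrak{M}}$ coincide on $\tilde{H}^{1}_{\Lambda}\cap\tilde{H}^{1}_{\mathfrak{M}}$, so $1\in I_{\tilde{G}_{W}}(\tilde{\theta}_{\Lambda},\tilde{\theta}_{\mathfrak{M}})$; as $1$ trivially lies in $Z_{\tilde{G}_{W}}(\mathcal{I}_{i_\circ,x}(E^{\times}))$, the transfer condition holds, and \cite[Section 3.5]{stevens-ss-char} moreover identifies $\tilde{\theta}_{\mathfrak{M}}$ with $\tau_{\Lambda,\mathfrak{M},\beta}(\tilde{\theta}_{\Lambda})$.

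The hard part is purely the bookkeeping in the second and third steps: one must reconcile the period-$6$ normalisation of $\Lambda_{i_\circ,x}$ with the period-$2$ normalisation of $\mathfrak{M}^{w}_{i_\circ,x}$, keeping track of the index shifts of Section \ref{section lattice higher ranks}, so that the two presentations of "$\psi_{\beta}$ on the transverse part" really do restrict to the same character on the common subgroup; and one must verify that the $\det_{i_\circ}$-twist on the $\mathrm{GL}_{3}(E_{i_\circ})$-block is compatible with the embedding $\mathcal{I}_{i_\circ,x}$, i.e.\ that $\tilde{\xi}'$ genuinely extends $\tilde{\xi}_{\mathfrak{M}^{w}_{i_\circ,x}}|_{\tilde{T}^{1}}$. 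None of this is conceptually difficult — it uses only that $\Lambda_{i_\circ,x}$ and $\mathfrak{M}^{w}_{i_\circ,x}$ are both $\mathcal{I}_{i_\circ,x}(\prod_{i}\mathfrak{o}^{\times}_{E_{i}})$-invariant and that $\mathfrak{M}^{w}_{i_\circ,x}$ is built from members of $\Lambda_{i_\circ,x}$ — and the argument is uniform in $w=y,z$, since on $\tilde{G}_{W}$ both $\mathfrak{M}^{y}_{\pm i_\circ}$ and $\mathfrak{M}^{z}_{\pm i_\circ}$ are simply lattice sequences in the same $\mathrm{GL}_{3}(E_{i_\circ})$.
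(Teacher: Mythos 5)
Your proposal is correct and follows the same route as the paper: the paper's proof likewise reduces the transfer condition to showing $1\in I_{\tilde{G}_{W}}(\tilde{\theta}_{\Lambda},\tilde{\theta}_{\mathfrak{M}})$, i.e.\ that the two characters agree on $\tilde{H}^1_{\Lambda}\cap\tilde{H}^1_{\mathfrak{M}}$, which it declares clear from the construction. Your three-step verification simply spells out the details that the paper leaves implicit.
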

\proof
It suffices to show that $1\in I_{\tilde{G}_{W}}(\tilde{\theta}_{\Lambda},\tilde{\theta}_{{\mathfrak{M}}})$, or equivalently
$$\tilde{\theta}_{\Lambda}|_{\tilde{H}^1_{\Lambda}\cap \tilde{H}^1_{{\mathfrak{M}}}}=\tilde{\theta}_{{\mathfrak{M}}}|_{\tilde{H}^1_{\Lambda}\cap \tilde{H}^1_{{\mathfrak{M}}}},$$
which is clear from the construction of both characters.
\qed

\subsubsection{Second step: Heisenberg representations}

Again let $\mathfrak{L}$ be one of the lattice sequences in (\ref{one-of-the-lattices}). The quotient $\mathfrak{V}_\mathfrak{L}={J}_\mathfrak{L}^1/{H}_\mathfrak{L}^1$ is naturally a $\mathbf{k}_{\Fo}$-vector space. Since ${\theta}_\mathfrak{L}$ is a semi-simple character, the space $\mathfrak{V}_\mathfrak{L}$ is equipped with a non-degenerate alternating form \cite[Proposition 3.5]{stevens-supercusp}
$$h_{{\theta}_\mathfrak{L}}:(1+x,1+y)\mapsto {\theta_\mathfrak{L}}([x,y]).$$
Using the theory of Heisenberg representation, there exists an irreducible representation ${\eta}_\mathfrak{L}=\eta_{\mathfrak{L},\tilde{\xi}}$ of ${J}_\mathfrak{L}^1$, unique up to isomorphism, whose restriction to ${H}_\mathfrak{L}^1$ is a multiple of ${\theta}_\mathfrak{L}$.

We again abbreviate $\Lambda=\Lambda_{i_\circ,x}$ and $\mathfrak{M}={\mathfrak{M}^w_{i_\circ,x}}$. Define
\begin{equation*}
{J}^1_{\Lambda,{\mathfrak{M}}}={U}^1_{\Lambda,E/\Eo}{J}^1_{\mathfrak{M}^w_{i_\circ,x}}
\text{ and }
  {J}_{\Lambda,{\mathfrak{M}}}={U}_{\Lambda,E/\Eo}{J}^1_{\mathfrak{M}^w_{i_\circ,x}}.
  \end{equation*}

  \begin{prop}\label{eta-Lambda-M}
      There exists a unique irreducible representation $\eta_{\Lambda,\mathfrak{M}}$ of ${J}^1_{\Lambda,{\mathfrak{M}}}$ such that
          \begin{enumerate}[(i)]
            \item $\eta_{\Lambda,\mathfrak{M}}|_{{J}^1_{{\mathfrak{M}}}}=\eta_{{\mathfrak{M}}}$;
            \item $\eta_{\Lambda,\mathfrak{M}}$ and $\eta_{\Lambda}$ induces equivalent irreducible representations of ${U}^1_{\Lambda,F/\Fo}$.
          \end{enumerate}
  \end{prop}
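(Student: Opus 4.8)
The plan is to carry out the comparison of Heisenberg representations attached to two lattice sequences that share the same underlying skew semisimple stratum element, as in \cite[Chapter~5]{BK} for $\GL_n$ and its adaptation to classical groups in \cite{stevens-supercusp}. The essential input from the preceding Proposition is that $\theta_{\Lambda}$ and $\theta_{\mathfrak{M}}$ are transfers of one another, hence agree on $H^1_{\Lambda}\cap H^1_{\mathfrak{M}}$; recall also that both are restrictions to $\sigma$-fixed points of semisimple characters based on strata with the common element $\beta=\sum_{i\in I}\beta_i$, and write $B=Z_{\tilde{A}_W}(\beta)$ for the associated centraliser algebra.

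First I would set up the group theory. From the explicit description of $\Lambda=\Lambda_{i_\circ,x}$ and $\mathfrak{M}=\mathfrak{M}^w_{i_\circ,x}$ in Section~\ref{section lattice higher ranks} one reads off the inclusions of hereditary orders and of their filtrations, from which one obtains: $J^1_{\Lambda,\mathfrak{M}}$ is a group; $J^1_{\mathfrak{M}}\trianglelefteq J^1_{\Lambda,\mathfrak{M}}$ with $J^1_{\Lambda,\mathfrak{M}}/J^1_{\mathfrak{M}}$ a finite abelian $p$-group; $J^1_{\Lambda}\subseteq J^1_{\Lambda,\mathfrak{M}}\subseteq U^1_{\Lambda,F/\Fo}$, so that statement (ii) is meaningful; and $H^1_{\mathfrak{M}}\trianglelefteq J^1_{\Lambda,\mathfrak{M}}$ with $\theta_{\mathfrak{M}}$ invariant --- the last point because $U^1_{\Lambda,E/\Eo}\subseteq B^\times\cap I_{\tilde{G}_W}(\tilde{\theta}_{\mathfrak{M}})$ and normalizes $H^1_{\mathfrak{M}}$. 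All of this rests on the standard commutator estimates for hereditary-order filtrations, keeping track of the $E/\Eo$- versus $F/\Fo$-parts by means of Lemma~\ref{compact groups sigma invariant}.

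For existence and uniqueness I would follow \cite[\S5.1]{BK} and \cite{stevens-supercusp}. The alternating form $k_{\mathfrak{M}}\colon(x,y)\mapsto\theta_{\mathfrak{M}}([x,y])$ on $J^1_{\Lambda,\mathfrak{M}}/H^1_{\mathfrak{M}}$ makes $\eta_{\mathfrak{M}}$ the unique irreducible representation of $J^1_{\mathfrak{M}}$ over $\theta_{\mathfrak{M}}$; using the transfer compatibility of $\theta_{\Lambda}$ and $\theta_{\mathfrak{M}}$ on $H^1_{\Lambda}\cap H^1_{\mathfrak{M}}$ one identifies the images of $J^1_{\Lambda}$ and $J^1_{\mathfrak{M}}$ inside this space, checks that the former is isotropic of the expected dimension, and deduces that inducing the Heisenberg representation of $J^1_{\Lambda}$ up to $J^1_{\Lambda,\mathfrak{M}}$ yields an irreducible representation lying over $\theta_{\mathfrak{M}}$ whose restriction to $J^1_{\mathfrak{M}}$ is $\eta_{\mathfrak{M}}$; this is the elementary fact that the induction of the Heisenberg representation of a Lagrangian-type subgroup is again a Heisenberg representation. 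Taking this representation to be $\eta_{\Lambda,\mathfrak{M}}$, property (ii) follows by inducing both $\eta_{\Lambda,\mathfrak{M}}$ and $\eta_{\Lambda}$ up to $U^1_{\Lambda,F/\Fo}$ and comparing by Frobenius reciprocity, and uniqueness is Clifford theory: any other representation satisfying (i) is of the form $\eta_{\Lambda,\mathfrak{M}}\otimes\chi$ for a character $\chi$ of $J^1_{\Lambda,\mathfrak{M}}/J^1_{\mathfrak{M}}$, and distinct such twists induce intertwining-disjoint representations of $U^1_{\Lambda,F/\Fo}$ --- exactly as in Proposition~\ref{beta-extension-tilde} --- so (ii) forces $\chi=1$.

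I expect the main obstacle to be the symplectic bookkeeping in the third step: one must verify that, under the transfer identification, the image of $J^1_{\Lambda}$ in $J^1_{\Lambda,\mathfrak{M}}/H^1_{\mathfrak{M}}$ is isotropic of exactly the size needed for the induced representation to be irreducible and to restrict correctly to $J^1_{\mathfrak{M}}$. This is where the particular shape of $\Lambda_{i_\circ,x}$ and $\mathfrak{M}^w_{i_\circ,x}$, and the interplay of the $E/\Eo$- and $F/\Fo$-filtrations, genuinely enter; the remainder is formal Clifford and Heisenberg theory.
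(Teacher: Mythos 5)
The paper offers no argument here: its ``proof'' is the citation \cite[Propositions 3.7 and 3.8]{stevens-supercusp}, whose content is the classical-group adaptation of \cite[\S 5.1]{BK}. Your sketch is in the right general spirit, but the central construction step has a genuine gap: you define $\eta_{\Lambda,\mathfrak{M}}$ as $\Ind_{J^1_{\Lambda}}^{J^1_{\Lambda,\mathfrak{M}}}\eta_{\Lambda}$, which presupposes $J^1_{\Lambda}\subseteq J^1_{\Lambda,\mathfrak{M}}=U^1_{\Lambda,E/\Eo}J^1_{\mathfrak{M}}$. The hypothesis in force is only that the $E$-parts of the orders are nested, $\mathfrak{A}_{\Lambda,E}\subseteq\mathfrak{A}_{\mathfrak{M},E}$ (compare the conditional clause ``if $\mathfrak{A}_{\Lambda}\subseteq\mathfrak{A}_{\mathfrak{M}}$'' in Proposition \ref{bijection of extensions}), and in the present situation the group containment actually fails: for $d=2k+1$ the matrices in Section \ref{section appendix levels} give the $(+,-)$ block of $J^1_{\Lambda}$ as $\mathfrak{P}^{k+1}$ while that of $J^1_{\mathfrak{M}^z}$ is $\mathfrak{P}^{k+2}$, and multiplying by $U^1_{\Lambda,E/\Eo}$ (a torus subgroup) cannot enlarge a unipotent block. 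So the induction you propose is undefined, and the subsequent discussion of ``the image of $J^1_{\Lambda}$ in $J^1_{\Lambda,\mathfrak{M}}/H^1_{\mathfrak{M}}$'' does not make sense as stated. Even where the containment holds, you would still need the nontrivial index identity $[J^1_{\Lambda,\mathfrak{M}}:J^1_{\Lambda}]\dim\eta_{\Lambda}=\dim\eta_{\mathfrak{M}}$ before the induced representation could restrict to $\eta_{\mathfrak{M}}$ on the nose.

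The argument in the cited source runs in the opposite direction. One first shows that $\eta_{\mathfrak{M}}$ extends from $J^1_{\mathfrak{M}}$ to $J^1_{\Lambda,\mathfrak{M}}$ (the quotient is an abelian $p$-group and $U^1_{\Lambda,E/\Eo}$ intertwines $\eta_{\mathfrak{M}}$), so that the extensions form a torsor under characters of $J^1_{\Lambda,\mathfrak{M}}/J^1_{\mathfrak{M}}$; one then proves, via a Mackey and intertwining computation together with the index identity above, that $\Ind_{J^1_{\Lambda}}^{U^1_{\Lambda,F/\Fo}}\eta_{\Lambda}$ is irreducible and is equivalent to $\Ind_{J^1_{\Lambda,\mathfrak{M}}}^{U^1_{\Lambda,F/\Fo}}\eta'$ for exactly one extension $\eta'$. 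Your uniqueness step (``distinct twists induce disjoint representations'') is asserted rather than proved, and it is exactly this disjointness that the intertwining computation supplies. If you want a self-contained proof you should follow that route; otherwise the honest proof here is the citation the paper gives.
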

  \proof
  \cite[Proposition 3.7 and 3.8]{stevens-supercusp}.
  \qed

\subsubsection{Third step: Beta-extensions}

Now we specify $\mathfrak{L}=\Lambda_x$ and construct supercuspidal representations of $G_V$.

\begin{prop}\label{beta-extension-U}
\begin{enumerate}[(i)]
 \item There is a unique irreducible representation ${\kappa}_{x,\tilde{\xi}}$ of ${J}_{\Lambda_x}$ such that
  \begin{enumerate}[(a)]
    \item ${\kappa}_{x,\tilde{\xi}}|_{{J}_{\Lambda_x}^1}\cong {\eta}_{\Lambda_x}$;
    \item ${\kappa}_{x,\tilde{\xi}}$ is intertwined by $T_{x}$;
        \item the character $\det{\kappa}_{x,\tilde{\xi}}$ has a finite order of a $p$-power.
  \end{enumerate}
  \item All extensions satisfying (a) and (b) above are of the form ${\kappa}_{x,\tilde{\xi}}\cdot{\chi}$, where ${\chi}$ is a tamely ramified character of ${J}_{\Lambda_x}$, which means that it is inflated from ${J}_{\Lambda_x}/{J}_{\Lambda_x}^1\cong T_x/T_x^1$.
  \end{enumerate}
 \end{prop}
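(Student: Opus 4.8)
The plan is to follow the same scheme as in the general linear case, Proposition~\ref{beta-extension-tilde}, with the role of \cite[(5.2.2)]{BK} now played by Stevens's construction of $\beta$-extensions \cite[Section~4]{stevens-supercusp}. The first observation is that condition~(b) is automatic once~(a) holds: since $J_{\Lambda_x}=T_xU^{d}_{\Lambda_x,F/\Fo}$ actually contains $T_x$, every element of $T_x$ intertwines any representation of $J_{\Lambda_x}$ with itself. This is the exact analogue of the situation in Proposition~\ref{beta-extension-tilde}, where the $\beta$-extensions of $\tilde\eta_{\Lambda_\circ}$ already form a full torsor under the characters of $\tilde J_{\Lambda_\circ}/\tilde J^1_{\Lambda_\circ}$; the intertwining hypothesis is retained only for parallelism with the analogous statements for the larger groups attached to $\mathfrak M^w_{i_\circ,x}$. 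So the real content of the proposition is the torsor statement~(ii) together with the uniqueness forced by the determinant normalization~(c) in~(i).

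For existence, note that $\theta_{\Lambda_x}$ is by construction a skew semi-simple character fixed by $T_x$, hence so is the Heisenberg representation $\eta_{\Lambda_x}$ lying above it. Applying Stevens's theory of $\beta$-extensions for $G_V$ relative to the stratum $[\Lambda_x,2d,0,\beta]$ — here the centralizer $Z_{G_V}(\beta)\cong\prod_{i\in I}\mathrm{U}_1(E_i/\Eo_i)$ is already compact, so there is no ambiguity in the choice of parahoric — provides at least one irreducible representation $\kappa_0$ of $J_{\Lambda_x}$ with $\kappa_0|_{J^1_{\Lambda_x}}\cong\eta_{\Lambda_x}$. For~(ii) I would then argue by Clifford theory and Schur's lemma: since $\eta_{\Lambda_x}$ is irreducible, $J^1_{\Lambda_x}\triangleleft J_{\Lambda_x}$, and the quotient $J_{\Lambda_x}/J^1_{\Lambda_x}\cong T_x/T_x^1\cong\prod_{i\in I}\mathrm{U}_1(\mathbf{k}_{E_i}/\mathbf{k}_{\Eo_i})$ is abelian, any two extensions of $\eta_{\Lambda_x}$ to $J_{\Lambda_x}$ differ by a character of $J_{\Lambda_x}/J^1_{\Lambda_x}$, and conversely every such twist is again an extension; these twists are exactly the characters called tamely ramified in the statement, and they have order dividing $\prod_{i\in I}(\qo^{n_i}+1)$, which is prime to $p$.

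It then remains to single out $\kappa_{x,\tilde\xi}$ inside this torsor by the determinant condition. For $\chi\in(T_x/T_x^1)^{\wedge}$ one has $\det(\kappa_0\cdot\chi)=(\det\kappa_0)\cdot\chi^{\dim\eta_{\Lambda_x}}$, where $\dim\eta_{\Lambda_x}=[J^1_{\Lambda_x}:H^1_{\Lambda_x}]^{1/2}$ is a power of $p$; since $|T_x/T_x^1|$ is prime to $p$, the map $\chi\mapsto\chi^{\dim\eta_{\Lambda_x}}$ is an automorphism of $(T_x/T_x^1)^{\wedge}$. As $J^1_{\Lambda_x}$ is pro-$p$, the prime-to-$p$ part of the finite-order character $\det\kappa_0$ factors through $J_{\Lambda_x}/J^1_{\Lambda_x}$, hence equals some $\mu\in(T_x/T_x^1)^{\wedge}$; taking the unique $\chi$ with $\chi^{\dim\eta_{\Lambda_x}}=\mu^{-1}$ yields the unique extension whose determinant has $p$-power order, and this is $\kappa_{x,\tilde\xi}$. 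The only genuinely non-formal ingredient in this plan is the existence of such a $\beta$-extension with the right intertwining and transfer behaviour, that is, Stevens's \cite[Section~4]{stevens-supercusp}, which is where the odd residual characteristic is used; everything else is elementary arithmetic of characters, made clean precisely by the facts that $p$ is odd, $J^1_{\Lambda_x}$ is pro-$p$, and $T_x/T_x^1$ has order coprime to $p$.
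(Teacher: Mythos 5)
Your proposal is correct and follows essentially the same route as the paper, which simply cites \cite[Theorem 4.1]{stevens-supercusp}: existence comes from Stevens's $\beta$-extension construction (trivialized here because $Z_{G_V}(\beta)=T_x$ is compact, so $\mathfrak{A}_{\Lambda_x,E}$ is simultaneously minimal and maximal), the torsor statement is Gallagher/Clifford theory over the abelian prime-to-$p$ quotient $J_{\Lambda_x}/J^1_{\Lambda_x}\cong T_x/T_x^1$, and the determinant normalization singles out the distinguished extension exactly as in the Bushnell--Henniart argument the paper invokes for the $\mathrm{GL}$ case. Your observations that condition (b) is automatic (since $T_x\subseteq J_{\Lambda_x}$) and that $\chi\mapsto\chi^{\dim\eta_{\Lambda_x}}$ is an automorphism of $(T_x/T_x^1)^{\wedge}$ because $p$ is odd and $|T_x/T_x^1|$ is prime to $p$ are both accurate.
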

 \proof
This is from \cite[Theorem 4.1]{stevens-supercusp}.
\qed

Define \begin{equation*}
  \tilde{\xi}_{{\Lambda_x},\mathrm{wd}}=\prod_{i\in I}\tilde{\xi}_{i,\mathrm{wd}} \text{ and }\tilde{\xi}_{{\Lambda_x},\mathrm{tm}}=\tilde{\xi}_{\Lambda_x}\tilde{\xi}_{{\Lambda_x},\mathrm{wd}}^{-1}
\end{equation*}
as a character of $\prod_{i\in I}U^1_{E_i}$, where $\tilde{\xi}_{i,\mathrm{wd}}$ is a character defined using Proposition \ref{wild-part-tilde}. 
Also, define
 \begin{equation}\label{wild-tame-U}
  {\xi}_{{\Lambda_x},\mathrm{tm}}\text{ and }{\xi}_{{\Lambda_x},\mathrm{wd}}
\end{equation}
 such that  $\tilde{\xi}_{{\Lambda_x},\mathrm{tm}}={\xi}_{{\Lambda_x},\mathrm{tm}}\circ(1-{c})$ and  $\tilde{\xi}_{{\Lambda_x},\mathrm{wd}}={\xi}_{{\Lambda_x},\mathrm{wd}}\circ(1-{c})$. We put
$$\lambda_{{x},\tilde{\xi}}={\xi}_{{\Lambda_x},\mathrm{tm}}\cdot {\kappa}_{x,\tilde{\xi}}$$
and
$$\pi_{{x},\tilde{\xi}}=\cInd^{G_V}_{{J}_{\Lambda_x}}\lambda_{{x},\tilde{\xi}}.$$

\begin{prop}
  $\pi_{{x},\tilde{\xi}}$ is an irreducible supercuspidal representation of $G_V$.
\end{prop}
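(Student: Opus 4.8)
The plan is to exhibit $(J_{\Lambda_x},\lambda_{x,\tilde\xi})$ as a maximal semi-simple type in the sense of \cite{stevens-supercusp} and to deduce the statement from the general theory of such types. Concretely, I would reduce both the irreducibility and the supercuspidality of $\pi_{x,\tilde\xi}=\cInd^{G_V}_{J_{\Lambda_x}}\lambda_{x,\tilde\xi}$ to the single intertwining identity
\[
I_{G_V}(\lambda_{x,\tilde\xi})=J_{\Lambda_x},
\]
and then invoke the well-known fact that for a compact open subgroup $K\leq G_V$ and an irreducible smooth representation $\rho$ of $K$ with $I_{G_V}(\rho)=K$, the representation $\cInd^{G_V}_K\rho$ is irreducible supercuspidal. (Here the centre of $G_V$ is compact, so compact induction from a compact open subgroup automatically has compactly supported matrix coefficients; alternatively one cites the main theorem of \cite{stevens-supercusp} directly, the present construction being the case of a maximal elliptic torus $\mathbf{T}$.)

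First I would observe that $\lambda_{x,\tilde\xi}={\xi}_{\Lambda_x,\tame}\cdot\kappa_{x,\tilde\xi}$ is irreducible, being the product of the irreducible representation $\kappa_{x,\tilde\xi}$ of Proposition~\ref{beta-extension-U} with the one-dimensional tamely ramified character ${\xi}_{\Lambda_x,\tame}$, which is inflated from $J_{\Lambda_x}/J^1_{\Lambda_x}\cong T_x/T_x^1$ and, in particular, trivial on $J^1_{\Lambda_x}$. Hence $\lambda_{x,\tilde\xi}|_{J^1_{\Lambda_x}}\cong\kappa_{x,\tilde\xi}|_{J^1_{\Lambda_x}}\cong\eta_{\Lambda_x}$, and restricting intertwining operators from $J_{\Lambda_x}\cap gJ_{\Lambda_x}g^{-1}$ to $J^1_{\Lambda_x}\cap gJ^1_{\Lambda_x}g^{-1}$ gives $I_{G_V}(\lambda_{x,\tilde\xi})\subseteq I_{G_V}(\eta_{\Lambda_x})$, while $J_{\Lambda_x}\subseteq I_{G_V}(\lambda_{x,\tilde\xi})$ is automatic. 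So the task becomes to show $I_{G_V}(\eta_{\Lambda_x})=J_{\Lambda_x}$.

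For this I would use that $\theta_{\Lambda_x}$ was shown above to be a skew semi-simple character attached to the skew semi-simple stratum $[\Lambda_x,2d,0,\beta]$ with $\beta=\sum_{i\in I}\beta_i$ and $F[\beta_i]=E_i$. Therefore the $\beta$-centraliser in $\tilde A_V$ is $B:=Z_{\tilde A_V}(\beta)=\prod_{i\in I}E_i$ (each $V_i$ being one-dimensional over $E_i$), so $B^\times=\tilde T$ and $B^\times\cap G_V=T_x=\prod_{i\in I}\mathrm{U}_1(E_i/\Eo_i)$, which is compact since $\varpi\notin\ker N_{E_i/\Eo_i}$. Stevens's description of the intertwining of the Heisenberg representation of a skew semi-simple character (\cite[\S3]{stevens-supercusp}, cf.\ \cite{stevens-ss-char}) then gives $I_{G_V}(\eta_{\Lambda_x})=J^1_{\Lambda_x}\,(B^\times\cap G_V)\,J^1_{\Lambda_x}=J^1_{\Lambda_x}\,T_x\,J^1_{\Lambda_x}$, and since $J^1_{\Lambda_x}=T_x^1U^{d}_{\Lambda_x,F/\Fo}$ with $T_x^1\subseteq T_x$, this equals $T_xU^{d}_{\Lambda_x,F/\Fo}=J_{\Lambda_x}$. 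Combined with the second paragraph this yields $I_{G_V}(\lambda_{x,\tilde\xi})=J_{\Lambda_x}$, and the fact quoted in the first paragraph concludes.

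The main obstacle is this intertwining computation — more precisely, checking that the hypotheses of Stevens's theory of skew semi-simple characters genuinely apply here: that $\Lambda_x$ is a conjugate-self-dual $\mathfrak o_E$-lattice sequence, that $\theta_{\Lambda_x}$ is genuinely skew semi-simple (this is where the $\tfrac12$-power normalisation of the preceding subsection is used, legitimate because $p$ is odd), and that the $\beta$-centraliser is exactly $\prod_{i\in I}E_i$, so that the relevant finite reductive quotient $J_{\Lambda_x}/J^1_{\Lambda_x}$ is the anisotropic torus $\prod_{i\in I}\mathrm{U}_1(\mathbf k_{E_i}/\mathbf k_{\Eo_i})$, the torus $T_x$ is compact with no non-compact enlargement, and $(J_{\Lambda_x},\lambda_{x,\tilde\xi})$ is a \emph{maximal} type. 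Once these points are secured, the remaining steps — the twist by ${\xi}_{\Lambda_x,\tame}$, the reduction to $\eta_{\Lambda_x}$, and the passage from $I_{G_V}(\lambda_{x,\tilde\xi})=J_{\Lambda_x}$ to irreducible supercuspidality — are routine.
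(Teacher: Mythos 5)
Your argument is correct and is essentially the route the paper takes: both reduce the statement to the intertwining identity $I_{G_V}(\lambda_{x,\tilde{\xi}})=J_{\Lambda_x}$, which holds because the underlying torus is elliptic so that $B^\times\cap G_V=T_x$ is compact and already contained in $J_{\Lambda_x}$. The only difference is one of citation: the paper identifies $(J_{\Lambda_x},\lambda_{x,\tilde{\xi}})$ as a cuspidal type in the sense of Stevens--Miyauchi and quotes \cite[Proposition 6.18]{stevens-supercusp} for the intertwining, whereas you derive the same identity from Stevens's intertwining formula for the Heisenberg representation $\eta_{\Lambda_x}$ of the skew semi-simple character, which amounts to unpacking that citation.
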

\proof
Note that $({{J}_{\Lambda_x}},\lambda_{{x},\tilde{\xi}})$ is a \emph{cuspidal type} as defined in \cite[Definition 4.3]{Stevens-Miya}, since  $\kappa_{{x},\tilde{\xi}}$ is a beta-extension and ${\xi}_{{\Lambda_x},\mathrm{tm}}$ is the inflation of an irreducible cuspidal representation, ,indeed a character, of $J_{\Lambda_x}/J_{\Lambda_x}^1 \cong U_{\Lambda_x,E/\Eo}/U^1_{\Lambda_x,E/\Eo}=T_x/T_x^1$ such that $U_{\Lambda_x,E/\Eo}=T_x$ is a maximal parahoric subgroup of $Z_{G_V}(T_x)=T_x$ itself. (For unitary groups, the conditions that $U_{\Lambda_x,E/\Eo}=U^\circ_{\Lambda_x,E/\Eo}$ and that $Z_{G_V}(T_x)$ has a compact center are always satisfied.) 
Moreover, the intertwining $I_{G_V}(\lambda_{{x},\tilde{\xi}})$ is ${{J}_{\Lambda_x}}$ by \cite[Proposition 6.18]{stevens-supercusp}, so that $\pi_{{x},\tilde{\xi}}$ is irreducible.
\qed


\begin{rmk}\label{compare-cuspidal-types}
  In the construction of cuspidal types, we used the method in \cite{reeder-pos-depth} for simple characters, and the method in \cite{stevens-supercusp} for beta-extensions. We now verify that the cuspidal type $\lambda_{{x},\tilde{\xi}}$ is equal to $\kappa_{\xi}$ in \cite[Section 3.2]{reeder-pos-depth} (with the notation of character $\chi$ in \emph{loc. cit.} replaced by our character $\xi$). First our $T_x$, ${{H}^1_{\Lambda_x}}$, ${{J}^1_{\Lambda_x}}$, and ${{J}_{\Lambda_x}}$ are $T^F$, $T^F_{0+}J_s$, $T^F_{0+}J_{s+}$, and $K_s=T^F\ltimes J_{s+}$ in \emph{loc. cit.} respectively (with $s=d$), and
  \begin{equation}\label{quotients}
    J_s/J_{s+}\cong {{J}^1_{\Lambda_x}}/{{H}^1_{\Lambda_x}}.
  \end{equation}
   Let $\tilde{\xi}_{{\Lambda_x},\mathrm{wd}}$ be defined as in (\ref{wild-tame-U}), and view it as a character on $T^F\ltimes J_{s}$ by natural extension. Let $\phi_{\xi}$ be the Weil representation of $T^F\ltimes J_{s}$ defined in \emph{loc. cit.}. The tensor product $\tilde{\xi}_{{\Lambda_x},\mathrm{wd}}\otimes \phi_{\xi}$ descends to a representation $\kappa$ on $K_s={{J}_{\Lambda_x}}$. We then check the conditions in Proposition \ref{beta-extension-U}.
  \begin{enumerate}[(i)]
    \item $\kappa|_{{J}^1_{\Lambda_x}}$ is a multiple of $\eta_{\xi}=\hat{\xi}$, the Heisenberg extension defined in \emph{loc. cit.}. (Note that we can view $\hat{\xi}$ as a representation of ${{J}^1_{\Lambda_x}}$ using (\ref{quotients}).)

        \item $\kappa$ is intertwined by $T_x$, which can be proved using \cite{adler-cusp}.

            \item $\det\kappa$ has a $p$-power order. This can be proved by tracing back the constructions as follows. We again use the notation in \cite[Section 3.2]{reeder-pos-depth}. Denote the quotient in (\ref{quotients}) by $\mathfrak{V}$, which is isomorphic to $\bar{\mathfrak{V}}\ltimes \mathbb{F}_p$ for an abelian $p$-group $\bar{\mathfrak{V}}$. Now $\phi_{\xi}$ is defined as a pull-back of the Weil-representation $\omega_\xi$ of $\mathrm{Sp}(\bar{\mathfrak{V}})\ltimes \mathfrak{V}$ using a special isomorphism
                $$W:T^F\ltimes J_{s}\rightarrow \mathrm{Sp}(\bar{\mathfrak{V}})\ltimes \mathfrak{V}$$
                (see \cite[Section 10]{Yu-cusp} for details). Therefore, for every $t\in T^F$ and $j\in J_{s}$,
                \begin{equation*}
                  \begin{split}
                    \det\kappa(tj)&=\det(\tilde{\xi}_{{\Lambda_x},\mathrm{wd}}\otimes \phi_{\xi})(t,j)
                                        \\
                    &=(\det\tilde{\xi}_{{\Lambda_x},\mathrm{wd}}(t,j))^{\dim \phi_\xi} \det\phi_{\xi}(t,j)
                    \\
                    &=(\det\tilde{\xi}_{{\Lambda_x},\mathrm{wd}}(t))^{\dim \phi_\xi} \det\omega_{\xi}(W(t,1)) \det\eta_{\xi}(W(1,j)).
                  \end{split}
                \end{equation*}
                In the product above, the first character is a p-power, the second one is trivial as a character of a symplectic group, and the third character is a p-power as defined on a p-group.
  \end{enumerate}
  Therefore, this $\kappa$ is our beta-extension $\kappa_{{x},\tilde{\xi}}$, and so
  $$ \lambda_{{x},\tilde{\xi}}={\xi}_{{\Lambda_x},\mathrm{tm}}\cdot {\kappa}_{x,\tilde{\xi}}=\tilde{\xi}_{{\Lambda_x}}\otimes \phi_{\xi}$$
  which is $\kappa_{\xi}$ in \emph{loc. cit.}.

\qed\end{rmk}

We now prove a result similar to Proposition \ref{repres-determines-char-GL}, that the isomoprhism class of $\pi_{x,\tilde{\xi}}$ depends only on the equivalence class on the pair $(T_x,\tilde{\xi}_x)$, where the equivalence relation is defined in Proposition \ref{equiv-relation-U} below.

\begin{lem}\label{U-equiv}
The following are equivalent.
\begin{enumerate}[(i)]
  \item $\pi_{x,\tilde{\xi}}\cong \pi_{x',\tilde{\xi}'}$. \label{U-equiv-repres}
  \item There exists $g\in G_V$ such that  \label{U-equiv-types} $\mathrm{Ad}(g)({J}_{\Lambda_x},\lambda_{{x},\tilde{\xi}})=({J}_{\Lambda_{x'}},\lambda_{{x'},\tilde{\xi}'})$.
      \item There exists $g\in G_V$ such that $\mathrm{Ad}(g)({T}_{x},{\xi}_{x})=({T}_{{x'}},{\xi}'_{x'})$. \label{U-equiv-char}

\end{enumerate}
\end{lem}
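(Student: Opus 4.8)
The plan is to prove the cycle $(\ref{U-equiv-types})\Rightarrow(\ref{U-equiv-repres})$, $(\ref{U-equiv-repres})\Rightarrow(\ref{U-equiv-types})$, $(\ref{U-equiv-types})\Rightarrow(\ref{U-equiv-char})$ and $(\ref{U-equiv-char})\Rightarrow(\ref{U-equiv-types})$, so that statement $(\ref{U-equiv-types})$ serves as the hub through which the representation and the torus--character pair are compared; this parallels Proposition \ref{repres-determines-char-GL} with $G_V$ in place of $\tilde G_{V_\circ}$. Two of these implications are immediate: $(\ref{U-equiv-types})\Rightarrow(\ref{U-equiv-repres})$ is just the compatibility of $\cInd$ with conjugation, since $\mathrm{Ad}(g)({J}_{\Lambda_x},\lambda_{x,\tilde\xi})=({J}_{\Lambda_{x'}},\lambda_{x',\tilde\xi'})$ gives $\pi_{x',\tilde\xi'}=\cInd^{G_V}_{{J}_{\Lambda_{x'}}}\lambda_{x',\tilde\xi'}\cong\cInd^{G_V}_{{J}_{\Lambda_x}}\lambda_{x,\tilde\xi}=\pi_{x,\tilde\xi}$; and $(\ref{U-equiv-char})\Rightarrow(\ref{U-equiv-types})$ follows from the functoriality of the construction of Section \ref{section Cuspidal types for U} under inner automorphisms. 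Concretely, for $g\in G_V=\tilde G_V^\sigma$ one has $\mathrm{Ad}(g)\circ\sigma=\sigma\circ\mathrm{Ad}(g)$, so $\mathrm{Ad}(g)$ commutes with the base-change map $1-c$ (Proposition \ref{U1-base-change}) and sends $\tilde\xi=\xi_x\circ(1-c)$ to $\xi'_{x'}\circ(1-c)=\tilde\xi'$; it then carries the entire chain $\Lambda_x\rightsquigarrow({H}^1_{\Lambda_x}\subset{J}^1_{\Lambda_x}\subset{J}_{\Lambda_x})\rightsquigarrow\theta_{\Lambda_x}\rightsquigarrow\eta_{\Lambda_x}\rightsquigarrow\kappa_{x,\tilde\xi}\rightsquigarrow\lambda_{x,\tilde\xi}$ onto the chain attached to $(\mathrm{Ad}(g)T_x,\tilde\xi')$, because each step is characterized by conjugation-stable properties (the skew semisimple stratum $[\Lambda_x,2d,0,\beta]$ is built from $\tilde\xi$ as in $(\ref{element-b})$, and the conditions (a)--(c) of Propositions \ref{beta-extension-U} and \ref{wild-part-tilde} are inner-automorphism-invariant). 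The one point to check is that $\mathrm{Ad}(g)(\Lambda_x)=\Lambda_{x'}$: both are the period-$2$, conjugate-self-dual, $T$-invariant lattice sequences of the normalized shape of Section \ref{section lattice seq} attached to the two embeddings, and this pins them down up to the equivalence that does not affect $({H}^1,{J}^1,{J})$.

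For $(\ref{U-equiv-types})\Rightarrow(\ref{U-equiv-char})$, conjugation by $g$ conjugates each intrinsic layer of $(J_{\Lambda_x},\lambda_{x,\tilde\xi})$. In particular it conjugates the skew semisimple character $\theta_{\Lambda_x}$ occurring in $\lambda_{x,\tilde\xi}|_{{H}^1_{\Lambda_x}}$ onto $\theta_{\Lambda_{x'}}$; by the rigidity of semisimple characters, whose underlying skew semisimple strata are determined up to $G_V$-conjugacy \cite{stevens-ss-char}, this conjugates the defining element $\beta$, hence the torus $T_x=Z_{G_V}(\beta)$, onto $\beta'$ and $T_{x'}$. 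Moreover $\theta_{\Lambda_x}|_{T^1_x}=\xi_x|_{T^1_x}$ (the square root in $\theta_{\mathfrak L}=(\tilde\theta_{\mathfrak L}|_{H^1_{\mathfrak L}})^{1/2}$ exactly undoes the squaring that $1-c$ induces on the pro-$p$ group $T^1_x$), while $\kappa_{x,\tilde\xi}$ is the unique twist of $\lambda_{x,\tilde\xi}$ by a tame character of ${J}_{\Lambda_x}/{J}^1_{\Lambda_x}\cong T_x/T^1_x$ with $p$-power determinant (Proposition \ref{beta-extension-U}), so the tame character $\xi_{\Lambda_x,\mathrm{tm}}$ by which $\lambda_{x,\tilde\xi}$ differs from $\kappa_{x,\tilde\xi}$ is intrinsic; together with $\xi_{\Lambda_x,\mathrm{wd}}$, which is pinned down by $\xi_x|_{T^1_x}$ and $\xi_{\mathrm{wd}}(\varpi)=1$ (Proposition \ref{wild-part-tilde}), this recovers $\xi_x$ on all of $T_x$. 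Transporting these intrinsic objects by $g$ gives $\mathrm{Ad}(g)(T_x,\xi_x)=(T_{x'},\xi'_{x'})$.

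The genuine content is $(\ref{U-equiv-repres})\Rightarrow(\ref{U-equiv-types})$. If $\pi_{x,\tilde\xi}\cong\pi_{x',\tilde\xi'}$, then since $\pi_{x,\tilde\xi}=\cInd^{G_V}_{{J}_{\Lambda_x}}\lambda_{x,\tilde\xi}$, Frobenius reciprocity and the Mackey formula produce $g\in G_V$ intertwining $\lambda_{x,\tilde\xi}$ with $\lambda_{x',\tilde\xi'}$. I then intend to promote this to $\mathrm{Ad}(g)$-conjugacy in the three standard layers of \cite{BK,stevens-supercusp}: the intertwining of the full types forces the underlying skew semisimple characters to intertwine, so after replacing $g$ within its double coset (using the description of intertwining of semisimple characters in \cite{stevens-ss-char}) we may assume $\theta_{\Lambda_x}$ and $\theta_{\Lambda_{x'}}$ are transfers of one another and conjugate the two strata, reducing to $T_x=T_{x'}$, $\theta_{\Lambda_x}=\theta_{\Lambda_{x'}}$; then $\eta_{\Lambda_x}\cong\eta_{\Lambda_{x'}}$ by uniqueness of the Heisenberg representation; finally the residual intertwining together with $I_{G_V}(\lambda_{x,\tilde\xi})={J}_{\Lambda_x}$ (\cite[Proposition 6.18]{stevens-supercusp}, cf. Proposition \ref{beta-extension-U}) and the analogous rigidity of beta-extensions forces the tame twists, hence the cuspidal types, to agree.

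I expect this last implication to be the main obstacle. Within it, the delicate point is to run the layered intertwining-implies-conjugacy argument so that the genuinely distinct embedding classes $x\in\mathcal D$ are separated by the $G_V$-conjugacy, and not merely the $\tilde G_V$-conjugacy, of the associated skew semisimple strata, since it is exactly this refinement that encodes the partition $I=I_{\mathrm o}\sqcup I_{\mathrm e}$ and the Hasse invariant of the Hermitian lattice $\Lambda_x$. Stevens's conjugacy results for skew semisimple characters and the period-$2$, self-dual normalization of $\Lambda_x$ are the tools, but the bookkeeping of the sign data $(e_i)_{i\in I}$ against the corresponding Hermitian structure is where the care is needed.
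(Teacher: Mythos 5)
Your route is genuinely different from the paper's. The paper does not re-run the Bushnell--Kutzko--Stevens machinery at all: for (i)$\Leftrightarrow$(ii) it first matches the Stevens-style type $\lambda_{x,\tilde\xi}$ with the Adler--Yu--Reeder construction (Remark \ref{compare-cuspidal-types}) and then simply invokes \cite[Theorem 6.7]{HM}, where the ``isomorphic compact inductions imply conjugate generic data'' statement is already proved; for (ii)$\Leftrightarrow$(iii) it translates into the notation of \cite{DR} and manipulates the explicit elements $m_\lambda, q_\lambda, t_\nu$ in the affine Weyl group using Lemmas 4.5.2, 6.9.1 and 9.6.1 of that paper. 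Your plan instead makes (ii) the hub and proposes to prove everything from scratch inside the Stevens framework. The easy implications you give ((ii)$\Rightarrow$(i) by transport of $\cInd$, and (iii)$\Rightarrow$(ii) by equivariance of the construction, including the check that $\mathrm{Ad}(g)\Lambda_x$ and $\Lambda_{x'}$ define the same filtration subgroups) are fine, and your recovery of $(T_x,\xi_x)$ from the type in (ii)$\Rightarrow$(iii) -- in particular the observation that the square root in $\theta_{\mathfrak L}=(\tilde\theta_{\mathfrak L}|_{H^1_{\mathfrak L}})^{1/2}$ undoes the squaring that $1-c$ induces on $T_x^1$ -- is correct.

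The gap is in (i)$\Rightarrow$(ii), exactly where you predict the difficulty. Mackey theory only yields an element $g$ \emph{intertwining} $\lambda_{x,\tilde\xi}$ with $\lambda_{x',\tilde\xi'}$, and your promotion of intertwining to conjugacy is asserted, not proved: you appeal to ``the description of intertwining of semisimple characters in \cite{stevens-ss-char}'' and to ``rigidity of beta-extensions,'' but an intertwining-implies-conjugacy theorem for skew semisimple characters of classical groups is not available in the sources you cite, and it is known to be delicate for classical groups precisely because two skew strata can intertwine in $G_V$ without their self-dual lattice data (here, the sign data distinguishing $x$ from $x'$ in $\mathcal D$) being $G_V$-conjugate. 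Closing this requires either the regularity of $\tilde\xi$ used in an essential, quantitative way (this is what the genericity hypothesis in \cite[Theorem 6.7]{HM} supplies, which is why the paper routes through that theorem), or a separate argument that distinct classes in $\mathcal D$ produce non-isomorphic inductions. As written, the last paragraph of your proposal names the obstacle but does not supply the argument, so the implication (i)$\Rightarrow$(ii) -- the only direction with real content -- remains open in your write-up.
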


\proof
After we identified in Remark \ref{compare-cuspidal-types} the constructions of cuspidal types in \cite{stevens-supercusp} and \cite{reeder-pos-depth}, where the latter is based on \cite{adler-cusp}, \cite{Yu-cusp}, we can show that (\ref{U-equiv-repres})$\Leftrightarrow$(\ref{U-equiv-types}) is given by \cite[Theorem 6.7]{HM}. To prove (\ref{U-equiv-types})$\Leftrightarrow$(\ref{U-equiv-char}), we make use of the results in \cite{DR}, and so we switch all notations to those in \emph{loc. cit.} below.
\\\\
To prove (\ref{U-equiv-char})$\Rightarrow$(\ref{U-equiv-types}), given $g(S_\mu,\theta_\mu)=(S_\lambda,\theta_\lambda)$ for some $g\in G^{F_u}$, then we claim that $\mathrm{Ad}(g)\varkappa_\mu{\cong }  \varkappa_\lambda$. From Lemma 6.9.1 of \emph{loc. cit.}, we have $q_\lambda^{-1}gq_\mu\in T$ where $q_\lambda^{-1}gq_\mu=t_\nu\in X=T/{}^0T$. By Lemma 4.5.2(3) of \emph{loc. cit.}, the element
$$g':=p_\lambda t_\nu p_\mu^{-1}=p_\lambda(q_\lambda^{-1}gq_\mu) p_\mu^{-1}=m_\lambda^{-1}gm_\mu$$
in the affine Weyl group $W=N/{}^0T$ transforms $\kappa_\mu$ to $\kappa_\lambda$, and therefore $g$ transforms $\varkappa_\mu$ to $\varkappa_\lambda$.
\\\\
To prove the converse (\ref{U-equiv-types})$\Rightarrow$(\ref{U-equiv-char}), if $\mathrm{Ad}(g)\varkappa_\mu{\cong }  \varkappa_\lambda$ for some $g\in G^{F_u}$, then form $g'=m_\lambda^{-1}gm_\mu$ which transforms $\kappa_\mu$ to $\kappa_\lambda$. We show that
\begin{enumerate}[(i)]
  \item $g'*u_\mu=u_\lambda$ where $m_\mu*u_\mu=m_\lambda*u_\lambda=u$, because
  $$g'*u_\mu=(m_\lambda^{-1}gm_\mu)*u_\mu=(m_\lambda^{-1}g)*u=m_\lambda^{-1}*u$$
  since $g\in G^{F_u}$. The right side is just $u_\lambda$.
  \item $g'J_\mu=J_\lambda$, where $J_\mu$ is the facet in the closure of the alcove $C_\mu$ and similar for $J_\lambda$ and $C_\lambda$. It is because
      $$g'J_\mu=(m_\lambda^{-1}gm_\mu)J_\mu=(m_\lambda^{-1}g)I_\mu=m_\lambda^{-1}I_\lambda=J_\lambda.$$
  \end{enumerate}
Therefore, by Lemmas 4.5.2 and 9.6.1 of \emph{loc. cit.}, we have $g(S_\mu,\theta_\mu)=(S_\lambda,\theta_\lambda)$.
\qed

\begin{prop}\label{equiv-relation-U}
   $\pi_{x,\tilde{\xi}}\cong \pi_{x',\tilde{\xi}'}$ if and only if
   \begin{enumerate}[(i)]
     \item $x= x'\in \mathcal{D}$, i.e., the embeddings $\mathcal{I}_x$ and $\mathcal{I}_{x'}$ are conjugate under $G_V$, and,
     \item if we assume that $T_x=T_{x'}$, there exists $g\in N_{G_V}(T_x)$ such that $\mathrm{Ad}(g)({\xi}_{x})={\xi}'_{x'}$.
   \end{enumerate}
   \end{prop}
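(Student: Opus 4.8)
The plan is to derive Proposition \ref{equiv-relation-U} from Lemma \ref{U-equiv} together with the parametrization of the $G_V$-conjugacy classes of $\Fo$-embeddings of $\mathbf{T}$ in $\mathbf{G}_V$ (within the fixed $\mathbf{G}_V$-conjugacy class) by the set $\mathcal{D}$. Throughout I use Proposition \ref{U1-base-change}: for a $+$-skew character, the datum $\tilde{\xi}$ on $\tilde{T}_x$ and its descent $\xi_x$ on $T_x$ determine each other, and $\mathrm{Ad}(g)$ for $g\in G_V$ is compatible with descent, so that the conjugation statements for $(T_x,\tilde{\xi})$ and for $(T_x,\xi_x)$ are equivalent. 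By the equivalence (\ref{U-equiv-repres})$\Leftrightarrow$(\ref{U-equiv-char}) of Lemma \ref{U-equiv}, $\pi_{x,\tilde{\xi}}\cong\pi_{x',\tilde{\xi}'}$ holds if and only if there exists $g\in G_V$ with $\mathrm{Ad}(g)(T_x,\xi_x)=(T_{x'},\xi'_{x'})$, and the proof consists in unpacking this single condition.

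For the ``only if'' direction, assume such a $g$ exists. Since $\mathrm{Ad}(g)$ carries the embedded torus $T_x=\mathcal{I}_x(\mathbf{T})$ onto $T_{x'}=\mathcal{I}_{x'}(\mathbf{T})$, the composite $\phi:=\mathcal{I}_{x'}^{-1}\circ\mathrm{Ad}(g)\circ\mathcal{I}_x$ is an $\Fo$-automorphism of $\mathbf{T}$, and it identifies the $d$-regular $+$-skew character $\xi$ (the pull-back of $\xi_x$ along $\mathcal{I}_x$) with $\xi'$ (the pull-back of $\xi'_{x'}$ along $\mathcal{I}_{x'}$). The key point, discussed below, is that any such $\phi$ is induced by conjugation by an element of $N_{G_V}(T_x)$; granting it and absorbing that element into $g$, we may assume $\mathrm{Ad}(g)\circ\mathcal{I}_x=\mathcal{I}_{x'}$, so that the embeddings $\mathcal{I}_x$ and $\mathcal{I}_{x'}$ are $G_V$-conjugate. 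By the Proposition asserting that these classes are parametrized by $\mathcal{D}$, this forces $x=x'$, which is (i). Condition (ii) is then immediate: its hypothesis is $T_x=T_{x'}$, and under that assumption the element $g$ above satisfies $\mathrm{Ad}(g)T_x=T_x$, i.e.\ $g\in N_{G_V}(T_x)$, while $\mathrm{Ad}(g)\xi_x=\xi'_{x'}$.

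For the ``if'' direction, assume (i) and (ii). By (i) the tori $T_x$ and $T_{x'}$ are $G_V$-conjugate as subgroups, so after replacing the cuspidal type of $\pi_{x',\tilde{\xi}'}$ by a $G_V$-conjugate one (which induces an isomorphic representation) we may assume $T_{x'}=T_x$. Then (ii) supplies $g\in N_{G_V}(T_x)\subseteq G_V$ with $\mathrm{Ad}(g)(T_x,\xi_x)=(T_x,\xi'_{x'})=(T_{x'},\xi'_{x'})$, and the implication (\ref{U-equiv-char})$\Rightarrow$(\ref{U-equiv-repres}) of Lemma \ref{U-equiv} yields $\pi_{x,\tilde{\xi}}\cong\pi_{x',\tilde{\xi}'}$.

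The only step I expect to require real work is the claim used above: an $\Fo$-automorphism $\phi$ of $\mathbf{T}$ taking one $d$-regular $+$-skew character to another is induced by an element of $N_{G_V}(T_x)$; equivalently, $G_V$-conjugacy of the subgroups $T_x$, $T_{x'}$ together with the matching of the regular characters upgrades to $G_V$-conjugacy of the embeddings $\mathcal{I}_x$, $\mathcal{I}_{x'}$, hence to $x=x'$ in $\mathcal{D}$. Here the regularity hypotheses of Section \ref{section regular char} are essential: because the restrictions $\tilde{\xi}_i|_{U_{E_i}^{2d}}$, for $i\in I$, are not Galois conjugate to one another, $\phi$ must preserve the decomposition $\mathbf{T}=\prod_{i\in I}\mathbf{T}_i$ up to a permutation of the factors together with a Galois twist on each factor, and by the explicit description of $\mathbf{T}$ and of $\mathbf{G}_V=\mathrm{U}_n$ every such automorphism lies in the relative Weyl group $N_{G_V}(T_x)/T_x$. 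The remaining arguments are routine transports of the pair $(T_x,\xi_x)$ by elements of $G_V$ and appeals to Lemma \ref{U-equiv}.
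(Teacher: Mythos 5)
Your overall route is the same as the paper's: the paper's entire proof of this proposition is the single sentence that it is the equivalence (\ref{U-equiv-repres})$\Leftrightarrow$(\ref{U-equiv-char}) of Lemma \ref{U-equiv}, and you likewise reduce everything to that lemma. You correctly notice that the reduction is not literally a tautology, because Lemma \ref{U-equiv}(\ref{U-equiv-char}) gives conjugacy of the \emph{pairs} $(T_x,\xi_x)$ and $(T_{x'},\xi'_{x'})$ as subgroup-with-character, whereas condition (i) of the proposition asserts conjugacy of the \emph{embeddings}, i.e.\ $x=x'$ in $\mathcal{D}$; bridging that gap is the one substantive step, and you flag it. The ``if'' direction and the descent compatibilities via Proposition \ref{U1-base-change} are fine.

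The problem is the justification you offer for the flagged step. You argue that an $\Fo$-automorphism $\phi$ of $\mathbf{T}$ matching two $d$-regular $+$-skew characters must be a permutation of the factors composed with Galois twists, and that ``every such automorphism lies in the relative Weyl group $N_{G_V}(T_x)/T_x$.'' That last assertion is false: by the paper's own description (the corollary following Corollary \ref{character-quad-or-not}), $N_{G_V}(T_x)/T_x\cong(\prod_{i}\Gamma_{E_i/F})\rtimes\mathfrak{S}_x$ where $\mathfrak{S}_x$ consists only of permutations preserving the partition $I=I_\mathrm{o}\sqcup I_\mathrm{e}$; moreover inversion on a factor $\mathrm{U}_1(E_i/\Eo_i)$ is an $\Fo$-automorphism that can match regular characters but has order $2$, while $\Gamma_{E_i/F}$ has odd order $n_i$. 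Regularity of the characters does not exclude these automorphisms. What actually saves the argument is that your $\phi$ is not arbitrary: it is induced by an isometry $g\in G_V$ with $gT_xg^{-1}=T_{x'}$. Such a $g$ carries the eigenspace decomposition $V=\oplus_{i}V_i$ attached to $T_x$ onto the one attached to $T_{x'}$, preserving the isometry class of $h_V$ restricted to each summand; by Corollary \ref{partition-as-embeddings} these classes $\delta_{E_i/\Eo_i}(x_i)$ are exactly the invariants distinguishing the elements of $\mathcal{D}$, so $x=x'$ follows directly, and then, once one takes $T_x=T_{x'}$, the element $g$ lies in $N_{G_V}(T_x)$ by definition. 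Replace your classification-of-automorphisms claim with this isometry argument and the proof is complete.
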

\proof
This is just (\ref{U-equiv-repres})$\Leftrightarrow$(\ref{U-equiv-char}) above.
\qed

\subsection{Covering pairs}\label{section cover}

In this section, we fix $i_\circ\in I$, $x\in \mathcal{D}$, $G=G_W$, and a skew character $\tilde{\xi}$ of $\tilde{T}_x$. We write $\Lambda=\Lambda_{i_\circ,x}$  and $\mathfrak{M}=\mathfrak{M}^w_{i_\circ,x}$ for $w=y$ or $z$,
and recall that $\Lambda=\Lambda_{i_\circ,-}\oplus \Lambda_{x}\oplus\Lambda_{i_\circ,+}$.
Hence $\mathfrak{A}_{\Lambda,E}$ is a minimal conjugate-self-dual $\mathfrak{o}_E$-order contained in the maximal conjugate-self-dual $\mathfrak{o}_E$-order $\mathfrak{A}_{\mathfrak{M},E}$ (notation as in Section \ref{section orders}).

\subsubsection{Compatible beta-extensions}

In Proposition \ref{eta-Lambda-M}, we defined a Heisenberg representation $\eta_{\Lambda,\mathfrak{M}}$ of the subgroup ${J}^1_{\Lambda,{\mathfrak{M}}}={U}^1_{\Lambda,E/\Eo}{J}^1_{\mathfrak{M},F/\Fo}$.

\begin{defn}[\text{\cite[Theorem 4.1]{stevens-supercusp}}]\label{beta-extension-maximal-case}
       We call an irreducible representation $\kappa_{\mathfrak{M}}$  of $J_{\mathfrak{M}}$ a beta-extension of $\eta_{\mathfrak{M}}$ of $J^1_{\mathfrak{M}}$ if it is an extension of $\eta_{\Lambda,\mathfrak{M}}$ of ${J}^1_{\Lambda,\mathfrak{M}}$ for arbitrary conjugate-self-dual $\mathfrak{o}_{E}$-lattice sequence $\Lambda$ such that $\mathfrak{A}_{\Lambda,{{E}}}$ is a minimal conjugate-self-dual ${\mathfrak{o}_{E}}$-order contained in  $\mathfrak{A}_{\mathfrak{M},{{E}}}$.
\end{defn}
This definition is independent of the minimal lattice chosen \cite[Remark after Proposition 3.7]{stevens-supercusp}. Also, the minimal orders contained in a fixed maximal order are conjugate by $U_{\mathfrak{M},E}$.

The following Proposition is similar to Proposition \ref{beta-extension-U}.
\begin{prop}\label{beta-extension-max-min}
\begin{enumerate}[(i)]
 \item There is a unique beta-extension ${\kappa}_{{\mathfrak{M}},\tilde{\xi}}$ of ${J}_{\mathfrak{M}}$ such that
  \begin{enumerate}[(a)]
    \item ${\kappa}_{{\mathfrak{M}},\tilde{\xi}}|_{J^1_{\Lambda,\mathfrak{M}}}\cong \eta_{\Lambda,\mathfrak{M}}$;
    \item ${\kappa}_{{\mathfrak{M}},\tilde{\xi}}$ is intertwined by $U_{\mathfrak{M},E/\Eo}$;
        \item the character $\det{\kappa}_{{\mathfrak{M}},\tilde{\xi}}$ has a finite order of a $p$-power.
  \end{enumerate}
  \item All beta-extensions satisfy (a) and (b) above. They are of the form ${\kappa}_{\mathfrak{M}}={\kappa}_{{\mathfrak{M}},\tilde{\xi}}\cdot{\chi}$ for a character ${\chi}$ of ${J}_{\mathfrak{M}}$ inflated from ${J}_{\mathfrak{M}}/{J}_{\mathfrak{M}}^1$. \label{beta-extension-max-min-char}
  \end{enumerate}
 \end{prop}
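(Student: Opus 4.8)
The plan is to deduce the existence of a beta-extension and the description of all beta-extensions directly from Stevens, and then to single out $\kappa_{\mathfrak{M},\tilde{\xi}}$ by a determinant normalization of the same kind used in Propositions \ref{beta-extension-tilde} and \ref{beta-extension-U}.

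By Proposition \ref{eta-Lambda-M} the representation $\eta_{\Lambda,\mathfrak{M}}$ of ${J}^1_{\Lambda,\mathfrak{M}}$ is available for every minimal conjugate-self-dual $\mathfrak{o}_E$-order $\mathfrak{A}_{\Lambda,E}\subseteq\mathfrak{A}_{\mathfrak{M},E}$, and \cite[Theorem 4.1]{stevens-supercusp} — which underlies both Definition \ref{beta-extension-maximal-case} and the analogous Proposition \ref{beta-extension-U} — supplies a beta-extension $\kappa_{\mathfrak{M}}$ of ${J}_{\mathfrak{M}}$ and shows that the collection of all beta-extensions is a torsor under the characters of ${J}_{\mathfrak{M}}$ inflated from ${J}_{\mathfrak{M}}/{J}^1_{\mathfrak{M}}$. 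Concretely: two beta-extensions both restrict to the irreducible $\eta_{\Lambda,\mathfrak{M}}$ on ${J}^1_{\Lambda,\mathfrak{M}}$, hence differ by a character of ${J}_{\mathfrak{M}}$ trivial on ${J}^1_{\Lambda,\mathfrak{M}}\supseteq{J}^1_{\mathfrak{M}}$; conversely, any character $\chi$ of ${J}_{\mathfrak{M}}/{J}^1_{\mathfrak{M}}$ is trivial on ${H}^1_{\mathfrak{M}}$, so $\eta_{\Lambda,\mathfrak{M}}\otimes\chi$ agrees with $\eta_{\Lambda,\mathfrak{M}}$ on ${H}^1_{\mathfrak{M}}$ and is therefore isomorphic to it by the uniqueness of the Heisenberg representation, whence $\kappa_{\mathfrak{M}}\cdot\chi$ is again a beta-extension. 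Property (b) holds for each beta-extension because $U_{\mathfrak{M},E/\Eo}\subseteq{J}_{\mathfrak{M}}$ by (\ref{compact-subgp-GW-M}), and conjugation by an element of ${J}_{\mathfrak{M}}$ sends a ${J}_{\mathfrak{M}}$-representation to an isomorphic one. This gives (ii) and all of (i) except the normalization.

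For the normalization: $\kappa_{\mathfrak{M}}|_{{J}^1_{\Lambda,\mathfrak{M}}}\cong\eta_{\Lambda,\mathfrak{M}}$ is a Heisenberg representation of a pro-$p$ group, so $\dim\kappa_{\mathfrak{M}}$ is a power of $p$ and $\det\kappa_{\mathfrak{M}}$ has $p$-power order on the pro-$p$ group ${J}^1_{\mathfrak{M}}$; hence its prime-to-$p$ part descends to ${J}_{\mathfrak{M}}/{J}^1_{\mathfrak{M}}$. By (\ref{compact-subgp-GW-M}), ${J}_{\mathfrak{M}}/{J}^1_{\mathfrak{M}}$ is a product of finite unitary groups $\mathrm{U}_m(\mathbf{k}_{E_i}/\mathbf{k}_{\Eo_i})$ with $m\leq 3$; their derived groups are the corresponding special unitary groups, so (as $p$ is odd) the abelianization of ${J}_{\mathfrak{M}}/{J}^1_{\mathfrak{M}}$ is a product of the groups $\mathrm{U}_1(\mathbf{k}_{E_i}/\mathbf{k}_{\Eo_i})$, of order prime to $p$. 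Therefore $\chi\mapsto\chi^{\dim\kappa_{\mathfrak{M}}}$ is a bijection of $({J}_{\mathfrak{M}}/{J}^1_{\mathfrak{M}})^{\wedge}$, and since $\det(\kappa_{\mathfrak{M}}\cdot\chi)=(\det\kappa_{\mathfrak{M}})\cdot\chi^{\dim\kappa_{\mathfrak{M}}}$ there is a unique $\chi$ making this determinant of $p$-power order; that twist is $\kappa_{\mathfrak{M},\tilde{\xi}}$, proving (i).

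The essential external input is \cite[Theorem 4.1]{stevens-supercusp} together with Proposition \ref{eta-Lambda-M}; everything after that is the elementary determinant bookkeeping above. The one step requiring genuine care — which I would attribute to Stevens' proof of the analogue of (ii) — is the verification that twisting a beta-extension by an arbitrary character of ${J}_{\mathfrak{M}}/{J}^1_{\mathfrak{M}}$ preserves the defining extension property for all minimal $\Lambda$ simultaneously; this is what ties the torsor group down to the full $({J}_{\mathfrak{M}}/{J}^1_{\mathfrak{M}})^{\wedge}$ and thereby makes the determinant normalization available.
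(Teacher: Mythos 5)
Your proposal is correct and follows essentially the same route as the paper: existence and the torsor structure come from \cite[Theorem 4.1]{stevens-supercusp}, and the representative ${\kappa}_{{\mathfrak{M}},\tilde{\xi}}$ is then singled out by the standard determinant normalization, using that the abelianization of ${J}_{\mathfrak{M}}/{J}^1_{\mathfrak{M}}$ has order prime to $p$ while $\dim\kappa_{\mathfrak{M}}$ is a $p$-power. The one step you explicitly flag --- that twisting by an arbitrary character of ${J}_{\mathfrak{M}}/{J}^1_{\mathfrak{M}}$ preserves the beta-extension property for every minimal $\Lambda$ --- is precisely what the paper's single additional ingredient supplies: by Dieudonn\'e's theorem $\mathrm{SU}_m(\mathbf{k}_{E}/\mathbf{k}_{\Eo})$ is generated by unipotent elements (no exception since $p$ is odd), so any character of the finite unitary quotient factors through the determinant and is in particular trivial on the image of $U^1_{\Lambda,E/\Eo}$, whence the twist still restricts to $\eta_{\Lambda,\mathfrak{M}}$ on $J^1_{\Lambda,\mathfrak{M}}$.
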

\proof
This is from \cite[Theorem 4.1]{stevens-supercusp} and the following fact. Let $\mathbf{F}/\mathbf{F}_\bullet$ be any quadratic field extension, then
\begin{equation*}
  \mathrm{SU}_n(\mathbf{F}/\mathbf{F}_\bullet)\text{ is generated by unipotent elements in } \mathrm{U}_n(\mathbf{F}/\mathbf{F}_\bullet).
\end{equation*}
    This appears in \cite[p.49,(4)]{Dieudonne}, which states that (except when $n=3$ and $\mathbf{F}/\mathbf{F}_\bullet=\mathbb{F}_4/\mathbb{F}_2$) $\mathrm{SU}_n(\mathbf{F}/\mathbf{F}_\bullet)$ is equal to the group generated by its unitary transvections, which are unitary unipotent elements in the modern language. \qed

Since ${J}_{\mathfrak{M}}/{J}_{\mathfrak{M}}^1$ is a product finite unitary groups, a character appearing in (\ref{beta-extension-max-min-char}) factors through the determinant map of the corresponding finite groups.

\begin{prop}\label{bijection of extensions}
\begin{enumerate}[(i)]
  \item Suppose that
  \begin{itemize}
    \item $\theta_\Lambda$ and $\theta_\mathfrak{M}$ are simple characters of $H^1_\Lambda$ and $H^1_\mathfrak{M}$, transfer of each other;
        \item $\eta_\Lambda$ and $\eta_\mathfrak{M}$ are the unique irreducible representations containing $\theta_\Lambda$ and $\theta_\mathfrak{M}$ respectively.
  \end{itemize}
  Then there is a canonical bijection
  \begin{equation}\label{bijection of extensions eq}
    \left\{\begin{matrix}
    \text{extensions }(\kappa_\Lambda,J_\Lambda)
    \\
    \text{of }(\eta_\Lambda,J^1_\Lambda)
  \end{matrix}\right\}\xrightarrow{\mathfrak{B}_{\Lambda,{\mathfrak{M}}}}\left\{\begin{matrix}
    \text{extensions }(\kappa_{\Lambda,{\mathfrak{M}}},J_{\Lambda,{\mathfrak{M}}})
    \\
    \text{of }(\eta_{{\mathfrak{M}}},J^1_{{\mathfrak{M}}})
  \end{matrix}\right\}.
  \end{equation} \label{bijection of extensions def}
  \item If $\mathfrak{A}_{\Lambda}\subseteq \mathfrak{A}_{\mathfrak{M}}$, then this bijection can be described as follows: if $\kappa_\Lambda$ is an extension on the left side of (\ref{bijection of extensions eq}) and  $\kappa_{\Lambda,{\mathfrak{M}}}=\mathfrak{B}_{\Lambda,{\mathfrak{M}}}(\kappa_\Lambda)$, then the induced representations
  $$\Ind_{J_\Lambda}^{U_{\Lambda,{E/\Eo}}U^1_{\Lambda}}\kappa_\Lambda\text{ and } \Ind_{J_{\Lambda,{\mathfrak{M}}}}^{U_{\Lambda,{E/\Eo}}U^1_{\Lambda}}\kappa_{\Lambda,{\mathfrak{M}}}$$
  are equivalent and irreducible.
  \item Moreover, this bijection is compatible with the twisting by characters of $J_\Lambda/J^1_\Lambda\cong J_{\Lambda,{\mathfrak{M}}}/J^1_{\Lambda,{\mathfrak{M}}}\cong {T}_{\Lambda}/T^1_\Lambda$. \label{bijection of extensions char}
\end{enumerate}
\end{prop}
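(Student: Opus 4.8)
This proposition repackages, in the present situation, the transfer of beta-extensions of \cite[\S 3--4]{stevens-supercusp}, so the plan is to realise $\mathfrak{B}_{\Lambda,\mathfrak{M}}$ concretely by induction to a common overgroup and to check that the inputs of that machinery are available here. Work inside $G_W$ and put $K=U_{\Lambda,E/\Eo}U^1_{\Lambda,F/\Fo}$. I would first record the group-theoretic facts. For $\Lambda=\Lambda_{i_\circ,x}$ and $\mathfrak{M}=\mathfrak{M}^w_{i_\circ,x}$ the lattices defining $\mathfrak{M}$ occur among the $\Lambda(k)$, so $\mathfrak{A}_\Lambda\subseteq\mathfrak{A}_\mathfrak{M}$, hence $U^1_\mathfrak{M}\subseteq U^1_\Lambda$ and $J^1_\mathfrak{M}\subseteq U^1_{\Lambda,F/\Fo}$; this gives $J_\Lambda,J_{\Lambda,\mathfrak{M}}\subseteq K$ together with $K=J_\Lambda U^1_{\Lambda,F/\Fo}=J_{\Lambda,\mathfrak{M}}U^1_{\Lambda,F/\Fo}$, $J_\Lambda\cap U^1_{\Lambda,F/\Fo}=J^1_\Lambda$ and $J_{\Lambda,\mathfrak{M}}\cap U^1_{\Lambda,F/\Fo}=J^1_{\Lambda,\mathfrak{M}}$. (For the general $\Lambda,\mathfrak{M}$ of part (i) I would instead pass through a conjugate-self-dual $\mathfrak{o}_E$-order refining both orders and compose the two resulting bijections, the independence of that choice coming from \cite[Remark after Proposition 3.7]{stevens-supercusp}.)

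Next I would define the map. Given an extension $\kappa_\Lambda$ of $\eta_\Lambda$ to $J_\Lambda$, Mackey's formula applied to $K=J_\Lambda U^1_{\Lambda,F/\Fo}$, together with $J_\Lambda\cap U^1_{\Lambda,F/\Fo}=J^1_\Lambda$, gives
\[
\left(\Ind_{J_\Lambda}^{K}\kappa_\Lambda\right)\big|_{U^1_{\Lambda,F/\Fo}}\cong\Ind_{J^1_\Lambda}^{U^1_{\Lambda,F/\Fo}}\eta_\Lambda,
\]
which is irreducible by the Heisenberg theory underlying $\eta_\Lambda$ (as in \cite[\S 3]{stevens-supercusp}); hence $\Ind_{J_\Lambda}^{K}\kappa_\Lambda$ is irreducible. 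By exactly the same computation, for every extension $\rho$ of $\eta_{\Lambda,\mathfrak{M}}$ to $J_{\Lambda,\mathfrak{M}}$ one gets $(\Ind_{J_{\Lambda,\mathfrak{M}}}^{K}\rho)|_{U^1_{\Lambda,F/\Fo}}\cong\Ind_{J^1_{\Lambda,\mathfrak{M}}}^{U^1_{\Lambda,F/\Fo}}\eta_{\Lambda,\mathfrak{M}}$, which by Proposition \ref{eta-Lambda-M}(ii) is isomorphic to $\Ind_{J^1_\Lambda}^{U^1_{\Lambda,F/\Fo}}\eta_\Lambda$ and in particular irreducible. I would then \emph{define} $\kappa_{\Lambda,\mathfrak{M}}=\mathfrak{B}_{\Lambda,\mathfrak{M}}(\kappa_\Lambda)$ to be the unique extension of $\eta_{\Lambda,\mathfrak{M}}$ (hence of $\eta_\mathfrak{M}$, by Proposition \ref{eta-Lambda-M}(i)) to $J_{\Lambda,\mathfrak{M}}$ with $\Ind_{J_{\Lambda,\mathfrak{M}}}^{K}\kappa_{\Lambda,\mathfrak{M}}\cong\Ind_{J_\Lambda}^{K}\kappa_\Lambda$. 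This is precisely assertion (ii), once existence and uniqueness of such a $\kappa_{\Lambda,\mathfrak{M}}$ are established.

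For existence, uniqueness, bijectivity and assertion (iii) all at once, the plan is to observe that the extensions of $\eta_\Lambda$ to $J_\Lambda$ form a torsor under $(J_\Lambda/J^1_\Lambda)^\wedge$, the extensions of $\eta_{\Lambda,\mathfrak{M}}$ to $J_{\Lambda,\mathfrak{M}}$ form a torsor under $(J_{\Lambda,\mathfrak{M}}/J^1_{\Lambda,\mathfrak{M}})^\wedge$, and both quotient groups are canonically $T_\Lambda/T^1_\Lambda$ (a direct check from the definitions \eqref{compact-subgp-GW-Lambda}, \eqref{compact-subgp-GW-M} and from $J_{\Lambda,\mathfrak{M}}=U_{\Lambda,E/\Eo}J^1_{\mathfrak{M}}$). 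A character of $T_\Lambda/T^1_\Lambda$ inflates to a character of $K$ trivial on $U^1_{\Lambda,F/\Fo}$, and induction intertwines the two twisting actions; moreover $\kappa_\Lambda\mapsto\Ind_{J_\Lambda}^{K}\kappa_\Lambda$ and $\rho\mapsto\Ind_{J_{\Lambda,\mathfrak{M}}}^{K}\rho$ are injective because $\kappa_\Lambda$ (resp. $\rho$) occurs in the restriction to $J_\Lambda$ (resp. $J_{\Lambda,\mathfrak{M}}$) of its own induction with multiplicity one, which follows from $J_\Lambda$ (resp. $J_{\Lambda,\mathfrak{M}}$) being its own intertwining in $K$ modulo twists trivial on $J^1$. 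Since both maps land in the same set of irreducibles of $K$ lying over $\Ind_{J^1_\Lambda}^{U^1_{\Lambda,F/\Fo}}\eta_\Lambda$ and are equivariant bijections of torsors over $(T_\Lambda/T^1_\Lambda)^\wedge$, composing one with the inverse of the other yields the canonical, twisting-compatible bijection $\mathfrak{B}_{\Lambda,\mathfrak{M}}$, proving (i) and (iii).

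\textbf{Main obstacle.} The Clifford-theoretic core of the second and third steps — the irreducibility of $\Ind_{J^1_\Lambda}^{U^1_{\Lambda,F/\Fo}}\eta_\Lambda$, the multiplicity-one/self-intertwining input needed for injectivity, and the verification that the relevant normalizer relations make the Mackey computations a single double coset — is exactly what enters Stevens's proof of \cite[Theorem 4.1]{stevens-supercusp}; I would extract and cite those facts rather than reprove them, so that the genuinely new content is the concrete description in (ii) and the identification of the relevant reductive quotients with $T_\Lambda/T^1_\Lambda$.
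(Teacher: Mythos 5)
The paper's own ``proof'' of this proposition is a bare citation of \cite[Lemma 4.3]{stevens-supercusp} and \cite[Lemma 1.5(ii)]{Blondel-Weil}, and your argument is a faithful reconstruction of exactly the argument behind those references: defining $\mathfrak{B}_{\Lambda,\mathfrak{M}}$ by matching the (irreducible) inductions to $U_{\Lambda,E/\Eo}U^1_{\Lambda}$, with irreducibility and injectivity supplied by the Heisenberg/intertwining facts of Proposition \ref{eta-Lambda-M}, and bijectivity plus twist-compatibility from the equivariant-torsor structure under $(T_\Lambda/T^1_\Lambda)^\wedge$. Your proposal is correct and takes essentially the same approach as the paper.
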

\proof
See \cite[Lemma 4.3]{stevens-supercusp} and \cite[Lemma 1.5(ii)]{Blondel-Weil}
\qed

\begin{defn}\label{definition-compatible-beta}
  \begin{enumerate}[(i)]
    \item (\text{\cite[Definition 4.5]{stevens-supercusp}}) Suppose that $\kappa_\mathfrak{M}$ is a beta-extension of $\eta_\mathfrak{M}$ in Definition \ref{beta-extension-maximal-case}. The beta-extension of $\eta_\Lambda$ to $J_\Lambda$, relative to $\mathfrak{M}$ and compatible with $\kappa_\mathfrak{M}$, is the unique representation $\kappa_\Lambda^\mathfrak{M}$ of $J_\Lambda$ such that $\mathfrak{B}_{\Lambda,{\mathfrak{M}}}(\kappa_\Lambda^\mathfrak{M})=\Res_{J_{\Lambda,\mathfrak{M}}}^{J_{\mathfrak{M}}}\kappa_\mathfrak{M}$. Hence if $\mathfrak{A}_\Lambda\subseteq \mathfrak{A}_\mathfrak{M}$, then
        $$\Ind_{J_\Lambda}^{U_{\Lambda,{E/\Eo}}U^1_\Lambda}\kappa_\Lambda^\mathfrak{M}\cong  \Ind_{J_{\Lambda,\mathfrak{M}}}^{U_{\Lambda,{E/\Eo}}U^1_\Lambda}\Res_{J_{\Lambda,\mathfrak{M}}}^{J_{\mathfrak{M}}}\kappa_\mathfrak{M}.$$
        \label{definition-compatible-beta-extension}

        \item If $\kappa_{\mathfrak{M},\tilde{\xi}}$ is the beta-extension fixed in Proposition \ref{beta-extension-max-min}, then we define $\kappa^{\mathfrak{M}}_{\Lambda,\tilde{\xi}}$ as the one compatible with $\kappa_{\mathfrak{M},\tilde{\xi}}$. \label{definition-compatible-beta-extension-nc}

  \end{enumerate}
\end{defn}

If $\kappa^0_{\Lambda,\tilde{\xi}}$ is the extension of $\eta_{\Lambda}$ such that $\det\kappa^0_{\Lambda,\tilde{\xi}}$ has a p-power order, then by Proposition \ref{bijection of extensions}(\ref{bijection of extensions char}) there exists a character $\mu^w_{\tilde{\xi}}$ of $J_\Lambda$, inflated from a character of $J_\Lambda/J^1_\Lambda\cong {T}_{\Lambda}/T^1_\Lambda$, such that
\begin{equation}\label{amending appear}
  \kappa^0_{\Lambda,\tilde{\xi}}\cdot\mu^w_{\tilde{\xi}}= \kappa^{\mathfrak{M}^w}_{\Lambda,\tilde{\xi}}.
\end{equation}
We will extensively study this character in Section \ref{section amending char}.

\subsubsection{Jacquet functor of types}

Let $P$ be the parabolic subgroup of $G_W$ stabilizing the flag
$ V_{i_\circ,-} \subset V_{i_\circ,-}\oplus V\subset W$, such that its Levi subgroup $M$ is isomorphic to $\tilde{G}_{V_{i_\circ}}\times G_V$. Let $U$　be the unipotent radical of $P$.

\begin{rmk}
  Before we proceed, it is necessary to check that the decomposition $W=V_{i_\circ,-}\oplus V\oplus V_{i_\circ,+}$ is exactly subordinate to the semi-simple stratum $[\Lambda,3d,0,\beta]$. In the notation of \cite[Section 5]{stevens-supercusp}, we have \begin{equation*}
    \begin{split}
     & W^{(-1)}=V_{i_\circ,-},\, W^{(0)}=V,\,W^{(1)}=V_{i_\circ,+},
      \\
    &V^i=V_i\text{ if }i\neq i_\circ\text{, and }V^{i_\circ}=  V_{i_\circ,-}\oplus V_{i_\circ}\oplus V_{i_\circ,+}\text{ if }i=i_\circ.
    \end{split}
  \end{equation*}
   \begin{enumerate}[(i)]
    \item It is easy to see that
    $$W^{(-1)}=V_{i_\circ,-}=\oplus_{i\in I}W^{(-1)}\cap V^i,$$
    where $V_{i_\circ,-}\cap V^i=0$ if $i\neq i_\circ$ and $V_{i_\circ,-}\cap V^i=V_{i_\circ,-}$ if $i=i_\circ$. Similar statement holds if we replace $W^{(-1)}$ and $V_{i_\circ,-}$ by $W^{(1)}$ and $V_{i_\circ,+}$ respectively. Moreover
    $$W^{(0)}=V=\oplus_{i\in I}(W^{(0)}\cap V^i)=\oplus_{i\in I} V_i.$$
    We can then check that
    $$V^i=(W^{(-1)}\cap V^i)\oplus (W^{(0)}\cap V^i)\oplus (W^{(1)}\cap V^i)$$
    for all $i\in I$.

    \item We then check that $\Lambda$ is subordinate to the stratum, which means that $$\Lambda(r)=(\Lambda(r)\cap V_{i_\circ,-})\oplus (\Lambda(r)\cap V)\oplus (\Lambda(r)\cap V_{i_\circ,+}),$$
        where $\Lambda(r)\cap V_{i_\circ,-}=\Lambda_{i_\circ}((r-1)/3),\, \Lambda_{i_\circ,x}(r)\cap V=\Lambda_{x}(r/3)$, and $\Lambda(r)\cap V_{i_\circ,+}=\Lambda_{i_\circ}((r+1)/3)$. As a remark, a similar statement holds for $\mathfrak{M}$, because when it is viewed as a function from $\mathbb{Z}$ to the set of lattices, its image is lying in that of $\Lambda$.

  \item We then check that $\Lambda$ is properly subordinate.
        \begin{itemize}
            \item When $i\neq i_\circ$, then
          $$\Lambda(r)\cap W^{(0)}\cap V^i\supsetneq\Lambda(r+1)\cap W^{(0)}\cap V^i$$
          when $r=3k$, and they are equal otherwise;
          $$\Lambda(r)\cap W^{(j)}\cap V^i=\Lambda(r+1)\cap W^{(j)}\cap V^i=0$$
          since $W^{(j)}\cap V^i=0$ for $j\neq 0$.
          \item  When $i= i_\circ$, then
          $$\Lambda(r)\cap W^{(j)}\cap V^{i_\circ}\supsetneq\Lambda(r+1)\cap W^{(j)}\cap V^{i_\circ}$$
          when $(r,j)=(3k,0),(6k+1,-)$ or $(6k-1,+)$, and they are equal otherwise.
        \end{itemize}
        In all cases, we see that for every $r\in \mathbb{Z}$ and $i\in I$, there is at most one $j=-,0,+$ such that
          $$\Lambda(r)\cap W^{(j)}\cap V^i\supsetneq\Lambda(r+1)\cap W^{(j)}\cap V^i.$$
        As a remark, it is clear that $\mathfrak{M}$ is never properly subordinate, because
          $$\mathfrak{M}^y(0)\cap W^{(j)}\cap V^{i_\circ}\supsetneq\mathfrak{M}^y(1)\cap W^{(j)}\cap V^{i_\circ}$$
          for all $j$, and
                    $$\mathfrak{M}^z(0)\cap W^{(j)}\cap V^{i_\circ}\supsetneq\mathfrak{M}^z(1)\cap W^{(j)}\cap V^{i_\circ}$$
                    for $j=-,+$.
 \item Finally, we check that $\Lambda$ is exactly subordinate, using the definition on \cite[p.331]{stevens-supercusp}.
 \begin{enumerate}
   \item Now $\mathfrak{a}_0(\Lambda^{(0)})\cap B^{(0)}$ in \emph{loc. cit.} is
   $$\mathfrak{A}_{\Lambda_x,E}:=\mathfrak{A}_{\Lambda_x}\cap Z_{\tilde{A}_V}((\oplus_{i\in I}E_i))\cong\oplus_{i\in I}\mathfrak{o}_{E_i},$$
   which is clearly a maximal conjugate-self-dual $(\oplus_{i\in I}\mathfrak{o}_{E_i})$-order in $Z_{\tilde{A}_V}(\oplus_{i\in I}E_i)\cong \oplus_{i\in I}E_i$,
       \item If $j\neq 0$, then $W^{(-1)}$ and $W^{(1)}$ are contained in $V^{i_\circ}= V_{i_\circ,-}\oplus V_{i_\circ}\oplus V_{i_\circ,+}$ and $\mathfrak{a}_0(\Lambda^{(\pm)})\cap B^{(\pm)}$ in \emph{loc. cit.} is $$\mathfrak{A}_{\Lambda_{i_\circ},E}=\mathfrak{A}_{\Lambda_{i_\circ}}\cap Z_{\tilde{A}_{V_{i_\circ}}}(E_{i_\circ})=\mathfrak{o}_{E_{i_\circ}},$$
            again a maximal $\mathfrak{o}_{E_{i_\circ}}$-order in $ Z_{\tilde{A}_{V_{i_\circ}}}(E_{i_\circ})=E_{i_\circ}$.
 \end{enumerate}
  \end{enumerate}
\qed\end{rmk}

Given the simple characters $\theta_{\Lambda}$ of $H^1_{\Lambda}$ and $\theta_{\mathfrak{M}}$ of $H^1_{\mathfrak{M}}$ transfer of each other, let $\kappa_{\mathfrak{M}}$ be the beta-extension containing a multiple of $\theta_{\mathfrak{M}}$ and $\kappa^{\mathfrak{M}}_{\Lambda}$ be the one compatible with $\kappa_{\mathfrak{M}}$.

We define
\begin{itemize}
  \item a simple character $\theta_{P}=\theta_{\Lambda,P}$ of $H^1_{P}=H^1_{\Lambda,P}=H^1_{\Lambda}(J^1_{\Lambda}\cap U)$ by extending $\theta_{\Lambda}$ trivially to $J^1_{\Lambda}\cap U$;
      \item an irreducible representation $\eta_{P}=\eta_{\Lambda,P}$ of $J^1_{P}=J^1_{\Lambda,P}=H^1_{\Lambda}(J^1_{\Lambda}\cap P)$ extending $\theta_{P}$, then we can check \cite[Lemma 5.12]{stevens-supercusp} that
          $$\eta_{\Lambda}\cong \Ind_{J^1_{P}}^{J^1_{\Lambda}}\eta_P;$$
        \item an irreducible representation $\kappa^w_{P}=\kappa^{\mathfrak{M}^w}_{\Lambda,P}$ of $J_{P}=J_{\Lambda,P}=H^1_{\Lambda}(J_{\Lambda}\cap P)$ by restricting $\kappa^{\mathfrak{M}^w}_{\Lambda}$ to the space of $(J^1_{\Lambda}\cap U)$-fixed vectors of  $\eta_{\Lambda}$. Then we can check \cite[Proposition 5.13]{stevens-supercusp} that
            $\kappa^w_{P}$ extends $\eta_{P}$ and
            $$\kappa_{\Lambda}^{\mathfrak{M}^w}\cong \Ind_{J_{P}}^{J_{\Lambda}}\kappa_P.$$
\end{itemize}

The relation between $\kappa^y_{P}$ and $\kappa^z_{P}$ is given as follows.
\begin{prop}\label{transferring-character-maximal}
 There exists a tamely ramified character $\chi_{y}^{z,P}$ of $J_\Lambda$ (inflated from a character of $J_\Lambda/J^1_\Lambda\cong {T}_{\Lambda}/T^1_\Lambda$) such that
 $$\kappa_{\Lambda}^{\mathfrak{M}^z}\cong \Ind_{J_{P}}^{J_{\Lambda}}(\kappa^{\mathfrak{M}^y}_{\Lambda,P}\cdot \chi_{y}^{z,P}).$$
\end{prop}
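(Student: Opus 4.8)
The plan is to deduce the statement formally from two facts already in place above: the equivalence $\kappa_\Lambda^{\mathfrak{M}^w}\cong\Ind_{J_P}^{J_\Lambda}\kappa_{\Lambda,P}^{\mathfrak{M}^w}$ for $w=y,z$ (this is \cite[Proposition 5.13]{stevens-supercusp}, recalled just before the statement), and the elementary fact that $\kappa_\Lambda^{\mathfrak{M}^y}$ and $\kappa_\Lambda^{\mathfrak{M}^z}$ are two extensions of one and the same irreducible representation $\eta_\Lambda$ of $J_\Lambda^1$, hence differ by a character of $J_\Lambda/J_\Lambda^1$.

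First I would record that, by Definition \ref{definition-compatible-beta}(\ref{definition-compatible-beta-extension}) together with the description of the domain of the bijection $\mathfrak{B}_{\Lambda,\mathfrak{M}}$ in Proposition \ref{bijection of extensions}(\ref{bijection of extensions def}), each of $\kappa_\Lambda^{\mathfrak{M}^y}$ and $\kappa_\Lambda^{\mathfrak{M}^z}$ is a representation of $J_\Lambda$ whose restriction to the normal subgroup $J_\Lambda^1$ is isomorphic to the irreducible Heisenberg representation $\eta_\Lambda$. Two such extensions necessarily differ by a character of $J_\Lambda/J_\Lambda^1$: by Schur's lemma the space of $J_\Lambda^1$-intertwiners between them is one-dimensional, and the $J_\Lambda$-action on that line defines a character of $J_\Lambda/J_\Lambda^1$ by which they differ. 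Since $J_\Lambda/J_\Lambda^1\cong{\mathbf{k}}^\times_{E_{i_\circ}}\times\prod_{i\in I}\mathrm{U}_1({\mathbf{k}}_{E_{i}}/{\mathbf{k}}_{\Eo_{i}})$ has order prime to $p$, the resulting character — which I name $\chi_y^{z,P}$ and inflate to $J_\Lambda$ — is tamely ramified, and $\kappa_\Lambda^{\mathfrak{M}^z}\cong\kappa_\Lambda^{\mathfrak{M}^y}\otimes\chi_y^{z,P}$. (Up to the identifications made above this $\chi_y^{z,P}$ is an avatar of the transition character of \cite[Corollary 6.13]{stevens-supercusp}, but only its existence is used here.)

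It then remains to transport this relation through the Jacquet-functor formula. Applying the projection formula for induction from $J_P$ to $J_\Lambda$ to the character $\chi_y^{z,P}$ of $J_\Lambda$, followed by the two equivalences $\kappa_\Lambda^{\mathfrak{M}^w}\cong\Ind_{J_P}^{J_\Lambda}\kappa_{\Lambda,P}^{\mathfrak{M}^w}$, I would conclude
$$\Ind_{J_P}^{J_\Lambda}\bigl(\kappa_{\Lambda,P}^{\mathfrak{M}^y}\cdot\chi_y^{z,P}\bigr)\cong\bigl(\Ind_{J_P}^{J_\Lambda}\kappa_{\Lambda,P}^{\mathfrak{M}^y}\bigr)\otimes\chi_y^{z,P}\cong\kappa_\Lambda^{\mathfrak{M}^y}\otimes\chi_y^{z,P}\cong\kappa_\Lambda^{\mathfrak{M}^z},$$
where $\kappa_{\Lambda,P}^{\mathfrak{M}^y}\cdot\chi_y^{z,P}$ denotes the twist of $\kappa_P^y$ by the restriction of $\chi_y^{z,P}$ to $J_P$; this is precisely the asserted identity.

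I do not expect a genuine obstacle inside this argument: the substantive content has been front-loaded into the cited structural results — the bijection $\mathfrak{B}_{\Lambda,\mathfrak{M}}$ and its compatibility with twisting by characters of $J_\Lambda/J^1_\Lambda$ in Proposition \ref{bijection of extensions}(\ref{bijection of extensions char}), the exact-subordination verification preceding the statement, and Sections 5 and 6 of \cite{stevens-supercusp} — so that the only points requiring care are the bookkeeping that the intertwining character genuinely factors through $J_\Lambda/J_\Lambda^1$, which is immediate from the torsor description of extensions of $\eta_\Lambda$, and the purely formal invocation of the projection formula for the finite-index pair $J_P\subseteq J_\Lambda$. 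The real work of identifying and computing $\chi_y^{z,P}$ is deferred to Section \ref{section amending char}.
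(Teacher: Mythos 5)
Your proof is correct, and it is a genuinely different route from the paper's: the paper disposes of this proposition by verifying the hypothesis of \cite[Corollary 6.13]{stevens-supercusp} (namely that $\mathfrak{A}_{\mathfrak{M}^y,E}$ and $\mathfrak{A}_{\mathfrak{M}^z,E}$ are both maximal conjugate-self-dual orders containing $\mathfrak{A}_{\Lambda,E}$) and citing it, whereas you give a self-contained derivation from two ingredients already on the table. Your two ingredients do suffice for the statement as written: by Definition \ref{definition-compatible-beta} and the description of the domain of $\mathfrak{B}_{\Lambda,\mathfrak{M}^w}$ in Proposition \ref{bijection of extensions}, both $\kappa_\Lambda^{\mathfrak{M}^y}$ and $\kappa_\Lambda^{\mathfrak{M}^z}$ restrict to $\eta_\Lambda$ on the normal subgroup $J^1_\Lambda$, so the Schur-lemma torsor argument produces a character trivial on $J^1_\Lambda$ (hence tamely ramified, as $J_\Lambda/J^1_\Lambda\cong T_\Lambda/T^1_\Lambda$ has order prime to $p$) with $\kappa_\Lambda^{\mathfrak{M}^z}\cong\kappa_\Lambda^{\mathfrak{M}^y}\otimes\chi_y^{z,P}$; combining with $\kappa_\Lambda^{\mathfrak{M}^w}\cong\Ind_{J_P}^{J_\Lambda}\kappa^{\mathfrak{M}^w}_{\Lambda,P}$ and the projection formula for the finite-index pair $J_P\subseteq J_\Lambda$ gives exactly the displayed isomorphism. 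What the citation buys the paper that your argument does not immediately supply is the finer information about the transition character that Stevens' corollary carries and that the paper exploits afterwards (e.g.\ the quadraticity argued via \cite[p.26]{GKS}, and its explicit identification); what your argument buys is transparency and, as a small bonus, the observation that the character you construct is manifestly independent of $P$, a fact the paper only extracts later in Proposition \ref{transition-with-amending}. Since the proposition asserts only existence of some such tamely ramified character, your proof is complete as it stands.
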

\proof
This is part of \cite[Corollary 6.13]{stevens-supercusp}. The condition that both $ \mathfrak{A}_{\mathfrak{M}^y,E}$ and  $\mathfrak{A}_{\mathfrak{M}^z,E}$ are maximal conjugate-self-dual orders containing $\mathfrak{A}_{\Lambda,E}$ clearly holds in our situation.
\qed

Hence by definition, both $\chi_{y}^{y,P}$ and $\chi_{z}^{z,P}$ are trivial, and $\chi_{z}^{y,P}=(\chi_{y}^{z,P})^{-1}$. Moreover, an argument similar to \cite[p.26]{GKS} shows that $\chi_{y}^{z,P}$ is at most quadratic, and is therefore skew and tamely ramified.

By the Iwahori-decompositions of the groups $H^1_{P}$, $J^1_{P}$, and $J_{P}$, we define
\begin{equation*}
  \theta_{P}|_{H_\Lambda^1\cap M}=\theta_{\Lambda}|_{H_\Lambda^1\cap M}=\tilde{\theta}_{\Lambda_{i_\circ}}\boxtimes\theta_{\Lambda_{x}},
\end{equation*}
  and by the uniqueness of Heisenberg representations,
  \begin{equation*}
    \eta_{P}|_{J_\Lambda^1\cap M}=\tilde{\eta}_{\Lambda_{i_\circ}}\boxtimes\eta_{\Lambda_{x}}.
  \end{equation*}
  We now write
   \begin{equation*}
  \kappa^w_{P}|_{J_\Lambda\cap M}=\tilde{\kappa}^{w,P}_{i_\circ}\boxtimes\kappa^{w,P}_{x}.
  \end{equation*}
where $\kappa^{w,P}_{x}$ and $\tilde{\kappa}^{w,P}_{i_\circ}$ are extensions of $\eta_{\Lambda_{x}}$ and $\tilde{\eta}_{\Lambda_{i_\circ}}$ respectively.

\begin{prop}\label{restrictions-are-beta}
   $\kappa^{w,P}_{x}$ (resp. $\tilde{\kappa}^{w,P}_{i_\circ}$) is a beta-extension of $\eta_{\Lambda_x}$ (resp. $\tilde{\eta}_{\Lambda_{i_\circ}}$).
  \end{prop}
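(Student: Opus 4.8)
\proof[Proof proposal]
The plan is to verify, for each of $\kappa^{w,P}_{x}$ and $\tilde{\kappa}^{w,P}_{i_\circ}$, the two defining conditions of a beta-extension of the relevant Heisenberg representation --- that it extends that Heisenberg representation, and that it has the prescribed intertwining --- and then to invoke Proposition~\ref{beta-extension-U}(ii) (for $\kappa^{w,P}_{x}$, inside $G_V$) and Proposition~\ref{beta-extension-tilde}(ii) (for $\tilde{\kappa}^{w,P}_{i_\circ}$, inside $\tilde{G}_{V_{i_\circ}}$), by which any extension satisfying those conditions is a twist of the canonical beta-extension by a tamely ramified character of the reductive quotient, hence is a beta-extension. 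The first condition is immediate from the constructions already recalled: $\kappa^w_{P}=\kappa^{\mathfrak{M}^w}_{\Lambda,P}$ was built so that $\eta_{P}|_{J^1_{\Lambda}\cap M}=\tilde{\eta}_{\Lambda_{i_\circ}}\boxtimes\eta_{\Lambda_{x}}$, hence under the tensor decomposition $\kappa^w_{P}|_{J_{\Lambda}\cap M}=\tilde{\kappa}^{w,P}_{i_\circ}\boxtimes\kappa^{w,P}_{x}$ along $M\cong\tilde{G}_{V_{i_\circ}}\times G_V$ the factor $\kappa^{w,P}_{x}$ restricts on $J^1_{\Lambda_{x}}$ to $\eta_{\Lambda_{x}}$ and $\tilde{\kappa}^{w,P}_{i_\circ}$ restricts on $\tilde{J}^1_{\Lambda_{i_\circ}}$ to $\tilde{\eta}_{\Lambda_{i_\circ}}$.

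For the intertwining I would transfer the intertwining of the maximal beta-extension $\kappa^{\mathfrak{M}^w}_{\Lambda}\cong\Ind_{J_{P}}^{J_{\Lambda}}\kappa^w_{P}$ on $G_W$ down to the Levi $M$. By Proposition~\ref{beta-extension-max-min} the beta-extension $\kappa_{\mathfrak{M}^w}$ is intertwined by $U_{\mathfrak{M}^w,E/\Eo}$, a maximal parahoric subgroup of the $p$-adic centralizer of the relevant compact $\mathfrak{o}_E$-torus --- a product of a $\mathrm{U}_3$-factor (when $w=y$) or a $\mathrm{U}_2$-factor (when $w=z$) over $E_{i_\circ}/\Eo_{i_\circ}$ with copies of $\mathrm{U}_1(E_i/\Eo_i)$, as the reductive quotients in Section~\ref{section lattice higher ranks} show --- inside which the $\mathrm{GL}$-block torus $\tilde{T}_{i_\circ}\cong E_{i_\circ}^\times$ sits as a Levi subgroup; and since $\mathfrak{M}^w_{i_\circ,x}$ is attached to a \emph{maximal} conjugate-self-dual $\mathfrak{o}_E$-order, the intertwining of $\kappa_{\mathfrak{M}^w}$, hence of $\kappa^{\mathfrak{M}^w}_{\Lambda}$, contains coset representatives for the affine Weyl group of that centralizer, in particular the uniformizer of $E_{i_\circ}$ embedded in the $\mathrm{GL}$-block. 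Because the orthogonal decomposition $W=V_{i_\circ,-}\oplus V\oplus V_{i_\circ,+}$ is exactly subordinate to the semi-simple stratum $[\Lambda,3d,0,\beta]$ --- as checked in the Remark preceding this subsection --- the comparison of intertwining across the parabolic $P$ developed in \cite[\S\S 5--6]{stevens-supercusp} (compare \cite[\S 1]{Blondel-Weil}) applies and relates the intertwining of $\kappa^w_{P}|_{J_{\Lambda}\cap M}$ in $M$ to that of $\kappa^{\mathfrak{M}^w}_{\Lambda}$ in $G_W$; feeding the elements of $\tilde{T}_{i_\circ}\times T_x\cong E_{i_\circ}^\times\times T_x$ through this correspondence and using that intertwining respects the tensor decomposition over $M$, I obtain that $\tilde{\kappa}^{w,P}_{i_\circ}$ is intertwined by every element of $E_{i_\circ}^\times$ and $\kappa^{w,P}_{x}$ by $T_x$, which are conditions~(b) of Propositions~\ref{beta-extension-tilde} and~\ref{beta-extension-U} respectively.

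The substantive part, and the main obstacle, is the intertwining for the $\mathrm{GL}$-factor $\tilde{\kappa}^{w,P}_{i_\circ}$: intertwining by the compact torus is essentially automatic (one has $T_x\subseteq J_{\Lambda_{x}}$ and $\mathfrak{o}^\times_{E_{i_\circ}}\subseteq\tilde{J}_{\Lambda_{i_\circ}}$), so the real content is intertwining by the uniformizer of $E_{i_\circ}$, and this is exactly where one must (i) pin down the intertwining set of the maximal beta-extension $\kappa^{\mathfrak{M}^w}_{\Lambda}$ in $G_W$ precisely enough to see that it absorbs the non-compact direction of the $\mathrm{GL}$-block --- the maximality of the $\mathfrak{o}_E$-order $\mathfrak{A}_{\mathfrak{M}^w,E}$ is used here --- and (ii) carry this through the Mackey-theoretic computation comparing $\kappa^w_{P}$, via $\kappa^{\mathfrak{M}^w}_{\Lambda}\cong\Ind_{J_{P}}^{J_{\Lambda}}\kappa^w_{P}$, with its restriction to $J_{\Lambda}\cap M$. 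No computation beyond that in \cite[\S\S 5--6]{stevens-supercusp} is needed, but one should record that our explicit lattice sequences $\Lambda_{i_\circ,x}$ and $\mathfrak{M}^w_{i_\circ,x}$, together with the exact-subordination check already made, satisfy the hypotheses of loc. cit.; granting this, both factors are forced into the respective torsors of beta-extensions, and identifying which members they are is deferred to Section~\ref{section amending char}.
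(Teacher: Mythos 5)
Your proposal is correct and takes essentially the same route as the paper: the paper's proof is simply a citation of \cite[Proposition 6.3]{stevens-supercusp} together with the check that $\mathfrak{A}_{\Lambda_x,E}$ (resp. $\mathfrak{A}_{\Lambda_{i_\circ},E}$) is a maximal conjugate-self-dual order, and your sketch reconstructs precisely the argument behind that citation --- the intertwining comparison across $P$ from \cite[\S\S 5--6]{stevens-supercusp} combined with the uniqueness statements of Propositions \ref{beta-extension-tilde} and \ref{beta-extension-U}. The one hypothesis the paper explicitly highlights, maximality of the restricted orders in the Levi blocks (rather than of $\mathfrak{A}_{\mathfrak{M}^w,E}$ in $G_W$), is only implicit in your write-up, but nothing in the argument is wrong.
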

\proof
This is \cite[Proposition 6.3]{stevens-supercusp}. The condition that  $ \mathfrak{A}_{\Lambda_x,E}$ (resp. $ \mathfrak{A}_{\Lambda_{i_\circ},E}$) is a {maximal} conjugate-self-dual order clearly holds in our situation.
\qed

Suppose that  $\rho_{i_\circ,x}=\tilde{\rho}_{i_\circ}\boxtimes \rho_{x}$ is a character of $J_\Lambda$ inflated from a character of the quotient
$$J_{\Lambda}/J^1_{\Lambda}\cong J_{\Lambda,P}/J^1_{\Lambda,P}\cong T_{\Lambda}/T^1_{\Lambda}.$$
We define
$$\lambda^w_P=\kappa^w_P\cdot \rho_{i_\circ,x}$$
such that
$\lambda^w_{P}|_{J\cap M}=\tilde{\lambda}^{w,P}_{i_\circ}\boxtimes\lambda^{w,P}_{x}$, where $\tilde{\lambda}^{w,P}_{i_\circ}=\tilde{\kappa}^{w,P}_{i_\circ}\cdot \tilde{\rho}_{i_\circ}$ and $\lambda^{w,P}_{x}=\kappa^{w,P}_{x}\cdot \rho_{x}$. Moreover, by \cite[Lemma 6.1]{stevens-supercusp},
\begin{itemize}
  \item $\lambda^w_{P}$ is the restriction of $\lambda^{\mathfrak{M}^w}_{\Lambda}$ to the space of $(J^1\cap U)$-fixed vectors, and
  \item     $\lambda^{\mathfrak{M}^w}_{\Lambda}\cong \Ind_{J_{P}}^{J_{\Lambda}}\lambda^w_P.$
\end{itemize}

The representation $(J_P,\lambda^w_P)$ is a cover, a notion defined in \cite[Section 8]{BK-cover}, of
  $$(J_\Lambda\cap M,\lambda^w_P|_{J_\Lambda\cap M})=(\tilde{J}_{\Lambda_{i_\circ}}\times {J}_{\Lambda_{x}},\tilde{{\lambda}}^{w,P}_{i_\circ}\boxtimes {\lambda^{w,P}_{x}}),$$
  in the following sense. Let  $\tilde{\pi}^{w,P}_{i_\circ}\boxtimes \pi^{w,P}_{x}$  be a cuspidal representation of $M$ containing the maximal type $\tilde{{\lambda}}^{w,P}_{i_\circ}\boxtimes {\lambda^{w,P}_{x}}$. From \cite[Theorem 5.3]{Stevens-Miya} or \cite[Section 3.2]{Blondel-Weil}, there is an injective morphism of Hecke algebras
$$t_P: \mathcal{H}(M, \tilde{{\lambda}}^{w,P}_{i_\circ}\boxtimes {\lambda^{w,P}_{x}}) \rightarrow \mathcal{H}(G_W,\lambda^w_P)$$
 which induces the following commutative diagram,
\begin{equation*}
  \xymatrixcolsep{5pc}\xymatrix{
\mathcal{R}^{[\tilde{\pi}^{w,P}_{i_\circ}\boxtimes\pi^{w,P}_{x},M]}(G_W)  \ar[r]^{h_{G_W}}    &\mathrm{Mod-}\mathcal{H}(G_W,\lambda^w_P)
\\
\mathcal{R}^{[\tilde{\pi}^{w,P}_{i_\circ}\boxtimes\pi^{w,P}_{x},M]}(M) \ar[u]_{\Ind_P^{G_W}}\ar[r]^{h_M}  &\mathrm{Mod-}\mathcal{H}(M,\tilde{{\lambda}}^{w,P}_{i_\circ}\boxtimes {\lambda^{w,P}_{x}}),  \ar[u]_{(t_P)_*}
}
\end{equation*}
where
\begin{itemize}
\item $\mathrm{Mod-}\mathcal{H}(G_W,\lambda^w_P)$ is the category of right module over the Hecke algebra $\mathcal{H}(G_W,\lambda^w_P)$ (and similarly for $\mathrm{Mod-}\mathcal{H}(M,\tilde{{\lambda}}^{w,P}_{i_\circ}\boxtimes {\lambda^{w,P}_{x}})$;

    \item $[\tilde{\pi}^{w,P}_{i_\circ}\boxtimes\pi^{w,P}_{x},M]$ is the inertial class of $\tilde{\pi}^{w,P}_{i_\circ}\otimes \pi^{w,P}_{x}$, and $\mathcal{R}^{[\tilde{\pi}^{w,P}_{i_\circ}\boxtimes\pi^{w,P}_{x},M]}(G_W)$  (resp. $\mathcal{R}^{[\tilde{\pi}^{w,P}_{i_\circ}\boxtimes\pi^{w,P}_{x},M]}(M) $) is the Bernstein spectrum of representations of $G_W$ (resp. $M$) whose irreducible subquotients have cuspidal supports lying in $[\tilde{\pi}^{w,P}_{i_\circ}\boxtimes\pi^{w,P}_{x},M]$ ;

    \item $\Ind_P^{G_W}$ is the normalized parabolic induction;

  \item The functor $(t_P)_*$ associates to a $\mathcal{H}(M,\tilde{{\lambda}}^{w,P}_{i_\circ}\boxtimes {\lambda^{w,P}_{x}})$-module $X$ the right $\mathcal{H}(G_W,\lambda^w_P)$-module $\mathrm{Hom}_{\mathcal{H}(M,\tilde{{\lambda}}^{w,P}_{i_\circ}\boxtimes {\lambda^{w,P}_{x}})}( \mathcal{H}(G_W,\lambda^w_P), X)$.

      \item The functor $h_M$ associates to a representation $\tau$ of $M$ the module $\mathrm{Hom}_{\tilde{J}_{\Lambda_{i_\circ}}\times {J}_{\Lambda_{x}}}(\tilde{{\lambda}}^{w,P}_{i_\circ}\boxtimes {\lambda^{w,P}_{x}},\tau)$, and $h_{G_W}$ associates to a representation $\mathcal{T}$ of $G_W$ the module $\mathrm{Hom}_{J_P}({{\lambda}}^w_P,\mathcal{T})$. Both functors are equivalences of categories.
\end{itemize}

Suppose that $\tilde{\pi}_\circ$ lies in the inertial class of $\tilde{\pi}^{w,P}_{i_\circ}$ and $\pi=\pi^{w,P}_{x}$, then the reducibility of $\Ind_P^{G_W}(\tilde{\pi}_\circ\boxtimes \pi)$ can be studied by the $\mathcal{H}(G_W,\lambda^w_P)$-module $(t_P)_*h_M(\tilde{\pi}_\circ\boxtimes \pi)$. In the next section, we will study the structure of the above Hecke algebras.

\newpage\section{Hecke algebras}\label{section Hecke algebra}

\subsection{Structures of Hecke algebras}

This section summarizes the facts from \cite[Section 3.1]{Blondel-Weil}. Recall that $W=V_{i_\circ,-}\oplus V\oplus V_{i_\circ,+}$. We embed $\tilde{G}_{V_{i_\circ}}\cong \mathrm{GL}_F(V_{i_\circ})$ and the unitary group $G_V$ into a larger unitary group $G_W$, by the maps
$$\mathcal{I}^M_+:\tilde{G}_{V_{i_\circ}}\rightarrow G_W,\,g\mapsto ({}^\sigma{g},I_{V},g)\text{ and }\mathcal{I}^M_0:{G}_V\rightarrow G_W,\,h\mapsto (I_{V_{i_\circ}},h,I_{V_{i_\circ}}).$$
Then the Levi subgroup $M$ is equal to $\mathcal{I}^M(\tilde{G}_{V_{i_\circ}}\times {G}_V)=\mathcal{I}^M_+(\tilde{G}_{V_{i_\circ}})\times \mathcal{I}^M_0({G}_V)$. We take $P$ the parabolic subgroup of $G_W$ stabilizing the flag
$ V_{i_\circ,-} \subset V_{i_\circ,-}\oplus V\subset W$.

Choose a basis
$\mathcal{B}_+=\{\mathbf{e}_1,\dots,\mathbf{e}_{n_{i_\circ}}\}$ for $\Lambda_{i_\circ,+}(0)$
and another
$\mathcal{B}_-=\{\mathbf{e}_{-1},\dots,\mathbf{e}_{-n_{i_\circ}}\}$ for $\Lambda_{i_\circ,-}(1)$
such that the Hermitian form $h_W$ on $W$ is defined as in (\ref{even-Hermitian}). 
In terms of the ordered basis $\mathcal{B}_W=\mathcal{B}_-\sqcup\mathcal{B}_V\sqcup\mathcal{B}_+$, we define
$$s_{y}^{P}=\left[\begin{smallmatrix}
  &&I_{V_{i_\circ}}
  \\
  &I_V&
  \\
  I_{V_{i_\circ}}&&
\end{smallmatrix}\right]\text{ and }s_{z}^{P}=\left[\begin{smallmatrix}
  &&\varpi I_{V_{i_\circ}}
  \\
  &I_V&
  \\
  \varpi^{-1} I_{V_{i_\circ}}&&
\end{smallmatrix}\right].$$
 These elements are respectively denoted by $s_1^\varpi$ and $s_1$ in \cite{stevens-supercusp} and \cite{Blondel-Weil}. They satisfy the following properties.
 \begin{prop}\label{s_1-and-s_1-varpi-property}
\begin{enumerate}[(i)]
  \item Both $s_{y}^{P}$ and $s_{z}^{P}$ are not in $U_{\Lambda_{},E/\Eo}$;\label{s_1-and-s_1-varpi-property-notin}
      \item $s_{y}^{P}\in U_{\mathfrak{M}^y,E/\Eo}- U_{\mathfrak{M}^z,E/\Eo}$ and $s_{z}^{P}\in U_{\mathfrak{M}^z,E/\Eo}- U_{\mathfrak{M}^y,E/\Eo}$; \label{s_1-and-s_1-varpi-property-in}
      \item $(s_{y}^{P})^2=(s_{z}^{P})^2=1$ and $s_{z}^{P} s_{y}^{P}=(s_{y}^{P}s_{z}^{P})^{-1}=\mathcal{I}^M_+(\varpi I_{V_{i_\circ}})^{-1}$.
          \item $J_Ps_{z}^{P}J_Ps_{y}^{P} J_P=J_Ps_{z}^{P}s_{y}^{P} J_P=J_P(\mathcal{I}^M_+(\varpi)^{-1})J_P$. \label{s_1-and-s_1-varpi-relation}
\end{enumerate}
 \end{prop}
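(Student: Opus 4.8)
The plan is to verify each of the four items of Proposition \ref{s_1-and-s_1-varpi-property} by direct matrix computation in the ordered basis $\mathcal{B}_W=\mathcal{B}_-\sqcup\mathcal{B}_V\sqcup\mathcal{B}_+$, using the explicit form of $s_y^P$ and $s_z^P$ together with the description of the lattice sequences $\Lambda=\Lambda_{i_\circ,x}$, $\mathfrak{M}^y_{i_\circ,x}$, $\mathfrak{M}^z_{i_\circ,x}$ from Section \ref{section lattice higher ranks} and the unitary structure on $W$ given by (\ref{even-Hermitian}). First I would record that both $s_y^P$ and $s_z^P$ lie in $G_W$: this is the statement that they preserve $h_W$, which one checks by a short computation with the block form of the Hermitian matrix $\mathrm{diag}(\varpi I, J_V, I)$ (off-antidiagonal blocks pairing $V_{i_\circ,-}$ with $V_{i_\circ,+}$); the $\varpi$ and $\varpi^{-1}$ in $s_z^P$ are arranged precisely so that the form is preserved, and similarly $s_y^P$ uses the fact that $h_W$ pairs $\mathcal{B}_-$ with $\mathcal{B}_+$ via $\varpi$. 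One also checks they commute with $\mathcal{I}^M_+(\prod_i \mathfrak{o}_{E_i}^\times)$, so they lie in $Z_{G_W}(\mathcal{I}_{i_\circ,x}(E^\times))$-adjacent groups after localizing, which is what makes items (ii) and (iv) meaningful.

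For item (\ref{s_1-and-s_1-varpi-property-notin}) and (\ref{s_1-and-s_1-varpi-property-in}), the key is to compute the action of $s_y^P$ and $s_z^P$ on the relevant lattices. Recall $\Lambda = \Lambda_{\pm i_\circ}\oplus_{i\neq i_\circ}\Lambda_i$ has period $6$ on the $V_{\pm i_\circ}$-part, with $\Lambda(r)\cap V_{i_\circ,-} = \Lambda_{i_\circ}((r-1)/3)$, $\Lambda(r)\cap V_{i_\circ} = \Lambda_{i_\circ}(r/3)$, $\Lambda(r)\cap V_{i_\circ,+} = \Lambda_{i_\circ}((r+1)/3)$, and $\mathfrak{M}^y$, $\mathfrak{M}^z$ are period-$2$ sequences built from $\Lambda_{\pm i_\circ}(-2),\Lambda_{\pm i_\circ}(3)$ and $\Lambda_{\pm i_\circ}(0),\Lambda_{\pm i_\circ}(1)$ respectively. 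Since $s_y^P$ simply swaps $V_{i_\circ,-}$ and $V_{i_\circ,+}$ (identically on $V$), it preserves $\mathfrak{M}^y$ (whose two lattices $\Lambda_{\pm i_\circ}(-2)$ and $\Lambda_{\pm i_\circ}(3)$ are interchanged, up to the shift built into the definition, by this swap — here one uses that $\Lambda_{i_\circ,-}$ and $\Lambda_{i_\circ,+}$ are isomorphic copies of $\Lambda_{i_\circ}$ with offset indices) but not $\mathfrak{M}^z$ nor the smaller $U_{\Lambda,E/\Eo}$; whereas $s_z^P$ rescales the $V_{i_\circ,\pm}$ components by $\varpi^{\pm1}$ and so preserves $\mathfrak{M}^z$ but not $\mathfrak{M}^y$. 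I would make this precise by exhibiting, for each case, an explicit lattice vector moved outside the claimed subgroup (e.g. for $s_z^P\notin U_{\mathfrak{M}^y,E/\Eo}$ one checks $s_z^P$ does not normalize $\mathfrak{A}_{\mathfrak{M}^y}$ by testing it against a generator of $\mathfrak{A}_{\mathfrak{M}^y}$), and for the positive assertions checking the two generating lattices of each period-$2$ sequence are permuted.

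Items (iii) and (\ref{s_1-and-s_1-varpi-relation}) are elementary algebra once the matrices are written out. For (iii): $(s_y^P)^2$ is the swap applied twice, hence $1$; $(s_z^P)^2$ has $(\varpi I_{V_{i_\circ}})(\varpi^{-1}I_{V_{i_\circ}}) = I_{V_{i_\circ}}$ in both surviving corners, hence $1$; and the product $s_z^P s_y^P$ sends $\mathbf{e}_j \mapsto \varpi \mathbf{e}_j$ on $V_{i_\circ,+}$ and $\mathbf{e}_{-j}\mapsto \varpi^{-1}\mathbf{e}_{-j}$ on $V_{i_\circ,-}$, which by the definition of $\mathcal{I}^M_+$ is exactly $\mathcal{I}^M_+(\varpi I_{V_{i_\circ}})^{-1}$ (the $\sigma_{i_\circ}$-twist in $\mathcal{I}^M_+$ accounts for the $\varpi^{-1}$ on the $V_{i_\circ,-}$-side, since $\sigma_{i_\circ}$ acts by $t\mapsto {}^{c_{i_\circ}}t^{-1}$); the relation $s_z^P s_y^P = (s_y^P s_z^P)^{-1}$ then follows from $(s_y^P)^2 = (s_z^P)^2 = 1$. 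For (\ref{s_1-and-s_1-varpi-relation}), the chain of equalities $J_P s_z^P J_P s_y^P J_P = J_P s_z^P s_y^P J_P = J_P \mathcal{I}^M_+(\varpi)^{-1} J_P$ requires the braid/length-additivity property of the double cosets: one shows that the pair $(s_z^P, s_y^P)$ satisfies $J_P s_z^P J_P \cdot J_P s_y^P J_P \subseteq J_P s_z^P s_y^P J_P$, i.e. no collapsing, which I would deduce from the Iwahori-type decompositions of $J_P$ relative to $P$ (as set up just before the proposition) exactly as in the corresponding statement in \cite{stevens-supercusp} for $s_1$ and $s_1^\varpi$; this length-additivity of the two reflections is the only non-formal input.

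I expect the main obstacle to be item (\ref{s_1-and-s_1-varpi-relation}): proving the double coset identity cleanly, i.e. that multiplication in the Hecke algebra of $(G_W,\lambda_P^w)$ does not cause cancellation when composing the $s_z^P$- and $s_y^P$-double cosets. This amounts to verifying that $s_z^P (J_P \cap U^-) (s_z^P)^{-1} \cap s_y^P (J_P \cap U) (s_y^P)^{-1}$ is appropriately small — equivalently that the two elements act as ``positive'' reflections whose product has additive length — and the reference \cite[Section 5, 6]{stevens-supercusp} handles precisely this for the analogous $s_1, s_1^\varpi$, so I would import that argument with the dictionary $s_1 \leftrightarrow s_z^P$, $s_1^\varpi \leftrightarrow s_y^P$. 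The geometric content of (\ref{s_1-and-s_1-varpi-property-in}) — that $s_y^P$ and $s_z^P$ are the two ``affine reflections'' fixing the barycentre corresponding to $\Lambda$ and moving to the two adjacent hyperspecial-type vertices $y$ and $z$ — is conceptually the heart, but mechanically reduces to the lattice bookkeeping above.
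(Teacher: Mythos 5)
Your plan is sound and is, in substance, the paper's own proof: the paper disposes of all four items by citing \cite[(7.2.2) and Lemma 7.11]{stevens-supercusp}, and your explicit lattice bookkeeping for (i)--(iii) is precisely the content of (7.2.2), while the one genuinely non-formal step you isolate in (iv) --- length-additivity of the two double cosets, i.e.\ no collapsing when composing $J_Ps_z^PJ_P$ and $J_Ps_y^PJ_P$ --- is exactly Lemma 7.11, which you also end up importing. So you have correctly identified where the real work lives and where a citation is unavoidable.

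Two small points to tidy. First, in (iii) you have the scalars on the wrong sides: with the matrices as written (acting on column vectors in the ordered basis $\mathcal{B}_-\sqcup\mathcal{B}_V\sqcup\mathcal{B}_+$), $s_z^Ps_y^P$ multiplies $V_{i_\circ,-}$ by $\varpi$ and $V_{i_\circ,+}$ by $\varpi^{-1}$, not the reverse; since ${}^{\sigma_{i_\circ}}(\varpi I)=\varpi^{-1}I$, this is what makes the product equal $\mathcal{I}^M_+(\varpi I_{V_{i_\circ}})^{-1}$ --- with the action as you state it you would land on $\mathcal{I}^M_+(\varpi I_{V_{i_\circ}})$ instead. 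Second, membership in $U_{\mathfrak{M}^w,E/\Eo}=U_{\mathfrak{M}^w}\cap Z_{G_W}(\mathcal{I}_{i_\circ,x}(E^\times))$ requires centralizing $\mathcal{I}_{i_\circ,x}(E^\times)$, not merely normalizing the lattice chain; your sentence about commuting with $\mathcal{I}^M_+(\prod_i\mathfrak{o}_{E_i}^\times)$ gestures at this but should be made precise, because under the naive identification of $V_{i_\circ,-}$ with $E_{i_\circ}$ the swap $s_y^P$ conjugates $\mathcal{I}^M_+(t)$ to $\mathcal{I}^M_+({}^{\sigma_{i_\circ}}t)$; one must use the $E_{i_\circ}$-structure on $W$ for which ${}^{\sigma_{i_\circ}}t_{i_\circ}$ \emph{is} multiplication by $t_{i_\circ}$ on $V_{i_\circ,-}$ (this is how Stevens sets up $W$ as an $E$-space, and it is the reason $s_y^P,s_z^P$ lie in the unitary group over $E$ at all). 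Neither point changes the architecture of your argument.
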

\proof
These can be found in \cite[(7.2.2) and Lemma 7.11]{stevens-supercusp}.
\qed

\begin{prop}\label{s_1-and-s_1-varpi-kappa}
Let $w$ be $y$ or $z$. The beta-extension $\tilde{\kappa}^{w,P}_{i_\circ}$ is conjugate-self-dual, which means that $\tilde{\kappa}^{w,P}_{i_\circ}$ is equivalent to $\tilde{\kappa}^{w,P}_{i_\circ}\circ\sigma$.


\end{prop}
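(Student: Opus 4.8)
The plan is to realise the unitary involution $\sigma$ of $\tilde G_{V_{i_\circ}}$ as conjugation by $s_w^P$ inside $G_W$, and then to exploit that $s_w^P$ intertwines the representation $\kappa^w_P$.

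First I will record a purely group-theoretic fact: conjugation by $s_w^P$ normalises the Levi subgroup $M$ of $P$ and restricts on $M\cong\tilde G_{V_{i_\circ}}\times G_V$ to the automorphism $(\sigma,\mathrm{id})$, where $\tilde G_{V_{i_\circ}}$ is embedded by $\mathcal{I}^M_+$ and $\sigma$ is the unitary involution $\sigma_{i_\circ}$ of $\tilde G_{V_{i_\circ}}$. Writing $s_y^P$ in block form with respect to $\mathcal B_W$, a direct computation gives $s_y^P\,\mathcal{I}^M_+(g)\,s_y^P=\mathcal{I}^M_+({}^\sigma g)$ and $s_y^P\,\mathcal{I}^M_0(h)\,s_y^P=\mathcal{I}^M_0(h)$; the same identities hold for $s_z^P$ because $s_z^P(s_y^P)^{-1}=\mathcal{I}^M_+(\varpi I_{V_{i_\circ}})^{-1}$ lies in the centre of $M$ by Proposition \ref{s_1-and-s_1-varpi-property}(iii). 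Since $s_w^P$ has order $2$ (Proposition \ref{s_1-and-s_1-varpi-property}(iii)) and $\Lambda_{i_\circ,x}$ is conjugate-self-dual — so $\sigma$ stabilises $\tilde J_{\Lambda_{i_\circ}}$, while the identity of course stabilises $J_{\Lambda_x}$ — conjugation by $s_w^P$ also normalises $J_\Lambda\cap M=J_P\cap M=\tilde J_{\Lambda_{i_\circ}}\times J_{\Lambda_x}$, acting there as $(\sigma,\mathrm{id})$; here $J_\Lambda\cap M=J_P\cap M$ by the Iwahori decomposition of $J_P$.

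Second, I will use that $s_w^P$ intertwines $\kappa^w_P$. This follows from $s_w^P\in U_{\mathfrak{M}^w,E/\Eo}$ (Proposition \ref{s_1-and-s_1-varpi-property}(ii)), the fact that $\kappa_{\mathfrak{M}^w,\tilde\xi}$ is intertwined by all of $U_{\mathfrak{M}^w,E/\Eo}$ (Proposition \ref{beta-extension-max-min}), and the transfer of this intertwining to $\kappa^w_P$ through the compatibility of beta-extensions and the Iwahori decomposition of $J_P$, exactly as in \cite[Sections 6 and 7]{stevens-supercusp}; it is what makes the double-coset identity of Proposition \ref{s_1-and-s_1-varpi-property}(iv) meaningful. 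Hence there is a nonzero $\Phi\in\mathrm{Hom}_{J_P\cap s_w^P J_P s_w^P}\bigl(\kappa^w_P,\ \kappa^w_P\circ\mathrm{Ad}(s_w^P)\bigr)$. Because $s_w^P$ normalises $J_P\cap M$, this group is contained in $J_P\cap s_w^P J_P s_w^P$, so $\Phi$ restricts to a nonzero intertwiner between $\kappa^w_P|_{J_\Lambda\cap M}=\tilde\kappa^{w,P}_{i_\circ}\boxtimes\kappa^{w,P}_x$ and its twist by $\mathrm{Ad}(s_w^P)$, which by the first step is $(\tilde\kappa^{w,P}_{i_\circ}\circ\sigma)\boxtimes\kappa^{w,P}_x$. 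Both are irreducible, since $\tilde\kappa^{w,P}_{i_\circ}$ and $\kappa^{w,P}_x$ are beta-extensions (Proposition \ref{restrictions-are-beta}); therefore they are isomorphic, and comparing the $\tilde J_{\Lambda_{i_\circ}}$-factors yields $\tilde\kappa^{w,P}_{i_\circ}\circ\sigma\cong\tilde\kappa^{w,P}_{i_\circ}$, which is the assertion.

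The main difficulty is justifying the input that $s_w^P$ intertwines the beta-extension $\kappa^w_P$ itself — not merely $\lambda^w_P$ with its particular tame twist (which would only give the conclusion up to a square of a tame character), nor only an induced representation on a larger group — which is why I lean on Stevens' computation of the Hecke algebra. A lesser point requiring care is that conjugation by $s_z^P$ realises $\sigma$ exactly, with no residual inner automorphism of $\tilde G_{V_{i_\circ}}$ that could introduce a character twist; this is handled by the observation that $s_z^P$ and $s_y^P$ differ by a central element of $M$.
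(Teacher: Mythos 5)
Your proposal is correct and follows essentially the same route as the paper: the paper's proof is a one-line appeal to \cite[Corollary 6.10(ii)]{stevens-supercusp}, which states precisely that conjugation by $s_w^P\in U_{\mathfrak{M}^w,E/\Eo}$ induces the equivalence on $\kappa^w_P|_{J_\Lambda\cap M}=\tilde\kappa^{w,P}_{i_\circ}\boxtimes\kappa^{w,P}_x$, and your two steps (that $\mathrm{Ad}(s_w^P)$ restricts to $(\sigma,\mathrm{id})$ on $M$, and that $s_w^P$ intertwines $\kappa^w_P$ via the intertwining of $\kappa_{\mathfrak{M}^w,\tilde\xi}$ by $U_{\mathfrak{M}^w,E/\Eo}$ and compatibility of beta-extensions) are exactly the content of that citation, worked out in detail.
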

\proof
 By \cite[Corollary 6.10(ii)]{stevens-supercusp}, the above equivalence is induced by the conjugation of $s_{w}^{P}\in U_{\mathfrak{M}^w,E/\Eo}$ on $\kappa^w_{P}|_{J_\Lambda\cap M}=\tilde{\kappa}^{w,P}_{i_\circ}\boxtimes\kappa^{w,P}_{x}$.
\qed

Following \cite[(4.11)]{stevens-supercusp}, we fix $w=z$ and call $\kappa=\kappa_\Lambda^{\mathfrak{M}^z}$ the \emph{standard beta-extension}. We then write $\kappa_P=\kappa_{\Lambda,P}^{\mathfrak{M}^z}$, $\lambda=\lambda_{\Lambda}^{\mathfrak{M}^z}$, and $\lambda_P=\lambda_{\Lambda,P}^{\mathfrak{M}^z}$. This choice is not very important as we can transit between $\kappa_P^y$ and $\kappa_P^z$ using the relation in Proposition \ref{transferring-character-maximal}. Hence the subsequent results for $\kappa=\kappa_\Lambda^{\mathfrak{M}^z}$ apply to $\kappa_\Lambda^{\mathfrak{M}^y}$ as well, with slight modification.

Recall that the pair $(J_P,\lambda_P)$  is a cover for $(J\cap M,\tilde{\lambda}^{P}_{i_\circ}\boxtimes \lambda^{P}_{x})$, where $\lambda^{P}_{x}=\lambda^{z,P}_{x}$ and $\tilde{\lambda}^{P}_{i_\circ}=\tilde{\lambda}^{z,P}_{i_\circ}$. Note that $\tilde{\lambda}^{P}_{i_\circ}\cong \tilde{\lambda}^{P}_{i_\circ}\circ\sigma$ if and only if $\tilde{\rho}_{i_\circ}\cong \tilde{\rho}_{i_\circ}\circ\sigma$, since  $\tilde{\kappa}^{P}_{i_\circ}\cong \tilde{\kappa}^{P}_{i_\circ}\circ\sigma$ by Proposition \ref{s_1-and-s_1-varpi-kappa}.

The following Proposition provides the structure of the Hecke algebras $\mathcal{H}(M,\tilde{\lambda}^{P}_{i_\circ}\boxtimes \lambda^{P}_{x})$ and $\mathcal{H}(G_W,\lambda_P)$, and describes the injective morphism $t_P$.
\begin{prop}\label{supp-Hecke-alg}
  \begin{enumerate}[(i)]
  \item $\mathcal{H}(M,\tilde{\lambda}^{P}_{i_\circ}\boxtimes \lambda^{P}_{x})$ is isomorphic to $$\mathcal{H}(\tilde{G}_{V_{i_\circ}}, \tilde{\lambda}^{P}_{i_\circ})\cong\mathbb{C}[Z,Z^{-1}],$$
    where $Z$ is an operator valued function supported on $\varpi\tilde{J}_{\Lambda_{i_\circ}}\subset \tilde{G}_{V_{i_\circ}}$.

    \item Suppose that $\tilde{\lambda}^{P}_{i_\circ}\ncong \tilde{\lambda}^{P}_{i_\circ}\circ\sigma$.
    \begin{enumerate}
      \item The support of $\mathcal{H}(G_W,\lambda_P)$ is
    $$J_P(I_{G_V}(\lambda^{P}_{x})\times \mathcal{I}^M_+(E^\times_{i_\circ}))J_P=J_P(J_{\Lambda_x}\times \mathcal{I}^M_+(E^\times_{i_\circ}))J_P.$$

    \item $t_P:\mathcal{H}(M,\tilde{\lambda}^{P}_{i_\circ}\boxtimes \lambda^{P}_{x}) \rightarrow \mathcal{H}(G_W,\lambda_P)$ is an isomorphism.
    \end{enumerate}

      \item
Suppose that $\tilde{\lambda}^{P}_{i_\circ}\cong \tilde{\lambda}^{P}_{i_\circ}\circ\sigma$.
\begin{enumerate}
  \item Define $\mathfrak{W}$ to be the affine Weyl group generated by $s_{y}^{P},\,s_{z}^{P}$ and the compact torus $T_{\Lambda_{i_\circ,x}}^0\cong \mathfrak{o}_{E_{i_\circ}}^\times
      \times T_{\lambda_x}$, then the support of $\mathcal{H}(G_W,\lambda_P)$ is
    $$J_P(I_{G_V}(\lambda^{P}_{x})\rtimes \mathfrak{W})J_P=J_P(J_{\Lambda_x}\rtimes \mathfrak{W})J_P.$$

    \item $\mathcal{H}(G_W,\lambda_P)$ is isomorphic to
     $$ \mathbb{C}[T_{y}^{P},T_{z}^{P}],$$
     where, for $w=y,\,z$, the operator valued function $T_{w}^{P}$  has support $J_Ps_{w}^{P}J_P$ and satisfies a quadratic relation of the form
  $$(T_w^{P}-\omega^{P}_{w,1})(T_w^{P}-\omega^{P}_{w,2})=0,$$
   where $\omega^{P}_{w,1}/\omega^{P}_{w,2}=-q^{r_w^{P}}$ for some $r_w^{P}\geq 0$, and moreover, we can normalize both $T_{y}^{P},\,T_{z}^{P}$ such that
  \begin{equation}\label{TyTz=Z}
  T_{y}^{P}T_{z}^{P}=t_P(Z).
\end{equation}

  \item $t_P$ defines a free $\mathcal{H}(M,\tilde{\lambda}^{P}_{i_\circ}\boxtimes \lambda^{P}_{x}) $-module structure on $\mathcal{H}(G_W,\lambda_P)$ of rank 2.
\end{enumerate}

    \end{enumerate}
\end{prop}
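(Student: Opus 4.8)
The plan is to derive all three parts from the general theory of Hecke algebras attached to covers (\cite{BK-cover}), specialised to semi-simple types of classical groups in \cite{Stevens-Miya} and \cite{Blondel-Weil}, together with the intertwining properties of semi-simple characters and $\beta$-extensions from \cite{stevens-supercusp} recalled in Section \ref{section cover}. Recall that $(J_P,\lambda_P)$ is a cover of $(J_\Lambda\cap M,\tilde\lambda^{P}_{i_\circ}\boxtimes\lambda^{P}_{x})$, and that $W=V_{i_\circ,-}\oplus V\oplus V_{i_\circ,+}$ was checked (in the remark preceding Proposition \ref{transferring-character-maximal}) to be exactly subordinate to $[\Lambda,3d,0,\beta]$, with $\mathfrak{A}_{\mathfrak{M}^y,E}$ and $\mathfrak{A}_{\mathfrak{M}^z,E}$ the two maximal self-dual orders over $\mathfrak{A}_{\Lambda,E}$. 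For such a cover the support of $\mathcal{H}(G_W,\lambda_P)$ is $J_P\,I_{G_W}(\lambda_P)\,J_P$, and the map $t_P$ together with its image is governed by how $I_{G_W}(\lambda_P)$ meets the two unipotent parts of the Iwahori decomposition of $J_P$. So everything comes down to computing the intertwining, reading off the quadratic relations, and fixing normalisations, exactly as in \cite[Section 3.1]{Blondel-Weil}.

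For (i): since $M\cong\tilde G_{V_{i_\circ}}\times G_V$ and the $M$-type is an exterior tensor product, one has $\mathcal{H}(M,\tilde\lambda^{P}_{i_\circ}\boxtimes\lambda^{P}_{x})\cong\mathcal{H}(\tilde G_{V_{i_\circ}},\tilde\lambda^{P}_{i_\circ})\otimes\mathcal{H}(G_V,\lambda^{P}_{x})$. By Proposition \ref{restrictions-are-beta}, $\lambda^{P}_x$ is, up to the tame twist $\rho_x$, a $\beta$-extension attached to the maximal self-dual order $\mathfrak{A}_{\Lambda_x,E}$, so $(J_{\Lambda_x},\lambda^{P}_x)$ is a cuspidal type and its intertwining in $G_V$ is $J_{\Lambda_x}$ by \cite[Proposition 6.18]{stevens-supercusp}; hence $\mathcal{H}(G_V,\lambda^{P}_x)\cong\mathbb{C}$. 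On the other factor $\tilde\lambda^{P}_{i_\circ}$ is a simple type for $\mathrm{GL}_{n_{i_\circ}}(F)$ carried by $\tilde J_{\Lambda_{i_\circ}}$ (not by $\tilde{\mathbf J}_{\Lambda_{i_\circ}}$), with intertwining $\tilde J_{\Lambda_{i_\circ}}\langle\varpi\rangle\tilde J_{\Lambda_{i_\circ}}$; by the structure theory of \cite{BK} its Hecke algebra is $\mathbb{C}[Z,Z^{-1}]$ with $Z$ supported on $\varpi\tilde J_{\Lambda_{i_\circ}}$. This gives (i).

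For (ii) and (iii): using Stevens's intertwining formulas, $I_{G_W}(\lambda_P)$ is generated over $J_P$ by $I_{G_V}(\lambda^{P}_x)=J_{\Lambda_x}$, the element $\mathcal{I}^M_+(\varpi)$, and possibly the elements $s^{P}_y,s^{P}_z$ of Proposition \ref{s_1-and-s_1-varpi-property}. The dichotomy is supplied by Proposition \ref{s_1-and-s_1-varpi-kappa}: conjugation by $s^{P}_w$ sends $\kappa^{w}_P|_{J_\Lambda\cap M}$ to $(\tilde\kappa^{w,P}_{i_\circ}\circ\sigma)\boxtimes\kappa^{w,P}_x$, and since $\tilde\kappa^{w,P}_{i_\circ}\cong\tilde\kappa^{w,P}_{i_\circ}\circ\sigma$, the element $s^{P}_w$ intertwines $\lambda_P$ precisely when $\tilde\lambda^{P}_{i_\circ}\cong\tilde\lambda^{P}_{i_\circ}\circ\sigma$, equivalently $\tilde\rho_{i_\circ}\cong\tilde\rho_{i_\circ}\circ\sigma$. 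In the non-self-dual case the extra intertwining beyond $M$ is only $\mathcal{I}^M_+(E^\times_{i_\circ})$, which normalises everything; the support is as in (ii)(a) and $t_P$ is surjective, hence an isomorphism, giving (ii). In the self-dual case one also picks up $s^{P}_y,s^{P}_z$; together with the compact torus $T^0_{\Lambda_{i_\circ,x}}$ they generate the infinite dihedral group $\mathfrak{W}$, and the Hecke algebra acquires the structure of the corresponding two-parameter affine Hecke algebra of rank one: generators $T^{P}_y,T^{P}_z$ supported on $J_Ps^{P}_wJ_P$, each satisfying $(T^{P}_w-\omega^{P}_{w,1})(T^{P}_w-\omega^{P}_{w,2})=0$ with $\omega^{P}_{w,1}/\omega^{P}_{w,2}=-q^{r^{P}_w}$, where $q^{r^{P}_w}$ is the ratio of the orders of a parabolic and a parahoric in the finite unitary quotient $J_{\mathfrak{M}^w}/J^1_{\mathfrak{M}^w}$, the actual values being left to Section \ref{section amending} and Lusztig's theorem. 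The normalisation $T^{P}_yT^{P}_z=t_P(Z)$ is forced by Proposition \ref{s_1-and-s_1-varpi-property}(\ref{s_1-and-s_1-varpi-relation}), since $J_Ps^{P}_yJ_Ps^{P}_zJ_P=J_P\mathcal{I}^M_+(\varpi)J_P$ is the support of $t_P(Z)$; and then $\{1,T^{P}_y\}$ is a basis of $\mathbb{C}[T^{P}_y,T^{P}_z]$ over $\mathbb{C}[t_P(Z),t_P(Z)^{-1}]$, so $\mathcal{H}(G_W,\lambda_P)$ is $t_P$-free of rank $2$. This gives (iii).

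The main obstacle is the precise computation of $I_{G_W}(\lambda_P)$ — that it is exactly $J_P(J_{\Lambda_x}\times\mathcal{I}^M_+(E^\times_{i_\circ}))J_P$ in case (ii) and $J_P(J_{\Lambda_x}\rtimes\mathfrak{W})J_P$ in case (iii) — together with the verification that the quadratic relations have the asserted shape (in particular $r^{P}_w\ge 0$, i.e.\ that $\omega^{P}_{w,1},\omega^{P}_{w,2}$ are genuinely the two eigenvalues of a rank-one finite-unitary Hecke operator). These are not formal consequences of the cover formalism: they need the full weight of the intertwining calculus for semi-simple characters, Heisenberg representations and $\beta$-extensions from \cite{stevens-supercusp}, together with the compatibilities among $\kappa^{\mathfrak{M}^w}_\Lambda$, $\kappa^w_P$ and their restrictions recorded in Section \ref{section cover}, plus an identification of the relevant finite unitary Hecke algebras. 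Once these are in place the remaining assertions follow exactly as in \cite[Section 3.1]{Blondel-Weil}.
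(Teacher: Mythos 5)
Your proposal is correct and follows essentially the same route as the paper: the paper's proof simply cites \cite[Proposition 3.3 and Corollary 3.4]{Blondel-Weil} (which in turn summarize the intertwining computations of \cite[Section 6]{stevens-supercusp}), and your argument is an unpacking of exactly that chain — the cover formalism, the dichotomy via Proposition \ref{s_1-and-s_1-varpi-kappa}, the support computation through $s_y^P,s_z^P$, and the normalization forced by Proposition \ref{s_1-and-s_1-varpi-property}(\ref{s_1-and-s_1-varpi-relation}). The only detail you omit is the paper's remark that in this situation the element $\mathbf{p}$ of \emph{loc.\ cit.} is $1$ and $s_z^P$ always centralizes $\mathcal{I}_{i_\circ,x}(E^\times)$, so the exceptional conditions of Blondel's statement never occur; this is a routine verification and not a gap.
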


 Note that the parameters $r_w^{P}$ do not depend on the normalizations of $T_w^{P}$.



\proof
Most of the statements can be found in \cite[Proposition 3.3 and COrollary 3.4]{Blondel-Weil} (which summarizes several Propositions in \cite[Section 6]{stevens-supercusp}), and the others are easily consequences. In our situation, the element $\mathbf{p}$ in \emph{loc. cit.} is just $1$, and $s_{z}^{P}$ is always in $Z_{\tilde{G}_W}(\mathcal{I}_{i_\circ,x}(E^\times))$. Hence the conditions (ib) and (ic) in \emph{loc. cit.} are always unsatisfied.
\qed

We can compute explicitly the parameters ${r_w^{P}}$, for $w=y,\,z$, in the above quadratic relations. This will be done at the end of this section. We first reduce the computation of the above Hecke algebra to the Hecke algebras for finite groups.

\begin{prop}\label{hecke-alg-reduce-to-finite}
  There is an injective support-preserving algebra morphism
  $$j_{z}^{P}:\mathcal{H}(U_{\mathfrak{M}^z,E/\Eo},\rho_{i_\circ,x})\hookrightarrow \mathcal{H}(G_W,\lambda^z_P).$$
  The generator in $\mathcal{H}(U_{\mathfrak{M}^z,E/\Eo},\rho_{i_\circ,x})$ with support in $U_{\Lambda,E/\Eo}s_{z}^{P}U_{\Lambda,E/\Eo}$ has image $T_{z}^{P}$.
\end{prop}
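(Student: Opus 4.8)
The plan is to realise $\mathcal{H}(G_W,\lambda^z_P)$ as containing a copy of the finite Hecke algebra $\mathcal{H}(U_{\mathfrak{M}^z,E/\Eo},\rho_{i_\circ,x})$ attached to the finite unitary group $\overline{G}_z:=U_{\mathfrak{M}^z,E/\Eo}/U^1_{\mathfrak{M}^z,E/\Eo}\cong\mathrm{U}_2(\mathbf{k}_{E_{i_\circ}}/\mathbf{k}_{\Eo_{i_\circ}})\times\prod_{i\in I}\mathrm{U}_1(\mathbf{k}_{E_i}/\mathbf{k}_{\Eo_i})$, by restricting attention to functions in $\mathcal{H}(G_W,\lambda^z_P)$ supported on double cosets $J_PgJ_P$ with $g$ lying in the parahoric subgroup $U_{\mathfrak{M}^z,E/\Eo}$ of $G_W$. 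What makes this possible is that both $U_{\Lambda,E/\Eo}$ and the representative $s^P_z$ lie inside $U_{\mathfrak{M}^z,E/\Eo}$: the first because $\mathfrak{A}_{\Lambda,E}\subseteq\mathfrak{A}_{\mathfrak{M}^z,E}$, the second by Proposition~\ref{s_1-and-s_1-varpi-property}(\ref{s_1-and-s_1-varpi-property-in}).

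First I would make the finite picture precise. Since $\mathfrak{A}_{\Lambda,E}\subseteq\mathfrak{A}_{\mathfrak{M}^z,E}$, the image of $U_{\Lambda,E/\Eo}$ in $\overline{G}_z$ is a parabolic subgroup $\overline{P}_z$ whose Levi quotient is canonically $J_\Lambda/J^1_\Lambda\cong T_\Lambda/T^1_\Lambda$; as $\rho_{i_\circ,x}$ is inflated from a character of $J_\Lambda/J^1_\Lambda$, it descends to a character $\overline{\rho}$ of $\overline{P}_z$, and $\mathcal{H}(U_{\mathfrak{M}^z,E/\Eo},\rho_{i_\circ,x})$ is by definition the intertwining algebra of $\Ind_{\overline{P}_z}^{\overline{G}_z}\overline{\rho}$. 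The nontrivial Weyl element $\overline{s}_z^P$ of the $\mathrm{U}_2$-factor is the image of $s^P_z$, and in the situation of interest, namely $\tilde{\lambda}^P_{i_\circ}\cong\tilde{\lambda}^P_{i_\circ}\circ\sigma$ (equivalently $\overline{\rho}$ is $\overline{s}_z^P$-fixed), this algebra is free of rank $2$ over $\mathbb{C}$, spanned by the unit and a generator $\Phi_z$ supported on $\overline{P}_z\,\overline{s}_z^P\,\overline{P}_z$, which lifts to the double coset $U_{\Lambda,E/\Eo}\,s^P_z\,U_{\Lambda,E/\Eo}$, and $\Phi_z$ satisfies a single quadratic relation (whose parameter is exactly what Lusztig's theorem will compute in the next section).

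Next I would construct $j^P_z$. By the very definition of the compatible beta-extension $\kappa=\kappa^{\mathfrak{M}^z}_\Lambda$ and its parabolic restriction $\kappa_P=\kappa^{\mathfrak{M}^z}_{\Lambda,P}$ (Definition~\ref{definition-compatible-beta}), together with \cite[Propositions 6.1--6.3]{stevens-supercusp}, the representation $\kappa_P$ behaves coherently with respect to the parahoric $U_{\mathfrak{M}^z,E/\Eo}$; concretely, conjugation by $s^P_z\in U_{\mathfrak{M}^z,E/\Eo}$ preserves $\kappa_P|_{J_\Lambda\cap M}$ up to equivalence (Proposition~\ref{s_1-and-s_1-varpi-kappa}), so the intertwining operators occurring in $\mathcal{H}(U_{\mathfrak{M}^z,E/\Eo},\rho_{i_\circ,x})$ can be transported verbatim to intertwining operators of $\lambda^z_P$ on the matching double cosets $J_PgJ_P\subset G_W$. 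This yields a linear, support-preserving map $j^P_z$ which sends $\Phi_z$ to the element of $\mathcal{H}(G_W,\lambda^z_P)$ with support $J_Ps^P_zJ_P$, that is (after the normalisation fixed in Proposition~\ref{supp-Hecke-alg}) to $T^P_z$.

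It then remains to check that $j^P_z$ is multiplicative, and this is the step I expect to be the main obstacle: one must show that the convolution of two functions in the image never escapes the parahoric $U_{\mathfrak{M}^z,E/\Eo}$, so that all structure constants — in particular the quadratic relation for $T^P_z$ — are computed literally inside $\overline{G}_z$ by a change of variables reducing the convolution integral to the finite-group convolution. The key inputs are $(s^P_z)^2=1$ (Proposition~\ref{s_1-and-s_1-varpi-property}(iii)) and the support description in Proposition~\ref{supp-Hecke-alg}(iii): the part of the support of $\mathcal{H}(G_W,\lambda^z_P)$ reached by powers of $s^P_z$ is exhausted by $J_P\cup J_Ps^P_zJ_P$, and both cosets sit inside $U_{\mathfrak{M}^z,E/\Eo}$. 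Geometrically this works because $\mathfrak{M}^z_{i_\circ,x}$ is obtained from $\Lambda_{i_\circ,x}$ by the rounding procedure of Section~\ref{section lattice higher ranks} with $\mathfrak{A}_{\mathfrak{M}^z,E}$ a maximal conjugate-self-dual order over $\mathfrak{A}_{\Lambda,E}$, exactly the configuration analysed in \cite[Section 6]{stevens-supercusp}. Once multiplicativity is in hand, injectivity is formal: $j^P_z$ carries the basis of $\mathcal{H}(U_{\mathfrak{M}^z,E/\Eo},\rho_{i_\circ,x})$ indexed by $\overline{P}_z$-double cosets carrying a nonzero intertwiner to elements of $\mathcal{H}(G_W,\lambda^z_P)$ with pairwise distinct $J_P$-supports, hence linearly independent.
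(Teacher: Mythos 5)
Your overall strategy --- reduce to the finite Hecke algebra of the reductive quotient $U_{\mathfrak{M}^z,E/\Eo}/U^1_{\mathfrak{M}^z,E/\Eo}$ using that $U_{\Lambda,E/\Eo}$ and $s_z^P$ both sit inside the maximal parahoric, and that the compatible beta-extension lets intertwining operators be transported --- is the right one, and it is what the paper's citation to \cite[(7.3)]{stevens-supercusp} ultimately rests on. But the way you propose to execute it has a genuine gap exactly at the step you flag as ``the main obstacle.'' Showing that the convolution of two elements of the image stays supported in $J_P\cup J_Ps_z^PJ_P\subset U_{\mathfrak{M}^z,E/\Eo}$ only tells you that $(T_z^P)^2$ is a linear combination $aT_z^P+b$; it does not tell you that $a$ and $b$ coincide with the structure constants of $\Phi_z$ in the finite algebra, and that coincidence is the entire content of the proposition (it is what licenses computing the parameters via Lusztig in the next subsection). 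The ``change of variables reducing the convolution integral to the finite-group convolution'' cannot be done verbatim, because the $G_W$-convolution is a sum over $J_P$-cosets of $J_Ps_z^PJ_P$ while the finite convolution is a sum over $\overline{P}_z$-cosets, i.e.\ over $J_{\Lambda,\mathfrak{M}^z}$-cosets (equivalently $U_{\Lambda,E/\Eo}U^1_{\mathfrak{M}^z}$-cosets) of a different double coset; these decompositions do not match term by term, and reconciling them requires precisely the induction identities $\eta_\Lambda\cong\Ind_{J^1_P}^{J^1_\Lambda}\eta_P$, $\kappa_\Lambda^{\mathfrak{M}^z}\cong\Ind_{J_P}^{J_\Lambda}\kappa_P^z$ and $\Ind_{J_\Lambda}\kappa_\Lambda^{\mathfrak{M}^z}\cong\Ind_{J_{\Lambda,\mathfrak{M}^z}}\Res\,\kappa_{\mathfrak{M}^z}$, none of which you invoke at this point.

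The paper avoids the issue by factoring the map through the Hecke algebra of the compact open group $J_{\mathfrak{M}^z}$ itself:
$$\mathcal{H}(U_{\mathfrak{M}^z,E/\Eo},\rho_{i_\circ,x})\cong \mathcal{H}(J_{\mathfrak{M}^z},\Res^{J_{\mathfrak{M}^z}}_{J_{\Lambda,\mathfrak{M}^z}}\kappa_{\mathfrak{M}^z}\cdot \rho_{i_\circ,x})
  \hookrightarrow \mathcal{H}(G_W,\kappa^{\mathfrak{M}^z}_{\Lambda}\cdot \rho_{i_\circ,x})\cong \mathcal{H}(G_W,\lambda^z_P).$$
Here the first arrow is the standard $\kappa$-twisting isomorphism (automatically an algebra isomorphism because $\kappa_{\mathfrak{M}^z}$ is defined on all of $J_{\mathfrak{M}^z}$ and intertwined by it), the second is extension by zero, which is multiplicative for free since the support $J_{\mathfrak{M}^z}$ is a group, and the last isomorphisms are the canonical Hecke-algebra equivalences induced by the compatibility and induction relations for beta-extensions. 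If you want a self-contained proof rather than a citation, you should restructure your argument along these lines: each arrow is individually and formally multiplicative, so no direct comparison of convolution integrals on $J_P$-double cosets is ever needed, and the support-preservation statement (hence the identification of the image of $\Phi_z$ with $T_z^P$) follows by tracking $U_{\Lambda,E/\Eo}s_z^PU_{\Lambda,E/\Eo}$ through the chain.
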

\proof
This is \cite[(7.3)]{stevens-supercusp}, from where we have a sequence of morphisms
\begin{equation*}
\begin{split}
  &\mathcal{H}(U_{\mathfrak{M}^z,E/\Eo},\rho_{i_\circ,x})\cong \mathcal{H}(J_{\mathfrak{M}^z},\Res^{J_{\mathfrak{M}^z}}_{J_{\Lambda,\mathfrak{M}^z}}\kappa_{\mathfrak{M}^z}\cdot \rho_{i_\circ,x})
  \\
  \hookrightarrow & \mathcal{H}(G_W,\Res^{J_{\mathfrak{M}^z}}_{J_{\Lambda,\mathfrak{M}^z}}\kappa_{\mathfrak{M}^z}\cdot \rho_{i_\circ,x})\cong\mathcal{H}(G_W,\kappa^{\mathfrak{M}^z}_{\Lambda}\cdot \rho_{i_\circ,x})\cong \mathcal{H}(G_W,\kappa^{z}_{P}\cdot \rho_{i_\circ,x}).
\end{split}
 \end{equation*}
\qed

Note that a similar Proposition holds if we replace $z$ by $y$, since $\kappa^{z}_{P}=\kappa^{y}_{P}\cdot\chi^{z,P}_y$ for a tamely ramified character $\chi^{z,P}_y$. We therefore have the following (see \cite[Corollary 3.7]{Blondel-Weil}).

\begin{cor}\label{Hecke-generators}
  The generators of $\mathcal{H}(G_W,\lambda_P)$ are computed from those of
  $$\mathcal{H}(U_{\mathfrak{M}^y,E/\Eo},\rho_{i_\circ,x}\cdot\chi^{z,P}_y)\text{ and }\mathcal{H}(U_{\mathfrak{M}^z,E/\Eo},\rho_{i_\circ,x}).$$
\end{cor}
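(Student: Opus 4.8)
The plan is to deduce the Corollary from Propositions~\ref{supp-Hecke-alg} and~\ref{hecke-alg-reduce-to-finite} together with the transition relation of Proposition~\ref{transferring-character-maximal}; the only real work is keeping track of the twisting characters. I would concentrate on the case $\tilde{\lambda}^{P}_{i_\circ}\cong\tilde{\lambda}^{P}_{i_\circ}\circ\sigma$, where by Proposition~\ref{supp-Hecke-alg} the algebra $\mathcal{H}(G_W,\lambda_P)$ is $\mathbb{C}[T^{P}_{y},T^{P}_{z}]$, generated by the operator-valued functions $T^{P}_{y}$ and $T^{P}_{z}$ supported on the double cosets $J_Ps^{P}_{y}J_P$ and $J_Ps^{P}_{z}J_P$ (the disjoint case $\tilde{\lambda}^{P}_{i_\circ}\ncong\tilde{\lambda}^{P}_{i_\circ}\circ\sigma$, where $\mathcal{H}(G_W,\lambda_P)\cong\mathbb{C}[Z,Z^{-1}]$ and $t_P$ is an isomorphism, being analogous and simpler). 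Thus it suffices to realise $T^{P}_{z}$ and $T^{P}_{y}$ individually as images of explicit generators of $\mathcal{H}(U_{\mathfrak{M}^z,E/\Eo},\rho_{i_\circ,x})$ and $\mathcal{H}(U_{\mathfrak{M}^y,E/\Eo},\rho_{i_\circ,x}\cdot\chi^{z,P}_y)$.

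For $T^{P}_{z}$ there is nothing to do: this is precisely the content of Proposition~\ref{hecke-alg-reduce-to-finite}, the support-preserving embedding $j^{P}_{z}$ sending the generator of $\mathcal{H}(U_{\mathfrak{M}^z,E/\Eo},\rho_{i_\circ,x})$ supported on $U_{\Lambda,E/\Eo}s^{P}_{z}U_{\Lambda,E/\Eo}$ to $T^{P}_{z}$. For $T^{P}_{y}$ I would rerun the chain of algebra isomorphisms and restrictions used to prove Proposition~\ref{hecke-alg-reduce-to-finite}, but with $\mathfrak{M}^y$ in place of $\mathfrak{M}^z$. What makes this work is the identification of extensions provided by Proposition~\ref{transferring-character-maximal}: there $\kappa^{z}_{P}=\kappa^{y}_{P}\cdot\chi^{z,P}_y$, hence
$$\lambda_P=\lambda^{z}_{P}=\kappa^{z}_{P}\cdot\rho_{i_\circ,x}=\kappa^{y}_{P}\cdot\bigl(\rho_{i_\circ,x}\cdot\chi^{z,P}_y\bigr),$$
and since $\chi^{z,P}_y$ is tamely ramified (inflated from $J_\Lambda/J^1_\Lambda\cong T_\Lambda/T^1_\Lambda$), the construction of Proposition~\ref{hecke-alg-reduce-to-finite} applies with $\rho_{i_\circ,x}\cdot\chi^{z,P}_y$ in place of $\rho_{i_\circ,x}$ and $\mathfrak{M}^y$ in place of $\mathfrak{M}^z$. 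This yields an injective support-preserving morphism
$$j^{P}_{y}:\mathcal{H}(U_{\mathfrak{M}^y,E/\Eo},\rho_{i_\circ,x}\cdot\chi^{z,P}_y)\hookrightarrow\mathcal{H}(G_W,\lambda_P)$$
carrying the generator supported on $U_{\Lambda,E/\Eo}s^{P}_{y}U_{\Lambda,E/\Eo}$ to $T^{P}_{y}$; here Proposition~\ref{s_1-and-s_1-varpi-property}(\ref{s_1-and-s_1-varpi-property-in}) is used to see that $s^{P}_{y}\in U_{\mathfrak{M}^y,E/\Eo}$ genuinely supports a generator of the source algebra.

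Combining the two, all generators of $\mathcal{H}(G_W,\lambda_P)$ are images, under $j^{P}_{y}$ and $j^{P}_{z}$, of generators of $\mathcal{H}(U_{\mathfrak{M}^y,E/\Eo},\rho_{i_\circ,x}\cdot\chi^{z,P}_y)$ and $\mathcal{H}(U_{\mathfrak{M}^z,E/\Eo},\rho_{i_\circ,x})$, which is the assertion. I do not expect any genuine obstacle: the substantive input is already in Propositions~\ref{supp-Hecke-alg} and~\ref{hecke-alg-reduce-to-finite} (themselves imported from \cite{stevens-supercusp} and \cite{Blondel-Weil}), and the only thing needing care is the bookkeeping — matching up the supports of the finite and $p$-adic Hecke algebra generators and placing the twist by $\chi^{z,P}_y$ on the correct factor, which is in any case harmless since $\chi^{z,P}_y$ is at most quadratic and hence equals its own inverse.
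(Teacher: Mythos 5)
Your proposal is correct and follows essentially the same route as the paper: apply Proposition \ref{hecke-alg-reduce-to-finite} directly to obtain $j_z^P$, then use the relation $\kappa^z_P=\kappa^y_P\cdot\chi^{z,P}_y$ from Proposition \ref{transferring-character-maximal} to rewrite $\lambda_P$ as $\kappa^y_P\cdot(\rho_{i_\circ,x}\cdot\chi^{z,P}_y)$ and invoke the $y$-analogue of Proposition \ref{hecke-alg-reduce-to-finite} to obtain $j_y^P$. The extra bookkeeping you do via Proposition \ref{supp-Hecke-alg} (identifying the generators as $T_y^P$ and $T_z^P$ supported on $J_P s_w^P J_P$) is left implicit in the paper but is exactly the intended reading.
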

\proof
From Proposition \ref{hecke-alg-reduce-to-finite}, we have an injective morphism
$$j_{z}^{P}:\mathcal{H}(U_{\mathfrak{M}^z,E/\Eo},\rho_{i_\circ,x})\hookrightarrow \mathcal{H}(G_W,\lambda^z_P).$$
By Proposition \ref{transferring-character-maximal}, the right hand side above is isomorphic to $\mathcal{H}(G_W,\lambda^y_P\cdot\chi^{z,P}_y)$, which has a subalgebra injected from
$$j_{y}^{P}:\mathcal{H}(U_{\mathfrak{M}^y,E/\Eo},\rho_{i_\circ,x}\cdot\chi^{z,P}_y)\hookrightarrow \mathcal{H}(G_W,\lambda^y_P\cdot\chi^{z,P}_y).$$
\qed


\subsection{Points of reducibility}

Remember that we have set $\tilde{\lambda}^{P}_{i_\circ}=\tilde{\lambda}^{z,P}_{i_\circ}$ and $\lambda^{P}_{x} =\lambda^{z,P}_{x} $. Let $\pi^{P}_{x}:=\cInd_{J_{\Lambda_x}}^{G_V}\lambda^{P}_{x}$ and $\tilde{\pi}^{P}_{i_\circ}:=\cInd_{\tilde{\mathbf{J}}_{\Lambda_x}}^{\tilde{G}_{V_{i_\circ}}}\tilde{\boldsymbol{\lambda}}^{P}_{i_\circ}$,
where  $\tilde{\boldsymbol{\lambda}}^{P}_{i_\circ}$ is an extended maximal type extending $\tilde{\lambda}^{P}_{i_\circ}$.
Since $\tilde{\lambda}^{P}_{i_\circ}$ is conjugate-self-dual, there are two non-equivalent conjugate-self-dual choices of $\tilde{\boldsymbol{\lambda}}^{P}_{i_\circ}$, and hence two corresponding choices of $\tilde{\pi}^{P}_{i_\circ}$ differing from each other by the quadratic unramified character. Hence if the real parts of the points of reducibility of
$$\tilde{\pi}^{P}_{i_\circ}|\det|^s\rtimes \pi^{P}_{x}:=\Ind_P^G(\tilde{\pi}^{P}_{i_\circ}|\det|^s\boxtimes\pi^{P}_{x}),$$
for one of the two $\tilde{\pi}^{P}_{i_\circ}$, is known, then those of the another are the same.

 We have a crucial relation between the parameters of Hecke algebras and the points of reducibility.
\begin{prop}
  For $w=y$ or $z$, if $T_w^{P}$ satisfies the quadratic relation $(T_w^{P}-\omega^{P}_{w,1})(T_w^{P}-\omega^{P}_{w,2})=0$, where $\omega^{P}_{w,1}/\omega^{P}_{w,2}=-q^{r_w^{P}}$ for some $r_w^{P}\geq 0$, then the multi-set of the real parts of the points of reducibility is
  $$\{\pm \frac{r_{y}^{P}\pm r_{z}^{P}}{2n_{i_\circ}}\}$$
  (4 elements as a multi-set).
\end{prop}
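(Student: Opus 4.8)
The plan is to transport the reducibility question, via the covering pair $(J_P,\lambda_P)$, to the module category of the two‑generator Hecke algebra $\mathcal H(G_W,\lambda_P)$ described in Proposition~\ref{supp-Hecke-alg}(iii), and then to read off the reducibility points from the two quadratic parameters $r_y^P,r_z^P$ together with the normalisation (\ref{TyTz=Z}).

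First I would invoke the commutative diagram of the covering pair $(J_P,\lambda_P)$ recalled at the end of Section~\ref{section cover}: since $(J_P,\lambda_P)$ is a cover of $(J_\Lambda\cap M,\tilde\lambda^P_{i_\circ}\boxtimes\lambda^P_x)$, the functors $h_{G_W}$ and $h_M$ are equivalences onto the corresponding module categories, $\mathrm{Ind}_P^{G_W}$ corresponds to $(t_P)_*$, and $\pi^P_x\cong\mathrm{cInd}(\lambda^P_x)$, $\tilde\pi^P_{i_\circ}\cong\mathrm{cInd}(\tilde{\boldsymbol{\lambda}}^P_{i_\circ})$ contain the respective types. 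Hence $\tilde\pi^P_{i_\circ}|\det|^s\rtimes\pi^P_x$ is reducible if and only if the $\mathcal H(G_W,\lambda_P)$‑module $N_s:=(t_P)_*\,h_M(\tilde\pi^P_{i_\circ}|\det|^s\boxtimes\pi^P_x)$ has a nonzero proper submodule, and this equivalence preserves multiplicities. Next I would compute $h_M(\tilde\pi^P_{i_\circ}|\det|^s\boxtimes\pi^P_x)$. By Proposition~\ref{supp-Hecke-alg}(i) this module lives over $\mathcal H(M,\tilde\lambda^P_{i_\circ}\boxtimes\lambda^P_x)\cong\mathcal H(\tilde G_{V_{i_\circ}},\tilde\lambda^P_{i_\circ})\cong\mathbb C[Z,Z^{-1}]$, with $Z$ supported on $\varpi\tilde J_{\Lambda_{i_\circ}}$; since the $G_V$‑factor is insensitive to $Z$ and $\varpi I_{V_{i_\circ}}$ is central in $\tilde G_{V_{i_\circ}}\cong\mathrm{GL}_{n_{i_\circ}}(F)$ with $|\det|^s(\varpi I_{V_{i_\circ}})=q^{-n_{i_\circ}s}$, the module is the character $Z\mapsto\zeta(s):=\zeta_0\,q^{-n_{i_\circ}s}$ of $\mathbb C[Z,Z^{-1}]$, where $\zeta_0$ is the value of the corresponding Hecke operator on the (unitary) supercuspidal $\tilde\pi^P_{i_\circ}$, whose central character is trivial on $\varpi$; in particular $|\zeta_0|=1$. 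The equality $|\zeta_0|=1$ can alternatively be deduced a posteriori from the fact, noted just before the Proposition, that the set of points of reducibility is stable under $s\mapsto -s$.

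Finally the algebra: by Proposition~\ref{supp-Hecke-alg}(iii)(c) and (\ref{TyTz=Z}), $\mathcal H(G_W,\lambda_P)=\mathbb C[T_y^P,T_z^P]$ is free of rank $2$ over $t_P(\mathbb C[Z^{\pm1}])=\mathbb C[(T_y^PT_z^P)^{\pm1}]$, so $N_s$ is the rank‑$2$ standard (principal series) module attached to the character $T_y^PT_z^P\mapsto\zeta(s)$ of a rank‑one affine Hecke algebra with parameters $q^{r_y^P},q^{r_z^P}$. By the standard structure theory of such modules, $N_s$ is reducible exactly when $\zeta(s)$ lies in the finite set $\{\pm q^{\pm(r_y^P+r_z^P)/2},\ \pm q^{\pm(r_y^P-r_z^P)/2}\}$, equivalently when $|\zeta(s)|\in\{q^{\pm(r_y^P\pm r_z^P)/2}\}$; combining this with $|\zeta(s)|=|\zeta_0|\,q^{-n_{i_\circ}\operatorname{Re}(s)}$ and $|\zeta_0|=1$ yields $\operatorname{Re}(s)\in\{\pm\tfrac{r_y^P\pm r_z^P}{2n_{i_\circ}}\}$, the four solutions occurring with the multiplicities with which they occur among the above parameter combinations. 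I would close by checking the degenerate configurations ($r_y^P=0$, $r_z^P=0$, or $r_y^P=\pm r_z^P$), where the principal series module or a quadratic relation collapses and the multiset acquires repeated entries, against the reducibility criterion.

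I expect the main obstacle to be the normalisation bookkeeping in the middle step: pinning $\zeta(s)$ exactly — the exponent $n_{i_\circ}$, the interplay with the half‑modulus character $\delta_P^{1/2}$ that is absorbed into the normalisation of $T_w^P$ through (\ref{TyTz=Z}), and above all the assertion $|\zeta_0|=1$ — so that the four reducibility lines come out symmetric about $\operatorname{Re}(s)=0$ and at the exact positions $\pm(r_y^P\pm r_z^P)/(2n_{i_\circ})$ rather than shifted by a constant; a secondary difficulty is extracting the precise reducibility criterion for a rank‑one affine Hecke algebra with unequal parameters in all non‑generic cases.
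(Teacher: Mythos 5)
Your proposal follows the same route as the paper: transport the question through the cover $(J_P,\lambda_P)$ to modules over the two--generator algebra $\mathcal{H}(G_W,\lambda_P)$, and identify reducibility of the rank--two induced module with the character value on $t_P(Z)$ hitting one of the four eigenvalue products $\omega^{P}_{y,i}\omega^{P}_{z,j}$. The one place you diverge is the normalisation step, and it is precisely the step you flag as the main obstacle: Proposition \ref{supp-Hecke-alg} fixes only the ratios $\omega^{P}_{w,1}/\omega^{P}_{w,2}=-q^{r_w^{P}}$ together with the product normalisation (\ref{TyTz=Z}), so neither your assertion that the four products equal $\pm q^{\pm(r_y^P\pm r_z^P)/2}$ nor the assertion $|\zeta_0|=1$ is separately available; only their combination matters, and it cannot be extracted from unitarity of $\tilde{\pi}^{P}_{i_\circ}$ alone without tracking how $\delta_P^{1/2}$ enters $t_P$. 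The paper sidesteps this entirely by the matching argument you relegate to an ``a posteriori'' alternative: by Silberger, reducibility forces $\tilde{\pi}_\circ$ conjugate-self-dual, there are exactly two such representations in the inertial class, each contributes a pair $\pm s_0$ of reducibility points, and comparing these four points (necessarily symmetric about $0$) with the four character values $\Xi(t_P(Z))$ — whose pairwise ratios are pinned down by the two quadratic relations — forces the centring and yields $\mathrm{Re}(s)\in\{\pm(r_y^P\pm r_z^P)/2n_{i_\circ}\}$. So your fallback is the paper's actual proof; promoted to the main argument it closes the gap, whereas the primary route as written leaves the absolute normalisation unjustified.
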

\proof
The arguments appear in \cite[Section 3.2]{Blondel-Weil}, from which we summarize below very briefly. Let $\tilde{\pi}_\circ$ be a representation lying in the inertial class of $\tilde{\pi}^{P}_{i_\circ}$, and $\pi=\pi^{P}_{x}$. Let $\Xi$ be the irreducible module $h_M(\tilde{\pi}_\circ\boxtimes \pi)$, which is always a character since $\mathcal{H}(M,\tilde{\lambda}^{P}_{i_\circ}\boxtimes \lambda^{P}_{x})\cong\mathbb{C}[Z,Z^{-1}]$ is commutative. If $\tilde{\pi}_\circ$ is conjugate-self-dual (which happens when $\Ind_P^{G_W}(\tilde{\pi}_\circ|\det|^s\boxtimes \pi)$ is reducible by \cite{silberger-special}), then from Proposition \ref{supp-Hecke-alg} we know that
$$\mathrm{rank}_{\mathcal{H}(M,\tilde{\lambda}^{P}_{i_\circ}\boxtimes \lambda^{P}_{x})}(\mathcal{H}(G_W,\lambda_P))=
\#(N_{G_W}([\tilde{\pi}_\circ\boxtimes\pi]_{G_W}))=2.$$
If furthermore $\Ind_P^{G_W}(\tilde{\pi}_\circ|\det|^s\boxtimes \pi)$ is reducible, then $(t_P)_*(\Xi)$ must decompose into proper components, which must be characters by reason of dimension. On the one hand, from (\ref{TyTz=Z}), the character values $\Xi(t_P(Z_M))$ must be equal to one of the 4 products of eigenvalues (counted with multiplicities) of $T_y^P$ and $T_z^P$ in two different ways (counted with multiplicities) (see \cite[Proposition 1.13]{Bl-Bl-SP4}). On the other hand, there are two conjugate-self-dual representations $\tilde{\pi}_\circ$ in the inertial class of $\tilde{\pi}^{P}_{i_\circ}$ (differing from each other by the quadratic unramified character), each of them gives rise to the reducibility of $\Ind_P^{G_W}(\tilde{\pi}_\circ|\det|^s\boxtimes \pi)$ at two different points (counted with multiplicities). Comparing these four points with the 4 character values of $\Xi(t_P(Z_M))$ implies our desired result.
\qed

\subsection{Reduction to finite reductive quotients}

By Corollary \ref{Hecke-generators}, we reduce the computation of the Hecke algebra $\mathcal{H}(G_W,\lambda^w_P)$ to the algebras $\mathcal{H}(U_{\mathfrak{M}^y,E/\Eo},\rho_{i_\circ,x}\cdot\chi^{z,P}_y)\text{ and }\mathcal{H}(U_{\mathfrak{M}^z,E/\Eo},\rho_{i_\circ,x})$. Since the characters are tamely ramified, the computation is eventually reduced to the Hecke algebras for finite unitary groups.

Let $\Lambda_{\pm i_\circ}$ and $\mathfrak{M}^w_{\pm i_\circ}$ be the lattice sequences defined near the end of Section \ref{section lattice higher ranks} such that $\Lambda_{i_\circ,x}=\Lambda_{\pm i_\circ}\oplus_{i\neq i_\circ}\Lambda_{i}\text{ and }\mathfrak{M}^w_{i_\circ,x}=\mathfrak{M}^w_{\pm i_\circ}\oplus_{i\neq i_\circ}\Lambda_{i}$. The decomposition
$$U_{\mathfrak{M}^w_{i_\circ,x},E/\Eo}\cong U_{\mathfrak{M}^w_{\pm i_\circ},E_{i_\circ}/\Eo_{i_\circ}}\times \prod_{i\neq i_\circ}\mathrm{U}_{1}(E_i/\Eo_i)$$
induces an isomorphism of Hecke algebras
\begin{equation*}
     \mathcal{H}(U_{\mathfrak{M}^w_{i_\circ,x},E/\Eo},\rho_{i_\circ,x})\cong \mathcal{H}(U_{\mathfrak{M}^w_{\pm i_\circ},E_{i_\circ}/\Eo_{i_\circ}},\tilde{\rho}_{i_\circ}\boxtimes {\rho}_{i_\circ}).
\end{equation*}
Note that at this moment there is no relation between $\tilde{\rho}_{i_\circ}$ and ${\rho}_{i_\circ}$.  
Suppose $\tilde{\rho}_{i_\circ}$ is inflated from $\bar{\tilde{\rho}}_{i_\circ}$ a character of $\mathbf{k}_E^\times$, and ${\rho}_{i_\circ}$ is inflated from $\bar{{\rho}}_{i_\circ}$ a character of $\mathbf{k}_{E/\Eo}^\times$. Therefore,
\begin{equation*}
      \mathcal{H}(U_{\mathfrak{M}^w_{\pm i_\circ},E_{i_\circ}/\Eo_{i_\circ}},\tilde{\rho}_{i_\circ}\boxtimes {\rho}_{i_\circ})\cong
\mathcal{H}(\mathsf{G},\bar{\tilde{\rho}}_{i_\circ}\boxtimes \bar{\rho}_{i_\circ}),
\end{equation*}
where $\mathsf{G}$ is the finite reductive quotient $U_{\mathfrak{M}^w_{\pm i_\circ},E_{i_\circ}/\Eo_{i_\circ}}/U^1_{\mathfrak{M}^w_{\pm i_\circ},E_{i_\circ}/\Eo_{i_\circ}}$.

We now apply the statements analogous to \cite[Sec 4.5 and Appendix]{kutzko-Morris}, which are based on \cite{Lusztig-book}, to compute the parameters. Let $\tilde{\tau}$ be a $(q^{n_{i_\circ}}-1)$th root of unity in $\mathbb{F}_{q^{n_{i_\circ}}}^\times$ corresponding to $\bar{\tilde{\rho}}_{i_\circ}$, and $\tau$ be a $(\qo^{n_{i_\circ}}+1)$th root of unity in $\mathbb{F}_{q^{n_{i_\circ}}}^\times$ corresponding to $\bar{{\rho}}_{i_\circ}$ (see \cite[Prop 3.2.3]{Carter-book}).
        \begin{enumerate}[(i)]
          \item We first consider $\mathsf{G}=\mathsf{U}_2(\mathbf{k}_E/\mathbf{k}_\Eo)\times \mathsf{U}_1(\mathbf{k}_E/\mathbf{k}_\Eo)$, which is of course reduced to $\mathsf{G}=\mathsf{U}_2(\mathbf{k}_E/\mathbf{k}_\Eo)$, so that $\mathsf{G}^*=\mathsf{G}=\mathsf{U}_2(\mathbf{k}_E/\mathbf{k}_\Eo)$. We take $\mathsf{L}=\mathsf{T}$ the Siegel Levi, which is isomorphic to $\mathbf{k}_E^\times$, and take a character of $\mathsf{T}$ corresponding to $s=\diag(\tilde{\tau},{}^\sigma\tilde{\tau})$ in $\mathsf{T}^*\cong \mathsf{T}$. If ${}^\sigma\tilde{\tau}\neq\tilde{\tau}$, i.e., ${}^\sigma\bar{\tilde{\rho}}_{i_\circ}\ncong \bar{\tilde{\rho}}_{i_\circ}$, then the parameter is clearly trivial 1. Hence we let ${}^\sigma\tilde{\tau}=\tilde{\tau}$, or let $\bar{\tilde{\rho}}_{i_\circ}$ be a skew character, i.e., ${}^{{c}}\bar{\tilde{\rho}}_{i_\circ}=(\bar{\tilde{\rho}}_{i_\circ})^{\qo^{n_{i_\circ}}}=(\bar{\tilde{\rho}}_{i_\circ})^{-1}$, and so $s$ is central in $\mathsf{G}^*$. Hence
              \begin{equation*}
                                 Z_{\mathsf{G}^*}(s)=\mathsf{G}^*\text{ and }Z_{\mathsf{L}^*}(s)=\mathsf{T}^*,
                              \end{equation*}
                              In other words, the root system of $Z_{\mathsf{G}^*}(s)$ is of type A1 with Weyl group $\mathbb{Z}/2$. The Frobenius acts as the outer automorphism and preserves the simple roots, or in fact it acts trivially on the root system. We have a pair of representations
                              $(\mathsf{W}',\mathsf{W}'')=(\mathbb{Z}_2,1)$, such that $\mathsf{E}'=\mathsf{J}^{\mathsf{W}'}_{\mathsf{W}''}(\mathbf{1}_{\mathsf{W}''})=\mathbf{1}_{\mathsf{W}'}$ and $\mathsf{E}''=\sgn_{\mathsf{W}'}$. The parameter is $\qo^{n_{i_\circ}}=q^{n_{i_\circ}/2}$ by \cite[p.447]{Carter-book}.

                              \item We then consider $\mathsf{G}=\mathsf{U}_3(\mathbf{k}_E/\mathbf{k}_\Eo)$, so that $\mathsf{G}^*=\mathsf{G}$. We again take $\mathsf{L}=\mathsf{T}$ the Siegel Levi, which is isomorphic to $\mathbf{k}_E^\times\times \mathsf{U}_1(\mathbf{k}_E/\mathbf{k}_\Eo)$, and take a character of $\mathsf{T}$ corresponding to $s=\diag(\tilde{\tau},\tau,{}^\sigma\tilde{\tau})\in \mathsf{T}^*\cong \mathsf{T}$. If
                                   ${}^\sigma\tilde{\tau}\neq\tilde{\tau}$, i.e., ${}^\sigma\bar{\tilde{\rho}}_{i_\circ}\ncong \bar{\tilde{\rho}}_{i_\circ}$, or if
                                    $\tilde{\tau}\neq{\tau}$, i.e., $\bar{\tilde{\rho}}_{i_\circ}\ncong \bar{{\rho}}_{i_\circ}$,
                              then it is reduced to the previous case. Hence we let  $\tilde{\tau}={\tau}$, i.e., $\bar{\tilde{\rho}}_{i_\circ}\cong \bar{{\rho}}_{i_\circ}$, and so $s$ is central in $\mathsf{G}^*$. Hence
              \begin{equation*}
                                 Z_{\mathsf{G}^*}(s)=\mathsf{G}^*\text{ and }Z_{\mathsf{L}^*}(s)=\mathsf{T}^*,
                              \end{equation*}
                              The root system of $Z_{\mathsf{G}^*}(s)$ is of type A2, generated by $\mathbf{e}_1-\mathbf{e}_2$ and $\mathbf{e}_2-\mathbf{e}_3$, with Weyl group isomorphic to $S_3$. The Frobenius acts as $$\mathbf{e}_1\mapsto -\mathbf{e}_3,\,\mathbf{e}_2\mapsto -\mathbf{e}_2,$$
                               and so it acts on the simple roots transitively. We now apply \cite[Theorem 8.6.2]{Lusztig-book}. We have
                              $(\mathsf{W}',\mathsf{W}'')=(S_3,1)$. Now $\mathsf{E}'=\mathsf{J}^{\mathsf{W}'}_{\mathsf{W}''}(\mathbf{1}_{\mathsf{W}''})$. This is again $\mathbf{1}_{\mathsf{W}'}$, since the classification of representations of $S_3$ shows that only the trivial representation has generic degree $a=1=a({\mathbf{1}_{\mathsf{W}''}})$ \cite[p.446]{Carter-book}. Hence $\mathsf{E}''=\sgn_{\mathsf{W}'}$ and the parameter is $(\qo^{n_{i_\circ}})^3=q^{3n_{i_\circ}/2}$ by \cite[p.447]{Carter-book}.
        \end{enumerate}

 We compute the real parts of the points of reducibility. If
 $\bar{\tilde{\rho}}_{i_\circ}$ is not the base change of $ \bar{{\rho}}_{i_\circ}$, then $r_0=r_1=n_{i_\circ}/2$, and so $Re(s)=0$ or $\pm1/2$. Otherwise, $r_0=3n_{i_\circ}/2$ and $r_1=n_{i_\circ}/2$, and so $Re(s)=\pm1/2$ or $\pm1$.

\newpage\section{Amending}\label{section amending}

In this section, we study the characters $\mu^w_{\tilde{\xi}}$ and $\chi_{y}^{z,P}$ that appear in (\ref{amending appear}) and Proposition \ref{transferring-character-maximal}. The former one is called an \emph{amending character}, and the latter one is called a \emph{transfer character}.

\subsection{Amending characters}\label{section amending char}

Given a group $G$ and a finite subgroup $H$ if finite index, we denote by $\delta_H^G$ the discriminant character $\det\Ind_H^G(\mathbf{1}_H)$ of $G$, which is equal to the signature character
$\sgn_{G}(G/H)$ for the $G$-action on the coset space $G/H$.

\subsubsection{The first amending character}

We denote by $U$ and $U^-$ the unipotent subgroups of the parabolic subgroup $P$ and its opposite $P^-$ respectively. For $\epsilon=-$ or $+$, denote $V_\epsilon=V_{i_\circ,\epsilon}$, and for $j,k\in \{-,+\}\cup I$, denote the space $U_{(
    j , k
  )}=\mathrm{Hom}_F(V_k,V_j)$. Then $U$ can be written as $$U_{(
    - , +
  )}\oplus{\bigoplus_{i\in I}}(U_{(
    - , i
  )}\oplus U_{(
    i , +
  )}),$$
and similarly $U^-$ can be written as ,
  $$U_{(
      + , -
  )}\oplus{\bigoplus_{i\in I}}(U_{(
    i , -
  )}\oplus U_{(
    + , i
  )}).$$
Write $\Lambda=\Lambda_{i_\circ,x}$ and $J_P=(J_\Lambda\cap P)H^1_\Lambda$, so that
$$J_\Lambda/J_P\cong \mathfrak{V}_{\Lambda,U^-}:= J_\Lambda^1{\cap U^-}/H_\Lambda^1{\cap U^-}.$$
and
$ \delta_{J_P}^{J_\Lambda}|_{J_\Lambda\cap M}=\sgn_{{J_\Lambda\cap M}}(\mathfrak{V}_{\Lambda,U^-}).$
Note that this is a tamely ramified character; more precisely, if we write ${J_\Lambda\cap M}=J_{\Lambda_x}\times\tilde{J}_{\Lambda_{i_\circ}}$, then the character is trivial on the pro-p-subgroup ${J^1_\Lambda\cap M}=J^1_{\Lambda_x}\times\tilde{J}^1_{\Lambda_{i_\circ}}$ since the signature character is quadratic. Hence it is enough to compute the character on
$${J_{\Lambda}}/{J^1_{\Lambda}}={J_\Lambda\cap M}/{J^1_\Lambda\cap M}\cong T_\Lambda/T^1_\Lambda\cong
  {\boldsymbol{\mu}}_\Lambda:={\boldsymbol{\mu}}_{E_{i_\circ}}\times{\boldsymbol{\mu}}_x,$$
  where ${\boldsymbol{\mu}}_x:=\prod_{i\in I}{{\boldsymbol{\mu}}_{E_{i}/\Eo_{i}}}$.

We write $\mathfrak{M}^w=\mathfrak{M}^w_{i_\circ,x}$, where $w=y$ or $z$.
\begin{prop}\label{rectifier 1}
  Let $\kappa^w=\kappa_\Lambda^{\mathfrak{M}^w}$ and $\kappa_P^w=\kappa_{\Lambda,P}^{\mathfrak{M}^w}$, then
  $$\det\kappa^w|_{{J_\Lambda\cap M}}=\det\left(\kappa^w_P|_{{J_\Lambda\cap M}}\cdot\sgn_{{J_\Lambda\cap M}}(\mathfrak{V}_{\Lambda,U^-})\right)^{\#\mathfrak{V}_{\Lambda,U^-}}.$$
\end{prop}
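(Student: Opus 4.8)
The plan is to use the isomorphism $\kappa^w=\kappa_{\Lambda}^{\mathfrak{M}^w}\cong\Ind_{J_{P}}^{J_{\Lambda}}\kappa^w_P$ recalled just before Proposition \ref{transferring-character-maximal}, to compute the determinant of this induced representation by the standard block-monomial-matrix formula, and then to restrict to $J_{\Lambda}\cap M$. Since the decomposition $W=V_{i_\circ,-}\oplus V\oplus V_{i_\circ,+}$ is exactly subordinate to $[\Lambda,3d,0,\beta]$, the groups $H^1_{\Lambda}\subseteq J^1_{\Lambda}\subseteq J_{\Lambda}$ admit Iwahori decompositions with respect to $P$ and $J_{\Lambda}\cap U^-=J^1_{\Lambda}\cap U^-$; I would first check that any system $g_1,\dots,g_r\in J^1_{\Lambda}\cap U^-$ of representatives of $\mathfrak{V}_{\Lambda,U^-}=(J^1_{\Lambda}\cap U^-)/(H^1_{\Lambda}\cap U^-)$ is also a system of representatives of $J_P\backslash J_{\Lambda}$, so that $r=\#\mathfrak{V}_{\Lambda,U^-}$.

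For $m\in J_{\Lambda}\cap M$ I would write $g_i m=p_i(m)\,g_{\pi_m(i)}$ with $p_i(m)\in J_P$ and $\pi_m$ a permutation of $\{1,\dots,r\}$. Because $m$ normalises $U^-$ (it lies in $M$) and normalises $J^1_{\Lambda}$ (it lies in $J_{\Lambda}$), the element $p_i(m)m^{-1}=g_i\cdot(m\,g_{\pi_m(i)}^{-1}m^{-1})$ lies in $J^1_{\Lambda}\cap U^-$; being also in $J_P$, it must lie in $(J^1_{\Lambda}\cap U^-)\cap J_P=H^1_{\Lambda}\cap U^-$. Hence $p_i(m)=h_i(m)\,m$ with $h_i(m)\in H^1_{\Lambda}\cap U^-$, and the block-monomial matrix of $\kappa^w(m)$ in the standard model of the induced representation yields
$$\det\kappa^w(m)=\mathrm{sgn}(\pi_m)^{\dim\kappa^w_P}\prod_{i=1}^{r}\det\kappa^w_P(p_i(m))=\mathrm{sgn}(\pi_m)^{\dim\kappa^w_P}\Big(\prod_{i=1}^{r}\det\kappa^w_P(h_i(m))\Big)\det\kappa^w_P(m)^{r},$$
where I used that $\det\kappa^w_P$ is a character. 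Moreover $\mathrm{sgn}(\pi_m)=\delta_{J_P}^{J_\Lambda}(m)=\sgn_{J_\Lambda\cap M}(\mathfrak{V}_{\Lambda,U^-})(m)$ by the identification of $\delta_{J_P}^{J_\Lambda}|_{J_\Lambda\cap M}$ recalled above.

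The only delicate step is to show that $\det\kappa^w_P$ is trivial on $H^1_{\Lambda}\cap U^-$, which will kill the middle factor. Here $H^1_{\Lambda}\cap U^-\subseteq H^1_P\subseteq J^1_P$, and $\kappa^w_P|_{J^1_P}=\eta_P$ restricts on $H^1_P$ to a multiple of $\theta_P$, which on $H^1_{\Lambda}$ agrees with $\theta_{\Lambda}$; so it suffices that $\theta_{\Lambda}|_{H^1_{\Lambda}\cap U^-}$ is trivial. Since the torus part of $H^1_{\Lambda}$ lies in $M$, we have $H^1_{\Lambda}\cap U^-=U^{3d+}_{\Lambda,F/\Fo}\cap U^-$, and on $U^{3d+}_{\Lambda}$ the character $\tilde{\theta}_{\Lambda}$ is $1+X\mapsto\psi_F(\tr_{\tilde{A}_W/F}(\beta X))$; as $\beta=\sum_i\beta_i$ lies in the block-diagonal (Levi) part $\mathrm{Lie}(M)$ while $X\in\mathrm{Lie}(U^-)$ is strictly block-lower-triangular, $\beta X$ is strictly block-lower-triangular, so $\tr_{\tilde{A}_W/F}(\beta X)=0$ and $\tilde{\theta}_{\Lambda}(1+X)=1$; taking the $1/2$-power gives $\theta_{\Lambda}(1+X)=1$. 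Consequently $\det\kappa^w|_{J_{\Lambda}\cap M}=\sgn_{J_\Lambda\cap M}(\mathfrak{V}_{\Lambda,U^-})^{\dim\kappa^w_P}\cdot\det(\kappa^w_P|_{J_{\Lambda}\cap M})^{r}$. Finally I would expand the right-hand side of the Proposition as $\det(\kappa^w_P|_{J_{\Lambda}\cap M})^{r}\cdot\sgn_{J_\Lambda\cap M}(\mathfrak{V}_{\Lambda,U^-})^{r\dim\kappa^w_P}$, and observe that $r=\#\mathfrak{V}_{\Lambda,U^-}$ is a power of the odd prime $p$ while $\sgn_{J_\Lambda\cap M}(\mathfrak{V}_{\Lambda,U^-})$ is (at most) quadratic, so $\sgn_{J_\Lambda\cap M}(\mathfrak{V}_{\Lambda,U^-})^{r\dim\kappa^w_P}=\sgn_{J_\Lambda\cap M}(\mathfrak{V}_{\Lambda,U^-})^{\dim\kappa^w_P}$, whence the two sides agree. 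I expect the coset/Iwahori bookkeeping — in particular verifying that the correction terms $h_i(m)$ genuinely lie in $H^1_{\Lambda}\cap U^-$ and that $r=\#\mathfrak{V}_{\Lambda,U^-}$ — to be the part needing the most care; once the triviality of $\theta_{\Lambda}$ on $H^1_{\Lambda}\cap U^-$ is in hand, the rest is formal.
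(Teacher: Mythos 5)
Your argument is correct and follows essentially the same route as the paper's proof: take determinants in $\kappa^w\cong\Ind_{J_P}^{J_\Lambda}\kappa^w_P$, identify $\delta_{J_P}^{J_\Lambda}|_{J_\Lambda\cap M}$ with $\sgn_{J_\Lambda\cap M}(\mathfrak{V}_{\Lambda,U^-})$, compute the transfer on $J_\Lambda\cap M$ via coset representatives taken in $J^1_\Lambda\cap U^-$, and finish with the parity juggling of exponents using that $\#\mathfrak{V}_{\Lambda,U^-}$ is an odd $p$-power. The only difference is that you explicitly check that the correction terms $h_i(m)\in H^1_\Lambda\cap U^-$ in the transfer are killed by $\det\kappa^w_P$ (via the triviality of $\theta_\Lambda$ on $H^1_\Lambda\cap U^-$), a point the paper passes over when it asserts $T_{J_P}^{J_\Lambda}|_{J_\Lambda\cap M}(m)=m^{\#\mathfrak{V}_{\Lambda,U^-}}$ outright.
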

\proof
Recall that $\kappa^w=\Ind_{J_P}^{J_\Lambda}\kappa^w_P$. By taking determinant, we have
$$\det\kappa^w=(\delta_{J_P}^{J_\Lambda})^{\dim\kappa^w_P}(\det\kappa^w_P\circ T_{J_P}^{J_\Lambda}).$$
We restrict to ${J_\Lambda\cap M}$ and observe that
\begin{itemize}
  \item since $\dim \kappa^w_P={\#\mathfrak{V}_{\Lambda,U^-}}$ is a $p$-power, in particular odd, we can write
  \begin{equation*}
     \begin{split}
       (\delta_{J_P}^{J_\Lambda}|_{{J_\Lambda\cap M}})^{\dim\kappa^w_P}
       =&(\sgn_{{J_\Lambda\cap M}}(\mathfrak{V}_{\Lambda,U^-}))^{\#\mathfrak{V}_{\Lambda,U^-}}
       \\
       =&(\sgn_{{J_\Lambda\cap M}}(\mathfrak{V}_{\Lambda,U^-}))^{(\#\mathfrak{V}_{\Lambda,U^-})^2}
       \\
       =&\det(\sgn_{{J_\Lambda\cap M}}(\mathfrak{V}_{\Lambda,U^-})\cdot\mathbf{1}_{\kappa^w_P})^{\#\mathfrak{V}_{\Lambda,U^-}}
     \end{split}
   \end{equation*}
   Here $\mathbf{1}_{\kappa^w_P}$ is the trivial representation on the representation space of $\kappa^w_P$.
  \item Since  ${J_\Lambda\cap M}$ normalizes $J_\Lambda^1{\cap U^-}$ and $H_\Lambda^1{\cap U^-}$, we have $T_{J_P}^{J_\Lambda}|_{{J_\Lambda\cap M}}(m)=m^{\#\mathfrak{V}_{\Lambda,U^-}}$, and so
      $$(\det\kappa^w_P\circ T_{J_P}^{J_\Lambda})|_{{J_\Lambda\cap M}}=
      \left(\det\kappa^w_P|_{{J_\Lambda\cap M}}\right)^{\#\mathfrak{V}_{\Lambda,U^-}}.$$
      \end{itemize}
  \qed

We denote the above amending character $\sgn_{J_\Lambda\cap M}(\mathfrak{V}_{\Lambda,U^-})$ by $\mu^P_{\tilde{\xi}}=\tilde{\nu}^P_{i_\circ,x,\tilde{\xi}}\boxtimes \nu^P_{x,\tilde{\xi}}$, where
$$\tilde{\nu}^P_{i_\circ,x,\tilde{\xi}}=\sgn_{\tilde{J}_{\Lambda_{i_\circ}}}(\mathfrak{V}_{\Lambda,U^-})\text{ and }\nu^P_{x,\tilde{\xi}}=\sgn_{J_{\Lambda_x}}(\mathfrak{V}_{\Lambda,U^-}) .$$
They are inflated from a character of  ${\boldsymbol{\mu}}_{E_{i_\circ}}:=U_{E_{i_\circ}}/U_{E_{i_\circ}}^1$ and ${\boldsymbol{\mu}}_x:=T_x/T_x^1$ respectively.

  \begin{cor}\label{cor-nu-P}
   $\nu^P_{x,\tilde{\xi}}$ is always trivial.
  \end{cor}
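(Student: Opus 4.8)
The plan is to compute the character $\nu^P_{x,\tilde\xi}=\sgn_{J_{\Lambda_x}}(\mathfrak{V}_{\Lambda,U^-})$ directly, where $\Lambda=\Lambda_{i_\circ,x}$. Since this character is inflated from a character of $\boldsymbol{\mu}_x=\prod_{i\in I}\boldsymbol{\mu}_{E_i/\Eo_i}$ and $\boldsymbol{\mu}_x$ is a direct product, it suffices to show that each factor $\boldsymbol{\mu}_{E_i/\Eo_i}\cong\mathrm{U}_1(\mathbf{k}_{E_i}/\mathbf{k}_{\Eo_i})$ acts on the finite $\mathbf{k}_{\Fo}$-vector space $\mathfrak{V}_{\Lambda,U^-}$ by even permutations. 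As $\qo$ is odd, the signature of a linear action is multiplicative over $\mathbf{k}_{\Fo}$-direct sum decompositions, so this may be checked on a convenient decomposition of $\mathfrak{V}_{\Lambda,U^-}$ into $\boldsymbol{\mu}_x$-submodules.

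First I would make explicit the $\boldsymbol{\mu}_x$-module structure of $\mathfrak{V}_{\Lambda,U^-}$. From $U^-=U_{(+,-)}\oplus\bigoplus_{i\in I}\big(U_{(i,-)}\oplus U_{(+,i)}\big)$, with $U_{(j,k)}=\mathrm{Hom}_F(V_k,V_j)$, together with the grading of $\mathfrak{V}_{\Lambda,U^-}$ coming from the lattice filtration of $\Lambda_{i_\circ,x}$, the torus $T_x\subset G_V$ acts on each graded subquotient, and the factor $T_i=\mathrm{U}_1(E_i/\Eo_i)$ acts non-trivially only on the subquotients of $U_{(i,-)}=\mathrm{Hom}_F(V_{i_\circ,-},V_i)$ and $U_{(+,i)}=\mathrm{Hom}_F(V_i,V_{i_\circ,+})$; on $U_{(+,-)}$ and on the summands indexed by $j\neq i$ it acts trivially. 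On those two families $T_i$ acts purely through its action on $V_i$, which is an $E_i$-line on which $T_i$ acts by multiplication inside $E_i^\times$; hence, after reduction modulo the filtration, the relevant graded subquotients are $\mathbf{k}_{E_i}$-modules on which $\boldsymbol{\mu}_{E_i/\Eo_i}\subset\mathbf{k}_{E_i}^\times$ acts by $\mathbf{k}_{E_i}$-scalar multiplication (on the $U_{(i,-)}$-part) or by its inverse (on the $U_{(+,i)}$-part). The involution $\sigma$ interchanges $U$ with $U^-$ and $V_{i_\circ,-}$ with $V_{i_\circ,+}$ while fixing $T_x$, which makes the two families appear symmetrically; this is reassuring but not actually needed for the conclusion.

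The crux is then the elementary remark that multiplication by an element $t\in\boldsymbol{\mu}_{E_i/\Eo_i}$ induces an even permutation of any finite $\mathbf{k}_{E_i}$-module. Indeed $\boldsymbol{\mu}_{E_i/\Eo_i}$ is the unique subgroup of order $\qo^{n_i}+1$ of the cyclic group $\mathbf{k}_{E_i}^\times$ of order $\qo^{2n_i}-1=(\qo^{n_i}-1)(\qo^{n_i}+1)$; since $p$ is odd, $2\mid(\qo^{n_i}-1)$, so $\qo^{n_i}+1\mid(\qo^{2n_i}-1)/2$, and therefore $\boldsymbol{\mu}_{E_i/\Eo_i}$ is contained in the subgroup of squares of $\mathbf{k}_{E_i}^\times$. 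Writing $t=s^2$ with $s\in\mathbf{k}_{E_i}^\times$, multiplication by $t$ on a $\mathbf{k}_{E_i}$-module is the square of the permutation given by multiplication by $s$, hence even; the same applies to $t^{-1}$. Combined with the trivial action on the remaining summands and with multiplicativity of the signature character over $\mathbf{k}_{\Fo}$-direct sums, this yields that $\boldsymbol{\mu}_{E_i/\Eo_i}$ acts by even permutations on $\mathfrak{V}_{\Lambda,U^-}$ for every $i$, so $\nu^P_{x,\tilde\xi}$ is trivial.

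The only point requiring real care is the bookkeeping in the second step — unwinding the lattice sequence $\Lambda_{i_\circ,x}$ from Section \ref{section lattice higher ranks} and the embedding $\mathcal{I}_{i_\circ,x}$ of (\ref{image-of-E}) to see exactly which graded pieces of $\mathfrak{V}_{\Lambda,U^-}$ the factor $\boldsymbol{\mu}_{E_i/\Eo_i}$ moves, and that it moves them by genuine $\mathbf{k}_{E_i}$-scalars; the rest is a one-line counting argument and needs no input beyond the structures already introduced. It is worth observing that the analogous computation for the GL-side character $\tilde{\nu}^P_{i_\circ,x,\tilde\xi}$ behaves differently precisely because the relevant group there is the full $\mathbf{k}_{E_{i_\circ}}^\times\cong\boldsymbol{\mu}_{E_{i_\circ}}$, which does contain non-squares.
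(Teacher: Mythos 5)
Your proof is correct, and the setup (isolating the pieces of $\mathfrak{V}_{\Lambda,U^-}$ on which $\boldsymbol{\mu}_{E_i/\Eo_i}$ acts nontrivially and recognizing them as finite $\mathbf{k}_{E_i}$-modules with scalar action) matches the paper's. Where you diverge is in the decisive parity computation. The paper argues by orbit counting: the nontrivial piece is $\mathbf{k}_{E_i}\otimes_{\mathbf{k}_F}\mathbf{k}_{E_{i_\circ}}$, the action on the nonzero vectors is free, and $\#(\mathbf{k}_{E_i}\otimes_{\mathbf{k}_F}\mathbf{k}_{E_{i_\circ}}-\{0\})$ is divisible by $q^{n_i}-1=(\qo^{n_i}-1)(\qo^{n_i}+1)$, hence is an \emph{even} multiple of the group order, forcing an even number of cycles of each length and a trivial signature. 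You instead observe that $\boldsymbol{\mu}_{E_i/\Eo_i}$, having order $\qo^{n_i}+1$ dividing $(\qo^{2n_i}-1)/2$, lies in the squares of the cyclic group $\mathbf{k}_{E_i}^\times$, so multiplication by any of its elements is the square of a permutation and therefore even. Your argument is slightly more robust: it applies to \emph{any} finite $\mathbf{k}_{E_i}$-module (indeed any set with a $\mathbf{k}_{E_i}^\times$-action restricting the given one) without needing to know its cardinality, whereas the paper's count uses that the module is an $\mathbf{k}_{E_i}$-vector space so that its punctured cardinality is a multiple of $q^{n_i}-1$. The paper's version has the mild virtue of being the same counting template reused later (Corollary \ref{character-quad-or-not} and the triviality of $\nu^w_{x,\tilde{\xi}}$), where one genuinely needs to decide when the signature is quadratic rather than trivial; your squares criterion only ever produces triviality, which is exactly why, as you correctly note at the end, it cannot be transported to $\tilde{\nu}^P_{i_\circ,x,\tilde{\xi}}$, where the acting group is all of $\boldsymbol{\mu}_{E_{i_\circ}}\cong\mathbf{k}_{E_{i_\circ}}^\times$ and contains non-squares. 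The one item you defer --- that each factor $\boldsymbol{\mu}_{E_i/\Eo_i}$ really does act on the surviving graded pieces by genuine $\mathbf{k}_{E_i}$-scalars through the embedding $\mathcal{I}_{i_\circ,x}$ --- is exactly what the paper asserts in identifying $\mathfrak{V}_{\Lambda,(i,-),(+,i)}$ with $\mathbf{k}_{E_i}\otimes_{\mathbf{k}_F}\mathbf{k}_{E_{i_\circ}}$ with the first factor acted on faithfully by multiplication, so no gap remains.
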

 \proof

Each component ${\boldsymbol{\mu}}_{E_i/\Eo_i}$ of ${\boldsymbol{\mu}}_x$ acts on the space $\mathfrak{V}_{\Lambda,U^-}$, which can be decomposed as
$$\mathfrak{V}_{\Lambda,U^-}=\mathfrak{V}_{\Lambda,(
      + , -
  )}\oplus {\bigoplus_{i\in I}}\mathfrak{V}_{\Lambda,(
    i , -
  ),(
    + , i
  )}
  $$
where
$$\mathfrak{V}_{\Lambda,(
      + , -
  )}=\frac{J^1_\Lambda\cap U_{(
      + , -
  )}^\sigma}{H^1_\Lambda\cap U^-}
  \text{ and }\mathfrak{V}_{\Lambda,(
    i , -
  ),(
    + , i
  )}=\frac{J^1_\Lambda\cap (U_{(
    i , -
  )}\oplus U_{(
    + , i
  )})^\sigma}{H^1_\Lambda\cap U^-}
.$$
The action  of ${\boldsymbol{\mu}}_{E_i/\Eo_i}$ on $\mathfrak{V}_{\Lambda,(
      + , -
  )}$ is clearly trivial. For each $i\in I$, the space $\mathfrak{V}_{\Lambda,(
    i , -
  ),(
    + , i
  )}$ is either trivial or isomorphic to $\mathbf{k}_{E_i}\otimes_{\mathbf{k}_F}\mathbf{k}_{E_{i_\circ}}$, where ${\boldsymbol{\mu}}_{E_i/\Eo_i}\cong \mathbf{k}^\times_{E_i/\Eo_i}$ acts on the first factor faithfully by multiplication. The order of ${\boldsymbol{\mu}}_{E_i/\Eo_i}$ is $\qo_i+1$, while the number $\#(\mathbf{k}_{E_i}\otimes_{\mathbf{k}_F}\mathbf{k}_{E_{i_\circ}}-\{0\})$ is a multiple of $(q_i-1)=(\qo_i-1)(\qo_i+1)$. Hence $\#(\mathbf{k}_{E_i}\otimes_{\mathbf{k}_F}\mathbf{k}_{E_{i_\circ}}-\{0\})$ is an even multiple of the order of order of ${\boldsymbol{\mu}}_{E_i/\Eo_i}$, and so the signature of this action is trivial.
\qed

\subsubsection{The second amending character}

  Let $A$ be a group on which an abelian group $T$ acts as automorphisms. Let $B$ and $C$ be two subgroups of $A$ such that $[B:B\cap C]$ and $[C:B\cap C]$ are finite. We define
  $$\mathrm{sgn}_{T}(B:C)=\mathrm{sgn}_{T}(B:B\cap C)\mathrm{sgn}_{T}(C:B\cap C).$$

  \begin{prop}\label{rectifier-2}
Suppose that $\mathfrak{A}_{\mathfrak{M}}\supseteq \mathfrak{A}_{\Lambda}$, and that
$\kappa^{\mathfrak{M}}_{\Lambda}$ and $\det\kappa_{\mathfrak{M}}$ are related as in Definition \ref{definition-compatible-beta}(\ref{definition-compatible-beta-extension}), then
$$\det\kappa^{\mathfrak{M}}_{\Lambda}|_{J_{\Lambda}\cap {M}}\cdot \sgn_{|_{J_{\Lambda}\cap {M}}}(J^1_{\Lambda}:J^1_{\mathfrak{M}})\equiv (\det\kappa_{\mathfrak{M}}|_{J_{\Lambda}\cap {M}})^{p^k}$$
    for some $p$-power $p^k$. The above equivalence is modulo the group of characters of ${J_{\Lambda}\cap {M}}$ whose orders are $p$-powers.
  \end{prop}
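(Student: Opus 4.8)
The plan is to reproduce the mechanism of the proof of Proposition~\ref{rectifier 1}, with the parabolic induction $J_P\subseteq J_\Lambda$ there replaced by the induction attached to the order inclusion $\mathfrak{A}_\Lambda\subseteq\mathfrak{A}_\mathfrak{M}$. Write $N=U_{\Lambda,E/\Eo}U^1_\Lambda$ and $T=J_\Lambda\cap M=\tilde J_{\Lambda_{i_\circ}}\times J_{\Lambda_x}$. By Definition~\ref{definition-compatible-beta}(\ref{definition-compatible-beta-extension}) and Proposition~\ref{bijection of extensions}(ii), the hypothesis $\mathfrak{A}_\Lambda\subseteq\mathfrak{A}_\mathfrak{M}$ gives the isomorphism
\begin{equation*}
\Ind_{J_\Lambda}^{N}\kappa^{\mathfrak{M}}_\Lambda\cong\Ind_{J_{\Lambda,\mathfrak{M}}}^{N}\Res^{J_\mathfrak{M}}_{J_{\Lambda,\mathfrak{M}}}\kappa_\mathfrak{M}.
\end{equation*}
First I would record the consequences of $\mathfrak{A}_\Lambda\subseteq\mathfrak{A}_\mathfrak{M}$ that are needed: $\mathfrak{P}_\mathfrak{M}\subseteq\mathfrak{P}_\Lambda$ and, after the appropriate rescaling of indices, $U^{3d}_\Lambda\subseteq U^d_\mathfrak{M}$, whence $T\subseteq J_\Lambda\cap J_{\Lambda,\mathfrak{M}}\cap J_\mathfrak{M}$ and even $J^1_\Lambda\subseteq J^1_\mathfrak{M}$; the fact that $N$ equals $T_\Lambda U^1_\Lambda$ (and $J_\Lambda$, $J_{\Lambda,\mathfrak{M}}$ are $T_\Lambda$ times a pro-$p$ group), so that $[N:J_\Lambda]$ and $[N:J_{\Lambda,\mathfrak{M}}]$ are powers of $p$; and the fact that $\dim\kappa^{\mathfrak{M}}_\Lambda$ and $\dim\kappa_\mathfrak{M}$ are powers of $p$, being square roots of orders of finite $p$-groups.

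Next I would take determinant characters of the displayed isomorphism, restrict them to $T$, and apply $\det\Ind_H^G\rho=(\delta_H^G)^{\dim\rho}(\det\rho\circ\mathrm{Ver}_{G\to H})$. As in the proof of Proposition~\ref{rectifier 1}, $T\subseteq M$ normalises the Iwahori factors (relative to $P$) of $N$, of $J_\Lambda$ and of $J_{\Lambda,\mathfrak{M}}$; hence, modulo characters of $T$ of $p$-power order, the transfers $\mathrm{Ver}_{N\to J_\Lambda}|_T$ and $\mathrm{Ver}_{N\to J_{\Lambda,\mathfrak{M}}}|_T$ become the $[N:J_\Lambda]$-th and $[N:J_{\Lambda,\mathfrak{M}}]$-th power maps. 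Since $p$ is odd, the quadratic characters $\delta_{J_\Lambda}^N$, $\delta_{J_{\Lambda,\mathfrak{M}}}^N$ are unaffected by the odd $p$-power exponents coming from the dimensions, and---since $\kappa^{\mathfrak{M}}_\Lambda$ differs from the $p$-power-determinant extension $\kappa^0_{\Lambda,\tilde\xi}$ only by the at most quadratic tame twist $\mu^w_{\tilde\xi}$ of (\ref{amending appear})---raising $\det\kappa^{\mathfrak{M}}_\Lambda|_T$ to the odd $p$-power $[N:J_\Lambda]$ alters it by a character of $p$-power order. Collecting terms, with $p^k:=[N:J_{\Lambda,\mathfrak{M}}]$ and using $\delta^{-1}=\delta$, I would obtain
\begin{equation*}
\det\kappa^{\mathfrak{M}}_\Lambda|_{T}\cdot(\delta_{J_\Lambda}^N\delta_{J_{\Lambda,\mathfrak{M}}}^N)|_{T}\equiv(\det\kappa_\mathfrak{M}|_{T})^{p^k}
\end{equation*}
modulo characters of $T$ of $p$-power order, so the proposition reduces to the identity
\begin{equation*}
(\delta_{J_\Lambda}^N\delta_{J_{\Lambda,\mathfrak{M}}}^N)|_{T}=\sgn_{T}(J^1_\Lambda:J^1_\mathfrak{M})=\sgn_T(J^1_\mathfrak{M}/J^1_\Lambda).
\end{equation*}

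Proving this identity is the crux and the main obstacle. I would follow the pattern of Proposition~\ref{rectifier 1} and Corollary~\ref{cor-nu-P}: restricted to $T$, the discriminant characters $\delta_{J_\Lambda}^N=\sgn_N(N/J_\Lambda)$ and $\delta_{J_{\Lambda,\mathfrak{M}}}^N=\sgn_N(N/J_{\Lambda,\mathfrak{M}})$ are computed by decomposing the coset spaces $N/J_\Lambda$ and $N/J_{\Lambda,\mathfrak{M}}$ along the Iwahori decomposition relative to $P$, presenting each as a product of explicit finite $\mathbf{k}$-vector spaces---sums of root spaces of the torus $T_\Lambda$, including the $E$-twisted Siegel directions inside $Z_{G_W}(\mathcal{I}_{i_\circ,x}(E^\times))$---on which $T$ acts by conjugation. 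Since the two cosets share the numerator $U^1_\Lambda\cap U^{\pm}$ in their $U^{\pm}$-contributions and share $N\cap M$ in their $M$-contributions, the product of the two sign characters telescopes (using $U^{3d}_\Lambda\cap U^{\pm}\subseteq J_{\Lambda,\mathfrak{M}}\cap U^{\pm}$) and collapses to the signs of the successive $T$-quotients between $J^1_\Lambda$ and $J^1_\mathfrak{M}$; as in the proof of Corollary~\ref{cor-nu-P}, any summand on which the $\mathbf{k}$-dimension makes the number of nonzero vectors an even multiple of the order of a cyclic factor of $T$ contributes trivially, and the surviving contribution is $\sgn_T(J^1_\mathfrak{M}/J^1_\Lambda)$. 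The delicate part is this last bookkeeping---identifying exactly which root spaces occur in $N/J_\Lambda$ versus $N/J_{\Lambda,\mathfrak{M}}$, tracking the Siegel-parabolic structure of $U_{\Lambda,E/\Eo}$ and the factor of three between the periods of $\Lambda$ and $\mathfrak{M}$, and matching the outcome against the definition of $\sgn_T(J^1_\Lambda:J^1_\mathfrak{M})$.
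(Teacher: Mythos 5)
Your first half coincides with the paper's: you take determinants of the compatibility isomorphism $\Ind_{J_\Lambda}^{N}\kappa^{\mathfrak{M}}_\Lambda\cong\Ind_{J_{\Lambda,\mathfrak{M}}}^{N}\Res^{J_\mathfrak{M}}_{J_{\Lambda,\mathfrak{M}}}\kappa_\mathfrak{M}$ with $N=U_{\Lambda,E/\Eo}U^1_\Lambda$, restrict to $J_\Lambda\cap M$, turn the transfer maps into $p$-power maps, and discard the odd $p$-power exponents on the quadratic discriminant characters. One step there is circular, though: to strip the exponent $[N:J_\Lambda]$ off $\det\kappa^{\mathfrak{M}}_\Lambda|_{J_\Lambda\cap M}$ you invoke the quadraticity of $\mu^w_{\tilde\xi}$ from (\ref{amending appear}), but that quadraticity is exactly what Proposition \ref{rectifier-2} and Corollary \ref{rectifier-2-nc} are later used to establish; a priori $\mu^w_{\tilde\xi}$ is only a character of $T_\Lambda/T^1_\Lambda$ of order prime to $p$, and raising such a character to a $p$-power can genuinely change it. The paper avoids this by instead raising both sides of the congruence to a further power $p^{m-a}$ with $p^m\equiv 1$ modulo the order of the prime-to-$p$ part of the left-hand character --- harmless, since the statement only asks for \emph{some} exponent $p^k$ on the right.

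The second half is where the real gap lies. The paper disposes of the discriminant product by a clean general identity: for subgroups $H,K$ of $G$ with $[G:H\cap K]$ odd, one has $\delta_H^G\delta_K^G=\sgn_H(H/H\cap K)\,\sgn_K(K/H\cap K)$, derived from the tower formula $\delta_J^G=(\delta_H^G)^{\#(H/J)}(\delta_J^H\circ T_H^G)$; applied with $G=N$, $H=J_\Lambda$, $K=J_{\Lambda,\mathfrak{M}}$ this yields exactly $\sgn_{J_\Lambda\cap M}(J^1_\Lambda:J^1_\mathfrak{M})$ in its two-sided sense. You replace this key lemma by an unexecuted root-space bookkeeping plan, which you yourself flag as ``the main obstacle''. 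Moreover, your reduction of the target to $\sgn_T(J^1_\mathfrak{M}/J^1_\Lambda)$ rests on the claimed containment $J^1_\Lambda\subseteq J^1_\mathfrak{M}$, which is false: the matrices in Section \ref{section appendix levels} show (e.g.\ for $d=2k+1$) that the $(+,-)$ block of $J^1_\Lambda$ is $\mathfrak{P}^{k+1}$ while that of $J^1_{\mathfrak{M}^z}$ is $\mathfrak{P}^{k+2}$, whereas the $(-,+)$ blocks go the other way, so neither group contains the other. This is precisely why the symmetric sign $\sgn_T(B:C)=\sgn_T(B:B\cap C)\sgn_T(C:B\cap C)$ appears in the statement, and any telescoping must keep both quotients $J^1_\Lambda/(J^1_\Lambda\cap J^1_{\Lambda,\mathfrak{M}})$ and $J^1_\mathfrak{M}/(J^1_\Lambda\cap J^1_{\Lambda,\mathfrak{M}})$ in play.
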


  \proof
  Recall from Definition \ref{definition-compatible-beta}(\ref{definition-compatible-beta-extension}) the defining property for $\kappa^\mathfrak{M}_\Lambda$ compatible with $\kappa^\mathfrak{M}$, that
  $$\Ind_{J_\Lambda}^{U_{\Lambda,E/\Eo}U^1_\Lambda}\kappa^\mathfrak{M}_\Lambda\cong \Ind^{U_{\Lambda,E/\Eo}U^1_\Lambda}_{J_{\Lambda,\mathfrak{M}}} \Res^{J_{\mathfrak{M}}}_{J_{\Lambda,\mathfrak{M}}}\kappa_{\mathfrak{M}}.$$
  If we take determinant on both sides, we obtain
  \begin{equation}\label{compare-p-power-char}
  \begin{split}
    &\left(\delta_{J_\Lambda}^{U_{\Lambda,E/\Eo}U^1_\Lambda}\right)^{\dim\kappa^\mathfrak{M}_\Lambda}
    \left(\det\kappa^\mathfrak{M}_\Lambda\circ T_{J_\Lambda}^{U_{\Lambda,E/\Eo}U^1_\Lambda}\right)
    \\
    =&
    \left(\delta^{U_{\Lambda,E/\Eo}U^1_\Lambda}_{J_{\Lambda,\mathfrak{M}}}\right)^{\dim\kappa_{\mathfrak{M}}}
  \left(\det\kappa_{\mathfrak{M}}|_{J_{\Lambda,\mathfrak{M}}}\circ T^{U_{\Lambda,E/\Eo}U^1_\Lambda}_{J_{\Lambda,\mathfrak{M}}}\right).
  \end{split}
  \end{equation}
  We can get rid of the powers of the $\delta$-characters, because these powers are all odd and $\delta$-characters are quadratic. We restrict to ${J_{\Lambda}\cap {M}}$ and observe the following.

  \begin{itemize}

    \item We have
    $$T^{U_{\Lambda,E/\Eo}U^1_\Lambda}_{J_{\Lambda}}|_{{J_{\Lambda}\cap {M}}}(t)\equiv t^{\#({U_{\Lambda,E/\Eo}U^1_\Lambda}/{J_{\Lambda}})}\mod U^1_\Lambda,$$
     since if we choose the transversals of $${U_{\Lambda,E/\Eo}U^1_\Lambda}/{J_{\Lambda}}={{U_{\Lambda,E/\Eo}U^1_\Lambda}}/{U_{\Lambda,E/\Eo}J^1(\Lambda)}=
     {U^1_\Lambda}/{J^1(\Lambda)}$$
     of the form $1+X_i\in U^1_\Lambda$, then for every $t(1+X)\in{U_{\Lambda,E/\Eo}U^1_\Lambda}$, we have
        $t(1+X)(1+X_1)=(1+X_j)t(1+Y_i)$ for some $1+X_j\in  U^1_\Lambda$ and $t(1+Y_i)\in J_\Lambda$.
        \\
        Similar result holds if we replace $J_\Lambda$ by ${J_{\Lambda,\mathfrak{M}}}$.

  \item

   We recall that, if $J\subseteq H\subseteq G$ is a tower of subgroups of finite indexes, then
  $$\delta_J^G=(\delta_H^G)^{\#(H/J)}\cdot(\delta_J^H\circ T_H^G).$$
  If now $H$ and $K$ are two subgroups of $G$ such that the index $[G:H\cap K]$ is odd, then
  \begin{equation*}
  \begin{split}
    &\delta_H^G\delta_K^G= (\delta_H^G)^{\#(H/H\cap K)}(\delta_K^G)^{\#(K/H\cap K)}
    \\
    =&  (\delta_{H\cap K}^H\circ T_H^G)(\delta_{H\cap K}^K\circ T_K^G)
    \\
  =&\sgn_{H}(H/H\cap K)^{\#(G/H)}\sgn_{K}(K/H\cap K)^{\#(G/K)}
  \\
  =&\sgn_{H}(H/H\cap K)\sgn_{K}(K/H\cap K).
  \end{split}
      \end{equation*}

  We set $G={U_{\Lambda,E/\Eo}U^1_\Lambda}$, $H=J_{\Lambda}$ and $K={J_{\Lambda,\mathfrak{M}}}$.
  Since $H/H\cap K=J_{\Lambda}/J_{\Lambda}\cap {J_{\Lambda,\mathfrak{M}}}=J^1_{\Lambda}/J^1_{\Lambda}\cap {J^1_{\Lambda,\mathfrak{M}}}$, and the action of $H$ on $H/H\cap K$ factors through ${\boldsymbol{\mu}}_\Lambda$ (and similar results hold if we replace $H$ by $K$), the above product of characters is equal to
  $$\sgn_{{J_{\Lambda}\cap {M}}}(J^1_{\Lambda}/J^1_{\Lambda}\cap {J^1_{\Lambda,\mathfrak{M}}})\sgn_{{J_{\Lambda}\cap {M}}}(J^1_{\mathfrak{M}}/J^1_{\Lambda}\cap {J^1_{\Lambda,\mathfrak{M}}})=\sgn_{{J_{\Lambda}\cap {M}}}(J^1_{\Lambda}:{J^1_{\mathfrak{M}}}).$$

       \end{itemize}

       From (\ref{compare-p-power-char}) and the above observations, we know that
       $$\left(\det\kappa^\mathfrak{M}_\Lambda |_{{J_{\Lambda}\cap {M}}} \right)^{\#({U_{\Lambda,E/\Eo}U^1_\Lambda}/{J_\Lambda})}
       \sgn_{{J_{\Lambda}\cap {M}}}({J^1_{\Lambda}}:{J^1_{\mathfrak{M}}})
      $$
       and $$
      \left(\det\kappa_{\mathfrak{M}}|_{{J_{\Lambda}\cap {M}}}\right)^{\#({U_{\Lambda,E/\Eo}U^1_\Lambda}/{J_{\Lambda,\mathfrak{M}}})}$$
      differ by a character of a $p$-power order. Suppose we write  $p^a={\#({U_{\Lambda,E/\Eo}U^1_\Lambda}/{J_\Lambda})}$ and $p^b={\#({U_{\Lambda,E/\Eo}U^1_\Lambda}/{J_{\Lambda,\mathfrak{M}}})}$. We now choose a large integer $m$ such that
      $$p^m\equiv 1\mod\text{ the order of }\det\kappa^\mathfrak{M}_\Lambda |_{{{\boldsymbol{\mu}}_{\Lambda}}}\cdot
       \sgn_{{{\boldsymbol{\mu}}_{\Lambda}}}({J^1_{\Lambda}}:{J^1_{\mathfrak{M}}})$$
       and so
       \begin{equation*}
         \begin{split}
          & \det\kappa^\mathfrak{M}_\Lambda |_{{J_{\Lambda}\cap {M}}} \cdot
       \sgn_{{J_{\Lambda}\cap {M}}}({J^1_{\Lambda}}:{J^1_{\mathfrak{M}}})
       \\
        \equiv &
      (\det\kappa^\mathfrak{M}_\Lambda |_{{J_{\Lambda}\cap {M}}} \cdot
       \sgn_{{J_{\Lambda}\cap {M}}}({J^1_{\Lambda}}:{J^1_{\mathfrak{M}}}))^{p^m}
       \\
        \equiv &
      ((\det\kappa^\mathfrak{M}_\Lambda |_{{J_{\Lambda}\cap {M}}} \cdot
       \sgn_{{J_{\Lambda}\cap {M}}}({J^1_{\Lambda}}:{J^1_{\mathfrak{M}}}))^{p^a})^{p^{m-a}}
       \\
        \equiv &
        (\det\kappa_{\mathfrak{M}}|_{{J_{\Lambda}\cap {M}}})^{p^{b+m-a}},
         \end{split}
       \end{equation*}
       where the equivalence is modulo the group of characters of ${J_{\Lambda}\cap {M}}$ whose orders are $p$-powers.
  \qed

Suppose we take $\kappa^{\mathfrak{M}}_{\Lambda}=\kappa^{\mathfrak{M}}_{\Lambda,\tilde{\xi}}$ and $\kappa_{\mathfrak{M}}=\kappa_{\mathfrak{M},\tilde{\xi}}$ as defined in Proposition \ref{beta-extension-max-min} and Definition \ref{definition-compatible-beta}.

\begin{cor}\label{rectifier-2-nc}
  The character $\det\kappa^{\mathfrak{M}}_{\Lambda,\tilde{\xi}}|_{J_{\Lambda}\cap {M}}$ differs from $\sgn_{|_{J_{\Lambda}\cap {M}}}(J^1_{\Lambda}:J^1_{\mathfrak{M}})$ by a character of a $p$-power order.
\end{cor}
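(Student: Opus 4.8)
The plan is to read this off directly from Proposition~\ref{rectifier-2} once the normalizations fixed in Propositions~\ref{beta-extension-max-min} and~\ref{definition-compatible-beta} are imposed. First I would check that Proposition~\ref{rectifier-2} applies with $\Lambda=\Lambda_{i_\circ,x}$, $\mathfrak{M}=\mathfrak{M}^w_{i_\circ,x}$, $\kappa^{\mathfrak{M}}_{\Lambda}=\kappa^{\mathfrak{M}}_{\Lambda,\tilde{\xi}}$ and $\kappa_{\mathfrak{M}}=\kappa_{\mathfrak{M},\tilde{\xi}}$. The hypothesis $\mathfrak{A}_{\mathfrak{M}}\supseteq\mathfrak{A}_{\Lambda}$ holds because $\mathfrak{M}^w_{i_\circ,x}$, viewed as a function from $\mathbb{Z}$ to the set of lattices, takes its values among the lattices of $\Lambda_{i_\circ,x}$ (as recorded in the remark on exact subordinacy preceding Proposition~\ref{transferring-character-maximal}), so that every element preserving all $\Lambda_{i_\circ,x}(r)$ preserves all $\mathfrak{M}^w_{i_\circ,x}(r)$; and the requirement that $\kappa^{\mathfrak{M}}_{\Lambda,\tilde{\xi}}$ and $\kappa_{\mathfrak{M},\tilde{\xi}}$ be related as in Definition~\ref{definition-compatible-beta}(\ref{definition-compatible-beta-extension}) is precisely how $\kappa^{\mathfrak{M}}_{\Lambda,\tilde{\xi}}$ was defined in Definition~\ref{definition-compatible-beta}(\ref{definition-compatible-beta-extension-nc}). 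Proposition~\ref{rectifier-2} then gives
$$\det\kappa^{\mathfrak{M}}_{\Lambda,\tilde{\xi}}|_{J_{\Lambda}\cap {M}}\cdot \sgn_{|_{J_{\Lambda}\cap {M}}}(J^1_{\Lambda}:J^1_{\mathfrak{M}})\equiv (\det\kappa_{\mathfrak{M},\tilde{\xi}}|_{J_{\Lambda}\cap {M}})^{p^k}$$
for a suitable $p$-power $p^k$, the equivalence being modulo the characters of $J_{\Lambda}\cap {M}$ of $p$-power order.

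Next I would invoke Proposition~\ref{beta-extension-max-min}, according to which $\det\kappa_{\mathfrak{M},\tilde{\xi}}$ has finite order a power of $p$; hence so does its restriction to $J_{\Lambda}\cap {M}$, and so does the $p^k$-th power of that restriction. Thus the right-hand side of the displayed relation is itself a character of $p$-power order, so that $\det\kappa^{\mathfrak{M}}_{\Lambda,\tilde{\xi}}|_{J_{\Lambda}\cap {M}}\cdot\sgn_{|_{J_{\Lambda}\cap {M}}}(J^1_{\Lambda}:J^1_{\mathfrak{M}})$ has $p$-power order. Since $\sgn_{|_{J_{\Lambda}\cap {M}}}(J^1_{\Lambda}:J^1_{\mathfrak{M}})$ is a quadratic character, in particular equal to its own inverse, multiplying through by it yields that $\det\kappa^{\mathfrak{M}}_{\Lambda,\tilde{\xi}}|_{J_{\Lambda}\cap {M}}$ differs from $\sgn_{|_{J_{\Lambda}\cap {M}}}(J^1_{\Lambda}:J^1_{\mathfrak{M}})$ by a character of $p$-power order, which is the assertion.

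There is no genuine obstacle beyond invoking the two cited results; the only point deserving care is keeping the $2$-part and the $p$-part separate. The exponent $p^k$ of Proposition~\ref{rectifier-2}, the order of $\det\kappa_{\mathfrak{M},\tilde{\xi}}$, and the implicit ``$p$-power-order'' ambiguity are all prime to $2$ because $p$ is odd, so none of them can absorb or alter the quadratic $\sgn$-contribution; this is exactly why passing through Proposition~\ref{rectifier-2} isolates $\sgn_{|_{J_{\Lambda}\cap {M}}}(J^1_{\Lambda}:J^1_{\mathfrak{M}})$ cleanly and gives the stated conclusion.
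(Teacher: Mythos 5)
Your proposal is correct and follows essentially the same route as the paper: specialize Proposition~\ref{rectifier-2} to the fixed choices $\kappa^{\mathfrak{M}}_{\Lambda,\tilde{\xi}}$, $\kappa_{\mathfrak{M},\tilde{\xi}}$ and then observe that Proposition~\ref{beta-extension-max-min}(i)(c) forces $\det\kappa_{\mathfrak{M},\tilde{\xi}}|_{J_\Lambda\cap M}$, and hence its $p^k$-th power, to have $p$-power order. Your additional checks (the containment $\mathfrak{A}_{\Lambda}\subseteq\mathfrak{A}_{\mathfrak{M}}$ and the coprimality of the quadratic sign character to the $p$-power ambiguity, $p$ being odd) are details the paper leaves implicit but are correctly supplied.
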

\proof
Note that the choice of $\kappa_{\mathfrak{M},\tilde{\xi}}$ from Proposition \ref{beta-extension-max-min} implies that $\det\kappa_{\mathfrak{M},\tilde{\xi}}|_{J_\Lambda\cap M}$ has a p-power order.
\qed

We denote the above amending character $\sgn_{J_{\Lambda}\cap {M}}(J^1_\Lambda:J^1_{\mathfrak{M}^w})$ by $\mu^w_{\tilde{\xi}}=\tilde{\nu}^w_{i_\circ,x,\tilde{\xi}}\boxtimes \nu^w_{x,\tilde{\xi}}$, where
$$\tilde{\nu}^w_{i_\circ,x,\tilde{\xi}}:=\sgn_{\tilde{J}_{\Lambda_{i_\circ}}}(J^1_\Lambda:J^1_{\mathfrak{M}^w})\text{ and }\nu^w_{x,\tilde{\xi}}:=\sgn_{J_{\Lambda_x}}(J^1_\Lambda:J^1_{\mathfrak{M}^w}).$$
Again they are inflated from a character of ${\boldsymbol{\mu}}_{E_{i_\circ}}:=U_{E_{i_\circ}}/U_{E_{i_\circ}}^1$ and ${\boldsymbol{\mu}}_x:=T_x/T_x^1$ respectively.

\begin{cor}
   For $w=y$ or $z$, the character $\nu^w_{x,\tilde{\xi}}$ is always trivial.
  \end{cor}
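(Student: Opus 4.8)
The plan is to establish this exactly as in the proof of Corollary \ref{cor-nu-P}, replacing the module $\mathfrak{V}_{\Lambda,U^-}$ by the coset spaces on which $\sgn_{J_{\Lambda_x}}(J^1_\Lambda:J^1_{\mathfrak{M}^w})$ is computed. By definition this character equals $\sgn_{J_{\Lambda_x}}(J^1_\Lambda : J^1_\Lambda\cap J^1_{\mathfrak{M}^w})\cdot\sgn_{J_{\Lambda_x}}(J^1_{\mathfrak{M}^w}: J^1_\Lambda\cap J^1_{\mathfrak{M}^w})$, and $J_{\Lambda_x}$, sitting inside $J_\Lambda\cap M$, normalises each of $J^1_\Lambda$, $J^1_{\mathfrak{M}^w}$ and their intersection, hence acts by conjugation on the two finite coset spaces $\mathfrak{X}_1 = J^1_\Lambda/(J^1_\Lambda\cap J^1_{\mathfrak{M}^w})$ and $\mathfrak{X}_2 = J^1_{\mathfrak{M}^w}/(J^1_\Lambda\cap J^1_{\mathfrak{M}^w})$. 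Since this action is trivial on the pro-$p$ group $J^1_{\Lambda_x}$, it factors through $\boldsymbol{\mu}_x = J_{\Lambda_x}/J^1_{\Lambda_x}\cong\prod_{i\in I}\boldsymbol{\mu}_{E_i/\Eo_i}$, and since a character of a product is determined by its restriction to each factor, it suffices to show that for each $i\in I$ a generator $g_i$ of the cyclic factor $\boldsymbol{\mu}_{E_i/\Eo_i}$ (of order $\qo_i+1$, identified with a generator of $\mathbf{k}^\times_{E_i/\Eo_i}\subset\mathbf{k}_{E_i}^\times$) acts on $\mathfrak{X}_1$ and on $\mathfrak{X}_2$ as an even permutation.

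Next I would decompose $\mathfrak{X}_1$ and $\mathfrak{X}_2$ into $\boldsymbol{\mu}_x$-stable pieces using the root-space decomposition of $\mathrm{End}_F(W)$ under the torus $T_x$ (equivalently, the $\mathfrak{o}_E$-structure coming from $\mathcal{I}_{i_\circ,x}$): each piece is of the form $L/L'$ for $\mathfrak{o}_{E_i}$-lattices $L'\subset L$ inside a single root space, hence is a $\mathbf{k}_{E_i}$-vector space. On the pieces sitting in the "diagonal" root spaces $\mathrm{Hom}_F(V_j,V_j)$ (and, for the three copies of $V_{i_\circ}$ inside $W$, among themselves), the factor $\boldsymbol{\mu}_{E_i/\Eo_i}$ with $i\neq i_\circ$ acts trivially; moreover, using $\Lambda\cap\mathrm{End}_F(V)=\mathfrak{M}^w\cap\mathrm{End}_F(V)=\Lambda_x$ from Section \ref{section lattice higher ranks}, the contributions of the $M$-block pieces cancel. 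The only pieces on which $g_i$ acts non-trivially lie in off-diagonal root spaces $\mathrm{Hom}_F(V_i, V_?)$ or $\mathrm{Hom}_F(V_?, V_i)$ with $V_i$ one factor of the domain or codomain, and there $g_i$ acts by scalar multiplication on a non-zero $\mathbf{k}_{E_i}$-vector space, hence freely on the set of non-zero vectors. (For $i=i_\circ$ one argues the same way with $\boldsymbol{\mu}_{E_{i_\circ}/\Eo_{i_\circ}}$ and the three copies of $V_{i_\circ}$, since the torus element again acts by a scalar lying in $\boldsymbol{\mu}_{E_{i_\circ}/\Eo_{i_\circ}}$.)

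The parity count then finishes the argument, exactly as in Corollary \ref{cor-nu-P}: if $W$ is a non-zero $\mathbf{k}_{E_i}$-vector space of dimension $m$, then $g_i$ permutes $W\smallsetminus\{0\}$ in $(q_i^m-1)/(\qo_i+1)$ orbits, each a cycle of length $\qo_i+1$; since $p$ is odd we have $q_i-1=(\qo_i-1)(\qo_i+1)$ with $\qo_i-1$ even, so $(q_i^m-1)/(\qo_i+1)=(\qo_i-1)(1+q_i+\cdots+q_i^{m-1})$ is even, and a product of an even number of even-length cycles is an even permutation; together with the fixed vectors this gives $\sgn(g_i)=+1$ on every such piece, hence on all of $\mathfrak{X}_1$ and $\mathfrak{X}_2$, and therefore $\nu^w_{x,\tilde{\xi}}$ is trivial. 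I expect the only genuine work to be the first half of the second paragraph: pinning down, from the explicit lattice sequences $\Lambda_{i_\circ,x}$ and $\mathfrak{M}^w_{i_\circ,x}$ of Section \ref{section lattice higher ranks}, precisely which root spaces occur in $\mathfrak{X}_1$ and $\mathfrak{X}_2$ and with what $\mathbf{k}_{E_i}$-module structure; once that decomposition into "trivial" and "free-scalar" pieces is in hand, the rest is the routine counting above.
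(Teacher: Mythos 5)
Your proposal is correct and follows essentially the same route as the paper: identify the root-space pieces of the quotients on which each cyclic factor $\boldsymbol{\mu}_{E_i/\Eo_i}$ acts non-trivially, observe that there the action is by free scalar multiplication on the non-zero vectors of a $\mathbf{k}_{E_i}$-vector space, and conclude by the same parity count as in Corollary \ref{cor-nu-P} (the paper's proof likewise just lists the relevant root spaces and invokes "arguments similar to Corollary \ref{cor-nu-P}"). Your explicit factorization $(q_i^m-1)/(\qo_i+1)=(\qo_i-1)(1+q_i+\cdots+q_i^{m-1})$ is exactly the evenness the paper relies on.
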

\proof
 For each component ${\boldsymbol{\mu}}_{E_{i_\circ}/\Eo_{i_\circ}}$ of ${\boldsymbol{\mu}}_x$, the root spaces on which this component acts non-trivially are described as follows. \begin{enumerate}[(i)]
   \item   The $\sigma$-invariant parts of the sums of blocks $(U_{( -,i_\circ  )}\oplus U_{(i_\circ, + )})^\sigma$ in $U$, $(U_{( i_\circ,-  )}\oplus U_{(+,i_\circ)})^\sigma$ in $U^-$, and $(U_{( i,i_\circ  )}\oplus U_{(i_\circ, i )})^\sigma$ in $G_V$, for $i\in I-\{ i_\circ\}$.
       \item The root spaces in $G_{V_{i_\circ}}$ over the torus $T_{i_\circ}\cong \mathrm{U}_1(E_{i_\circ}/\Eo_{i_\circ})$, which are of the form $U_{(\sigma_{i_\circ}^j,\sigma_{i_\circ}^k)}$ for $j,k=1,\dots,n_{i_\circ}$ and $j\neq k$, with the action given as follows: if we write
           $$U_{\sigma_{i_\circ},l}:=\bigoplus_{j-k\equiv l\mod n_{i_\circ}}U_{(\sigma_{i_\circ}^j,\sigma_{i_\circ}^k)}$$
           for $l=1,\dots,n_{i_\circ}-1$ , then $U_{\sigma_{i_\circ},l}\cong E_{i_\circ}$ and $t\in T_{i_\circ}$ acts by multiplication by $({}^{\sigma_{i_\circ}^l}t)(t^{-1})$. The sum $U_{\sigma_{i_\circ},l}\oplus U_{\sigma_{i_\circ},-l}$ is $\sigma$-invariant, for $l=1,\dots,\frac{1}{2}(n_{i_\circ}-1)$ (remember that $n_{i_\circ}$ is odd), and the components we look for are
           $(U_{\sigma_{i_\circ},l}\oplus U_{\sigma_{i_\circ},-l})^\sigma$.
 \end{enumerate}
 Using the arguments similar to Corollary \ref{cor-nu-P}, we can show that the signatures of the actions of ${\boldsymbol{\mu}}_{E_{i_\circ}/\Eo_{i_\circ}}$ on these root spaces are trivial.
\qed

  Therefore, if we denote
  $$\mu^{w,P}_{\tilde{\xi}}=\mu^w_{\tilde{\xi}}\mu^P_{\tilde{\xi}}=\tilde{\nu}^{w,P}_{i_\circ,x,\tilde{\xi}}\boxtimes \nu^{w,P}_{x,\tilde{\xi}},$$
  then the product $\nu^{w,P}_{x,\tilde{\xi}}={\nu}^w_{x,\tilde{\xi}}\nu^P_{x,\tilde{\xi}}$ is always trivial, and $\tilde{\nu}^{w,P}_{i_\circ,x,\tilde{\xi}}=\tilde{\nu}^w_{i_\circ,x,\tilde{\xi}}\tilde{\nu}^P_{i_\circ,x,\tilde{\xi}}$ is equal to the inflation of
  \begin{equation}\label{product-nu-w-nu-P}
    \sgn_{{\boldsymbol{\mu}}_{E_{i_\circ}}}(J^1_{\Lambda}:J^1_{\mathfrak{M}^w})
    \sgn_{{\boldsymbol{\mu}}_{E_{i_\circ}}}(J^1_{\Lambda}\cap U^-/H^1_{\Lambda}\cap U^-).
  \end{equation}

\subsubsection{The transfer character}

Recall the character $\chi_{y}^{z,P}$ of $J_\Lambda/J^1_\Lambda\cong {T}_{\Lambda}/T^1_\Lambda$
defined in (\ref{transferring-character-maximal}). Since ${T}_{\Lambda}\cong \tilde{T}_{i_\circ}\times T_x$, we can write
$$\chi_{y}^{z,P}=\tilde{\chi}_{y,i_\circ}^{z,P}\boxtimes \chi_{y,x}^{z,P}.$$

\begin{prop}\label{transition-with-amending}
The transfer character $\chi_{y}^{z,P}$ is equal to
$$\mu^{y,P}_{\tilde{\xi}}\mu^{z,P}_{\tilde{\xi}}=\mu^{y}_{\tilde{\xi}}\mu^{z}_{\tilde{\xi}}.$$
In particular, it is independent of the parabolic subgroup $P$.
\end{prop}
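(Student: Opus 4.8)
The plan is to compare the two ways of producing a compatible beta-extension of $\eta_\Lambda$ relative to the two maximal orders $\mathfrak{A}_{\mathfrak{M}^y,E}$ and $\mathfrak{A}_{\mathfrak{M}^z,E}$, and to read off the relating character as a product of the discriminant-type signs computed in the previous subsections. Concretely, recall from \eqref{amending appear} that $\kappa^{\mathfrak{M}^w}_{\Lambda,\tilde{\xi}}=\kappa^0_{\Lambda,\tilde{\xi}}\cdot\mu^w_{\tilde{\xi}}$, where $\kappa^0_{\Lambda,\tilde{\xi}}$ is the \emph{fixed} extension of $\eta_\Lambda$ whose determinant has $p$-power order; this $\kappa^0$ does not depend on $w$. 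Hence
$$\kappa^{\mathfrak{M}^z}_{\Lambda,\tilde{\xi}}=\kappa^{\mathfrak{M}^y}_{\Lambda,\tilde{\xi}}\cdot(\mu^y_{\tilde{\xi}}\mu^z_{\tilde{\xi}}),$$
using that $\mu^w_{\tilde{\xi}}$ is at most quadratic (Corollary \ref{rectifier-2-nc} together with the triviality of $\nu^w_{x,\tilde\xi}$ shows $\mu^w_{\tilde\xi}$ is, up to a $p$-power character, the quadratic sign character $\sgn(J^1_\Lambda:J^1_{\mathfrak{M}^w})$; but $\mu^w_{\tilde\xi}$ itself is the sign character by its definition). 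Restricting the identity $\kappa_{\Lambda}^{\mathfrak{M}^w}\cong\Ind_{J_P}^{J_\Lambda}\kappa^w_P$ and Proposition \ref{transferring-character-maximal} to $J_\Lambda$, the character $\chi_y^{z,P}$ is characterized by $\kappa^{\mathfrak{M}^z}_{\Lambda,P}\cong\kappa^{\mathfrak{M}^y}_{\Lambda,P}\cdot\chi_y^{z,P}$ on $J_P$, and since inducing a twist of $\kappa^y_P$ by a character of $J_\Lambda/J^1_\Lambda$ gives the corresponding twist of $\kappa^{\mathfrak{M}^y}_\Lambda$, one gets $\kappa^{\mathfrak{M}^z}_{\Lambda,\tilde\xi}\cong\kappa^{\mathfrak{M}^y}_{\Lambda,\tilde\xi}\cdot\chi_y^{z,P}$ on $J_\Lambda$. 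Comparing the two displayed relations and using that both $\mu^y_{\tilde\xi}\mu^z_{\tilde\xi}$ and $\chi_y^{z,P}$ are characters of the finite abelian quotient ${T}_\Lambda/T^1_\Lambda\cong\boldsymbol{\mu}_\Lambda$ yields $\chi_y^{z,P}=\mu^y_{\tilde\xi}\mu^z_{\tilde\xi}$.

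The one subtlety is that the displayed equations only determine $\chi_y^{z,P}$ once we know that twisting at the level of $J_P$ by a character trivial on $J^1_P$ commutes with parabolic induction $\Ind_{J_P}^{J_\Lambda}$; this is precisely the compatibility statement in Proposition \ref{bijection of extensions}(\ref{bijection of extensions char}) combined with the fact that $J_P/J^1_P\cong J_\Lambda/J^1_\Lambda$, so that a character of $J_\Lambda/J^1_\Lambda$ pulled back to $J_P$ and then induced produces the same character-twist upstairs. I would spell this out: since $\chi$ is inflated from $J_\Lambda/J^1_\Lambda$ and $J_P\supseteq J^1_\Lambda\supseteq J^1_P$, one has $\Ind_{J_P}^{J_\Lambda}(\kappa^y_P\cdot\chi)\cong(\Ind_{J_P}^{J_\Lambda}\kappa^y_P)\cdot\chi=\kappa^{\mathfrak{M}^y}_\Lambda\cdot\chi$ by the projection formula. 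The remaining identity $\mu^y_{\tilde\xi}\mu^P_{\tilde\xi}\cdot\mu^z_{\tilde\xi}\mu^P_{\tilde\xi}=\mu^y_{\tilde\xi}\mu^z_{\tilde\xi}$ is immediate since $\mu^P_{\tilde\xi}$ is quadratic, giving the ``in particular'' clause that the answer is independent of $P$ (as $\mu^P_{\tilde\xi}$ is the only $P$-dependent factor and it cancels).

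The main obstacle I anticipate is bookkeeping rather than conceptual: one must make sure the \emph{same} fixed base extension $\kappa^0_{\Lambda,\tilde\xi}$ (equivalently, the same normalization of the Heisenberg representation $\eta_{\Lambda,\mathfrak{M}}$ on $J^1_{\Lambda,\mathfrak{M}}$) underlies both $\kappa^{\mathfrak{M}^y}_{\Lambda,\tilde\xi}$ and $\kappa^{\mathfrak{M}^z}_{\Lambda,\tilde\xi}$, so that their ratio really is $\mu^y_{\tilde\xi}\mu^z_{\tilde\xi}$ and not $\mu^y_{\tilde\xi}\mu^z_{\tilde\xi}$ times some spurious $p$-power character. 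This is guaranteed because in \eqref{amending appear} we defined $\mu^w_{\tilde\xi}$ by $\kappa^0_{\Lambda,\tilde\xi}\cdot\mu^w_{\tilde\xi}=\kappa^{\mathfrak{M}^w}_{\Lambda,\tilde\xi}$ with one and the same $\kappa^0_{\Lambda,\tilde\xi}$, and Proposition \ref{bijection of extensions}(\ref{bijection of extensions char}) ensures the twisting bijections for $w=y$ and $w=z$ are compatible through the common $J^1_{\Lambda,\mathfrak{M}^y}\cap J^1_{\Lambda,\mathfrak{M}^z}\supseteq J^1_\Lambda$. I would close by noting that $\chi_y^{z,P}$ is thus automatically at most quadratic and skew, consistent with the remark after Proposition \ref{transferring-character-maximal}.
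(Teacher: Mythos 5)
Your argument is correct, and it is organized differently from the paper's, so a comparison is worthwhile. The paper's proof stays entirely at the level of determinant characters restricted to $J_\Lambda\cap M$: it takes determinants in Proposition \ref{transferring-character-maximal} to get $\det\kappa_\Lambda^{\mathfrak{M}^z}|_{J_\Lambda\cap M}=(\det\kappa_{\Lambda}^{\mathfrak{M}^y}|_{J_\Lambda\cap M})\chi_y^{z,P}$, substitutes $\det\kappa_\Lambda^{\mathfrak{M}^w}|_{J_\Lambda\cap M}=(\det\kappa_{\mathfrak{M}^w}|_{J_\Lambda\cap M})^{p^{k_w}}\cdot\mu^w_{\tilde{\xi}}$ from Proposition \ref{rectifier-2}, and then splits the resulting identity into its $p$-power-order and prime-to-$p$-order parts. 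You instead compare the representations themselves on $J_\Lambda$: the ratio description (\ref{amending appear}) gives $\kappa^{\mathfrak{M}^z}_{\Lambda,\tilde{\xi}}=\kappa^{\mathfrak{M}^y}_{\Lambda,\tilde{\xi}}\cdot\mu^y_{\tilde{\xi}}\mu^z_{\tilde{\xi}}$, the projection formula applied to Proposition \ref{transferring-character-maximal} gives $\kappa^{\mathfrak{M}^z}_{\Lambda,\tilde{\xi}}\cong\kappa^{\mathfrak{M}^y}_{\Lambda,\tilde{\xi}}\cdot\chi_y^{z,P}$, and the two twists must coincide because the twisting character of an extension of $\eta_\Lambda$ is uniquely determined. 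Both routes rest on the same inputs; yours is arguably cleaner but carries one obligation the paper's version discharges automatically, namely identifying the two meanings of $\mu^w_{\tilde{\xi}}$ --- the ratio character of (\ref{amending appear}) versus the signature character $\sgn_{J_\Lambda\cap M}(J^1_\Lambda:J^1_{\mathfrak{M}^w})$ appearing in the statement. Your parenthetical treats this as holding ``by definition,'' but it is a small argument: both are characters of $J_\Lambda/J^1_\Lambda\cong{\boldsymbol{\mu}}_\Lambda$, a group of order prime to $p$, so Corollary \ref{rectifier-2-nc} (which a priori compares them only modulo $p$-power-order characters and only after raising the ratio character to the odd $p$-power $\dim\eta_\Lambda$) does pin them down exactly --- and this is precisely the order-separation step that constitutes the paper's proof. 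With that spelled out, your argument is complete, and the ``in particular'' clause follows as you say from $(\mu^P_{\tilde{\xi}})^2=1$ together with the manifest $P$-independence of $\sgn_{J_\Lambda\cap M}(J^1_\Lambda:J^1_{\mathfrak{M}^w})$.
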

\proof
Recall from Proposition \ref{transferring-character-maximal} that
\begin{equation*}
      \det\kappa_\Lambda^{\mathfrak{M}^z}|_{J_\Lambda\cap M}=(\det\kappa_{\Lambda}^{\mathfrak{M}^y}|_{J_\Lambda\cap M})\chi_y^{z,P}.
\end{equation*}
Moreover, for $w=y$ or $z$, we have
$$   \det\kappa_\Lambda^{\mathfrak{M}^w}|_{J_\Lambda\cap M}=(\det\kappa_{\mathfrak{M}^w}|_{J_\Lambda\cap M})^{p^{k_w}}\cdot\mu^{w}_{\tilde{\xi}}$$
for some $p$-power ${p^{k_w}}$, by Proposition \ref{rectifier-2}. Hence
\begin{equation*}
  \begin{split}
    \mu^{y}_{\tilde{\xi}}\mu^{z}_{\tilde{\xi}}&= \det\kappa_\Lambda^{\mathfrak{M}^z}|_{J_\Lambda\cap M}( \det\kappa_\Lambda^{\mathfrak{M}^y}|_{J_\Lambda\cap M})^{-1}(\det\kappa_{\mathfrak{M}^z}|_{J_\Lambda\cap M})^{-p^{k_z}}(\det\kappa_{\mathfrak{M}^y}|_{J_\Lambda\cap M})^{p^{k_y}}
    \\
    &=\chi_y^{z,P}(\det\kappa_{\mathfrak{M}^z}|_{J_\Lambda\cap M})^{-p^{k_z}}(\det\kappa_{\mathfrak{M}^y}|_{J_\Lambda\cap M})^{p^{k_y}}.
  \end{split}
\end{equation*}
Since $\mu^{y}_{\tilde{\xi}}\mu^{z}_{\tilde{\xi}}(\chi_y^{z,P})^{-1}$ has an order coprime to $p$, and $(\det\kappa_{\mathfrak{M}^z}|_{J_\Lambda\cap M})^{-p^{k_z}}(\det\kappa_{\mathfrak{M}^y}|_{J_\Lambda\cap M})^{p^{k_y}}$ has a p-power order, these characters are trivial and $\mu^{y}_{\tilde{\xi}}\mu^{z}_{\tilde{\xi}}=\chi_y^{z,P}$. The independence of $P$ is then clear.
\qed

We hence write $ \chi_{y}^{z}$ instead of $ \chi_{y}^{z,P}$, and similarly write $\chi_{y}^{z}=\tilde{\chi}_{y,i_\circ}^{z}\boxtimes \chi_{y,x}^{z}$. In particular, the character $ \chi_{y,x}^{z}$ of ${\boldsymbol{\mu}}_x$ is trivial, and $\tilde{\chi}_{y,i_\circ}^{z}$ of ${\boldsymbol{\mu}}_{E_{i_\circ}}$ is a quadratic (hence skew) character.

\subsection{The process of amending}

We now fix $\kappa_\Lambda^{\mathfrak{M}^z}=\kappa_{\Lambda,\tilde{\xi}}^{\mathfrak{M}^z}$, the beta-extension defined in Definition \ref{definition-compatible-beta}(\ref{definition-compatible-beta-extension-nc}). From Proposition \ref{rectifier 1} and Corollary \ref{rectifier-2-nc}, we have
\begin{equation*}
    \det\kappa_{\Lambda,\tilde{\xi}}^{\mathfrak{M}^z}|_{J_\Lambda\cap M}=(\det\kappa_{\Lambda,\tilde{\xi},P}^{\mathfrak{M}^z}|_{J_\Lambda\cap M})\cdot{{\mu}_{\tilde{\xi}}^P}\equiv \mu_{\tilde{\xi}}^z.
\end{equation*}
Note that $\kappa_{\Lambda,\tilde{\xi},P}^{\mathfrak{M}^z}|_{J_\Lambda\cap M}\cdot\mu_{\tilde{\xi}}^{z,P}=(\kappa_{\Lambda,\tilde{\xi}}^{\mathfrak{M}^z}\cdot\mu_{\tilde{\xi}}^{z,P})_P|_{J_\Lambda\cap M}$ is a beta-extension of $\tilde{\eta}_{i_\circ}\boxtimes\eta_x$ by Proposition \ref{restrictions-are-beta}, and has determinant of a $p$-power order. Therefore,
$$\kappa_{\Lambda,\tilde{\xi},P}^{\mathfrak{M}^z}|_{J_\Lambda\cap M}\cdot\mu_{\tilde{\xi}}^{z,P}\cong \tilde{\kappa}_{\tilde{\xi}_{i_\circ}}\boxtimes\kappa_{x,\tilde{\xi}},$$
where $\tilde{\kappa}_{\tilde{\xi}_{i_\circ}}$ and $\kappa_{x,\tilde{\xi}}$ are the fixed beta-extensions in Propositions \ref{beta-extension-tilde} and \ref{beta-extension-U} respectively. Moreover, $\mathcal{H}(G_W,\kappa_{\Lambda,\tilde{\xi}}^{\mathfrak{M}^z}\cdot \rho_{i_\circ,x})$ is a cover of $\mathcal{H}(M,\kappa_{\Lambda,\tilde{\xi},P}^{\mathfrak{M}^z}|_{J_\Lambda\cap M}\cdot \rho_{i_\circ,x})=\mathcal{H}(M,\tilde{\lambda}^{z,P}_{i_\circ}\boxtimes\lambda^{}_{x})$, where $$\tilde{\lambda}^{z,P}_{i_\circ}\boxtimes\lambda^{}_{x}=(\tilde{\kappa}_{\tilde{\xi}_{i_\circ}}\boxtimes\kappa_{x,\tilde{\xi}})\cdot (\tilde{\nu}_{i_\circ,x,\tilde{\xi}}^{z,P}\tilde{\rho}_{i_\circ}\boxtimes\rho_x).$$
If $\tilde{\pi}^{z,P}_{i_\circ}\boxtimes\pi^{}_{x}$ is a supercuspidal of $M$ with type $\tilde{\lambda}^{z,P}_{i_\circ}\boxtimes\lambda^{}_{x}$, then the real parts of the points $s$ where
$$\tilde{\pi}^{z,P}_{i_\circ}|\det|^s\rtimes \pi^{}_{x}:=\Ind_P^G(\tilde{\pi}^{z,P}_{i_\circ}|\det|^s\boxtimes\pi^{}_{x})$$
  is reducible are determined by $\mathcal{H}(G_W,\kappa_{\Lambda,\tilde{\xi}}^{\mathfrak{M}^z}\cdot \rho_{i_\circ,x})$.

\begin{thm}\label{theorem-inertial-class}
  There exists a conjugate-self-dual representation $\tilde{\pi}_{i_\circ}$ (i.e., $\tilde{\pi}_{i_\circ}\cong \tilde{\pi}_{i_\circ}\circ\sigma$) of $\tilde{G}_{V_i}$ whose inertial class is determined by the maximal simple type
  $$\tilde{\kappa}_{\tilde{\xi}_{i_\circ}}\cdot (\tilde{\xi}_{i_\circ,\tame}\tilde{\nu}^{y,P}_{i_\circ,x,\tilde{\xi}})$$
  such that if $\pi_{x,\tilde{\xi}}$ is a supercuspidal representation of $G_V$ induced from the cuspidal type
  $${\kappa}_{x,\tilde{\xi}}\cdot {\xi}_{x,\tame},$$
  then $\tilde{\pi}_{i_\circ}|\det|^s\rtimes \pi_{x,\tilde{\xi}}$ is reducible at a point $s$ whose real part is 1.
\end{thm}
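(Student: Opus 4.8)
The plan is to realize $\tilde\pi_{i_\circ}|\det|^s\rtimes\pi_{x,\tilde\xi}$ inside the covering-pair machinery of Section~\ref{section cover} for a suitable choice of the tame twisting data, and then to read off the points of reducibility from the Hecke-algebra structure computed in Section~\ref{section Hecke algebra}. First I would fix the data. In the notation of Section~\ref{section cover} and of the subsection on the process of amending in Section~\ref{section amending}, I would take $\rho_x=\xi_{x,\tame}$, so that the cuspidal type $\lambda_x=\kappa_{x,\tilde\xi}\cdot\xi_{x,\tame}$ of $\pi_{x,\tilde\xi}$ is exactly recovered, and I would take $\tilde\rho_{i_\circ}$ to be the restriction of $\tilde\xi_{i_\circ,\tame}\,\tilde\chi_{y,i_\circ}^{z}$ to $\boldsymbol\mu_{E_{i_\circ}}$, inflated to $\tilde J_{\Lambda_{i_\circ}}$ through $\tilde J_{\Lambda_{i_\circ}}/\tilde J_{\Lambda_{i_\circ}}^1\cong\boldsymbol\mu_{E_{i_\circ}}$; this is legitimate because $\tilde\xi_{i_\circ,\tame}$ is tamely ramified and $\tilde\chi_{y,i_\circ}^{z}$ is already a character of $\boldsymbol\mu_{E_{i_\circ}}$. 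With $\rho_{i_\circ,x}=\tilde\rho_{i_\circ}\boxtimes\rho_x$, the displayed identities of that subsection give $\kappa^{z,P}_x=\kappa_{x,\tilde\xi}$ and $\tilde\lambda^{z,P}_{i_\circ}=\tilde\kappa_{\tilde\xi_{i_\circ}}\cdot(\tilde\nu^{z,P}_{i_\circ,x,\tilde\xi}\,\tilde\rho_{i_\circ})$. Since Proposition~\ref{transition-with-amending} (together with the triviality of the $\boldsymbol\mu_x$-components) gives $\tilde\chi_{y,i_\circ}^{z}=\tilde\nu^{y,P}_{i_\circ,x,\tilde\xi}\tilde\nu^{z,P}_{i_\circ,x,\tilde\xi}$ on $\boldsymbol\mu_{E_{i_\circ}}$, and all these characters are quadratic, one gets $\tilde\nu^{z,P}_{i_\circ,x,\tilde\xi}\,\tilde\rho_{i_\circ}=\tilde\xi_{i_\circ,\tame}\tilde\nu^{y,P}_{i_\circ,x,\tilde\xi}$, so $\tilde\lambda^{z,P}_{i_\circ}$ is precisely the maximal simple type in the statement; I would then let $\tilde\pi_{i_\circ}$ be a supercuspidal representation of $\tilde G_{V_{i_\circ}}$ compactly induced from an extension of it, which pins down its inertial class.

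For conjugate-self-duality I would use that $\tilde\lambda^{z,P}_{i_\circ}=\tilde\kappa^{z,P}_{i_\circ}\cdot\tilde\rho_{i_\circ}$, where $\tilde\kappa^{z,P}_{i_\circ}$ is conjugate-self-dual by Proposition~\ref{s_1-and-s_1-varpi-kappa} and $\tilde\rho_{i_\circ}$ is skew, being the product of the skew character $\tilde\xi_{i_\circ,\tame}$ (Proposition~\ref{wild-part-tilde}) and the quadratic, hence skew, character $\tilde\chi_{y,i_\circ}^{z}$; thus $\tilde\lambda^{z,P}_{i_\circ}$ is conjugate-self-dual, and $\tilde\pi_{i_\circ}$ may be chosen among the two conjugate-self-dual representations carrying it.

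Next I would compute the parameters $r^P_y,r^P_z$ of the quadratic relations in $\mathcal H(G_W,\lambda_P)$. By Corollary~\ref{Hecke-generators} these are extracted from the finite Hecke algebras $\mathcal H(U_{\mathfrak M^z,E/\Eo},\rho_{i_\circ,x})$ and $\mathcal H(U_{\mathfrak M^y,E/\Eo},\rho_{i_\circ,x}\cdot\chi_y^{z})$, which, via the reductive quotients of $\mathfrak M^z_{\pm i_\circ}$ and $\mathfrak M^y_{\pm i_\circ}$, reduce to Hecke algebras on the finite groups $\mathrm U_2(\mathbf k_{E_{i_\circ}}/\mathbf k_{\Eo_{i_\circ}})$ and $\mathrm U_3(\mathbf k_{E_{i_\circ}}/\mathbf k_{\Eo_{i_\circ}})$, with the finite-group characters obtained by reducing the relevant twisting characters modulo the pro-$p$ radicals. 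On the $\mathrm U_2$-side the relevant character is the reduction of $\tilde\rho_{i_\circ}$, which is skew, so the case-(i) computation of Section~\ref{section Hecke algebra} (via Lusztig's theorem) gives $r^P_z=n_{i_\circ}/2$. On the $\mathrm U_3$-side the character on the $\mathbf k_{E_{i_\circ}}^\times$-factor is the reduction of $\tilde\rho_{i_\circ}\tilde\chi_{y,i_\circ}^{z}=\tilde\xi_{i_\circ,\tame}$ (the two quadratic twists cancel), while the character on the $\mathrm U_1$-factor is the reduction of the $i_\circ$-component $\xi_{i_\circ,\tame}$ of $\xi_{x,\tame}$; since $\tilde\xi_{i_\circ,\tame}=\xi_{i_\circ,\tame}\circ(1-c_{i_\circ})$, the former is exactly the base change of the latter, so we are in the base-change situation and the case-(ii) computation gives $r^P_y=3n_{i_\circ}/2$.

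Finally I would feed $r^P_y=3n_{i_\circ}/2$ and $r^P_z=n_{i_\circ}/2$ into the Proposition of Section~\ref{section Hecke algebra} relating these parameters to the points of reducibility: the multiset of real parts of the points of reducibility of $\tilde\pi_{i_\circ}|\det|^s\rtimes\pi_{x,\tilde\xi}$ equals $\{\pm\frac{r^P_y\pm r^P_z}{2n_{i_\circ}}\}=\{\pm1,\pm\frac12\}$, so $1$ occurs; after possibly replacing $\tilde\pi_{i_\circ}$ by its unramified quadratic twist, which leaves its inertial class unchanged, one obtains a point of reducibility of real part $1$, as desired. The hard part is the bookkeeping of the amending and transfer characters: the single choice of $\tilde\rho_{i_\circ}$ above must at once reproduce the advertised maximal simple type, keep that type conjugate-self-dual, and turn the two finite twisting characters on the $\mathrm U_3$-factor into a base-change pair, so that Lusztig's theorem returns $r^P_y=3n_{i_\circ}/2$ rather than $n_{i_\circ}/2$; that is the step most prone to sign errors, and it is exactly what the preceding sections have been set up to deliver.
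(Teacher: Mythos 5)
Your proposal is correct and follows essentially the same route as the paper: the same choice $\tilde\rho_{i_\circ}=\tilde\xi_{i_\circ,\tame}\tilde\chi_{y,i_\circ}^{z}$, $\rho_x=\xi_{x,\tame}$, the same use of Proposition \ref{transition-with-amending} to convert $\tilde\chi_{y,i_\circ}^{z}\tilde\nu^{z,P}_{i_\circ,x,\tilde\xi}$ into $\tilde\nu^{y,P}_{i_\circ,x,\tilde\xi}$, and the same reduction via Corollary \ref{Hecke-generators} to the finite $\mathrm{U}_2$- and $\mathrm{U}_3$-Hecke algebras yielding $r^P_z=n_{i_\circ}/2$ and $r^P_y=3n_{i_\circ}/2$, hence real parts $\{\pm 1,\pm\tfrac12\}$. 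Your added remarks on conjugate-self-duality of the type and on choosing the right unramified twist are consistent with what the paper establishes elsewhere.
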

\proof
Recall that  $\mathcal{H}(G_W,\kappa_{\Lambda,\tilde{\xi}}^{\mathfrak{M}^z}\cdot \rho_{i_\circ,x})$ is covering
$$\mathcal{H}(M,(\tilde{\kappa}_{\tilde{\xi}_{i_\circ}}\boxtimes\kappa_{x,\tilde{\xi}})\cdot (\tilde{\nu}_{i_\circ,x,\tilde{\xi}}^{z,P}\tilde{\rho}_{i_\circ}\boxtimes\rho_x)).$$
From Corollary \ref{Hecke-generators} that its generators are given by those of
$$\mathcal{H}(U_{\mathfrak{M}^y,E/\Eo},\rho_{i_\circ,x}\cdot\chi^{z}_y)\text{ and }\mathcal{H}(U_{\mathfrak{M}^z,E/\Eo},\rho_{i_\circ,x}).$$
We now take $\tilde{\rho}_{i_\circ}=\tilde{\xi}_{i_\circ,\tame}\tilde{\chi}^{z}_{y,i_\circ}$ and ${\rho}_{x}=\xi_{x,\tame}{\chi}^{z}_{y,x}=\xi_{x,\tame}$, then $\mathcal{H}(G_W,\kappa^z\cdot \rho_{i_\circ,x})$  is covering
\begin{equation*}
\begin{split}
  &\mathcal{H}(M,(\tilde{\kappa}_{\tilde{\xi}_{i_\circ}}\boxtimes\kappa_{x,\tilde{\xi}})\cdot (\tilde{\xi}_{i_\circ,\tame}\tilde{\chi}^{z}_{y,i_\circ}\tilde{\nu}_{i_\circ,x,\tilde{\xi}}^{z,P}\boxtimes\xi_{x,\tame}))
  \\
=&\mathcal{H}(M,(\tilde{\kappa}_{\tilde{\xi}_{i_\circ}}\boxtimes\kappa_{x,\tilde{\xi}})\cdot (\tilde{\xi}_{i_\circ,\tame}\tilde{\nu}_{i_\circ,x,\tilde{\xi}}^{y,P}\boxtimes\xi_{x,\tame})),
\end{split}
  \end{equation*}
and is generated by
\begin{equation*}
\mathcal{H}(U_{\mathfrak{M}^y,E/\Eo},\tilde{\rho}_{i_\circ}\cdot\tilde{\chi}^{z}_y\boxtimes\rho_x)
  =\mathcal{H}(U_{\mathfrak{M}^y,E/\Eo},\tilde{\xi}_{i_\circ,\tame}\boxtimes\xi_{x,\tame})
  \end{equation*}
which gives the parameter $3/2$, and by
\begin{equation*}
 \mathcal{H}(U_{\mathfrak{M}^z,E/\Eo},\tilde{\rho}_{i_\circ}\boxtimes\rho_x)
  =\mathcal{H}(U_{\mathfrak{M}^z,E/\Eo},\tilde{\xi}_{i_\circ,\tame}\tilde{\chi}^{z}_{y,i_\circ}\boxtimes\xi_{x,\tame})
\end{equation*}
which gives the parameter $1/2$ since $\tilde{\chi}^{z}_{y,i_\circ}$ is a skew character.
\qed

\subsection{The product of amending characters}

 Using the Iwahori decomposition on the compact groups, the product (\ref{product-nu-w-nu-P}) is expanded into a product
      \begin{equation}\label{4-factors}
      \begin{split}
      &\sgn_{{\boldsymbol{\mu}}_{E_{i_\circ}}}(J^1_{\Lambda}\cap M:J^1_{\mathfrak{M}^w}\cap M)
      \\
      &\sgn_{{\boldsymbol{\mu}}_{E_{i_\circ}}}(J^1_{\Lambda}\cap U:J^1_{\mathfrak{M}^w}\cap U)
      \\
      &     \sgn_{{\boldsymbol{\mu}}_{E_{i_\circ}}}(J^1_{\mathfrak{M}^w}\cap U^-/H^1_{\mathfrak{M}^w}\cap U^-)
      \\
    &\sgn_{{\boldsymbol{\mu}}_{E_{i_\circ}}}(H^1_{\Lambda}\cap U^-:H^1_{\mathfrak{M}^w}\cap U^-)
      \end{split}
    \end{equation}
of four characters.

  \begin{lem}\label{compare-roots on M}
    The character $\sgn_{{\boldsymbol{\mu}}_{E_{i_\circ}}}(J^1_{\Lambda}\cap M:J^1_{\mathfrak{M}^w}\cap M)$ (the first factor in (\ref{4-factors})) is trivial.
  \end{lem}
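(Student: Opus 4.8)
The plan is to decompose the two compact groups along the Levi $M\cong \tilde{G}_{V_{i_\circ}}\times G_V$, check that the $G_V$--factor contributes nothing, and reduce the remaining computation in the $\GL$--factor to a parity statement about root spaces of the elliptic torus $E_{i_\circ}^\times$.

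First I would use the (Iwahori) decomposition of $J^1_{\Lambda}$ and $J^1_{\mathfrak{M}^w}$ along $P=MU$ --- available since both $\Lambda=\Lambda_{i_\circ,x}$ and $\mathfrak{M}^w=\mathfrak{M}^w_{i_\circ,x}$ are subordinate to the semi--simple stratum, cf.\ the discussion in Section~\ref{section cover} --- to write $J^1_{\Lambda}\cap M=\tilde{B}_0\times B_1$ and $J^1_{\mathfrak{M}^w}\cap M=\tilde{C}_0\times C_1$ with $\tilde{B}_0,\tilde{C}_0\subset\tilde{G}_{V_{i_\circ}}$ and $B_1,C_1\subset G_V$. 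The key observation is that $B_1=C_1=J^1_{\Lambda_x}$: by the relation $\mathfrak{M}^w_{i_\circ,x}\cap\mathrm{End}_F(V)=\Lambda_x=\Lambda_{i_\circ,x}\cap\mathrm{End}_F(V)$ from the end of Section~\ref{section lattice higher ranks}, and since the level-$3d$ cut on the period-$6$ sequence $\Lambda_{i_\circ,x}$ and the level-$d$ cut on the period-$2$ sequence $\mathfrak{M}^w_{i_\circ,x}$ both restrict on $\mathrm{End}_F(V)$ to the level-$d$ cut of $\Lambda_x$ (using $U^1_{\Lambda_x,E/\Eo}=T_x^1$ from Lemma~\ref{compact groups sigma invariant}). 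From the definition $\sgn_{{\boldsymbol{\mu}}_{E_{i_\circ}}}(B:C)=\sgn_{{\boldsymbol{\mu}}_{E_{i_\circ}}}(B:B\cap C)\,\sgn_{{\boldsymbol{\mu}}_{E_{i_\circ}}}(C:B\cap C)$ together with the product structure $B\cap C=(\tilde{B}_0\cap\tilde{C}_0)\times B_1$, the $G_V$--coordinate drops out entirely, and it remains to prove $\sgn_{{\boldsymbol{\mu}}_{E_{i_\circ}}}(\tilde{B}_0:\tilde{C}_0)=1$ inside $\tilde{G}_{V_{i_\circ}}\cong\GL_{n_{i_\circ}}(F)$.

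For this I would pass to root spaces of $E_{i_\circ}^\times$ viewed as an elliptic maximal torus of $\GL_{n_{i_\circ}}(F)$. Both $\tilde{B}_0$ and $\tilde{C}_0$ are groups of the form $1+\mathfrak{P}^{ed}$ attached to $\mathfrak{o}_{E_{i_\circ}}$--lattice sequences in the one--dimensional $E_{i_\circ}$--space $V_{i_\circ,-}$, so the two hereditary orders have the \emph{same} $E_{i_\circ}$--pure part $\mathfrak{A}\cap E_{i_\circ}=\mathfrak{o}_{E_{i_\circ}}$; hence $\tilde{B}_0\cap\tilde{C}_0\supseteq U^1_{E_{i_\circ}}$ and both quotients $\tilde{B}_0/(\tilde{B}_0\cap\tilde{C}_0)$, $\tilde{C}_0/(\tilde{B}_0\cap\tilde{C}_0)$ are subquotients of the off--torus root spaces $U_{\sigma_{i_\circ},l}\cong E_{i_\circ}$ ($l=1,\dots,n_{i_\circ}-1$), on which $t\in{\boldsymbol{\mu}}_{E_{i_\circ}}=\mathbf{k}_{E_{i_\circ}}^\times$ acts by multiplication by $\mu_l(t):={}^{\sigma_{i_\circ}^l}t\cdot t^{-1}$, exactly as in the root--space description used to prove Corollary~\ref{cor-nu-P}. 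By Hilbert's Theorem~90 for the cyclic extension $\mathbf{k}_{E_{i_\circ}}/\mathbf{k}_F$, the image of $\mu_l$ is the norm--one subgroup relative to the fixed field of $\sigma_{i_\circ}^l$, of order $(q^{n_{i_\circ}}-1)/(q^{\gcd(l,n_{i_\circ})}-1)=1+q^{\gcd(l,n_{i_\circ})}+\dots+q^{(n_{i_\circ}/\gcd(l,n_{i_\circ})-1)\gcd(l,n_{i_\circ})}$; since $n_{i_\circ}$ is odd this is a sum of an odd number of odd terms, hence odd. Therefore every $t\in{\boldsymbol{\mu}}_{E_{i_\circ}}$ acts on each such root--space piece as a linear automorphism of odd multiplicative order, i.e.\ as an even permutation of a finite set, so $\sgn_{{\boldsymbol{\mu}}_{E_{i_\circ}}}$ is trivial on $\tilde{B}_0/(\tilde{B}_0\cap\tilde{C}_0)$ and on $\tilde{C}_0/(\tilde{B}_0\cap\tilde{C}_0)$. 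Combined with the first step this gives $\sgn_{{\boldsymbol{\mu}}_{E_{i_\circ}}}(J^1_{\Lambda}\cap M:J^1_{\mathfrak{M}^w}\cap M)=1$.

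The main obstacle is not the sign computation --- which, as in Corollary~\ref{cor-nu-P}, is immediate once one knows the parity of $(q^{n_{i_\circ}}-1)/(q^{\gcd(l,n_{i_\circ})}-1)$ --- but the bookkeeping in the first two steps: matching the period-$2$ and period-$6$ index scalings so that the $G_V$--components genuinely coincide, and verifying that the difference of the two hereditary orders in the $\GL$--factor is supported precisely on the off--torus root spaces $U_{\sigma_{i_\circ},l}$ (so that the torus part $\mathfrak{o}_{E_{i_\circ}}$, where ${\boldsymbol{\mu}}_{E_{i_\circ}}$ acts trivially, really is common to both).
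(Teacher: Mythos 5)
Your argument reaches the right conclusion, but it runs in almost the opposite direction from the paper's, and the point of divergence is worth flagging. The paper's proof locates the (potential) difference between $J^1_{\Lambda}\cap M$ and $J^1_{\mathfrak{M}^w}\cap M$ inside the $G_V$--factor of $M$, namely on the blocks $U_{(i,i_\circ)}$ and $U_{(i_\circ,i)}$ with $i\neq i_\circ$, and then disposes of the sign in one line: the torus ${\boldsymbol{\mu}}_{E_{i_\circ}}$ sits in the $\mathrm{GL}(V_{i_\circ,\pm})$--factor of $M$ (via $g\mapsto({}^\sigma g,I_V,g)$), hence acts trivially on everything inside $\mathrm{End}_F(V)$, so the signature on those blocks is trivial \emph{whatever} the quotient turns out to be. You instead assert that the $G_V$--components coincide exactly ($B_1=C_1=J^1_{\Lambda_x}$) and put all the content into the $\mathrm{GL}_{n_{i_\circ}}$--factor, where you run a Hilbert--90/parity computation on the off--torus root spaces of $E_{i_\circ}^\times$. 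Your parity argument there is sound (an element of odd order acts as an even permutation, and $(q^{n_{i_\circ}}-1)/(q^{\gcd(l,n_{i_\circ})}-1)$ is odd since $n_{i_\circ}$ is odd and $q$ is odd), and it is the same style of counting used in Corollary \ref{cor-nu-P}; it covers a case the paper's proof does not even mention, so it is a useful safeguard if the two hereditary orders do differ on the $\mathrm{GL}$--block.

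The one place where I would not rely on your write-up as it stands is the claim $B_1=C_1$, which rests on a delicate matching of the period-$6$ indexing of $\Lambda_{i_\circ,x}$ at level $3d$ against the period-$2$ indexing of $\mathfrak{M}^w_{i_\circ,x}$ at level $d$ (and the ceiling convention $\Lambda(r)=\Lambda(\lceil r\rceil)$); the paper's proof asserts a nontrivial difference precisely in the part you claim is common. Rather than litigating the bookkeeping, the robust fix is to absorb the paper's observation: on the entire $G_V$--coordinate of $M$, the group ${\boldsymbol{\mu}}_{E_{i_\circ}}$ acts trivially, so $\sgn_{{\boldsymbol{\mu}}_{E_{i_\circ}}}$ of any quotient formed there is trivial and the exact comparison of $B_1$ with $C_1$ is irrelevant. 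With that substitution your proof needs only the $\mathrm{GL}$--factor analysis, which you have.
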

  \proof
  The difference of $J^1_{\Lambda}\cap M$ and $J^1_{\mathfrak{M}^w}\cap M$  is on the root spaces in the blocks ${U}_{(
    i , i_\circ
  )}$ and ${U}_{(
    i_\circ , i
  )}$ with $i\neq i_\circ$. The torus ${\boldsymbol{\mu}}_{E_{i_\circ}}$ acts trivially on these root spaces because it lies outside the reductive group which contains these root spaces.
  \qed

    \begin{lem}
        The product character
        $$\sgn_{{\boldsymbol{\mu}}_{E_{i_\circ}}}(J^1_{\Lambda}\cap U:J^1_{\mathfrak{M}^w}\cap U)
      \sgn_{{\boldsymbol{\mu}}_{E_{i_\circ}}}(H^1_{\Lambda}\cap U^-:H^1_{\mathfrak{M}^w}\cap U^-)$$
      (the product of the second and the fourth factor in (\ref{4-factors})) is trivial.
      \end{lem}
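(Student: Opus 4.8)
The plan is to identify the second and fourth factors of (\ref{4-factors}) with one another, using the affine reflection $s_w^P$ of Proposition~\ref{s_1-and-s_1-varpi-property}, and then to conclude by the trivial observation that a quadratic character multiplied by itself is trivial.

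First I would unwind the four groups by their Iwahori decompositions into $1+\mathfrak{P}$-type pieces supported on the individual blocks $U_{(j,k)}$ of $U$ and $U^-$, so that each of
$\sgn_{{\boldsymbol{\mu}}_{E_{i_\circ}}}(J^1_\Lambda\cap U:J^1_{\mathfrak{M}^w}\cap U)$ and
$\sgn_{{\boldsymbol{\mu}}_{E_{i_\circ}}}(H^1_\Lambda\cap U^-:H^1_{\mathfrak{M}^w}\cap U^-)$
becomes a product, over the root-space blocks, of the signatures of the ${\boldsymbol{\mu}}_{E_{i_\circ}}$-action on the finite vector spaces over the residue field that measure the discrepancy between the $\Lambda$-lattice and the $\mathfrak{M}^w$-lattice on that block (at the thresholds defining $J^1$, resp. $H^1$). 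Note that one cannot expect the individual factors to vanish by a counting argument as in Corollary~\ref{cor-nu-P}: here ${\boldsymbol{\mu}}_{E_{i_\circ}}=\mathbf{k}^\times_{E_{i_\circ}}$ is the full multiplicative group and the relevant module cardinalities turn out to be only \emph{odd} multiples of its order (because each $n_i$ is odd), so the cancellation must occur \emph{between} the two factors, not within one.

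The key step is to conjugate by $s_w^P$. Since $s_w^P\in U_{\mathfrak{M}^w,E/\Eo}\subset Z_{G_W}(\mathcal{I}_{i_\circ,x}(E^\times))$, the map $\mathrm{Ad}(s_w^P)$ commutes with the ${\boldsymbol{\mu}}_{E_{i_\circ}}$-conjugation action; being the nontrivial relative Weyl element it interchanges $U$ and $U^-$, carrying $U_{(-,i)}$ to $U_{(+,i)}$, $U_{(i,+)}$ to $U_{(i,-)}$, and $U_{(-,+)}$ to $U_{(+,-)}$; it normalizes $J^1_{\mathfrak{M}^w}$ and $H^1_{\mathfrak{M}^w}$ (as $s_w^P\in U_{\mathfrak{M}^w,F/\Fo}$); and it sends the conjugate-self-dual $\mathfrak{o}_E$-order attached to $\Lambda$ to another minimal conjugate-self-dual $\mathfrak{o}_E$-order inside $\mathfrak{A}_{\mathfrak{M}^w,E}$, hence to one that is $U_{\mathfrak{M}^w,E}$-conjugate to $\mathfrak{A}_{\Lambda,E}$ by the uniqueness recalled after Definition~\ref{beta-extension-maximal-case}, and $U_{\mathfrak{M}^w,E}$-conjugation leaves all the signature characters unchanged. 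Using $(s_w^P)^2=1$ and the conjugate-self-duality of $\Lambda$ and $\mathfrak{M}^w$, I would then show that $\mathrm{Ad}(s_w^P)$ carries the pair $(J^1_\Lambda\cap U,\ J^1_{\mathfrak{M}^w}\cap U)$ onto the pair $(H^1_\Lambda\cap U^-,\ H^1_{\mathfrak{M}^w}\cap U^-)$ — the passage from $J^1$ to $H^1$ being precisely the half-step shift in level that the reflection in the wall fixed by $\mathfrak{M}^w$ induces on a root group of $U$ as it crosses to $U^-$. Hence the second and fourth factors of (\ref{4-factors}) coincide; being signature characters they are quadratic, so their product equals the square of a quadratic character, which is trivial.

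The main obstacle is exactly this last piece of bookkeeping: verifying that $\mathrm{Ad}(s_w^P)$ trades the threshold defining $J^1$ on $U$ for the threshold defining $H^1$ on $U^-$. This means placing the period-$6$ sequence $\Lambda_{i_\circ,x}$ and the period-$2$ sequence $\mathfrak{M}^w_{i_\circ,x}$ in a common apartment, locating the wall fixed by $s_w^P$, and tracking the induced level reflection together with the duality $\Lambda(k)^*=\Lambda(1-k)$. As a safeguard, should the clean statement ``second factor $=$ fourth factor'' prove delicate, one can argue block by block instead: $\mathrm{Ad}(s_w^P)$ identifies the discrepancy space on $U_{(-,i)}$ with that on $U_{(+,i)}$ (and $U_{(i,+)}$ with $U_{(i,-)}$, and $U_{(-,+)}$ with $U_{(+,-)}$) as ${\boldsymbol{\mu}}_{E_{i_\circ}}$-modules up to inverting the action, and a permutation and its inverse have the same signature; so the two factors agree one block at a time, and the product again cancels.
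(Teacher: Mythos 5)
Your setup (decomposing each factor into block-wise signatures of ${\boldsymbol{\mu}}_{E_{i_\circ}}$ acting on finite discrepancy modules) is fine, and your observation that the individual factors need not vanish by a counting argument — so the cancellation must occur \emph{between} the two factors — is correct and is indeed the point of the lemma. But the central step of your argument, that $\mathrm{Ad}(s_w^P)$ carries the pair $(J^1_\Lambda\cap U,\ J^1_{\mathfrak{M}^w}\cap U)$ onto $(H^1_\Lambda\cap U^-,\ H^1_{\mathfrak{M}^w}\cap U^-)$, is false rather than merely delicate. Since $s_w^P\in U_{\mathfrak{M}^w,E/\Eo}$ normalizes $J^1_{\mathfrak{M}^w}$ and $H^1_{\mathfrak{M}^w}$ separately and interchanges $U$ with $U^-$, conjugation gives $\mathrm{Ad}(s_w^P)(J^1_{\mathfrak{M}^w}\cap U)=J^1_{\mathfrak{M}^w}\cap U^-$, and this is in general \emph{not} equal to $H^1_{\mathfrak{M}^w}\cap U^-$: for $w=z$ and $d=2k+1$ the tables in the appendix give $\mathfrak{P}^{k+1}$ versus $\mathfrak{P}^{k+2}$ in the $(I_\mathrm{o},-)$ and $(+,I_\mathrm{o})$ blocks. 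No inner automorphism can convert the $J^1$-threshold into the $H^1$-threshold for a group it normalizes; and the other natural symmetry, the adjoint involution attached to $h_W$, also preserves the filtration level while swapping $U$ and $U^-$, so it too relates the $J$-discrepancy on $U$ to the $J$-discrepancy on $U^-$, never to the $H$-discrepancy. The block-by-block "safeguard" at the end of your proposal inherits the same defect, because it still invokes $\mathrm{Ad}(s_w^P)$ to identify a $J$-type discrepancy space with an $H$-type one.

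What actually makes the statement true — and how the paper proves it — is the explicit location of the jumps of $\Lambda_{i_\circ,x}$ versus $\mathfrak{M}^w_{i_\circ,x}$ recorded in the appendix on levels. For $d$ odd and $w=y$ both factors are trivial outright, because $J^1_{\Lambda}\cap U=J^1_{\mathfrak{M}^y}\cap U$ and $H^1_{\Lambda}\cap U^-=H^1_{\mathfrak{M}^y}\cap U^-$; for $w=z$ the two discrepancy modules turn out to be supported on transposed blocks at levels that match precisely after the $J^1\to H^1$ shift, so they are isomorphic as ${\boldsymbol{\mu}}_{E_{i_\circ}}$-sets up to inverting the action, hence carry equal signature characters, and the product of two equal quadratic characters is trivial ($d$ even is analogous). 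Your intended conclusion "the two factors coincide, so their product is a square and hence trivial" is the right one, but establishing the coincidence requires the level bookkeeping itself — the appendix tables, or an equivalent computation using the normalization $\Lambda(k)^*=\Lambda(1-k)$ and the positions of the jumps of the period-$6$ and period-$2$ sequences — and cannot be delegated to conjugation by $s_w^P$.
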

\proof
This can be check from the explicit descriptions of the compact groups in Sub-section \ref{section appendix levels}. For example, when $d$ is odd, we can check that
$$J^1_{\Lambda}\cap U=J^1_{\mathfrak{M}^y}\cap U\text{ and }H^1_{\Lambda}\cap U^-=H^1_{\mathfrak{M}^y}\cap U^-,$$
and
$$J^1_{\mathfrak{M}^z}\cap U/J^1_{\Lambda}\cap U\cong H^1_{\mathfrak{M}^z}\cap U^-/H^1_{\Lambda}\cap U^-.$$
When $d$ is even, the argument is similar.
\qed

We write
$$\mathfrak{V}_{{\mathfrak{M}^w},U^-}:=J^1_{\mathfrak{M}^w}\cap U^-/H^1_{\mathfrak{M}^w}\cap U^-.$$
      Therefore, the amending character is
            \begin{equation*}
       \tilde{\nu}_{x,\tilde{\xi}}^{w,P}= \boxtimes_{i\in I}\tilde{\nu}^{w,P}_{i,x,\tilde{\xi}}\text{, where } \tilde{\nu}^{w,P}_{i,x,\tilde{\xi}}=\sgn_{{\boldsymbol{\mu}}_{E_i}}(\mathfrak{V}_{{\mathfrak{M}^w},U^-}).
      \end{equation*}

\subsection{Final computation}

We now know that $\tilde{\nu}^{w,P}_{i_\circ,x,\tilde{\xi}}$, as a tamely ramified character of $\mathfrak{o}_{E_{i_\circ}}^\times$, is at most quadratic. Since $\varpi\in F^\times\subseteq {E_{i_\circ}}^\times$ clearly acts on $\mathfrak{V}_{{\mathfrak{M}^w},U^-}$ trivially, we can extend our character to the whole ${E_{i_\circ}}^\times$ by requiring
$$\tilde{\nu}^{w,P}_{i_\circ,x,\tilde{\xi}}(\varpi)=1,$$
 so that it is a $+$-skew character.

 In this subsection, we determine whether the restriction $\tilde{\nu}^{w,P}_{i_\circ,x,\tilde{\xi}}|_{{\boldsymbol{\mu}}_{E_{i_\circ}}}$ is trivial or quadratic.

\begin{prop}
\begin{enumerate}[(i)]
  \item If $i_\circ\in I_\mathrm{o}$, then
    \begin{equation*}
     \tilde{\nu}^{y,P}_{i_\circ,x,\tilde{\xi}}|_{{\boldsymbol{\mu}}_{E_{i_\circ}}}=
\sgn_{{\boldsymbol{\mu}}_{E_{i_\circ}}}\left((U_{(
    I_\mathrm{e} , -
  )}\oplus U_{(
    + , I_\mathrm{e}
  )})^\sigma\right)^d
  \sgn_{{\boldsymbol{\mu}}_{E_{i_\circ}}}\left((U_{(
    I_\mathrm{o} , -
  )}\oplus U_{(
    + , I_\mathrm{o}
  )})^\sigma\oplus U_{(
    + , -
  )}^\sigma\right)^{d+1}
  \end{equation*}
  and
  \begin{equation*}
     \tilde{\nu}^{z,P}_{i_\circ,x,\tilde{\xi}}|_{{\boldsymbol{\mu}}_{E_{i_\circ}}}=
  \sgn_{{\boldsymbol{\mu}}_{E_{i_\circ}}}\left((U_{(
    I_\mathrm{o} , -
  )}\oplus U_{(
    + , I_\mathrm{o}
  )})^\sigma\right)^d
  \sgn_{{\boldsymbol{\mu}}_{E_{i_\circ}}}\left((U_{(
    I_\mathrm{e} , -
  )}\oplus U_{(
    + , I_\mathrm{e}
  )})^\sigma\oplus U_{(
    + , -
  )}^\sigma\right)^{d+1}
         \end{equation*}
    If $i_\circ\in I_\mathrm{e}$, then switch $\tilde{\nu}^{w}_{i_\circ,x,\tilde{\xi}}$ and $\tilde{\nu}^{z}_{i_\circ,x,\tilde{\xi}}$ (or equivalently, switch $I_\mathrm{o}$ and $I_\mathrm{e}$) in the two formulae above.
    \item $\tilde{\nu}^{y,P}_{i_\circ,x,\tilde{\xi}}$ and $\tilde{\nu}^{z,P}_{i_\circ,x,\tilde{\xi}}$ are independent of the two parabolic subgroups $P$ and $P^-$ that contain the Levi subgroup $M\cong \tilde{G}_{V_i}\times G_V$.
\end{enumerate}
\end{prop}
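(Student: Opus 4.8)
The plan is to compute $\tilde{\nu}^{w,P}_{i_\circ,x,\tilde{\xi}}=\sgn_{{\boldsymbol{\mu}}_{E_{i_\circ}}}(\mathfrak{V}_{\mathfrak{M}^w,U^-})$ block by block, where $\mathfrak{V}_{\mathfrak{M}^w,U^-}=(J^1_{\mathfrak{M}^w}\cap U^-)/(H^1_{\mathfrak{M}^w}\cap U^-)$. First I would use the $\sigma$-stable root-space decomposition $U^-=U_{(+,-)}^\sigma\oplus\bigoplus_{i\in I}(U_{(i,-)}\oplus U_{(+,i)})^\sigma$, which induces a ${\boldsymbol{\mu}}_{E_{i_\circ}}$-equivariant decomposition of $\mathfrak{V}_{\mathfrak{M}^w,U^-}$; since the signature of an action on a direct sum is the product of the signatures on the summands, it is enough to treat each block separately. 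For $i\neq i_\circ$ the block is $(U_{(i,-)}\oplus U_{(+,i)})^\sigma$, on which ${\boldsymbol{\mu}}_{E_{i_\circ}}$ acts through the $V_{i_\circ}$-slot; the remaining blocks $(U_{(i_\circ,-)}\oplus U_{(+,i_\circ)})^\sigma$ and $U_{(+,-)}^\sigma$ lie inside $G_{V_{\pm i_\circ}}$ and are handled through the decompositions $\Lambda_{i_\circ,x}=\Lambda_{\pm i_\circ}\oplus_{i\neq i_\circ}\Lambda_i$ and $\mathfrak{M}^w_{i_\circ,x}=\mathfrak{M}^w_{\pm i_\circ}\oplus_{i\neq i_\circ}\Lambda_i$ from Section \ref{section lattice higher ranks}.

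The heart of the argument is to determine, for each block $B$, the structure of $(J^1_{\mathfrak{M}^w}\cap B)/(H^1_{\mathfrak{M}^w}\cap B)$ as a ${\boldsymbol{\mu}}_{E_{i_\circ}}$-module. Using the explicit descriptions of $\mathfrak{M}^y_{i_\circ,x}$ and $\mathfrak{M}^z_{i_\circ,x}$ and of the subgroups $H^1$, $J^1$, together with the level tables of Subsection \ref{section appendix levels}, I would show that this quotient is isomorphic as a ${\boldsymbol{\mu}}_{E_{i_\circ}}$-module to a number of copies of the residue space $\bar B$, that number being $d$ for some blocks and $d+1$ for others; which of the two occurs is governed by whether $i_\circ\in I_\mathrm{o}$ or $i_\circ\in I_\mathrm{e}$ and by whether $w=y$ or $w=z$, because that is exactly what decides on which side of the index shift defining $\mathfrak{M}^w$ the block falls — recall $\mathfrak{M}^y$ comes from $0\mapsto\Lambda_{i_\circ,x}(-2)$, $1\mapsto\Lambda_{i_\circ,x}(3)$ and $\mathfrak{M}^z$ from $0\mapsto\Lambda_{i_\circ,x}(0)$, $1\mapsto\Lambda_{i_\circ,x}(1)$, while the period-$2$ normalization (\ref{even-period-assumption}) forces $H^1$ to sit one filtration step above $J^1$. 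Since $\sgn_{{\boldsymbol{\mu}}_{E_{i_\circ}}}$ of the action on $m$ copies of $\bar B$ is the $m$-th power of $\sgn_{{\boldsymbol{\mu}}_{E_{i_\circ}}}(\bar B)$, assembling the blocks with the correct multiplicities, regrouping $\bigoplus_{i\in I_\mathrm{o}\setminus\{i_\circ\}}(U_{(i,-)}\oplus U_{(+,i)})^\sigma$ into $(U_{(I_\mathrm{o},-)}\oplus U_{(+,I_\mathrm{o})})^\sigma$ (and similarly for $I_\mathrm{e}$), and attaching the $U_{(+,-)}^\sigma$ block to the $I_\mathrm{o}$ group, gives the two stated formulas. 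The case $d$ odd and the case $d$ even have to be run separately, as in the lemma preceding this subsection, but the two runs are parallel.

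For part (ii), I would deduce it directly from the formulas of part (i): their right-hand sides are signatures of ${\boldsymbol{\mu}}_{E_{i_\circ}}$-actions on the fixed root spaces $(U_{(j,-)}\oplus U_{(+,j)})^\sigma$ and $U_{(+,-)}^\sigma$, none of which depends on a choice of parabolic; and when the computation of (i) is redone for the opposite parabolic $P^-$ — which replaces $U^-$ by $U$ — the blocks $(U_{(-,i)}\oplus U_{(i,+)})^\sigma$ that appear are carried onto $(U_{(i,-)}\oplus U_{(+,i)})^\sigma$ by $\sigma$ via an anti-equivariant isomorphism, and since an element and its inverse induce permutations of the same sign, the signatures agree. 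Alternatively, one checks that conjugation by the element $w_0\in G_W$ exchanging $V_{i_\circ,-}$ with $V_{i_\circ,+}$ and fixing $V$ carries $P$ to $P^-$ while, after the bases $\mathcal{B}_-,\mathcal{B}_+$ are chosen $w_0$-compatibly, stabilizing $\mathfrak{M}^y_{i_\circ,x}$ and $\mathfrak{M}^z_{i_\circ,x}$ and acting on the $\tilde{T}_{i_\circ}=E_{i_\circ}^\times$ factor of $\mathcal{I}_{i_\circ,x}(E^\times)$ by $\sigma_{i_\circ}\colon t\mapsto{}^{c_{i_\circ}}t^{-1}$; since $\tilde{\nu}^{w,P}_{i_\circ,x,\tilde{\xi}}$ was normalized to be $+$-skew, one gets ${}^{\sigma_{i_\circ}}\tilde{\nu}^{w,P}_{i_\circ,x,\tilde{\xi}}=\tilde{\nu}^{w,P}_{i_\circ,x,\tilde{\xi}}$.

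The main obstacle is the level bookkeeping in the middle step: reading off correctly, for every root-space block and for each of $\mathfrak{M}^y$ and $\mathfrak{M}^z$, the exact number of copies of the residue space in $(J^1_{\mathfrak{M}^w}\cap B)/(H^1_{\mathfrak{M}^w}\cap B)$ under the period-$2$ convention and the index shifts, and doing so uniformly in the parity of $d$. Everything else — multiplicativity of signatures over direct sums, the regrouping into $I_\mathrm{o}$ and $I_\mathrm{e}$ contributions, and the parabolic-independence — is routine once that count is in hand.
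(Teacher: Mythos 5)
Your proposal is correct and is essentially the paper's own argument: the paper's proof consists of the single sentence that the formulas "are observed from the descriptions of the compact subgroups" in the appendix, i.e., precisely the block-by-block reading of the level tables for $H^1_{\mathfrak{M}^w}\cap U^-$ and $J^1_{\mathfrak{M}^w}\cap U^-$ combined with multiplicativity of the signature over the $\sigma$-stable root-space decomposition, which is what you carry out (your parity bookkeeping matches the tables, e.g.\ for $d=2k+1$ and $w=y$ only the $I_\mathrm{e}$-blocks contribute a nontrivial layer). Your treatment of (ii), including the observation that $\sigma$ carries the $U$-blocks to the $U^-$-blocks with equal signature, supplies the detail the paper leaves implicit.
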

\proof
These are observed from the descriptions of the compact subgroups, which are given in the next section.
\qed

We drop the notation $P$ from $\tilde{\nu}^{w,P}_{i_\circ,x,\tilde{\xi}}$ and just write $\tilde{\nu}^{w}_{i_\circ,x,\tilde{\xi}}$, for $w=y$ or $z$.

\begin{cor}\label{character-quad-or-not}
Suppose $i_\circ\in I_\mathrm{o}$.
\begin{enumerate}[(i)]
  \item $\tilde{\nu}^{y}_{i_\circ,x,\tilde{\xi}}$ is quadratic if and only if $d$ is even and $\#I$ (or equivalently $\#I_\mathrm{o}$) is even.
  \item $\tilde{\nu}^{z}_{i_\circ,x,\tilde{\xi}}$ is quadratic if and only if $d$ is even, or $d$ is odd and $\#I$ is odd.
\end{enumerate}
If $i_\circ\in I_\mathrm{e}$, then switch $\tilde{\nu}^{w}_{i_\circ,x,\tilde{\xi}}$ and $\tilde{\nu}^{z}_{i_\circ,x,\tilde{\xi}}$ in the two statements above.
\end{cor}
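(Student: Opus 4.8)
The plan is to deduce the Corollary from the preceding Proposition by evaluating the three ``atomic'' signature characters of ${\boldsymbol{\mu}}_{E_{i_\circ}}$ occurring in its formulas, and then bookkeeping the exponents $d$ and $d+1$. The tool is the linear refinement of the orbit count already used in Corollary~\ref{cor-nu-P}: if a finite cyclic group of order prime to $p$ acts $\mathbf{k}$-linearly on a finite-dimensional vector space $X$ over a finite field $\mathbf{k}$ of odd order, then the permutation-signature character $\sgn(X)$ equals $t\mapsto\chi_{\mathbf{k}}(\det_{\mathbf{k}}(t\mid X))$, where $\chi_{\mathbf{k}}$ is the quadratic character of $\mathbf{k}^\times$ (reduce to $\dim_{\mathbf{k}}X=1$, i.e.\ Zolotarev's lemma). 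Thus each occurrence of $\sgn_{{\boldsymbol{\mu}}_{E_{i_\circ}}}(\ast)$ in the Proposition is trivial or equal to the unique quadratic character $\varepsilon$ of ${\boldsymbol{\mu}}_{E_{i_\circ}}\cong\mathbf{k}_{E_{i_\circ}}^\times$, and everything reduces to the class modulo squares of the determinant of the ${\boldsymbol{\mu}}_{E_{i_\circ}}$-action on each root block (recall that, via $\mathcal{I}^M_+$, ${\boldsymbol{\mu}}_{E_{i_\circ}}$ acts on $V_{i_\circ,-}$ and $V_{i_\circ,+}$, each identified with $\mathbf{k}_{E_{i_\circ}}$, and trivially on every $V_i$). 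For the off-diagonal blocks this is immediate: $(U_{(I_\bullet,-)}\oplus U_{(+,I_\bullet)})^\sigma$ maps ${\boldsymbol{\mu}}_{E_{i_\circ}}$-equivariantly and isomorphically onto $U_{(I_\bullet,-)}=\bigoplus_{i\in I_\bullet}\mathrm{Hom}_{\mathbf{k}_F}(\mathbf{k}_{E_{i_\circ}},\mathbf{k}_{E_i})$, on which $t$ acts only through precomposition with a power of multiplication-by-$t$ on $\mathbf{k}_{E_{i_\circ}}$, so $\det_{\mathbf{k}_F}(t)=N_{\mathbf{k}_{E_{i_\circ}}/\mathbf{k}_F}(t)^{\qo^{n_{i_\circ}}\sum_{i\in I_\bullet}n_i}$; since $\qo^{n_{i_\circ}}\sum_{i\in I_\bullet}n_i\equiv\#I_\bullet\pmod2$ (each $n_i$ odd) and $\chi_{\mathbf{k}_F}\circ N_{\mathbf{k}_{E_{i_\circ}}/\mathbf{k}_F}=\varepsilon$ (the norm is surjective), we get $\sgn_{{\boldsymbol{\mu}}_{E_{i_\circ}}}\!\bigl((U_{(I_\bullet,-)}\oplus U_{(+,I_\bullet)})^\sigma\bigr)=\varepsilon^{\#I_\bullet}$, in agreement with the free-action orbit count.

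The step I expect to be the main obstacle is the diagonal block $U_{(+,-)}^\sigma$, because here ${\boldsymbol{\mu}}_{E_{i_\circ}}$ acts on \emph{both} factors and, more seriously, the relevant space $U_{(+,-)}^\sigma=\mathrm{Hom}_{\mathbf{k}_F}(\mathbf{k}_{E_{i_\circ}},\mathbf{k}_{E_{i_\circ}})^\sigma$ is only a $\mathbf{k}_{\Fo}$-vector space, and one \emph{cannot} read the value of $\chi_{\mathbf{k}_{\Fo}}$ off the corresponding $\mathbf{k}_F$-computation since every element of $\mathbf{k}_{\Fo}^\times$ is a square in $\mathbf{k}_F^\times$. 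The plan is: decompose $U_{(+,-)}=\mathrm{Hom}_{\mathbf{k}_F}(\mathbf{k}_{E_{i_\circ}},\mathbf{k}_{E_{i_\circ}})$ into the $E_{i_\circ}/F$-semilinear lines $\mathbf{k}_{E_{i_\circ}}\phi_j$ with $\phi_j(v)=v^{q^j}$, on which $t$ scales by $t^{1+\qo^{n_{i_\circ}+2j}}$, so that $\det_{\mathbf{k}_F}(t\mid U_{(+,-)})=N_{\mathbf{k}_{E_{i_\circ}}/\mathbf{k}_F}(t)^{e}$ with $e=\sum_{j=0}^{n_{i_\circ}-1}(1+\qo^{n_{i_\circ}+2j})$. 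Since $\sigma$ is a $\mathbf{k}_F/\mathbf{k}_{\Fo}$-semilinear involution of $U_{(+,-)}$ commuting with the $\mathbf{k}_F$-linear action of ${\boldsymbol{\mu}}_{E_{i_\circ}}\subset M\subset G_W$, we have $U_{(+,-)}=U_{(+,-)}^\sigma\otimes_{\mathbf{k}_{\Fo}}\mathbf{k}_F$ and $\det_{\mathbf{k}_{\Fo}}(t\mid U_{(+,-)}^\sigma)=\det_{\mathbf{k}_F}(t\mid U_{(+,-)})=N(t)^{e}$. Finally each $n_{i_\circ}+2j$ is odd, hence $(\qo+1)\mid(1+\qo^{n_{i_\circ}+2j})$, so $(\qo+1)\mid e$, $N(t)^{e}\in\mathbf{k}_{\Fo}^\times$, and
$$\sgn_{{\boldsymbol{\mu}}_{E_{i_\circ}}}\!\bigl(U_{(+,-)}^\sigma\bigr)=\chi_{\mathbf{k}_{\Fo}}\!\bigl(N(t)^{e}\bigr)=\bigl(\chi_{\mathbf{k}_F}(N(t))\bigr)^{e/(\qo+1)}=\varepsilon^{\,e/(\qo+1)}.$$
It remains to observe that $e/(\qo+1)=\sum_{j=0}^{n_{i_\circ}-1}\tfrac{1+\qo^{n_{i_\circ}+2j}}{\qo+1}$ is a sum of $n_{i_\circ}$ \emph{odd} integers, each $\tfrac{1+\qo^{k}}{\qo+1}=\sum_{l=0}^{k-1}(-\qo)^{l}$ being a sum of an odd number of odd terms; hence $e/(\qo+1)$ is odd and $\sgn_{{\boldsymbol{\mu}}_{E_{i_\circ}}}(U_{(+,-)}^\sigma)=\varepsilon$.

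With the three atomic characters in hand, the remaining bookkeeping is routine. Substituting $\varepsilon^{\#I_\mathrm{e}}$, $\varepsilon^{\#I_\mathrm{o}}$ and $\varepsilon$ into the Proposition's formulas (for $i_\circ\in I_\mathrm{o}$) gives $\tilde{\nu}^{y,P}_{i_\circ,x,\tilde{\xi}}|_{{\boldsymbol{\mu}}_{E_{i_\circ}}}=\varepsilon^{\,d\#I_\mathrm{e}+(d+1)(\#I_\mathrm{o}+1)}$ and $\tilde{\nu}^{z,P}_{i_\circ,x,\tilde{\xi}}|_{{\boldsymbol{\mu}}_{E_{i_\circ}}}=\varepsilon^{\,d\#I_\mathrm{o}+(d+1)(\#I_\mathrm{e}+1)}$. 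By Corollary~\ref{partition-as-embeddings} $\#I_\mathrm{e}$ is even, so $\#I_\mathrm{o}\equiv\#I\pmod2$ and the terms involving $\#I_\mathrm{e}$ drop out, leaving $\tilde{\nu}^{y}_{i_\circ,x,\tilde{\xi}}|_{{\boldsymbol{\mu}}_{E_{i_\circ}}}=\varepsilon^{(d+1)(\#I+1)}$ and $\tilde{\nu}^{z}_{i_\circ,x,\tilde{\xi}}|_{{\boldsymbol{\mu}}_{E_{i_\circ}}}=\varepsilon^{d\#I+d+1}$. The first exponent is odd exactly when $d$ is even and $\#I$ is even; the second is odd exactly when $d(\#I+1)$ is even, i.e.\ when $d$ is even or $\#I$ is odd. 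This proves the assertions for $i_\circ\in I_\mathrm{o}$; for $i_\circ\in I_\mathrm{e}$ one interchanges $I_\mathrm{o}$ and $I_\mathrm{e}$ throughout, exactly as in the preceding Proposition, which interchanges $\tilde{\nu}^{y}$ and $\tilde{\nu}^{z}$.
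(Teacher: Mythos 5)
Your proof is correct, and its overall strategy is the one the paper intends: evaluate the atomic signature characters appearing in the preceding Proposition and then reduce the exponents mod $2$ using $\#I_\mathrm{e}$ even and $n_i$ odd; your final bookkeeping $\tilde{\nu}^{y}=\varepsilon^{(d+1)(\#I+1)}$, $\tilde{\nu}^{z}=\varepsilon^{d\#I+d+1}$ reproduces both assertions exactly. Where you genuinely diverge is in \emph{how} the atomic signatures are computed. The paper's proof only treats the blocks $(U_{(+,i)}\oplus U_{(i,-)})^\sigma\cong \mathbf{k}_{E_{i_\circ}}\otimes_{\mathbf{k}_F}\mathbf{k}_{E_i}$, where the ${\boldsymbol{\mu}}_{E_{i_\circ}}$-action is free on nonzero vectors, and counts orbits as in Corollary \ref{cor-nu-P} (the orbit count $1+q^{n_{i_\circ}}+\cdots+q^{n_{i_\circ}(n_i-1)}\equiv n_i\equiv 1\bmod 2$ gives the quadratic character); it says nothing about $U_{(+,-)}^\sigma$, even though the corollary's conclusion forces that block to contribute $\varepsilon$ as well. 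You correctly identify this as the delicate point: on $U_{(+,-)}^\sigma$ the torus acts through $t\cdot{}^{c}t$ on both factors, the action is \emph{not} free (norm-one elements have fixed vectors), and the underlying space is only a $\mathbf{k}_{\Fo}$-form, so the naive orbit count does not transfer. Your replacement — the generalized Zolotarev identity $\sgn(X)=\chi_{\mathbf{k}}\circ\det_{\mathbf{k}}$, Galois descent of the determinant to the $\mathbf{k}_{\Fo}$-form (legitimate because $\mathcal{I}^M_+(t)$ is $\sigma$-fixed), and the parity analysis of $e/(\qo+1)=\sum_j\sum_{l}(-\qo)^l$ — is sound and fills a real gap in the paper's one-line argument; it also recovers the free-block answers as a consistency check. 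In short: same route globally, but your determinant method is the more robust tool at the one block where the paper's sketch is silent.
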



\proof
Note that ${\boldsymbol{\mu}}_{E_{i_\circ}}\cong \mathbf{k}_{E_{i_\circ}}^\times$ acts on
$$\left({U}_{(
    +
    ,
    i
  )}\oplus{U}_{(
    i
    ,
    -
  )}\right)^\sigma\cong \left({U}_{(
    -
    ,
    i
  )}\oplus{U}_{(
    i
    ,
    +
  )}\right)^\sigma
  \cong \mathbf{k}_{E_{i_\circ}}\otimes_{\mathbf{k}_{F}}\mathbf{k}_{E_i}
  $$
by multiplication on the $ \mathbf{k}_{E_{i_\circ}}$-factor. It is then enough to show that the signature of this action is quadratic. Again the arguments are similar to the proof of Corollary \ref{cor-nu-P}.
\qed





We provide a few properties of amending characters under the equivalence relation by the Weyl group action (see Proposition \ref{equiv-relation-U}).

\begin{cor} For all $\gamma\in N_{G_V}(T_x)$, we have
$\tilde{\nu}^y_{x,\tilde{\xi}}=\tilde{\nu}^y_{x,{}^\gamma\tilde{\xi}}={}^{\gamma}\tilde{\nu}^y_{x,\tilde{\xi}}$.
\end{cor}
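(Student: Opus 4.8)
The plan is to reduce everything to the explicit formula established just above: $\tilde\nu^y_{x,\tilde\xi}=\boxtimes_{i\in I}\tilde\nu^y_{i,x,\tilde\xi}$, where each $\tilde\nu^y_{i,x,\tilde\xi}$ is the $+$-skew character of $E_i^\times$, trivial at $\varpi$, whose restriction to ${\boldsymbol{\mu}}_{E_i}$ is $\sgn_{{\boldsymbol{\mu}}_{E_i}}(\mathfrak{V}_{\mathfrak{M}^y,U^-})$ for the ${\boldsymbol{\mu}}_{E_i}$-action via $\mathcal{I}_x$. The first thing I would record is that, by the definitions (\ref{compact-subgp-GW-M}), the quotient $\mathfrak{V}_{\mathfrak{M}^y,U^-}=J^1_{\mathfrak{M}^y}\cap U^-/H^1_{\mathfrak{M}^y}\cap U^-$ together with the ${\boldsymbol{\mu}}_{E_i}$-action on it depend on the character $\tilde\xi$ only through its level $d$; otherwise they are functions of $i$, $i_\circ$ and $x$ alone. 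Since $\gamma\in N_{G_V}(T_x)\subseteq G_V$ acts on $\tilde T_x$ by conjugation and therefore preserves both the Moy--Prasad filtration and the skewness condition, ${}^\gamma\tilde\xi$ is again a skew character of $\tilde T_x$ with all components of level $d$; hence $\tilde\nu^y_{x,{}^\gamma\tilde\xi}$ and $\tilde\nu^y_{x,\tilde\xi}$ are manufactured from the same data and agree. This disposes of the first equality.

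For the second equality I would first unwind the $\gamma$-action on $\tilde T_x\cong\prod_{i\in I}E_i^\times$. As $\gamma$ is $F$-linear and normalizes $T_x$, it carries each $E_i$-line $V_i$ to an $E_{w(i)}$-line $V_{w(i)}$ compatibly with the torus action, for a permutation $w$ of $I$, and block by block it restricts to an $F$-algebra isomorphism $\gamma_i\colon E_i\xrightarrow{\sim}E_{w(i)}$ (so $n_i=n_{w(i)}$, $\gamma_i(\varpi)=\varpi$, and $\gamma_i$ induces isomorphisms ${\boldsymbol{\mu}}_{E_i}\xrightarrow{\sim}{\boldsymbol{\mu}}_{E_{w(i)}}$ and $U^1_{E_i}\xrightarrow{\sim}U^1_{E_{w(i)}}$). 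A short computation then shows $({}^\gamma\tilde\nu^y_{x,\tilde\xi})_j=\tilde\nu^y_{w^{-1}(j),x,\tilde\xi}\circ\gamma_{w^{-1}(j)}^{-1}$, so the claim becomes $\tilde\nu^y_{i,x,\tilde\xi}=\tilde\nu^y_{w(i),x,\tilde\xi}\circ\gamma_i$ for every $i$. Both sides are $+$-skew characters of $E_i^\times$ trivial on $\varpi$ and on $U^1_{E_i}$, hence determined by their restrictions to the cyclic group ${\boldsymbol{\mu}}_{E_i}$; and since each such restriction is a signature character of a ${\boldsymbol{\mu}}_{E_i}$-action it is at most quadratic, so is either trivial or the unique quadratic character of ${\boldsymbol{\mu}}_{E_i}$. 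Because a finite cyclic group has exactly one character of order two and that character is fixed by every automorphism of the group (and by inversion), it will suffice to check that $\tilde\nu^y_{i,x,\tilde\xi}$ is quadratic precisely when $\tilde\nu^y_{w(i),x,\tilde\xi}$ is.

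By Corollary~\ref{character-quad-or-not} this last dichotomy depends only on the parity of $d$, the parity of $\#I$, and whether the block index lies in $I_\mathrm{o}$ or in $I_\mathrm{e}$; so the matter reduces to showing that $w$ preserves the partition $I=I_\mathrm{o}\sqcup I_\mathrm{e}$. This is the one step I expect to require genuine attention. I would argue it as follows: in the notation of Corollary~\ref{partition-as-embeddings} the Hermitian form carried by $V_i$ is $h_{E_i}(w_i,w_i')=\mathrm{tr}_{E_i/F}(x_i\,{}^{c_i}w_i\,w_i')$ with $x_i\in\{1,\varpi\}$, and an $F$-isometry $V_i\xrightarrow{\sim}V_j$ (which the existence of $\gamma$ forces whenever $w(i)=j$) can exist only when $\delta_{E_i/\Eo_i}(x_i)=\delta_{E_j/\Eo_j}(x_j)$ --- equivalently, this is the $G_V$-conjugacy criterion recalled after Corollary~\ref{partition-as-embeddings}. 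Hence $w(I_\mathrm{o})=I_\mathrm{o}$ and $w(I_\mathrm{e})=I_\mathrm{e}$, which completes the verification of ${}^\gamma\tilde\nu^y_{x,\tilde\xi}=\tilde\nu^y_{x,\tilde\xi}$, and together with the first equality proves the corollary. Apart from this partition-preservation point, the proof is bookkeeping organised around the uniqueness of the quadratic character of a cyclic group.
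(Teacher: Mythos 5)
Your proposal is correct and follows essentially the same route as the paper: the paper's proof simply records that $N_{G_V}(T_x)/T_x\cong\bigl(\prod_{i\in I}\Gamma_{E_i/F}\bigr)\rtimes\mathfrak{S}_x$ with $\mathfrak{S}_x$ preserving the partition $I=I_{\mathrm{o}}\sqcup I_{\mathrm{e}}$, and concludes because the amending character depends only on that partition (together with the level), which is exactly the content of your two reductions. You have merely made explicit the steps the paper leaves implicit — the independence of $\tilde{\nu}^y_{i,x,\tilde{\xi}}$ from $\tilde{\xi}$ beyond its level, the invariance of the unique quadratic character of a cyclic group under automorphisms, and the isometry argument forcing $w(I_{\mathrm{o}})=I_{\mathrm{o}}$ — all of which are sound.
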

\proof
Indeed, $N_{G_V}(T_x)/T_x$ is isomorphic to $\left(\prod_{i\in I}\Gamma_{E_i/F}\right)\rtimes \mathfrak{S}_x$, where $\mathfrak{S}_x$ is a subgroup of the permutation group $\mathfrak{S}_I$ of the index set $I$ such that $\sigma\in \mathfrak{S}_x$ if and only if $n_{\sigma(i)}=n_i$ and $\sigma I_\mathrm{o}=I_\mathrm{o}$ (and hence also $\sigma I_\mathrm{e}=I_\mathrm{e}$). In other words, the Weyl group action does not change the partition $I=I_\mathrm{o}\sqcup I_\mathrm{e}$ corresponding to $x$, and the corollary follows.
\qed

Finally, we provide the values of the character $\tilde{\chi}_{y,i_\circ}^{z}$ in Proposition \ref{transferring-character-maximal}.

\begin{cor}
 For all $i_\circ\in I$, the character $\tilde{\chi}_{y,i_\circ}^{z}$ is quadratic if and only if $\#I$ is odd. In particular, either all $\tilde{\chi}_{y,i_\circ}^{z}$, $i_\circ\in I$, are trivial, or all are quadratic.
\end{cor}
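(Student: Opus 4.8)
The plan is to deduce the statement directly from Proposition \ref{transition-with-amending} and Corollary \ref{character-quad-or-not}, reducing it to a short parity bookkeeping. First I would use Proposition \ref{transition-with-amending}, together with the decomposition $\chi_y^z=\tilde\chi_{y,i_\circ}^z\boxtimes\chi_{y,x}^z$ and the triviality of $\chi_{y,x}^z$ recorded just before the corollary, to identify $\tilde\chi_{y,i_\circ}^z$ as the $\tilde T_{i_\circ}$-component of $\mu^y_{\tilde\xi}\mu^z_{\tilde\xi}$, that is,
\[
\tilde\chi_{y,i_\circ}^z=\tilde\nu^y_{i_\circ,x,\tilde\xi}\cdot\tilde\nu^z_{i_\circ,x,\tilde\xi}
\]
as characters of ${\boldsymbol{\mu}}_{E_{i_\circ}}$.

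Next I would note that ${\boldsymbol{\mu}}_{E_{i_\circ}}\cong\mathbf{k}_{E_{i_\circ}}^\times$ is cyclic, so that ``quadratic'' for a character of it means ``of order exactly $2$'', and therefore a product of two at-most-quadratic characters of ${\boldsymbol{\mu}}_{E_{i_\circ}}$ is quadratic if and only if precisely one of the two factors is quadratic. At this point the problem is purely combinatorial: I would read off from Corollary \ref{character-quad-or-not} the truth values of the two conditions ``$\tilde\nu^y_{i_\circ,x,\tilde\xi}$ is quadratic'' and ``$\tilde\nu^z_{i_\circ,x,\tilde\xi}$ is quadratic'' across the four cases given by the parities of $d$ and of $\#I$, treating $i_\circ\in I_\mathrm{o}$ and $i_\circ\in I_\mathrm{e}$ in turn.

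Running through this $2\times2$ table for $i_\circ\in I_\mathrm{o}$: when $d$ is even, $\tilde\nu^z_{i_\circ,x,\tilde\xi}$ is always quadratic while $\tilde\nu^y_{i_\circ,x,\tilde\xi}$ is quadratic exactly when $\#I$ is even, so exactly one factor is quadratic iff $\#I$ is odd; when $d$ is odd, $\tilde\nu^y_{i_\circ,x,\tilde\xi}$ is trivial and $\tilde\nu^z_{i_\circ,x,\tilde\xi}$ is quadratic exactly when $\#I$ is odd, so again exactly one factor is quadratic iff $\#I$ is odd. For $i_\circ\in I_\mathrm{e}$ the roles of $\tilde\nu^y$ and $\tilde\nu^z$ are interchanged, which does not affect the (commutative) product, so the outcome is the same. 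Hence $\tilde\chi_{y,i_\circ}^z$ is quadratic if and only if $\#I$ is odd; since this criterion does not involve $i_\circ$, either all the $\tilde\chi_{y,i_\circ}^z$ are trivial (when $\#I$ is even) or all are quadratic (when $\#I$ is odd). The argument involves no real obstacle; the only points requiring care are keeping straight which of the vertices $y$ and $z$ plays which role when $i_\circ\in I_\mathrm{e}$, and observing that the $\mu^P$-contributions to $\chi_y^z$ drop out because the signature character $\mu^P_{\tilde\xi}$ is quadratic.
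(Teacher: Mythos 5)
Your proposal is correct and follows the paper's own (very terse) proof exactly: the paper likewise just invokes Proposition \ref{transition-with-amending} and Corollary \ref{character-quad-or-not} and leaves the parity check to the reader, which you have carried out correctly, including the key observation that ${\boldsymbol{\mu}}_{E_{i_\circ}}$ is cyclic so it has a unique quadratic character and the product of two at-most-quadratic characters is quadratic iff exactly one factor is.
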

\proof
$\tilde{\chi}_{y,i_\circ}^{z}$ can be directly calculated using Corollary \ref{character-quad-or-not} and Proposition \ref{transition-with-amending}. The second statement is clear.
\qed

Note that $\tilde{\chi}_{y,i_\circ}^{z}$ is independent of the level of $\tilde{\xi}$.

\begin{prop}\label{translate-bijection}
  For $T$ ranges over all unramified elliptic maximal tori if $G_V$, $x$ ranges over $\mathcal{D}$, and $\tilde{\xi}$ ranges over all regular +-skew characters, the map
  $${\pi}_{x,\tilde{\xi}}\mapsto {\pi}_{x,\tilde{\xi}\tilde{\nu}^y_{x,\tilde{\xi}}}$$
  is a bijection on the set of isomorphism classes of very cuspidal representations.
\end{prop}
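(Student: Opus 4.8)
The plan is to deduce the statement from the explicit parametrisation of isomorphism classes in Proposition \ref{equiv-relation-U}, and then to observe that, for each fixed unramified elliptic maximal torus $T$ of $G_V$ and each fixed $x\in\mathcal D$, the assignment $\tilde\xi\mapsto\tilde\xi\,\tilde\nu^y_{x,\tilde\xi}$ is an \emph{involution} on the set of $d$-regular $+$-skew characters of $\tilde T_x$; bijectivity on isomorphism classes then follows formally.

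First I would check that multiplication by the amending character preserves all the data needed to run the constructions of Section \ref{section Cuspidal types for U}. The amending character $\tilde\nu^y_{x,\tilde\xi}=\boxtimes_{i\in I}\tilde\nu^y_{i,x,\tilde\xi}$ is $+$-skew and tamely ramified, hence trivial on $\prod_{i\in I}U_{E_i}^{1}$, and in particular on $\prod_{i\in I}U_{E_i}^{2d}$ since $d\ge 1$. Therefore $(\tilde\xi\,\tilde\nu^y_{x,\tilde\xi})|_{\prod_i U_{E_i}^{2d}}=\tilde\xi|_{\prod_i U_{E_i}^{2d}}$, so $\tilde\xi\,\tilde\nu^y_{x,\tilde\xi}$ has the same level $d$, is again $d$-regular, and still has the restrictions $(\tilde\xi\,\tilde\nu^y_{x,\tilde\xi})|_{U_{E_i}^{2d}}$ pairwise non-Galois-conjugate; being a product of two $+$-skew characters it is again $+$-skew, and hence descends, via Proposition \ref{U1-base-change}, to a character of $T_x$. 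Thus $\pi_{x,\tilde\xi\,\tilde\nu^y_{x,\tilde\xi}}$ is a well-defined very cuspidal representation of $G_V$ attached to the same $T$ and the same $x$, so the rule $\pi_{x,\tilde\xi}\mapsto\pi_{x,\tilde\xi\,\tilde\nu^y_{x,\tilde\xi}}$ makes sense at the level of parameters.

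Next I would show that the rule descends to isomorphism classes. By Proposition \ref{equiv-relation-U}, transporting its condition (ii) through the $N_{G_V}(T_x)$-equivariant base-change bijection of Proposition \ref{U1-base-change}, two parameters $(x,\tilde\xi)$ and $(x',\tilde\xi')$ give isomorphic representations exactly when, after replacing one datum by a $G_V$-conjugate, $x=x'$, $T_x=T_{x'}$, and $\tilde\xi'={}^{\gamma}\tilde\xi$ for some $\gamma\in N_{G_V}(T_x)$. It then suffices to verify ${}^{\gamma}(\tilde\xi\,\tilde\nu^y_{x,\tilde\xi})=({}^{\gamma}\tilde\xi)\,\tilde\nu^y_{x,{}^{\gamma}\tilde\xi}$, i.e.\ that ${}^{\gamma}\tilde\nu^y_{x,\tilde\xi}=\tilde\nu^y_{x,{}^{\gamma}\tilde\xi}$; this is exactly the Corollary above asserting $\tilde\nu^y_{x,\tilde\xi}={}^{\gamma}\tilde\nu^y_{x,\tilde\xi}=\tilde\nu^y_{x,{}^{\gamma}\tilde\xi}$, which itself rests on the $N_{G_V}(T_x)$-action preserving the partition $I=I_{\mathrm{o}}\sqcup I_{\mathrm{e}}$ attached to $x$. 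Hence $[\pi_{x,\tilde\xi}]\mapsto[\pi_{x,\tilde\xi\,\tilde\nu^y_{x,\tilde\xi}}]$ is a well-defined map on isomorphism classes of very cuspidal representations.

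Finally I would prove it is a bijection by showing it is its own inverse. The rule fixes $T$ and $x$ and, by the second paragraph, preserves the level $d$ (hence its parity) and $\#I$. By Corollary \ref{character-quad-or-not} each component $\tilde\nu^y_{i,x,\tilde\xi}$ is either trivial or the unique quadratic character of $\boldsymbol{\mu}_{E_i}$, and which of the two occurs is determined solely by the partition $I=I_{\mathrm{o}}\sqcup I_{\mathrm{e}}$, by $\#I$, and by the parity of $d$; since none of these changes when $\tilde\xi$ is replaced by $\tilde\xi\,\tilde\nu^y_{x,\tilde\xi}$, one gets $\tilde\nu^y_{x,\tilde\xi\,\tilde\nu^y_{x,\tilde\xi}}=\tilde\nu^y_{x,\tilde\xi}$. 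As $\tilde\nu^y_{x,\tilde\xi}$ is at most quadratic, applying the rule twice returns $\tilde\xi\,(\tilde\nu^y_{x,\tilde\xi})^{2}=\tilde\xi$, so the rule is an involution on parameters and therefore a bijection on isomorphism classes. The delicate step is the third paragraph — the compatibility of the rule with the $N_{G_V}(T_x)$-equivalence — which relies essentially on the $N_{G_V}(T_x)$-invariance and equivariance of the amending character established above, together with Corollary \ref{character-quad-or-not} which shows $\tilde\nu^y_{x,\tilde\xi}$ depends on $\tilde\xi$ only through the parity of its level; once those are granted, the remaining verifications are routine bookkeeping, and I expect no further obstacle.
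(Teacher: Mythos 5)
Your proposal is correct and rests on the same key observation as the paper's proof: the amending character $\tilde{\nu}^y_{x,\tilde{\xi}}$ is a fixed tamely ramified character depending only on data unchanged by a tame twist of $\tilde{\xi}$ (the paper phrases this as dependence only on $\tilde{\xi}|_{\tilde{T}_x^1}$, you via Corollary \ref{character-quad-or-not}), so that on parameters with fixed $T$, $x$ and wild part the map is multiplication by a fixed character, hence a bijection. Your additional remarks — that the map is in fact an involution since $\tilde{\nu}^y_{x,\tilde{\xi}}$ is quadratic, and the explicit check of compatibility with the $N_{G_V}(T_x)$-equivalence of Proposition \ref{equiv-relation-U} — are correct refinements of the same argument rather than a different route.
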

\proof
For fixed $T$, $x$, and a restriction $\tilde{\theta}=\tilde{\xi}_x|_{\tilde{T}_x^1}$, then $\tilde{\nu}^y_{x,\tilde{\xi}}$ is a fixed tamely ramified character $\tilde{\nu}=\tilde{\nu}(\tilde{\theta})$ for all regular +-skew character $\tilde{\xi}$ whose restriction to ${\tilde{T}_x^1}$ is  $\tilde{\theta}$. The map between finite sets
$$\{{\pi}_{x,\tilde{\xi}}\}_{\text{fixed }T,\,x,\,\tilde{\theta}}\mapsto \{{\pi}_{x,\tilde{\xi}\tilde{\nu}^y_{x,\tilde{\xi}}}\}_{\text{fixed }T,\,x,\,\tilde{\theta}}$$
is clearly a bijection, and so is the map in the proposition.
\qed

\subsection{Appendix: computing the levels}\label{section appendix levels}

We provide the matrix descriptions of the groups $H_L^1$ and $J_L^1$, for $L$ being the lattice sequence $\Lambda=\Lambda_{i_\circ,x}$, $\mathfrak{M}^y=\mathfrak{M}^y_{i_\circ,x}$, or $\mathfrak{M}^z=\mathfrak{M}^z_{i_\circ,x}$.

We only show the $U$- and $U^-$-parts of the matrices. Recall that $x\in \mathcal{D}$ corresponds to a partition $I=I_\mathrm{o}\sqcup I_\mathrm{e}$. Suppose that we order the basis as $\mathcal{B}_{-}\sqcup\mathcal{B}_{I_\mathrm{o}}\sqcup\mathcal{B}_{I_\mathrm{e}}\sqcup\mathcal{B}_{+}$, such that $L$ has a decomposition $L_{-}\oplus{}L_{I_\mathrm{o}}\oplus{}L_{I_\mathrm{e}}\oplus{}L_{+}$. For $J,K$ being ${-},\,{I_\mathrm{o}},\,{I_\mathrm{e}}$ or ${+}$, the entries $\mathfrak{P}^{k}$ in the $(J,K)$-entry represents
$$\bigcap_{m\in \mathbb{Z}}Hom_{\mathfrak{o}_F}(L_K(m),L_J(m+k)).$$
The bullet symbols $\bullet$ are the entries in $M$, which are not important to the calculation.

For $i_\circ\in I_\mathrm{o}$, we separate into two cases.
\begin{itemize}
  \item When $d=2k+1$,
\begin{equation*}
\begin{split}
  &H^1_\Lambda=T_\Lambda^1U_\Lambda^{6k+4}
=\left[\begin{smallmatrix}
    \bullet&\mathfrak{P}^{k+1}&\mathfrak{P}^{k+1}&\mathfrak{P}^{k+1}
    \\
    \mathfrak{P}^{k+1}&\bullet&\bullet&\mathfrak{P}^{k+1}
    \\
    \mathfrak{P}^{k+1}&\bullet&\bullet&\mathfrak{P}^{k}
    \\
    \mathfrak{P}^{k+1}&\mathfrak{P}^{k+1}&\mathfrak{P}^{k+2}&\bullet
  \end{smallmatrix}\right]
  ,\,
  J^1_\Lambda=T_\Lambda^1U_\Lambda^{6k+3}
=\left[\begin{smallmatrix}
    \bullet&\mathfrak{P}^{k+1}&\mathfrak{P}^{k+1}&\mathfrak{P}^{k+1}
    \\
    \mathfrak{P}^{k+1}&\bullet&\bullet&\mathfrak{P}^{k+1}
    \\
    \mathfrak{P}^{k+1}&\bullet&\bullet&\mathfrak{P}^{k}
    \\
    \mathfrak{P}^{k+1}&\mathfrak{P}^{k+1}&\mathfrak{P}^{k+2}&\bullet
  \end{smallmatrix}\right],\,
\\
  &H^1_{\mathfrak{M}^y}=T_{\mathfrak{M}^y}^1U_{\mathfrak{M}^y}^{2k+2}=
\left[\begin{smallmatrix}
    \bullet&\mathfrak{P}^{k+1}&\mathfrak{P}^{k+2}&\mathfrak{P}^{k+1}
    \\
    \mathfrak{P}^{k+1}&\bullet&\bullet&\mathfrak{P}^{k+1}
    \\
    \mathfrak{P}^{k+1}&\bullet&\bullet&\mathfrak{P}^{k+1}
    \\
    \mathfrak{P}^{k+1}&\mathfrak{P}^{k+1}&\mathfrak{P}^{k+2}&\bullet
  \end{smallmatrix}\right]
  ,\,
  J^1_{\mathfrak{M}^y}=T_{\mathfrak{M}^y}^1U_{\mathfrak{M}^y}^{2k+1}=
\left[\begin{smallmatrix}
    \bullet&\mathfrak{P}^{k+1}&\mathfrak{P}^{k+1}&\mathfrak{P}^{k+1}
    \\
    \mathfrak{P}^{k+1}&\bullet&\bullet&\mathfrak{P}^{k+1}
    \\
    \mathfrak{P}^{k}&\bullet&\bullet&\mathfrak{P}^{k}
    \\
    \mathfrak{P}^{k+1}&\mathfrak{P}^{k+1}&\mathfrak{P}^{k+1}&\bullet
  \end{smallmatrix}\right]
\\
  &H^1_{\mathfrak{M}^z}=T_{\mathfrak{M}^z}^1U_{\mathfrak{M}^z}^{2k+2}=
\left[\begin{smallmatrix}
    \bullet&\mathfrak{P}^{k+1}&\mathfrak{P}^{k+1}&\mathfrak{P}^{k}
    \\
    \mathfrak{P}^{k+2}&\bullet&\bullet&\mathfrak{P}^{k+1}
    \\
    \mathfrak{P}^{k+1}&\bullet&\bullet&\mathfrak{P}^{k}
    \\
    \mathfrak{P}^{k+2}&\mathfrak{P}^{k+2}&\mathfrak{P}^{k+2}&\bullet
  \end{smallmatrix}\right]
  ,\,
  J^1_{\mathfrak{M}^z}=T_{\mathfrak{M}^z}^1U_{\mathfrak{M}^z}^{2k+1}=
\left[\begin{smallmatrix}
    \bullet&\mathfrak{P}^{k}&\mathfrak{P}^{k+1}&\mathfrak{P}^{k}
    \\
    \mathfrak{P}^{k+1}&\bullet&\bullet&\mathfrak{P}^{k}
    \\
    \mathfrak{P}^{k+1}&\bullet&\bullet&\mathfrak{P}^{k}
    \\
    \mathfrak{P}^{k+2}&\mathfrak{P}^{k+1}&\mathfrak{P}^{k+2}&\bullet
  \end{smallmatrix}\right].
\end{split}
 \end{equation*}

\item When $d=2k$,
\begin{equation*}
\begin{split}
  &H^1_\Lambda=T_\Lambda^1U_\Lambda^{6k+1}
=\left[\begin{smallmatrix}
    \bullet&\mathfrak{P}^{k}&\mathfrak{P}^{k+1}&\mathfrak{P}^{k}
    \\
    \mathfrak{P}^{k+1}&\bullet&\bullet&\mathfrak{P}^{k}
    \\
    \mathfrak{P}^{k}&\bullet&\bullet&\mathfrak{P}^{k}
    \\
    \mathfrak{P}^{k+1}&\mathfrak{P}^{k+1}&\mathfrak{P}^{k+1}&\bullet
  \end{smallmatrix}\right]
  ,\,
  J^1_\Lambda=T_\Lambda^1U_\Lambda^{6k}
=\left[\begin{smallmatrix}
    \bullet&\mathfrak{P}^{k}&\mathfrak{P}^{k+1}&\mathfrak{P}^{k}
    \\
    \mathfrak{P}^{k+1}&\bullet&\bullet&\mathfrak{P}^{k}
    \\
    \mathfrak{P}^{k}&\bullet&\bullet&\mathfrak{P}^{k}
    \\
    \mathfrak{P}^{k+1}&\mathfrak{P}^{k+1}&\mathfrak{P}^{k+1}&\bullet
  \end{smallmatrix}\right],\,
\\
  &H^1_{\mathfrak{M}^y}=T_{\mathfrak{M}^y}^1U_{\mathfrak{M}^y}^{2k+1}
=\left[\begin{smallmatrix}
    \bullet&\mathfrak{P}^{k+1}&\mathfrak{P}^{k+1}&\mathfrak{P}^{k+1}
    \\
    \mathfrak{P}^{k+1}&\bullet&\bullet&\mathfrak{P}^{k+1}
    \\
    \mathfrak{P}^{k}&\bullet&\bullet&\mathfrak{P}^{k}
    \\
    \mathfrak{P}^{k+1}&\mathfrak{P}^{k+1}&\mathfrak{P}^{k+1}&\bullet
  \end{smallmatrix}\right]
  ,\,
  J^1_{\mathfrak{M}^y}=T_{\mathfrak{M}^y}^1U_{\mathfrak{M}^y}^{2k}
=\left[\begin{smallmatrix}
    \bullet&\mathfrak{P}^{k}&\mathfrak{P}^{k+1}&\mathfrak{P}^{k}
    \\
    \mathfrak{P}^{k}&\bullet&\bullet&\mathfrak{P}^{k}
    \\
    \mathfrak{P}^{k}&\bullet&\bullet&\mathfrak{P}^{k}
    \\
    \mathfrak{P}^{k}&\mathfrak{P}^{k}&\mathfrak{P}^{k+1}&\bullet
  \end{smallmatrix}\right]
\\
  &H^1_{\mathfrak{M}^z}=T_{\mathfrak{M}^z}^1U_{\mathfrak{M}^z}^{2k+1}
=\left[\begin{smallmatrix}
    \bullet&\mathfrak{P}^{k}&\mathfrak{P}^{k+1}&\mathfrak{P}^{k}
    \\
    \mathfrak{P}^{k+1}&\bullet&\bullet&\mathfrak{P}^{k}
    \\
    \mathfrak{P}^{k+1}&\bullet&\bullet&\mathfrak{P}^{k}
    \\
    \mathfrak{P}^{k+2}&\mathfrak{P}^{k+1}&\mathfrak{P}^{k+2}&\bullet
  \end{smallmatrix}\right]
  ,\,
  J^1_{\mathfrak{M}^z}=T_{\mathfrak{M}^z}^1U_{\mathfrak{M}^z}^{2k}
=\left[\begin{smallmatrix}
    \bullet&\mathfrak{P}^{k}&\mathfrak{P}^{k}&\mathfrak{P}^{k-1}
    \\
    \mathfrak{P}^{k+1}&\bullet&\bullet&\mathfrak{P}^{k}
    \\
    \mathfrak{P}^{k}&\bullet&\bullet&\mathfrak{P}^{k-1}
    \\
    \mathfrak{P}^{k+1}&\mathfrak{P}^{k+1}&\mathfrak{P}^{k+1}&\bullet
  \end{smallmatrix}\right].
\end{split}
  \end{equation*}

\end{itemize}
If $i_\circ\in I_\mathrm{e}$, then switch $I_\mathrm{o}$ and $I_\mathrm{e}$ in the ordered basis $\mathcal{B}_{-}\sqcup\mathcal{B}_{I_\mathrm{o}}\sqcup\mathcal{B}_{I_\mathrm{e}}\sqcup\mathcal{B}_{+}$.

\newpage\section{Endoscopic Classification}\label{section endos classification}


 Given a cuspidal representation $\pi_{x,\tilde{\xi}}$ of $G_V$ constructed by a regular +-skew character $\tilde{\xi}$, by Theorem \ref{theorem-inertial-class} there is a conjugate-self-dual supercuspidal representation in the inertial class of $\tilde{\pi}_{\tilde{\xi}_i\tilde{\nu}_{i,x,\tilde{\xi}}}$ and in the extended cuspidal support of $\pi_{x,\tilde{\xi}}$. In this inertial class, there are two conjugate-self-dual representations, different from each other by the quadratic unramified character $\tilde{\chi}_-\circ\mathrm{det}_{\tilde{G}_{V_{i}}}$. In this section, we use the endoscopic classification to determine the one that belongs to the extended cuspidal support of $\pi_{x,\tilde{\xi}}$.

\subsection{L-groups and parameters}

We follow \cite[Section 2.1]{Mok-unitary} and describe the L-groups of $\mathbf{G}_V=\mathrm{U}_{n,F/\Fo}$ and $\tilde{\mathbf{G}}_V=\Res_{F/\Fo}\mathrm{GL}_{n,F}$. Let $\mathcal{W}_\Fo$ be the Weil group of $\Fo$. We set
\begin{equation*}
\begin{split}
&{}^L\mathbf{G}_V=\mathrm{GL}_{n}(\mathbb{C})\rtimes \mathcal{W}_{\Fo},
    \\
    &{}^L\tilde{\mathbf{G}}_V=
    (\mathrm{GL}_{n}(\mathbb{C})\times \mathrm{GL}_{n}(\mathbb{C}))\rtimes \mathcal{W}_{\Fo},
  \end{split}
\end{equation*}
where for the L-group ${}^L\mathbf{G}_V$, the action of $\mathcal{W}_{\Fo} $ on $\hat{\mathbf{G}}_V=\mathrm{GL}_{n}(\mathbb{C})$ factors through $\Gamma_{F/\Fo}$ and each $w\in \mathcal{W}_{\Fo} -\mathcal{W}_{F} $ acts as
\begin{equation*}
  g\mapsto J{}(^tg^{-1})J^{-1},\,\qquad \text{for }g\in \mathrm{GL}_{n}(\mathbb{C}),
\end{equation*}
where $J$ as in (\ref{J-matrix}), and for the L-group ${}^L\tilde{\mathbf{G}}_V$, the action of $\mathcal{W}_{\Fo}$ of $\hat{\tilde{\mathbf{G}}}_V=\mathrm{GL}_{n}(\mathbb{C})\times \mathrm{GL}_{n}(\mathbb{C})$ also factors through $\Gamma_{F/\Fo}$ and each $w\in \mathcal{W}_{\Fo} -\mathcal{W}_{F} $ acts as interchanging the two factors as
\begin{equation*}
  (g,h)\mapsto (h,g),\,\qquad\text{for }(g,h)\in \mathrm{GL}_{n}(\mathbb{C})\times \mathrm{GL}_{n}(\mathbb{C}).
\end{equation*}
We fix a choice $c\in \mathcal{W}_{\Fo}-\mathcal{W}_F$ for convenience, but usually the subsequent statements do not depend on this choice. For $\epsilon=+$ or $-$, we fix two skew characters $\tilde{\chi}_\epsilon$ as in Section \ref{section characters}, so that $\tilde{\chi}_+$ is trivial and $\tilde{\chi}_-$ is unramified quadratic. By local class field theory, we identify $\tilde{\chi}_\epsilon$ with a character of $\mathcal{W}_{F}$, also denoted by $\tilde{\chi}_\epsilon$, which satisfies
\begin{equation*}
  \tilde{\chi}_\epsilon(cwc^{-1})=\tilde{\chi}_\epsilon(w)^{-1}\text{ for all }w\in \mathcal{W}_{F},\text{ and }\tilde{\chi}_\epsilon(c^2)=\epsilon1.
  \end{equation*}
We then define two embeddings
$$\iota_\epsilon^{\mathbf{G}_V}:{}^L \mathbf{G}_V\rightarrow {}^L\tilde{\mathbf{G}}_V$$
as follows:
\begin{equation}\label{stable-embedding-rules}
  \begin{split}
    g\rtimes 1 &\mapsto (g,{}^tg^{-1})\rtimes 1,\,\text{ for all }g\in \mathrm{GL}_n(\mathbb{C}),
    \\
    I\rtimes w &\mapsto (\tilde{\chi}_\epsilon(w)I,\tilde{\chi}_\epsilon^{-1}(w)I)\rtimes w,\,\text{ for all }w\in \mathcal{W}_F\text{, and }
    \\
    I\rtimes c &\mapsto (\epsilon J,J^{-1})\rtimes c.
  \end{split}
\end{equation}
One can show that the $\hat{\tilde{\mathbf{G}}}_V$-conjugacy class of $\iota_\epsilon^{\mathbf{G}_V}$ is independent of the choice of $c\in \mathcal{W}_{\Fo}-\mathcal{W}_F$.

Let $\tilde{\phi}:\mathcal{W}_F\rightarrow \mathrm{GL}_n(\mathbb{C})$ be a parameter for $\tilde{G}_V$. It is sometimes convenient to view $\tilde{\phi}$ as a semi-simple representation of $\mathcal{W}_F$ on a $\mathbb{C}$-vector space $\mathbf{V}$ of dimension $n$. We call $\tilde{\phi}$ conjugate-self-dual if $\tilde{\phi}\cong {}^\sigma\tilde{\phi}:= {}^c\tilde{\phi}^\vee$ where, for all $w\in \mathcal{W}_F$,
${}^c\tilde{\phi}(w)=\tilde{\phi}(c^{-1}wc)$ and $\tilde{\phi}^\vee(w)={}^t\tilde{\phi}(w)^{-1}$, i.e., $\tilde{\phi}^\vee$ is the contragredient of $\tilde{\phi}$. Hence $\tilde{\phi}$ is conjugate-self-dual if there exists a non-degenerate bilinear form $B$ on $\mathbf{V}$ such that,
\begin{equation} \label{B-is-G-invar}
  B(\tilde{\phi}^c(w)u,\tilde{\phi}(w)v)=B(u,v)\text{ for all }u,v\in \mathbf{V}\text{ and }w\in \mathcal{W}_F.
\end{equation}
We call $\tilde{\phi}$ conjugate-orthogonal (resp. conjugate-symplectic) if, furthermore,
\begin{equation}\label{B-is-Herm}
  B(u,v)= B(v,\tilde{\phi}(c^2)u)\qquad\text{(resp. }-B(v,\tilde{\phi}(c^2)u)\text{)}
\end{equation}
for all $u,v\in \mathbf{V}$. Again this condition is independent of the choice of $c\in \mathcal{W}_{\Fo}-\mathcal{W}_F$.

A conjugate-self-dual $\tilde{\phi}$ is called skew if it is irreducible as a representation of $\mathcal{W}_F$, and is called $+$-skew (resp. $-$-skew) if it is furthermore conjugate-orthogonal (resp. conjugate-symplectic).

We can extend a parameter $\tilde{\phi}:\mathcal{W}_F\rightarrow \mathrm{GL}(\mathbf{V})$ to a morphism
\begin{equation}\label{extend-parameter}
  \begin{split}
    &\tilde{\phi}\times{}^\sigma\tilde{\phi}:\mathcal{W}_{\Fo}\rightarrow {}^L\tilde{\mathbf{G}}_V,
    \\
    \text{by }&w\mapsto (\tilde{\phi}(w),{}^\sigma\tilde{\phi}(w))\rtimes w\text{ and }c\mapsto (I,I)\rtimes c.
  \end{split}
\end{equation}
By \cite[Lemma 2.2.1]{Mok-unitary}, if $\tilde{\phi}$ is a skew parameter, then it is $\epsilon$-skew if and only if the image of $\tilde{\phi}\times{}^\sigma\tilde{\phi}$ lies in $\iota_\epsilon^{\mathbf{G}_V}({}^L \mathbf{G}_V)$ (up to conjugacy).

Suppose that $\Eo/\Fo$ is an unramified extension of odd degree $n$ and $E/\Eo$ is quadratic unramified, then we can assume that $c\in \mathcal{W}_{\Eo}$ and $c^2\in \mathcal{W}_E$. Given a regular character $\tilde{\xi}$ of $E^\times$ over $F$, we regard $\tilde{\xi}$ as a character of $\mathcal{W}_E$ by class field theory, then we know that $\tilde{\phi}=\Ind_{E/F}\tilde{\xi}$ is irreducible as a representation of $\mathcal{W}_F$.
\begin{prop}
  If $\tilde{\xi}$ is a $\epsilon$-skew character, then $\tilde{\phi}=\Ind_{E/F}\tilde{\xi}$ is a $\epsilon$-skew parameter.
\end{prop}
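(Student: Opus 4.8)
The plan is to deduce both assertions — conjugate-self-duality and the correct $\epsilon$-type — purely from the formalism of induced representations, exploiting two features of the situation: that $E/\Fo$ is unramified, hence Galois with cyclic group $\mathrm{Gal}(E/\Fo)\cong\mathbb{Z}/2n$ (recall $n$ is odd), and that the conjugating element $c$ may be taken inside $\mathcal{W}_{\Eo}$, so that the same $c$ governs the conjugate-duality on the $\mathrm{U}_{n,F/\Fo}$-side and the relation ${}^{c}\tilde\xi=\tilde\xi^{-1}$ that expresses skewness of $\tilde\xi$. For such a $c$ one has $c^{2}\in\mathcal{W}_{E}$, and $c$ normalises both $\mathcal{W}_{F}$ (index two in $\mathcal{W}_{\Fo}$) and $\mathcal{W}_{E}$ (in fact $\mathcal{W}_{E}\triangleleft\mathcal{W}_{\Fo}$). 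Since the irreducibility of $\tilde\phi=\Ind_{E/F}\tilde\xi$ already follows from the regularity of $\tilde\xi$, it remains only to produce a non-degenerate invariant bilinear form and to compute its type.

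For conjugate-self-duality I would use that $[\mathcal{W}_{F}:\mathcal{W}_{E}]<\infty$, so $\tilde\phi^{\vee}\cong\Ind_{E/F}(\tilde\xi^{-1})$, together with the standard fact that $c$-twisting commutes with induction when $c$ normalises both groups, giving ${}^{c}\tilde\phi^{\vee}\cong\Ind_{E/F}\big(({}^{c}\tilde\xi)^{-1}\big)$. The skew relation ${}^{c}\tilde\xi=\tilde\xi^{-1}$ turns $({}^{c}\tilde\xi)^{-1}$ into $\tilde\xi$, hence ${}^{\sigma}\tilde\phi={}^{c}\tilde\phi^{\vee}\cong\tilde\phi$. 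By irreducibility the invariant bilinear form on $\mathbf{V}=\mathbf{V}_{\tilde\phi}$ is unique up to scalar, so $\tilde\phi$ has a well-defined type, and it suffices to read it off on a convenient subspace.

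To compute the type I would decompose $\mathbf{V}=\bigoplus_{\bar\gamma\in\mathcal{W}_{F}/\mathcal{W}_{E}}\mathbf{V}_{\bar\gamma}$ into $\tilde\phi(\mathcal{W}_{E})$-stable lines permuted by $\tilde\phi$ through the action of $\mathcal{W}_{F}$ on $\mathcal{W}_{F}/\mathcal{W}_{E}\cong\mathrm{Gal}(E/F)$. Since $\mathcal{W}_{F}/\mathcal{W}_{E}$ is a subgroup of the abelian group $\mathcal{W}_{\Fo}/\mathcal{W}_{E}\cong\mathrm{Gal}(E/\Fo)$, conjugation by $c$ acts trivially on it; following this through the isomorphisms above shows the invariant form $B$ is block-diagonal for the decomposition, hence non-degenerate on each $\mathbf{V}_{\bar\gamma}$. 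Taking $\bar\gamma=1$: the line $\mathbf{V}_{1}$ is stable under $\tilde\phi(c^{2})$ (as $c^{2}\in\mathcal{W}_{E}$), on which $\mathcal{W}_{E}$ acts through $\tilde\xi$, so $\tilde\phi(c^{2})$ is the scalar $\tilde\xi(c^{2})$ there; substituting $u,v\in\mathbf{V}_{1}$ into $B(u,v)=\epsilon' B(v,\tilde\phi(c^{2})u)$ (with $\epsilon'=\pm1$ the type of $\tilde\phi$) and using that a non-zero bilinear form on a line is a multiple of $(u,v)\mapsto uv$ forces $\epsilon'=\tilde\xi(c^{2})^{-1}=\tilde\xi(c^{2})$. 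Finally $\tilde\xi(c^{2})=\epsilon$ is exactly the normalisation of "$\epsilon$-skew": under local class field theory for $E/\Eo$ the value $\tilde\xi(c^{2})$ equals $\tilde\xi|_{\Eo^{\times}}$ at a uniformiser, namely $1$ when $\epsilon=+$ and $\delta_{E/\Eo}(\varpi)=-1$ when $\epsilon=-$ (compare the conditions imposed on $\tilde\chi_{\epsilon}$ in Section~\ref{section characters}). Thus $\tilde\phi$ is conjugate-orthogonal when $\epsilon=+$ and conjugate-symplectic when $\epsilon=-$, and being irreducible it is an $\epsilon$-skew parameter.

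The main obstacle is this last step: checking that $B$ is genuinely block-diagonal, and that consequently the type of $\tilde\phi$ is already detected on the single line $\mathbf{V}_{1}$. The care goes into the bookkeeping of which of $c$, $c^{2}$ acts where, the triviality of $c$-conjugation on $\mathcal{W}_{F}/\mathcal{W}_{E}$, and above all the class-field-theoretic identification of the Weil-group value $\tilde\xi(c^{2})$ with the sign $\epsilon$ built into the definition of an $\epsilon$-skew character; the remaining ingredients — duality of induced representations and compatibility of the $c$-twist with induction — are entirely standard.
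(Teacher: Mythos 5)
Your proof is correct and follows essentially the same route as the paper's: conjugate-self-duality comes from the chain ${}^{c}(\Ind_{E/F}\tilde{\xi})\cong\Ind_{E/F}({}^{c}\tilde{\xi})\cong\Ind_{E/F}(\tilde{\xi}^{-1})\cong(\Ind_{E/F}\tilde{\xi})^{\vee}$, and the sign is then read off from the action of $\tilde{\phi}(c^{2})$ on the $\tilde{\xi}$-eigenline of $\Res_{E/F}\tilde{\phi}$, using $\tilde{\xi}(c^{2})=\epsilon$. You are in fact slightly more careful than the paper: by checking that $B$ is block-diagonal with respect to the eigenline decomposition and hence nonzero on $\mathbf{V}_{1}$, you justify the step that the paper leaves implicit, namely that the identity $B(v,v)=\epsilon\,B(v,\tilde{\phi}(c^{2})v)$ actually determines the sign (it would be vacuous if $B(v,v)=0$).
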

\proof
Note that $\tilde{\phi}$ is skew since
$${}^c(\Ind_{E/F}\tilde{\xi})\cong \Ind_{E/F}({}^c\tilde{\xi})\cong \Ind_{E/F}(\tilde{\xi}^{-1})\cong (\Ind_{E/F}\tilde{\xi})^\vee$$
We can choose a non-degenerate bilinear form $B$ such that conditions (\ref{B-is-G-invar}) and (\ref{B-is-Herm}) are satisfied, for a fixed sign $\epsilon$. The restriction $\Res_{E/F}\tilde{\phi}$ contains $\tilde{\xi}$ (with multiplicity 1).
Hence there is an eigenvector $v$ in $\mathbf{V}\cong \mathbb{C}^n$ such that
 $$\tilde{\phi}(c^2)v=\tilde{\xi}(c^2)v=\epsilon v.$$
Then
$$B(v,v)=B(v,\epsilon\tilde{\phi}(c^2)v)=\epsilon B(v,\tilde{\phi}(c^2)v),$$
which implies that $\tilde{\phi}$ is $\epsilon$-skew.
\qed

\subsection{Asai L-functions}

We can interpret the skewness in terms of Asai L-functions on the Galois side and Asai-Shahidi L-functions the representation side.

On the Galois side, we define, for $\epsilon=+$ or $-$,
$$\Asai_\epsilon:{}^L\tilde{\mathbf{G}}_V=(\mathrm{GL}(\mathbf{V})\times \mathrm{GL}(\mathbf{V}))\rtimes \mathcal{W}_{\Fo}\rightarrow \mathrm{GL}(\mathbf{V}\otimes \mathbf{V}),$$
where $\mathrm{GL}(\mathbf{V})\times \mathrm{GL}(\mathbf{V})$ acts on $\mathbf{V}\otimes \mathbf{V}$ by the standard representation on tensor product, $ \mathcal{W}_{F}$ acts trivially on $\mathbf{V}\otimes \mathbf{V}$, and
$$\Asai_\epsilon(c)(v\otimes w)=\epsilon (w\otimes v).$$
If $n\in \mathbb{Z}$, then we write $\Asai_n$ for $\Asai_+$ if $n$ is even, and for $\Asai_-$ if $n$ is odd.

Let $|\cdot|:\mathcal{W}_{\Fo}\rightarrow \mathbb{Z}$ be the valuation of $\mathcal{W}_{\Fo}$. We define the Asai L-functions by
$$L(\tilde{\phi},\Asai_\epsilon,s)=\det\left(I_{\mathbf{V}\otimes \mathbf{V}}-\Asai_\epsilon\circ(\tilde{\phi}\times{}^\sigma\tilde{\phi})|\cdot|^s\right)^{-1}$$
for $\epsilon=+$ or $-$.

\begin{prop}
  $L(\tilde{\phi}\otimes {}^\sigma\tilde{\phi},s)=L(\tilde{\phi},\Asai_+,s)L(\tilde{\phi},\Asai_-,s)$.
\end{prop}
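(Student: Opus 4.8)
The plan is to deduce the identity from the inductivity of Artin--Weil $L$-functions by realizing the right-hand side as the $L$-function of an induced representation. Set $\Phi_\epsilon:=\Asai_\epsilon\circ(\tilde{\phi}\times{}^\sigma\tilde{\phi})$, a representation of $\mathcal{W}_{\Fo}$ on $\mathbf{V}\otimes\mathbf{V}$, for $\epsilon=+$ and $\epsilon=-$; by the definition of the Asai $L$-function recalled above, $L(\tilde{\phi},\Asai_\epsilon,s)$ is the Weil-group $L$-function of $\Phi_\epsilon$. The core assertion is that
\[
  \Phi_+\oplus\Phi_-\ \cong\ \Ind_{\mathcal{W}_F}^{\mathcal{W}_{\Fo}}(\tilde{\phi}\otimes{}^\sigma\tilde{\phi}).
\]
Granting it, the proposition follows at once from additivity of $L$-functions together with the inductivity relation $L(\Ind_{\mathcal{W}_F}^{\mathcal{W}_{\Fo}}\psi,s)=L(\psi,s)$ applied to $\psi=\tilde{\phi}\otimes{}^\sigma\tilde{\phi}$ (the latter being viewed as a representation of $\mathcal{W}_F$, whose $L$-function is what $L(\tilde{\phi}\otimes{}^\sigma\tilde{\phi},s)$ denotes).

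To prove the assertion, I would first restrict $\Phi_\epsilon$ to $\mathcal{W}_F$. In the definition of $\Asai_\epsilon$ the subgroup $\mathrm{GL}(\mathbf{V})\times\mathrm{GL}(\mathbf{V})$ acts on $\mathbf{V}\otimes\mathbf{V}$ through $(g,h)\mapsto g\otimes h$ while $\mathcal{W}_F$ acts trivially, so the extension rule (\ref{extend-parameter}), $w\mapsto(\tilde{\phi}(w),{}^\sigma\tilde{\phi}(w))\rtimes w$, yields $\Phi_\epsilon(w)=\tilde{\phi}(w)\otimes{}^\sigma\tilde{\phi}(w)$ for all $w\in\mathcal{W}_F$, independently of $\epsilon$; that is, each $\Phi_\epsilon$ is an extension of $\tilde{\phi}\otimes{}^\sigma\tilde{\phi}$ to $\mathcal{W}_{\Fo}$. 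The projection formula then gives
\[
  \Ind_{\mathcal{W}_F}^{\mathcal{W}_{\Fo}}(\tilde{\phi}\otimes{}^\sigma\tilde{\phi})\cong\Ind_{\mathcal{W}_F}^{\mathcal{W}_{\Fo}}\bigl(\Res_{\mathcal{W}_F}\Phi_+\bigr)\cong\Phi_+\otimes\Ind_{\mathcal{W}_F}^{\mathcal{W}_{\Fo}}\mathbf{1}\cong\Phi_+\oplus(\Phi_+\otimes\delta_{F/\Fo}),
\]
where $\delta_{F/\Fo}$ is regarded, via class field theory, as the quadratic character of $\mathcal{W}_{\Fo}$ with kernel $\mathcal{W}_F$. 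Finally I would identify $\Phi_+\otimes\delta_{F/\Fo}$ with $\Phi_-$: the two homomorphisms agree on $\mathcal{W}_F$ (both restrict to $\tilde{\phi}\otimes{}^\sigma\tilde{\phi}$, since $\delta_{F/\Fo}$ is trivial there), and on the chosen Frobenius $c\in\mathcal{W}_{\Fo}-\mathcal{W}_F$ one computes $\Phi_\epsilon(c)=\Asai_\epsilon\bigl((I,I)\rtimes c\bigr)=\epsilon\,\tau$, where $\tau$ is the flip $v\otimes w\mapsto w\otimes v$, while $\delta_{F/\Fo}(c)=-1$, so $(\Phi_+\otimes\delta_{F/\Fo})(c)=-\tau=\Phi_-(c)$; since $\mathcal{W}_{\Fo}=\mathcal{W}_F\sqcup c\mathcal{W}_F$, two homomorphisms on $\mathcal{W}_{\Fo}$ that agree on $\mathcal{W}_F$ and at $c$ coincide. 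This gives $\Phi_-\cong\Phi_+\otimes\delta_{F/\Fo}$ and hence the assertion.

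The computations above are routine; the two points needing care belong to the $L$-group setup of Section \ref{section endos classification} rather than to the present argument. First, one should know that the extension rule (\ref{extend-parameter}) really defines a homomorphism $\tilde{\phi}\times{}^\sigma\tilde{\phi}$ (compatibility of the value at $c$ with the image of $c^2\in\mathcal{W}_F$); second, one should fix the precise meaning of the determinant defining $L(\tilde{\phi},\Asai_\epsilon,s)$ so that it is the honest Weil-group $L$-function of $\Phi_\epsilon$ (passage to inertia invariants, normalization of $|\cdot|^s$). I expect no genuine obstacle here: the content of the proposition is exactly the displayed decomposition, which is the projection formula together with the observation that the two sign choices in $\Asai_\pm$ — equivalently the two L-group embeddings $\iota_+^{\mathbf{G}_V},\iota_-^{\mathbf{G}_V}$ — differ precisely by the twist $\delta_{F/\Fo}$.
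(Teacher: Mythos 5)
Your proposal is correct and follows essentially the same route as the paper: the paper's proof consists precisely of asserting the isomorphism $\Ind_{F/\Fo}(\tilde{\phi}\otimes{}^\sigma\tilde{\phi})\cong \Asai_+\circ(\tilde{\phi}\times{}^\sigma\tilde{\phi})\oplus\Asai_-\circ(\tilde{\phi}\times{}^\sigma\tilde{\phi})$ and taking $L$-functions. Your contribution is to supply the verification of that isomorphism (projection formula plus the identification $\Phi_-\cong\Phi_+\otimes\delta_{F/\Fo}$ by checking on $\mathcal{W}_F$ and at $c$), which the paper leaves implicit; the details are correct.
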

\proof
Indeed, there is an isomorphism of representations
$$\Ind_{F/\Fo}(\tilde{\phi}\otimes {}^\sigma\tilde{\phi})\cong \Asai_+\circ(\tilde{\phi}\times{}^\sigma\tilde{\phi})\oplus \Asai_-\circ(\tilde{\phi}\times{}^\sigma\tilde{\phi}).$$
We then obtain the result by taking the L-functions on both sides.
\qed

\begin{prop}\label{L-function-pole}
  A parameter $\tilde{\phi}$ is conjugate-orthogonal (resp. conjugate-symplectic) if and only if $L(\tilde{\phi},\Asai_+,s)$ (resp. $L(\tilde{\phi},\Asai_-,s)$ ) has a pole at $s=0$.
\end{prop}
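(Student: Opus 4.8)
The plan is to translate the statement about poles of the Asai $L$-function at $s=0$ into a statement about occurrences of the trivial character in the relevant Galois representation, and then read off that occurrence from the invariant bilinear form that witnesses conjugate-self-duality. I will treat the case where $\tilde\phi$ is irreducible, which is the case actually needed; the general conjugate-self-dual case follows by decomposing $\tilde\phi$ into isotypic pieces (a summand $\rho\oplus{}^{\sigma}\rho$ with $\rho$ not conjugate-self-dual contributes equally to $\Asai_+$ and $\Asai_-$, hence affects neither side of the equivalence), and if $\tilde\phi$ is not conjugate-self-dual at all the statement is vacuously true on both sides, since then the space of invariant forms below is zero.

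First I would record the analytic reduction. The representation $\Asai_\epsilon\circ(\tilde\phi\times{}^{\sigma}\tilde\phi)$ of $\mathcal{W}_{\Fo}$ on $\mathbf{V}\otimes\mathbf{V}$ is semisimple: it is semisimple on the index-two subgroup $\mathcal{W}_F$, where it is $\tilde\phi\otimes{}^{\sigma}\tilde\phi$, hence semisimple on $\mathcal{W}_{\Fo}$ by Clifford theory. For the $L$-function of a semisimple Weil representation a pole at $s=0$ occurs precisely when Frobenius acts with eigenvalue $1$ on the inertia invariants, and by semisimplicity this is equivalent to the trivial character $\mathbf{1}_{\mathcal{W}_{\Fo}}$ being a subrepresentation. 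Thus $L(\tilde\phi,\Asai_\epsilon,s)$ has a pole at $s=0$ if and only if $\mathbf{1}_{\mathcal{W}_{\Fo}}$ occurs in $\Asai_\epsilon\circ(\tilde\phi\times{}^{\sigma}\tilde\phi)$, and the proposition reduces to identifying the latter condition, for $\epsilon=+$ (resp.\ $-$), with $\tilde\phi$ being conjugate-orthogonal (resp.\ conjugate-symplectic).

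Next I would carry out the representation-theoretic computation. Restricting to $\mathcal{W}_F$, the $\mathcal{W}_F$-fixed vectors of $\mathbf{V}\otimes\mathbf{V}$ form, canonically, the space of bilinear forms $B$ on $\mathbf{V}$ satisfying the invariance condition (\ref{B-is-G-invar}); by Schur's lemma this space is one-dimensional when $\tilde\phi$ is irreducible conjugate-self-dual, spanned by a non-degenerate $B$, and is zero otherwise. The nontrivial coset of $\mathcal{W}_{\Fo}/\mathcal{W}_F$ is represented by $c$, and by the definition of $\Asai_\epsilon$ its action on this line sends $B$ to $\epsilon\,\widetilde{B}$, where $\widetilde{B}(u,v):=B(v,\tilde\phi(c^2)u)$ --- the $\epsilon$ coming from $\Asai_\epsilon(c)$ being $\epsilon$ times the interchange $v\otimes w\mapsto w\otimes v$, and the twist by $\tilde\phi(c^2)$ being forced by the requirement that $c^2\in\mathcal{W}_F$ act as $\tilde\phi(c^2)\otimes{}^{\sigma}\tilde\phi(c^2)$. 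Now $\widetilde{B}$ again satisfies (\ref{B-is-G-invar}), so $\widetilde{B}=\lambda B$ for a scalar $\lambda$; iterating, and using that $\tilde\phi(c^2)$ is a $B$-isometry (which is (\ref{B-is-G-invar}) with $w=c^2$), gives $\lambda^2=1$, and $\lambda=+1$ is by definition the conjugate-orthogonal case while $\lambda=-1$ is the conjugate-symplectic case of (\ref{B-is-Herm}). Hence $c$ fixes a nonzero vector of this line --- equivalently $\mathbf{1}_{\mathcal{W}_{\Fo}}$ occurs in $\Asai_\epsilon\circ(\tilde\phi\times{}^{\sigma}\tilde\phi)$ --- if and only if $\epsilon=\lambda$, which is exactly the desired dichotomy. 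As a cross-check one may also note that $\Asai_-=\Asai_+\otimes\delta_{F/\Fo}$, so that $\mathbf{1}_{\mathcal{W}_{\Fo}}$ occurs in $\Asai_-\circ(\tilde\phi\times{}^{\sigma}\tilde\phi)$ iff $\delta_{F/\Fo}$ occurs in $\Asai_+\circ(\tilde\phi\times{}^{\sigma}\tilde\phi)$, and since $\mathrm{Res}_{\mathcal{W}_F}$ of the latter is $\tilde\phi\otimes{}^{\sigma}\tilde\phi$, which contains $\mathbf{1}_{\mathcal{W}_F}$ with multiplicity one, exactly one of the two poles occurs --- consistent with conjugate-orthogonal and conjugate-symplectic being mutually exclusive for irreducible $\tilde\phi$. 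Alternatively the middle step can be routed through \cite[Lemma 2.2.1]{Mok-unitary}, identifying the occurrence of $\mathbf{1}_{\mathcal{W}_{\Fo}}$ with $\tilde\phi\times{}^{\sigma}\tilde\phi$ factoring (up to conjugacy) through $\iota_\epsilon^{\mathbf{G}_V}({}^L\mathbf{G}_V)$, which is the stabilizer of such a line under $\Asai_\epsilon$.

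The step I expect to be the main obstacle is the sign bookkeeping in the third paragraph: pinning down exactly how $c$ acts on the one-dimensional space of invariant forms requires care with the conjugate-dual conventions --- the appearance of $\tilde\phi(c^2)$, and the precise relation between the matrix ${}^{\sigma}\tilde\phi(w)={}^t\tilde\phi(c^{-1}wc)^{-1}$ entering $\Asai_\epsilon$ and the bilinear form $B$ of (\ref{B-is-G-invar}) --- and a misplaced sign there would interchange the roles of $\Asai_+$ and $\Asai_-$. Everything else (the semisimplicity reduction, Schur's lemma, and the reduction to the irreducible case) is routine.
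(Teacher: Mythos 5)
Your proof is correct and takes essentially the same route as the paper: the paper's own argument is the two-line assertion that $\tilde{\phi}$ is conjugate-orthogonal (resp.\ conjugate-symplectic) exactly when $\Asai_+\circ(\tilde{\phi}\times{}^\sigma\tilde{\phi})$ (resp.\ $\Asai_-\circ(\tilde{\phi}\times{}^\sigma\tilde{\phi})$) has an invariant vector, which in turn is equivalent to the vanishing of $\det\bigl(I_{\mathbf{V}\otimes\mathbf{V}}-\Asai_\epsilon\circ(\tilde{\phi}\times{}^\sigma\tilde{\phi})\bigr)$. Your second and third paragraphs (the semisimplicity reduction for the pole criterion, the identification of the $\mathcal{W}_F$-invariant line with the form $B$ of (\ref{B-is-G-invar}), and the computation that $c$ acts on that line by $\epsilon\lambda$ with $\lambda$ the sign in (\ref{B-is-Herm})) are precisely the content the paper leaves unproved, so you have supplied details rather than a different argument.
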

\proof
Note that $\tilde{\phi}$ is conjugate-orthogonal exactly when $\Asai_+\circ(\tilde{\phi}\times{}^\sigma\tilde{\phi})$ has an invariant vector, which is equivalent to saying that $\det\left(I_{\mathbf{V}\otimes \mathbf{V}}-\Asai_+\circ(\tilde{\phi}\times{}^\sigma\tilde{\phi})\right)=0$. Similarly for $\tilde{\phi}$ being conjugate-symplectic and taking $\Asai_-$.
\qed

It is clear that
\begin{equation}\label{Asai-twist}
  \begin{split}
    &L(\tilde{\phi},\Asai_\epsilon,s)\text{ has a pole at }s=0
    \\
    \text{if and only if }&L(\tilde{\phi}\tilde{\chi}_-,\Asai_{-\epsilon},s)\text{ has a pole at }s=0.
  \end{split}
\end{equation}
If $\tilde{\phi}$ is irreducible and skew, then $L(\tilde{\phi}\otimes {}^\sigma\tilde{\phi},s)$ has a simple pole at $s=0$, and so either $\Asai_+\circ(\tilde{\phi}\times{}^\sigma\tilde{\phi})$ or $\Asai_-\circ(\tilde{\phi}\times{}^\sigma\tilde{\phi})$ has an invariant vector but not both. Combining with (\ref{Asai-twist}), this provides a dichotomy on the singularity of the L-functions:
\begin{prop}\label{dicho-on-sing}
  If $\tilde{\phi}$ is irreducible and skew and $\epsilon$ is fixed, then either $L(\tilde{\phi},\Asai_\epsilon,s)$ or $L(\tilde{\phi}\tilde{\chi}_-,\Asai_\epsilon,s)$ has a (simple) pole at $s=0$, but not both.
\end{prop}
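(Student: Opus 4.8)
The plan is to deduce the dichotomy by combining three facts already established: (a) the factorization $L(\tilde{\phi}\otimes {}^\sigma\tilde{\phi},s)=L(\tilde{\phi},\Asai_+,s)L(\tilde{\phi},\Asai_-,s)$; (b) the twisting relation (\ref{Asai-twist}); and (c) the elementary fact that, when $\tilde{\phi}$ is irreducible as a representation of $\mathcal{W}_F$, the Rankin–Selberg $L$-function $L(\tilde{\phi}\otimes{}^\sigma\tilde{\phi},s)$ has a simple pole at $s=0$ precisely because ${}^\sigma\tilde{\phi}\cong\tilde{\phi}^\vee$ (skewness), so that $\tilde{\phi}\otimes{}^\sigma\tilde{\phi}$ contains the trivial representation exactly once. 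The last fact is what the excerpt already invokes in the paragraph preceding the proposition, so I would cite it rather than reprove it.

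First I would record that, since $\tilde\phi$ is skew, ${}^c\tilde\phi\cong\tilde\phi^\vee$, hence $\tilde\phi\otimes{}^\sigma\tilde\phi = \tilde\phi\otimes{}^c\tilde\phi^\vee \cong \tilde\phi\otimes\tilde\phi^{\vee\vee}$-type object whose invariants are one-dimensional by irreducibility (Schur). Therefore $L(\tilde{\phi}\otimes {}^\sigma\tilde{\phi},s)$ has a pole at $s=0$ of order exactly one. By the factorization in the proposition just above, exactly one of $L(\tilde{\phi},\Asai_+,s)$, $L(\tilde{\phi},\Asai_-,s)$ has a pole at $s=0$, and that pole is simple; this already gives a dichotomy between $\Asai_+$ and $\Asai_-$ for the fixed parameter $\tilde\phi$, which is the statement recalled in the excerpt but is not yet the statement of Proposition \ref{dicho-on-sing}.

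Next I would apply the same analysis to the twisted parameter $\tilde{\phi}\tilde{\chi}_-$. This is again irreducible and skew (irreducibility is preserved under twisting by a character, and conjugate-self-duality is preserved since $\tilde{\chi}_-$ is skew: ${}^c(\tilde\phi\tilde\chi_-)^\vee = {}^c\tilde\phi^\vee\cdot{}^c\tilde\chi_-^{-1}\cong\tilde\phi\cdot\tilde\chi_- $). So by the first paragraph, exactly one of $L(\tilde{\phi}\tilde{\chi}_-,\Asai_+,s)$, $L(\tilde{\phi}\tilde{\chi}_-,\Asai_-,s)$ has a (simple) pole at $s=0$. Now fix $\epsilon$. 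By (\ref{Asai-twist}), $L(\tilde{\phi},\Asai_\epsilon,s)$ has a pole at $s=0$ iff $L(\tilde{\phi}\tilde{\chi}_-,\Asai_{-\epsilon},s)$ has a pole at $s=0$. Combining: suppose both $L(\tilde{\phi},\Asai_\epsilon,s)$ and $L(\tilde{\phi}\tilde{\chi}_-,\Asai_\epsilon,s)$ had poles at $s=0$. From the first, $L(\tilde\phi\tilde\chi_-,\Asai_{-\epsilon},s)$ has a pole; together with $L(\tilde\phi\tilde\chi_-,\Asai_\epsilon,s)$ having a pole, the parameter $\tilde\phi\tilde\chi_-$ would be both conjugate-orthogonal and conjugate-symplectic, contradicting the order-one dichotomy applied to $\tilde\phi\tilde\chi_-$. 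Conversely, if neither had a pole, then by (\ref{Asai-twist}) (reading it the other way, i.e. applied with $\tilde\phi$ replaced by $\tilde\phi\tilde\chi_-$ and $\epsilon$ by $-\epsilon$, noting $\tilde\chi_-^2=1$) $L(\tilde\phi\tilde\chi_-,\Asai_{-\epsilon},s)$ has no pole either, so $\tilde\phi\tilde\chi_-$ would be neither conjugate-orthogonal nor conjugate-symplectic, again contradicting its dichotomy. Hence exactly one of the two has a pole.

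I do not anticipate a genuine obstacle here: the argument is a bookkeeping exercise once the order-one pole of $L(\tilde\phi\otimes{}^\sigma\tilde\phi,s)$ is granted and (\ref{Asai-twist}) is available. The one point requiring a little care is to make sure that (\ref{Asai-twist}) is being applied to the correct pair of parameters and signs — in particular that twisting $\tilde\phi\tilde\chi_-$ by $\tilde\chi_-$ returns $\tilde\phi$ because $\tilde\chi_-$ is quadratic — and that irreducibility and skewness genuinely pass to the twist $\tilde\phi\tilde\chi_-$, which I verified above. I would present the proof as: (1) $\tilde\phi\tilde\chi_-$ is irreducible and skew; (2) apply Proposition \ref{L-function-pole} and the factorization to both $\tilde\phi$ and $\tilde\phi\tilde\chi_-$ to get a $\pm$-dichotomy for each; (3) use (\ref{Asai-twist}) to transport the dichotomy for $\tilde\phi\tilde\chi_-$ into the statement about $L(\tilde\phi,\Asai_\epsilon,s)$ versus $L(\tilde\phi\tilde\chi_-,\Asai_\epsilon,s)$, ruling out "both" and "neither" as above.
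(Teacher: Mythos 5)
Your proposal is correct and follows essentially the same route as the paper: the simple pole of $L(\tilde{\phi}\otimes{}^\sigma\tilde{\phi},s)$ forces exactly one of the two Asai factors of $\tilde{\phi}$ to have a pole, and (\ref{Asai-twist}) converts that $\pm\epsilon$ dichotomy into the stated dichotomy between $\tilde{\phi}$ and $\tilde{\phi}\tilde{\chi}_-$ for fixed $\epsilon$. Your detour through the dichotomy for the twisted parameter $\tilde{\phi}\tilde{\chi}_-$ (and the check that it is irreducible and skew) is harmless but unnecessary, since the paper derives the contradiction directly from the dichotomy for $\tilde{\phi}$ itself.
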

\proof
If $\tilde{\phi}$ is skew, then at least one of $L(\tilde{\phi},\Asai_\epsilon,s)$ and $L(\tilde{\phi},\Asai_{-\epsilon},s)$ has a pole at $s=0$. The latter is equivalent to that $L(\tilde{\phi}\tilde{\chi}_-,\Asai_{\epsilon},s)$ has a pole at $s=0$. If both of them have such a pole, then this contradicts the dichotomy.
\qed

On the representation side, we have the Asai-Shahidi L-function $L(\tilde{\pi},\mathrm{AS}_\epsilon,s)$ introduced from the Langlands-Shahidi method \cite{shahidi-complementary} (see also \cite{Shahidi-Cogdell-2014}). The relation between Asai-Shahidi L-functions and Asai L-functions is provided by the following Proposition.

\begin{prop}\label{one-of-the-two}
  If $\tilde{\pi}_{\tilde{\phi}}$ is an irreducible representation of a general linear group with Langlands parameter ${\tilde{\phi}}$, then
$$L(\tilde{\pi}_{\tilde{\phi}},\mathrm{AS}_\epsilon,s)=L({\tilde{\phi}},\Asai_\epsilon,s).$$
\end{prop}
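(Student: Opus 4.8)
The plan is to deduce this equality of local $L$-functions from the compatibility of the Langlands--Shahidi construction with the local Langlands correspondence for $\mathrm{GL}_n$. The identity $L(\tilde\pi_{\tilde\phi},\mathrm{AS}_\epsilon,s)=L(\tilde\phi,\Asai_\epsilon,s)$ is precisely the statement that the Asai $L$-function attached by the Langlands--Shahidi method to an irreducible representation of $\mathrm{GL}_n(F)$ agrees with the Artin-type Asai $L$-function of its parameter, and this has been established in the literature. The cleanest route is to invoke Henniart's theorem \cite{Hen-ext-sym} (quoted in the introduction), which proves exactly the equality between the Asai--Shahidi $L$-function and the Galois Asai $L$-function for all irreducible representations of $\mathrm{GL}_n(F)$; one then only needs to observe that the two normalizations of $\Asai_\epsilon$ (the representation-theoretic $\mathrm{AS}_\epsilon$ and the Galois-side $\Asai_\epsilon$ defined above in terms of the extended parameter $\tilde\phi\times{}^\sigma\tilde\phi$) coincide under the dictionary provided by the embeddings $\iota_\epsilon^{\mathbf{G}_V}$.

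First I would recall that, by \cite{shahidi-complementary}, the Asai--Shahidi $L$-function $L(\tilde\pi,\mathrm{AS}_\epsilon,s)$ is defined for $\tilde\pi$ an irreducible (generic, and then in general by multiplicativity and the Langlands classification) representation of $\mathrm{GL}_n(F)$ via the constant term of Eisenstein series on a quasi-split unitary group of which $\mathrm{GL}_n$ is a Siegel Levi, with the sign $\epsilon$ recording which of the two outer forms / which embedding of the $L$-group is used. Then I would quote Henniart's identity: for every irreducible $\tilde\pi$ with parameter $\tilde\phi$, $L(\tilde\pi,\mathrm{AS}_\epsilon,s)=L({}^L\tilde\phi,\Asai_\epsilon,s)$, where the right side is the Artin Asai $L$-function of the parameter. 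Finally I would check that the Artin Asai $L$-function in Henniart's formulation is literally the function $L(\tilde\phi,\Asai_\epsilon,s)=\det\left(I_{\mathbf{V}\otimes\mathbf{V}}-\Asai_\epsilon\circ(\tilde\phi\times{}^\sigma\tilde\phi)|\cdot|^s\right)^{-1}$ defined just above the statement: this is a matter of unwinding the definition of $\Asai_\epsilon$ as the twisted tensor representation of ${}^L\tilde{\mathbf{G}}_V$, using that $\mathcal{W}_F$ acts on $\mathbf{V}\otimes\mathbf{V}$ by the ordinary tensor product of $\tilde\phi$ with ${}^\sigma\tilde\phi=\,^c\tilde\phi^\vee$, and that $c$ acts by $\epsilon$ times the flip --- exactly the data of the Asai (multiplicative) induction, possibly twisted by the quadratic character $\tilde\chi_-$ when $\epsilon=-$.

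The main obstacle, such as it is, is purely bookkeeping: reconciling the several sign/normalization conventions for the Asai representation. Different sources package the twist by $\delta_{F/\Fo}$ (equivalently $\tilde\chi_-$) into either the $+$ or the $-$ side, and one must make sure that the $\epsilon$ appearing in the display $\Asai_\epsilon(c)(v\otimes w)=\epsilon(w\otimes v)$ matches the $\epsilon$ in $\mathrm{AS}_\epsilon$ as used by Shahidi and Henniart. The relation \eqref{Asai-twist} in the excerpt already records the precise effect of twisting by $\tilde\chi_-$, so once the conventions are aligned on irreducible parameters --- where Proposition \ref{L-function-pole} and the dichotomy Proposition \ref{dicho-on-sing} pin down the behaviour completely --- the general case follows by the standard reduction to the generic/discrete series case via multiplicativity of both $L$-functions along parabolic induction and the Langlands classification. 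I expect no serious analytic or representation-theoretic difficulty here, since the hard theorem (the equality for $\mathrm{GL}_n$) is being imported wholesale from \cite{Hen-ext-sym}; the contribution of this proof is only to record that the objects on the two sides of the claimed equality are the ones Henniart relates.
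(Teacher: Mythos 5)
Your proposal is correct and matches the paper's proof, which simply cites Henniart's theorem from \cite{Hen-ext-sym} establishing the equality of the Asai--Shahidi and Galois Asai $L$-functions. The additional bookkeeping you describe about aligning the sign conventions is reasonable diligence but not something the paper spells out.
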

\proof
 This is proved in \cite{Hen-ext-sym}.
\qed

We can now answer the question posed at the beginning of this section. Let $\pi_{x,\tilde{\xi}}$ be as in the beginning of this section, and $\tilde{\pi}_{i_\circ}$ is one of the two conjugate-self-dual supercuspidal representations lying in the inertial class of $\tilde{\pi}_{\tilde{\xi}_{i_\circ}}$.
\begin{prop}\label{Asai-implies-reducible}
  $\tilde{\pi}_{i_\circ}|\det|^{s}\rtimes \pi_{x,\tilde{\xi}}$ is reducible at $s=1$ if and only if
$L(\tilde{\pi}_{i_\circ},\mathrm{AS}_{n-1},s)$ has a pole at $s=0$.
\end{prop}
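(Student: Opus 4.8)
The plan is to combine the reducibility criterion from M\oe glin's theory (as set up in Section~\ref{intro-endo-class}) with the Asai--Shahidi computations just established. Recall from the discussion of the extended cuspidal support that $\tilde{\pi}_{i_\circ}|\det|^s\rtimes \pi_{x,\tilde{\xi}}$ is reducible at $s=1$ precisely when $\tilde{\pi}_{i_\circ}$ lies in the extended cuspidal support of $\pi_{x,\tilde{\xi}}$, and that this reducibility is governed by the pole at $s=0$ of an intertwining operator, whose analytic behaviour is encoded by a Langlands--Shahidi (Asai--Shahidi) $L$-function. For a unitary group $\mathrm{U}_n$ with $n$ odd, the relevant Asai--Shahidi $L$-function attached to the Siegel-type induction from $\mathrm{GL}_{n_{i_\circ}}\times \mathrm{U}_{n}$ is $L(\tilde{\pi}_{i_\circ},\mathrm{AS}_{n-1},s)$, the subscript $n-1$ being dictated by the recipe $\Asai_n=\Asai_+$ ($n$ even) and $\Asai_-$ ($n$ odd) together with the parity bookkeeping in Shahidi's theory for the embedding $\mathrm{GL}_{n_{i_\circ}}(F)\times \mathrm{U}_n(F/\Fo)\hookrightarrow \mathrm{U}_{n+2n_{i_\circ}}(F/\Fo)$. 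So the first step is simply to quote this: the reducibility of $\tilde{\pi}_{i_\circ}|\det|^{s}\rtimes \pi_{x,\tilde{\xi}}$ at $s=1$ is equivalent to the Asai--Shahidi $L$-function $L(\tilde{\pi}_{i_\circ},\mathrm{AS}_{n-1},s)$ having a pole at $s=0$ (the shift by $1$ in $s$ matching the pole location $s=0$ after the standard normalization).

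Next I would translate this to the Galois side via Proposition~\ref{one-of-the-two}: since $\tilde{\pi}_{i_\circ}$ is supercuspidal with Langlands parameter some conjugate-self-dual irreducible skew parameter $\tilde{\phi}_{i_\circ}$ (one of the two in its inertial class, as explained at the start of this section), Henniart's identity gives $L(\tilde{\pi}_{i_\circ},\mathrm{AS}_{n-1},s)=L(\tilde{\phi}_{i_\circ},\Asai_{n-1},s)$. Then Proposition~\ref{L-function-pole} says this has a pole at $s=0$ exactly when $\tilde{\phi}_{i_\circ}$ is conjugate-orthogonal (if $n-1$ is even, i.e.\ $\Asai_{n-1}=\Asai_+$) or conjugate-symplectic (if $n-1$ is odd, i.e.\ $\Asai_{n-1}=\Asai_-$); in either case, this is precisely the skewness dictated by the sign convention $\Asai_{n-1}$, which was designed so that the pole occurs iff $\tilde{\phi}_{i_\circ}$ has the ``correct'' type relative to the unitary group $\mathrm{U}_n$. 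So the statement reduces to: $\tilde{\pi}_{i_\circ}|\det|^{s}\rtimes \pi_{x,\tilde{\xi}}$ is reducible at $s=1$ iff $\tilde{\phi}_{i_\circ}$ is $\epsilon$-skew for the sign $\epsilon$ attached to the twisted endoscopic datum $\mathbf{G}=\mathrm{U}_n$ of $\Res_{F/\Fo}\mathrm{GL}_n$.

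The remaining content is that this last condition is indeed the condition appearing in M\oe glin's classification of the extended cuspidal support. This is where I would invoke \cite{Moeg-base-change}: for a generic supercuspidal $\pi$ of $\mathbf{G}(\Fo)$ in a packet with base change parameter $\tilde\phi$, the condition that a supercuspidal $\tilde\pi_\circ$ of $\mathrm{GL}_{n_\circ}(F)$ lies in the extended cuspidal support — equivalently that $\tilde\pi_\circ|\det|^s\rtimes\pi$ reduces at $s=1$ — is that the parameter $\tilde\phi_\circ$ of $\tilde\pi_\circ$ is a component of $\tilde\phi$, and components of a conjugate-self-dual parameter for a unitary group are exactly the $\epsilon$-skew irreducible summands with the sign $\epsilon$ determined by the L-embedding $\iota_\epsilon^{\mathbf{G}_V}$; here, by Lemma~2.2.1 of \cite{Mok-unitary} (quoted before the Proposition on $\Ind_{E/F}\tilde\xi$), the sign is forced by the requirement that $\tilde\phi_{i_\circ}\times{}^\sigma\tilde\phi_{i_\circ}$ land in $\iota_\epsilon^{\mathbf{G}_V}({}^L\mathbf{G}_V)$, which is the sign $n-1$ in the $\Asai$ notation. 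Chaining these equivalences closes the proof.

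The main obstacle I anticipate is the bookkeeping of the sign: one must check carefully that the shift $s\mapsto s-1$ between the ``reducible at $s=1$'' normalization and the ``pole at $s=0$'' normalization of the Langlands--Shahidi $L$-function, together with the parity of $n$ and the passage through the Siegel Levi $\mathrm{GL}_{n_{i_\circ}}\times\mathrm{U}_n\hookrightarrow\mathrm{U}_{n+2n_{i_\circ}}$, really produces the index $n-1$ rather than $n+1$ or $n$; this is precisely the point where M\oe glin's normalization in \cite{Moeg-base-change} and Shahidi's in \cite{shahidi-complementary} must be reconciled, and it is the only genuinely delicate part — everything else is a direct citation of Propositions~\ref{one-of-the-two}, \ref{L-function-pole}, and the M\oe glin reducibility criterion.
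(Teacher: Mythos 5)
Your first paragraph already \emph{is} the paper's entire proof: the proposition is a direct citation of M{\oe}glin's reducibility criterion (\cite[5.6 Proposition]{Moeg-base-change}, alternatively \cite[A.2.1]{Moeg-exhaustion}), which works purely on the representation side and relates reducibility of $\tilde{\pi}_{i_\circ}|\det|^{s}\rtimes \pi_{x,\tilde{\xi}}$ at $s=1$ to the pole at $s=0$ of $L(\tilde{\pi}_{i_\circ},\mathrm{AS}_{n-1},s)$; nothing further is required, and your concern about the sign $n-1$ is settled by the normalization in the cited result. The two additional paragraphs — passing to the Galois side via Propositions \ref{one-of-the-two} and \ref{L-function-pole} and then characterizing the extended cuspidal support as the set of components of the base-change parameter of $\pi_{x,\tilde{\xi}}$ — should not be presented as part of this proof: at this stage the base-change parameter of $\pi_{x,\tilde{\xi}}$ is precisely what is not yet known (it is established only in Theorem \ref{theorem-stable-packet}, whose proof relies on the present proposition and on Theorem \ref{theorem-inertial-class}), so an argument routed through ``$\tilde{\phi}_{i_\circ}$ is a component of $\tilde{\phi}$'' is circular here. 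That Galois-side translation is instead exactly the content of the subsequent Corollary \ref{type-ECS} and of the packet discussion, where it is legitimately combined with the dichotomy of Proposition \ref{dicho-on-sing} to pick out which of the two conjugate-self-dual twists in the inertial class actually produces the reducibility.
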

\proof
By \cite[5.6 Proposition]{Moeg-base-change} or \cite[A.2.1]{Moeg-exhaustion}.
\qed


Let $\tilde{\xi}=\boxtimes_{i\in I}\tilde{\xi}_i$ be a regular +-skew character. If $\tilde{\nu}^{y}_{i,x,\tilde{\xi}}$ is the tamely ramified character defined in Theorem \ref{theorem-inertial-class}, then Proposition \ref{Asai-implies-reducible} implies directly the following corollary.

\begin{cor}\label{type-ECS}
   For each $i\in I$, $\tilde{\pi}_{\tilde{\xi}_i\tilde{\chi}^{E_i}_{(-1)^{n-1}}\tilde{\nu}^y_{i,x,\tilde{\xi}}}$ lies in the extended cuspidal support of ${\pi}_{x,\tilde{\xi}}$.
\end{cor}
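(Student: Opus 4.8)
The plan is to assemble \Cref{type-ECS} from the pieces established just before it, so the proof is essentially a bookkeeping argument tracking how various tame twists compose. First I would recall the output of \Cref{theorem-inertial-class}: for each $i_\circ\in I$ (here playing the role of a generic index $i$), there is a conjugate-self-dual representation $\tilde{\pi}_{i}$ of $\tilde{G}_{V_i}$, characterized up to the quadratic unramified twist $\tilde{\chi}_-\circ\det$ by the maximal simple type $\tilde{\kappa}_{\tilde{\xi}_i}\cdot(\tilde{\xi}_{i,\tame}\tilde{\nu}^{y,P}_{i,x,\tilde{\xi}})$, such that $\tilde{\pi}_{i}|\det|^s\rtimes\pi_{x,\tilde{\xi}}$ is reducible at a point with real part $1$. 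By \Cref{repres-determines-char-GL} together with \Cref{wild-part-tilde} (which lets us write $\tilde{\xi}_i=\tilde{\xi}_{i,\wild}\tilde{\xi}_{i,\tame}$), the type $\tilde{\kappa}_{\tilde{\xi}_i}\cdot\tilde{\xi}_{i,\tame}$ is the type of $\tilde{\pi}_{\tilde{\xi}_i}$ up to the appropriate conventions, so multiplying by the additional tame character $\tilde{\nu}^y_{i,x,\tilde{\xi}}$ shows that $\tilde{\pi}_i$ lies in the inertial class of $\tilde{\pi}_{\tilde{\xi}_i\tilde{\nu}^y_{i,x,\tilde{\xi}}}$; concretely one of the two conjugate-self-dual choices of $\tilde{\pi}_i$ is $\tilde{\pi}_{\tilde{\xi}_i\tilde{\nu}^y_{i,x,\tilde{\xi}}}$ and the other is $\tilde{\pi}_{\tilde{\xi}_i\tilde{\nu}^y_{i,x,\tilde{\xi}}\tilde{\chi}_-^{E_i}}$, since twisting $\tilde{\xi}_i$ by $\tilde{\chi}_-^{E_i}=\tilde{\chi}_-\circ N_{E_i/F}$ corresponds, via \Cref{repres-determines-char-GL}, to twisting $\tilde{\pi}_{\tilde{\xi}_i}$ by $\tilde{\chi}_-\circ\det$.

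Next I would invoke \Cref{Asai-implies-reducible}: the reducibility of $\tilde{\pi}_i|\det|^s\rtimes\pi_{x,\tilde{\xi}}$ at $s=1$ is equivalent to $L(\tilde{\pi}_i,\mathrm{AS}_{n-1},s)$ having a pole at $s=0$. Combining this with \Cref{theorem-inertial-class}, the correct $\tilde{\pi}_i$ — call it $\tilde{\pi}_i^{\mathrm{good}}$ — is precisely the one among the two conjugate-self-dual candidates whose Asai--Shahidi $L$-function $L(\cdot,\mathrm{AS}_{n-1},s)$ is singular at $s=0$. Here I use that $n$ is odd so $n-1$ is even, matching $\Asai_{n-1}=\Asai_+$; more generally the notation $\mathrm{AS}_{n-1}$ and $\Asai_{n-1}$ track the parity $(-1)^{n-1}$ appearing in the statement. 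By \Cref{one-of-the-two} (Henniart's equality of Asai--Shahidi and Asai $L$-functions) this pole condition is the same as $L(\tilde{\phi}_i,\Asai_{n-1},s)$ having a pole at $s=0$, where $\tilde{\phi}_i$ is the Langlands parameter of $\tilde{\pi}_i^{\mathrm{good}}$; and by \Cref{L-function-pole} this says $\tilde{\phi}_i$ is conjugate-orthogonal if $n-1$ is even (resp. conjugate-symplectic if odd), i.e. $(-1)^{n-1}$-skew.

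The final step is the dichotomy argument. The parameter of $\tilde{\pi}_{\tilde{\xi}_i\tilde{\nu}^y_{i,x,\tilde{\xi}}}$ is $\Ind_{E_i/F}(\tilde{\xi}_i\tilde{\nu}^y_{i,x,\tilde{\xi}})$, which is irreducible and skew by the argument in the Proposition following \Cref{extend-parameter} (using that $\tilde{\xi}_i\tilde{\nu}^y_{i,x,\tilde{\xi}}$ is still a regular conjugate-self-dual character — $\tilde{\nu}^y_{i,x,\tilde{\xi}}$ is quadratic hence does not affect regularity, and it is skew). By \Cref{dicho-on-sing}, for the fixed sign $\epsilon$ corresponding to $(-1)^{n-1}$, exactly one of $L(\tilde{\phi},\Asai_\epsilon,s)$ and $L(\tilde{\phi}\tilde{\chi}_-,\Asai_\epsilon,s)$ has a pole at $s=0$, where $\tilde{\phi}=\Ind_{E_i/F}(\tilde{\xi}_i\tilde{\nu}^y_{i,x,\tilde{\xi}})$ and $\tilde{\phi}\tilde{\chi}_-$ is the parameter of the other candidate $\tilde{\pi}_{\tilde{\xi}_i\tilde{\nu}^y_{i,x,\tilde{\xi}}\tilde{\chi}_-^{E_i}}$ (note $\tilde{\chi}_-^{E_i}=\tilde{\chi}_-$ on $\mathcal{W}_{E_i}$ restricts from $\tilde{\chi}_-$ on $\mathcal{W}_F$ since $E_i/F$ is odd unramified). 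Therefore whichever of the two candidates has the $\Asai_{(-1)^{n-1}}$-pole is $\tilde{\pi}_i^{\mathrm{good}}$, and since $\tilde{\chi}_{(-1)^{n-1}}^{E_i}$ is by definition $\tilde{\chi}_+^{E_i}$ (trivial) when $n-1$ is even and $\tilde{\chi}_-^{E_i}$ when $n-1$ is odd, the twist by $\tilde{\chi}_{(-1)^{n-1}}^{E_i}$ in the statement selects exactly the member of the inertial class whose parameter is $(-1)^{n-1}$-skew. Hence $\tilde{\pi}_{\tilde{\xi}_i\tilde{\chi}^{E_i}_{(-1)^{n-1}}\tilde{\nu}^y_{i,x,\tilde{\xi}}}$ is $\tilde{\pi}_i^{\mathrm{good}}$, which lies in the extended cuspidal support of $\pi_{x,\tilde{\xi}}$ by \Cref{Asai-implies-reducible} applied in the reverse direction. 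The main obstacle I anticipate is purely notational: making sure the normalizations of the various tame twists ($\tilde{\xi}_{i,\tame}$ versus $\tilde{\xi}_i$, the $\mathrm{AS}_{n-1}$ versus $\mathrm{AS}_n$ indexing, and whether $\tilde{\chi}_{(-1)^{n-1}}$ is being applied on the $E_i$-side or $F$-side) all line up consistently, since each individual implication is already available as a cited result.
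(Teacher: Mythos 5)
Your proposal is correct and follows the same route the paper intends: Theorem \ref{theorem-inertial-class} pins down the inertial class of the reducibility point, Proposition \ref{Asai-implies-reducible} together with Henniart's identity (Proposition \ref{one-of-the-two}) converts reducibility at $s=1$ into a pole of $L(\cdot,\Asai_{(-1)^{n-1}},s)$, and the skewness computation for $\Ind_{E_i/F}(\tilde{\xi}_i\tilde{\nu}^y_{i,x,\tilde{\xi}})$ plus the dichotomy of Proposition \ref{dicho-on-sing} selects the twist by $\tilde{\chi}^{E_i}_{(-1)^{n-1}}$. The paper states this corollary as an immediate consequence; your write-up is simply a fuller expansion of the same argument.
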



\subsection{Stable Packets}

We apply the parametrization of stable packets in \cite[Section 5.7]{Moeg-base-change}. Suppose that $\tilde{\xi}=\boxtimes_{i\in I}\tilde{\xi}_i$ is a $d$-regular character as in Section \ref{section regular char}, except that we now require that all $\tilde{\xi}_i$ are $(-1)^{n+1}$-skew. We define a representation of $\mathcal{W}_F$ by,
\begin{equation}\label{parameter-induced}
  \tilde{\phi}=\bigoplus_{i\in I}\tilde{\phi}_i\text{, where }\tilde{\phi}_i\cong \Ind_{E_i/F}\tilde{\xi}_i.
\end{equation}
This is a parameter considered in \cite{reeder-pos-depth}, and is called a very cuspidal parameter in this paper. The following properties hold.
\begin{itemize}
  \item The $d$-regularity of each $\tilde{\xi}_i$ implies that each $\tilde{\phi}_i$ is irreducible.
  \item Since $\tilde{\phi}_i$ is $(-1)^{n+1}$-skew, $L(\tilde{\phi}_i,\mathrm{Asai}_{n+1},s)$ has a pole at $s=0$, by Proposition \ref{L-function-pole}.
      \item The regularity of the product $\tilde{\xi}$ implies that $\tilde{\phi}$ is a $\sigma$-discrete parameter, where by definition $\tilde{\phi}$ has no multiplicity and all components are $\sigma$-invariant (c.f. \cite[Section 5.7]{Moeg-base-change}).
\end{itemize}

The parameter $\tilde{\phi}$ is \emph{stable}, which means that if we extend  $\tilde{\phi}$ to a morphism $\tilde{\phi}\times{}^\sigma\tilde{\phi}:\mathcal{W}_{\Fo}\rightarrow {}^L\tilde{\mathbf{G}}_V$ as in (\ref{extend-parameter}), then up to $\hat{\tilde{{\mathbf{G}}}}_V$-conjugacy it factors through the embedding $\iota_{(-1)^{n-1}}^{\mathbf{G}_V}:{}^L\mathbf{G}_V\rightarrow {}^L\tilde{\mathbf{G}}_V$, i.e.,
$$\iota_{(-1)^{n-1}}^{\mathbf{G}_V}\circ \phi=\tilde{\phi}\times{}^\sigma\tilde{\phi}.$$
for some parameter $\phi:\mathcal{W}_{\Fo}\rightarrow {}^L\mathbf{G}_V:=\hat{\mathbf{G}}_V\rtimes \mathcal{W}_{\Fo}$ (see \cite[Lemma 2.2.1]{Mok-unitary} and \cite[Proposition 5.2.3]{Moeg-endosc-L-param}).

Let $\tilde{\pi}_{\tilde{\phi}_i}$ be the supercuspidal representation of $\tilde{\mathbf{G}}_{V_i}(F)\cong \mathrm{GL}_{n_i}(F)$ corresponding to $\tilde{\phi}_i$ via the local Langlands correspondence for $\mathrm{GL}_{n_i} $ \cite{HT}, \cite{Hen-simple}, \cite{Scholze-LLC}; and denote by
$$\tilde{\pi}_{\tilde{\phi}}=\prod_{i\in I}\tilde{\pi}_{\tilde{\phi}_i}$$
the parabolic induction from the representation $\boxtimes_{i\in I}\tilde{\pi}_{\tilde{\phi}_i}$ of the Levi subgroup $\prod_{i\in I}\tilde{\mathbf{G}}_{V_i}(F)\cong \prod_{i\in I}\mathrm{GL}_{n_i}(F)$ of $\tilde{\mathbf{G}}_{V}(F)\cong \mathrm{GL}_{n}(F)$. This is the representation parametrized by ${\tilde{\phi}}$ under the local Langlands correspondence, and is tempered and $\sigma$-discrete, i.e., it is $\sigma$-invariant and parabolically induced from mutually inequivalent discrete series (see \cite[Section 2.2]{Moeg-endosc-L-param}). By \cite{Hen-unram},
$$\tilde{\pi}_{\tilde{\phi}_i}=\tilde{\pi}_{\tilde{\xi}_i}$$
 since each $E_i/F$ is unramified of odd degree.

 Let $\Pi_{\tilde{\phi}}$ be the packet of representations of $G_V$, each of whose extended cuspidal support coincide with the cuspidal support of $\tilde{\pi}_{\tilde{\phi}}$.

We temporarily write $\tilde{\xi}_+=\boxtimes_{i\in I}\tilde{\xi}_{i,+}$, where each component $\tilde{\xi}_{i,+}=\tilde{\xi}_i\tilde{\chi}^{E_i}_{(-1)^{n-1}}$ is $+$-skew. If $\tilde{\nu}^{y}_{i,x,\tilde{\xi}_+}$ is the tamely ramified character defined in Theorem \ref{theorem-inertial-class}, then Corollary \ref{type-ECS} implies that $\tilde{\pi}_{\tilde{\xi}_{i,+}\tilde{\nu}^{y}_{i,x,\tilde{\xi}_+}}$ lies in the extended cuspidal support of ${\pi}_{x,\tilde{\xi}}$. Write $\tilde{\nu}_{x,\tilde{\xi}_+}^{y}=\boxtimes_{i\in I}\tilde{\nu}^{y}_{i,x,\tilde{\xi}_+}$.

\begin{lem}\label{translation property}
  If $\tilde{\chi}=\boxtimes_{i\in I}\tilde{\chi}_i$ is a +-skew tamely ramified character of $\tilde{T}$, then
 $\tilde{\pi}_{\tilde{\xi}_{i,+}\tilde{\chi}_i\tilde{\nu}^{y}_{i,x,\tilde{\xi}_+}}$ lies in the extended cuspidal support of ${\pi}_{x,\tilde{\xi}\tilde{\chi}}$.
\end{lem}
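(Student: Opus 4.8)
\textbf{Proof plan for Lemma \ref{translation property}.}

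The plan is to reduce the statement to the case already handled by Corollary \ref{type-ECS} via a translation-by-a-tame-character argument. The key observation I want to exploit is that the whole machinery computing the amending character $\tilde{\nu}^{y}_{i,x,\tilde{\xi}}$ depends on $\tilde{\xi}$ only through the restriction $\tilde{\xi}|_{\tilde{T}^1_x}$: indeed, as noted just after Proposition \ref{equiv-relation-U} and made explicit in Proposition \ref{translate-bijection}, for fixed $T$, $x$ and fixed $\tilde{\theta}=\tilde{\xi}_x|_{\tilde{T}^1_x}$, the character $\tilde{\nu}^y_{x,\tilde{\xi}}$ is a \emph{fixed} tamely ramified character $\tilde{\nu}(\tilde{\theta})$ for every regular $+$-skew $\tilde{\xi}$ with that restriction. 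Since $\tilde{\chi}=\boxtimes_{i\in I}\tilde{\chi}_i$ is tamely ramified, the product $\tilde{\xi}\tilde{\chi}$ has the same restriction to $\tilde{T}^1_x$ as $\tilde{\xi}$, so that
$$\tilde{\nu}^{y}_{i,x,\tilde{\xi}\tilde{\chi}}=\tilde{\nu}^{y}_{i,x,\tilde{\xi}}$$
for all $i\in I$. The same holds after the $+$-twist, i.e. $\tilde{\nu}^{y}_{i,x,(\tilde{\xi}\tilde{\chi})_+}=\tilde{\nu}^{y}_{i,x,\tilde{\xi}_+}$, because $(\tilde{\xi}\tilde{\chi})_{i,+}=\tilde{\xi}_{i,+}\tilde{\chi}_i$ differs from $\tilde{\xi}_{i,+}$ by the tame character $\tilde{\chi}_i$.

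With this in hand, the argument runs as follows. First I would record that the cuspidal type of $\pi_{x,\tilde{\xi}\tilde{\chi}}$ is obtained from that of $\pi_{x,\tilde{\xi}}$ by multiplying the inflated tame part: in the notation of Section \ref{section Cuspidal types for U}, $\lambda_{x,\tilde{\xi}\tilde{\chi}}=\xi_{\Lambda_x,\mathrm{tm}}\chi_{\Lambda_x}\cdot\kappa_{x,\tilde{\xi}}$ where $\chi$ is the descent of $\tilde{\chi}$ to $T_x$, because the semisimple character $\theta_{\Lambda_x}$, the Heisenberg representation $\eta_{\Lambda_x}$, and hence the beta-extension $\kappa_{x,\tilde{\xi}}$ all depend only on $\tilde{\xi}|_{\tilde{T}^1_x}$ and so are unchanged. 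Then I would re-run the reducibility computation of Theorem \ref{theorem-inertial-class} with $\tilde{\xi}$ replaced by $\tilde{\xi}\tilde{\chi}$ throughout: the Hecke algebra $\mathcal{H}(G_W,\kappa^{\mathfrak{M}^z}_{\Lambda,\tilde{\xi}\tilde{\chi}}\cdot\rho_{i_\circ,x})$ covers $\mathcal{H}(M,(\tilde{\kappa}_{(\tilde{\xi}\tilde{\chi})_{i_\circ}}\boxtimes\kappa_{x,\tilde{\xi}\tilde{\chi}})\cdot(\tilde{\nu}^{z,P}_{i_\circ,x,\tilde{\xi}\tilde{\chi}}\tilde{\rho}_{i_\circ}\boxtimes\rho_x))$, and choosing $\tilde{\rho}_{i_\circ}=(\tilde{\xi}\tilde{\chi})_{i_\circ,\mathrm{tm}}\tilde{\chi}^z_{y,i_\circ}$, $\rho_x=\xi_{x,\mathrm{tm}}$ (note $\chi^z_{y,x}$ is trivial) gives, exactly as before, the parameters $3/2$ and $1/2$; the point is that the transfer character $\tilde{\chi}^z_{y,i_\circ}$ is independent of $\tilde{\xi}$ (it is even independent of the level, by the remark after the corollary computing it). Hence Theorem \ref{theorem-inertial-class} applies verbatim to the pair $(\tilde{\xi}\tilde{\chi},x)$ and produces a conjugate-self-dual representation in the inertial class of $\tilde{\pi}_{(\tilde{\xi}\tilde{\chi})_{i_\circ}\tilde{\nu}^{y}_{i_\circ,x,\tilde{\xi}\tilde{\chi}}}$ giving reducibility at real part $1$, and by Corollary \ref{type-ECS} the representation $\tilde{\pi}_{(\tilde{\xi}\tilde{\chi})_{i,+}\tilde{\nu}^{y}_{i,x,\tilde{\xi}\tilde{\chi}}}$ lies in the extended cuspidal support of $\pi_{x,\tilde{\xi}\tilde{\chi}}$. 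Finally, using $(\tilde{\xi}\tilde{\chi})_{i,+}=\tilde{\xi}_{i,+}\tilde{\chi}_i$ and $\tilde{\nu}^{y}_{i,x,\tilde{\xi}\tilde{\chi}}=\tilde{\nu}^{y}_{i,x,\tilde{\xi}_+}$ (the displayed equality above, applied to $\tilde{\xi}_+$ in place of $\tilde{\xi}$), this is precisely the claim that $\tilde{\pi}_{\tilde{\xi}_{i,+}\tilde{\chi}_i\tilde{\nu}^{y}_{i,x,\tilde{\xi}_+}}$ lies in the extended cuspidal support of $\pi_{x,\tilde{\xi}\tilde{\chi}}$.

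The main obstacle I anticipate is the bookkeeping needed to justify that \emph{all} the intermediate objects in Sections \ref{section repres}--\ref{section amending} (the simple/semisimple characters, Heisenberg representations, beta-extensions, compatible beta-extensions, and both amending characters $\mu^P_{\tilde\xi}$, $\mu^w_{\tilde\xi}$, as well as the transfer character) genuinely depend on $\tilde\xi$ only through $\tilde\xi|_{\tilde T^1_x}$ — this is morally clear from the constructions, since $\tilde\xi_{\mathrm{wd}}$ and the Heisenberg data are built from the restriction while the tame part is carried separately as an inflated character, but it must be stated carefully because a tame twist of $\tilde\xi$ does change $\tilde\xi_{i_\circ,\mathrm{tm}}$ (and hence the choice of $\tilde\rho_{i_\circ}$ in the cover), and one must check this twist is absorbed correctly into the Hecke-algebra computation without altering the parameters $r^P_y,r^P_z$. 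Once that independence is isolated as a lemma, the rest is a direct substitution into Theorem \ref{theorem-inertial-class} and Corollary \ref{type-ECS}.
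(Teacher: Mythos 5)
Your proposal is correct and matches the paper's own (much terser) proof: the paper likewise argues that $\tilde{\nu}^{y}_{i,x,\tilde{\xi}_+\tilde{\chi}}$ is unchanged as $\tilde{\chi}$ ranges over $+$-skew tamely ramified characters — i.e. the amending character depends only on $\tilde{\xi}|_{\tilde{T}^1_x}$, as in Proposition \ref{translate-bijection} — and then applies Corollary \ref{type-ECS} to the twisted character. Your extra bookkeeping (checking that the semisimple characters, beta-extensions and Hecke-algebra parameters are unaffected by a tame twist) is exactly the content the paper leaves implicit.
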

\proof
It is because $\tilde{\nu}^{y}_{i,x,\tilde{\xi}_+\tilde{\chi}}$ are all equal as $\tilde{\chi}$ ranges over +-skew tamely ramified characters. The result is immediate from Corollary \ref{type-ECS}.
\qed

\begin{prop}\label{tame-twist}
  For each $i\in I$, $\tilde{\pi}_{\tilde{\xi}_i}$ lies in the extended cuspidal support of ${\pi}_{x,\tilde{\xi}_+\tilde{\nu}^y_{x,\tilde{\xi}_+}}$.
\end{prop}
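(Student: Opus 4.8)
The plan is to deduce the statement from Corollary \ref{type-ECS}, applied not to $\tilde{\xi}_+$ itself but to its tamely ramified twist $\tilde{\eta}:=\tilde{\xi}_+\tilde{\nu}^y_{x,\tilde{\xi}_+}$, and then to simplify the character produced by the corollary using that both $\tilde{\chi}^{E_i}_{(-1)^{n-1}}$ and the amending character $\tilde{\nu}^y_{i,x,\tilde{\xi}_+}$ have order dividing $2$.

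First I would record that $\tilde{\eta}=\boxtimes_{i\in I}\tilde{\eta}_i$, with $\tilde{\eta}_i=\tilde{\xi}_{i,+}\tilde{\nu}^y_{i,x,\tilde{\xi}_+}$, is again a regular $+$-skew character in the sense of Section \ref{section regular char}: each $\tilde{\nu}^y_{i,x,\tilde{\xi}_+}$ is a $+$-skew tamely ramified character of $E_i^\times$ of order dividing $2$ (by the discussion preceding Corollary \ref{character-quad-or-not}), so each $\tilde{\eta}_i$ is $+$-skew, while the $d$-regularity together with the pairwise non-Galois-conjugacy of the restrictions $\tilde{\eta}_i|_{U^{2d}_{E_i}}$ are inherited from those of the $\tilde{\xi}_i$ (cf. the proof of Proposition \ref{translate-bijection}), the twisting characters being tamely ramified. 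Hence Corollary \ref{type-ECS} applies with $\tilde{\xi}$ replaced by $\tilde{\eta}$: for each $i\in I$ the representation $\tilde{\pi}_{\tilde{\eta}_i\tilde{\chi}^{E_i}_{(-1)^{n-1}}\tilde{\nu}^y_{i,x,\tilde{\eta}}}$ lies in the extended cuspidal support of $\pi_{x,\tilde{\eta}}=\pi_{x,\tilde{\xi}_+\tilde{\nu}^y_{x,\tilde{\xi}_+}}$. It then remains to identify the subscript. Since $\tilde{\eta}$ and $\tilde{\xi}_+$ differ by the tamely ramified character $\tilde{\nu}^y_{x,\tilde{\xi}_+}$ they have the same restriction to $\tilde{T}^1_x$, so the amending character is unchanged, $\tilde{\nu}^y_{i,x,\tilde{\eta}}=\tilde{\nu}^y_{i,x,\tilde{\xi}_+}$ (this is exactly the invariance used in the proof of Lemma \ref{translation property}); writing $\tilde{\eta}_i=\tilde{\xi}_i\,\tilde{\chi}^{E_i}_{(-1)^{n-1}}\,\tilde{\nu}^y_{i,x,\tilde{\xi}_+}$ via the definition $\tilde{\xi}_{i,+}=\tilde{\xi}_i\tilde{\chi}^{E_i}_{(-1)^{n-1}}$, one gets
\[ \tilde{\eta}_i\,\tilde{\chi}^{E_i}_{(-1)^{n-1}}\,\tilde{\nu}^y_{i,x,\tilde{\eta}}=\tilde{\xi}_i\cdot\bigl(\tilde{\chi}^{E_i}_{(-1)^{n-1}}\bigr)^2\cdot\bigl(\tilde{\nu}^y_{i,x,\tilde{\xi}_+}\bigr)^2=\tilde{\xi}_i, \]
because $\tilde{\chi}^{E_i}_{(-1)^{n-1}}$ is trivial or unramified quadratic and $\tilde{\nu}^y_{i,x,\tilde{\xi}_+}$ is at most quadratic. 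This yields the claim for all $n$.

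Since the argument is purely a bookkeeping of tamely ramified twists on top of Corollary \ref{type-ECS}, I do not expect a serious obstacle; the only points that need genuine attention are the invariance $\tilde{\nu}^y_{i,x,\tilde{\eta}}=\tilde{\nu}^y_{i,x,\tilde{\xi}_+}$ under tame twisting (which rests on the amending character depending only on $\tilde{\xi}_+|_{\tilde{T}^1_x}$) and the verification that $\tilde{\eta}$ still satisfies the regularity and $+$-skewness hypotheses required to invoke Corollary \ref{type-ECS}.
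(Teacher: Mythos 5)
Your proof is correct and follows essentially the same route as the paper: the paper obtains the statement by specializing Lemma \ref{translation property} (which is itself Corollary \ref{type-ECS} combined with the invariance of the amending character under tamely ramified twists) at $\tilde{\chi}=\tilde{\nu}^y_{x,\tilde{\xi}_+}$, and your argument is exactly this computation unfolded, using that $\tilde{\chi}^{E_i}_{(-1)^{n-1}}$ and $\tilde{\nu}^y_{i,x,\tilde{\xi}_+}$ square to the trivial character. Your explicit verification that the twisted character remains regular and $+$-skew is a hypothesis check the paper leaves implicit.
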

\proof
Just take $\tilde{\chi}=\tilde{\nu}^y_{x,\tilde{\xi}_+}$ in Lemma \ref{translation property}.
\qed

\begin{thm}\label{theorem-stable-packet}
  Let $\tilde{\phi}$ be a very cuspidal parameter for $\mathbf{G}_V=\mathrm{U}_{n,F/\Fo}$ such that each $\tilde{\phi}_i$ is $(-1)^{n+1}$-skew, and let $\tilde{\xi}$ be the associated character. The set
  $$\{\pi_{x,\tilde{\xi}_+\tilde{\nu}_{x,\tilde{\xi}_+}^{y}}\}_{x\in\mathcal{D}}$$
  is a stable packet with Langlands parameter $\phi$. In other words, its base-change (relative to $\iota_{(-1)^{n-1}}^{\mathbf{G}_V}$) is the representation $\tilde{\pi}_{\tilde{\phi}}$.
\end{thm}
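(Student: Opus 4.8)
The plan is to show that the map $x\mapsto\pi_{x,\tilde{\xi}_+\tilde{\nu}_{x,\tilde{\xi}_+}^{y}}$ is a bijection from $\mathcal{D}$ onto the packet $\Pi_{\tilde{\phi}}$, and then to read off the base-change reformulation from the defining property of $\Pi_{\tilde{\phi}}$. First I would check that the recipe is well posed: twisting a $d$-regular character by the tamely ramified character $\tilde{\chi}^{E_i}_{(-1)^{n-1}}$ affects neither the level nor the $d$-regularity, so $\tilde{\xi}_+=\boxtimes_{i\in I}\tilde{\xi}_{i,+}$ is again $d$-regular, now $+$-skew; and by Theorem \ref{theorem-inertial-class} together with Corollary \ref{character-quad-or-not} the amending character $\tilde{\nu}_{x,\tilde{\xi}_+}^{y}$ is tamely ramified and at most quadratic, so $\tilde{\xi}_+\tilde{\nu}_{x,\tilde{\xi}_+}^{y}$ is still a $d$-regular $+$-skew character and $\pi_{x,\tilde{\xi}_+\tilde{\nu}_{x,\tilde{\xi}_+}^{y}}$ is a well-defined very cuspidal representation of $G_V$. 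Recall also that the parameter $\phi$ exists because $\tilde{\phi}$ is stable, as observed just before the statement.

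Next I would prove the inclusion $\{\pi_{x,\tilde{\xi}_+\tilde{\nu}_{x,\tilde{\xi}_+}^{y}}\}_{x\in\mathcal{D}}\subseteq\Pi_{\tilde{\phi}}$. By Proposition \ref{tame-twist}, for every $x\in\mathcal{D}$ and every $i\in I$ the supercuspidal $\tilde{\pi}_{\tilde{\phi}_i}=\tilde{\pi}_{\tilde{\xi}_i}$ lies in the extended cuspidal support of $\pi_{x,\tilde{\xi}_+\tilde{\nu}_{x,\tilde{\xi}_+}^{y}}$, so that extended cuspidal support contains the whole cuspidal support $\{\tilde{\pi}_{\tilde{\phi}_i}\}_{i\in I}$ of $\tilde{\pi}_{\tilde{\phi}}$. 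Since $\sum_{i\in I}n_i=n$ already saturates the rank of $\mathbf{G}_V$, I would apply M{\oe}glin's inequality \cite[4. Proposition]{Moeg-base-change} on the ranks of the unitary groups attached to the blocks of an extended cuspidal support to conclude that this containment is an equality; by the characterisation of $\Pi_{\tilde{\phi}}$ in terms of extended cuspidal supports (\cite{Moeg-base-change}, \cite{Moeg-endosc-L-param}) it follows that $\pi_{x,\tilde{\xi}_+\tilde{\nu}_{x,\tilde{\xi}_+}^{y}}\in\Pi_{\tilde{\phi}}$ for all $x$.

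For surjectivity I would argue by counting. Injectivity of $x\mapsto\pi_{x,\tilde{\xi}_+\tilde{\nu}_{x,\tilde{\xi}_+}^{y}}$ is Proposition \ref{equiv-relation-U} (equivalently Proposition \ref{translate-bijection}): distinct classes $x\in\mathcal{D}$ give non-$G_V$-conjugate torus embeddings, hence non-isomorphic supercuspidals. On the source side $\#\mathcal{D}=2^{\#I-1}$ by Corollary \ref{partition-as-embeddings}. On the target side, $\tilde{\phi}=\bigoplus_{i\in I}\tilde{\phi}_i$ is a $\sigma$-discrete parameter whose components are pairwise inequivalent irreducible $(-1)^{n-1}$-skew representations; a standard computation — each block contributes a factor $\mathbb{Z}/2$ to the centraliser of $\phi$, while the fixed part $Z(\hat{\mathbf{G}}_V)^{\Gamma_\Fo}\cong\mathbb{Z}/2$ of the centre sits diagonally — shows that the component group $\mathcal{S}_\phi$ of the associated parameter $\phi\colon\mathcal{W}_\Fo\to{}^L\mathbf{G}_V$ has order $2^{\#I-1}$, so by the endoscopic classification \cite[Theorem 2.5.1]{Mok-unitary} and Arthur's theory \cite{Arthur-new-book} one gets $\#\Pi_{\tilde{\phi}}=\#\Pi_\phi=2^{\#I-1}$. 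Comparing cardinalities shows the injection $\mathcal{D}\hookrightarrow\Pi_{\tilde{\phi}}$ is a bijection, so $\{\pi_{x,\tilde{\xi}_+\tilde{\nu}_{x,\tilde{\xi}_+}^{y}}\}_{x\in\mathcal{D}}=\Pi_{\tilde{\phi}}$ is exactly the stable packet of $\phi$. The base-change assertion is then immediate, since $\tilde{\pi}_{\tilde{\phi}}$ is by construction the representation of $\mathrm{GL}_n(F)$ whose cuspidal support equals the common extended cuspidal support of the members of $\Pi_{\tilde{\phi}}$, which is precisely the base change relative to $\iota_{(-1)^{n-1}}^{\mathbf{G}_V}$.

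I expect the exhaustion step to be the main obstacle, namely the identity $\#\Pi_{\tilde{\phi}}=\#\mathcal{D}$: it requires pinning down the component group $\mathcal{S}_\phi$ of $\phi$ — in particular the contribution of the fixed part $Z(\hat{\mathbf{G}}_V)^{\Gamma_\Fo}$ of the centre — and matching its character group with the set $\mathcal{D}$ of $G_V$-conjugacy classes of embeddings, whereas the inclusion into $\Pi_{\tilde{\phi}}$ and the injectivity are comparatively routine given Proposition \ref{tame-twist}, Proposition \ref{equiv-relation-U} and M{\oe}glin's inequality.
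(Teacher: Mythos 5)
Your proposal is correct and follows essentially the same route as the paper: inclusion into $\Pi_{\tilde{\phi}}$ via Proposition \ref{tame-twist} and M{\oe}glin's inequality on extended cuspidal supports, then exhaustion by matching $\#\mathcal{D}=2^{\#I-1}$ (Propositions \ref{equiv-relation-U} and \ref{translate-bijection} for injectivity) against the packet size. The only difference is cosmetic: you compute $\#\Pi_{\tilde{\phi}}=|\mathcal{S}_\phi|=2^{\#I-1}$ via the component group in Mok's classification, whereas the paper cites M{\oe}glin's \cite[7.1 Th{\'e}or{\`e}me]{Moeg-base-change} for the same count.
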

\proof

By Propositions \ref{Asai-implies-reducible} and \ref{tame-twist}, the set $\{\tilde{\pi}_{\tilde{\xi}_i}\}_{i\in I}$ is a subset of the extended cuspidal support of $\pi_{x,\tilde{\xi}_+\tilde{\nu}_{x,\tilde{\xi}_+}^{y}}$. The two sets are indeed equal by the estimate \cite[4. Proposition]{Moeg-base-change}, which says that if $\tilde{\pi}_{\tilde{\xi}_i}$ is a representation of GL$_{n_i}$, then $$\sum n_i\leq n,$$ where the sum ranges over the representations in the extended cuspidal support.
\\\\
The size of the set $\{\pi_{x,\tilde{\xi}_+\tilde{\nu}_{x,\tilde{\xi}_+}^{y}}\}_{x\in\mathcal{D}}$ is $\mathcal{D}$ by Propositions \ref{equiv-relation-U} and \ref{translate-bijection}, and the representations inside have the same extended cuspidal support. By the discussion in \cite[Section 5.7]{Moeg-base-change}, they belong to the stable packet $\Pi_{\tilde{\phi}}$. Another estimate \cite[7.1 Th{\'e}or{\`e}me]{Moeg-base-change} implies that
$$\#\Pi_{\tilde{\phi}}=2^{\#I-1}=\#\mathcal{D}.$$
Therefore, the set exhausts the whole packet $\Pi_{\tilde{\phi}}$.

\qed

Finally, we prove the uniqueness of our amending characters.

\begin{prop}
  For each $x\in\mathcal{D}$ and regular +-skew character $\tilde{\xi}$, there is a unique tamely ramified character $\tilde{\nu}_{x,\tilde{\xi}}$ such that the base change of $\pi_{x,\tilde{\xi}\tilde{\nu}_{x,\tilde{\xi}}}$ is $\tilde{\pi}_{\tilde{\phi}}$.
\end{prop}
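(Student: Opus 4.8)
\medskip

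Existence is already in hand: by Theorem~\ref{theorem-stable-packet} together with Corollary~\ref{type-ECS}, the base change of $\pi_{x,\tilde\xi\tilde\nu}$ equals $\tilde\pi_{\tilde\phi}$ when $\tilde\nu=\tilde\nu^{y}_{x,\tilde\xi}$, so only uniqueness has to be proved. The plan is to reduce it to the rigidity statement for maximal simple types of general linear groups, Proposition~\ref{repres-determines-char-GL}(ii), by first computing explicitly the base change of $\pi_{x,\tilde\xi\tilde\nu}$ for an arbitrary tamely ramified $\tilde\nu$ (which must be $+$-skew for $\pi_{x,\tilde\xi\tilde\nu}$ to be defined), and then exploiting the two regularity conditions built into $\tilde\xi$.

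First I would note that the amending character is insensitive to tame twisting. Since $d>0$, every tamely ramified character of $\tilde T_x$ is trivial on $\prod_{i}U_{E_i}^{2d}$, so $\tilde\xi\tilde\nu$ is again $d$-regular $+$-skew, and, exactly as in the proof of Lemma~\ref{translation property}, $\tilde\nu^{y}_{i,x,\tilde\xi\tilde\nu}=\tilde\nu^{y}_{i,x,\tilde\xi}$ for all $i$ --- indeed $\tilde\nu^{y}_{i,x,\tilde\xi}$ is assembled from signature characters of the ${\boldsymbol{\mu}}_{E_i}$-action on root spaces of the lattice sequences $\Lambda_{i,x}$ and $\mathfrak{M}^{w}_{i,x}$, which depend only on $d$ and on the partition $I=I_\mathrm{o}\sqcup I_\mathrm{e}$ attached to $x$. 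Applying Corollary~\ref{type-ECS} with $\tilde\xi$ replaced by $\tilde\xi\tilde\nu$, and then the estimate $\sum n_i\le n$ of \cite[4.~Proposition]{Moeg-base-change} precisely as in the proof of Theorem~\ref{theorem-stable-packet}, the extended cuspidal support of $\pi_{x,\tilde\xi\tilde\nu}$ is the multiset
$$\bigl\{\,\tilde\pi_{\tilde\xi_i\tilde\nu_i\tilde\chi^{E_i}_{(-1)^{n-1}}\tilde\nu^{y}_{i,x,\tilde\xi}}\,\bigr\}_{i\in I},$$
where $\tilde\nu=\boxtimes_{i\in I}\tilde\nu_i$, consisting of $\#I$ pairwise non-isomorphic supercuspidals of total degree $n$; equivalently, the base change of $\pi_{x,\tilde\xi\tilde\nu}$ is $\tilde\pi_{\tilde\psi}$ with $\tilde\psi=\bigoplus_{i\in I}\Ind_{E_i/F}\bigl(\tilde\xi_i\tilde\nu_i\tilde\chi^{E_i}_{(-1)^{n-1}}\tilde\nu^{y}_{i,x,\tilde\xi}\bigr)$.

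Now let $\tilde\nu^{(1)}$ and $\tilde\nu^{(2)}$ both have the required property, so that the corresponding multisets above both coincide with the cuspidal support of $\tilde\pi_{\tilde\phi}$, hence with each other. Matching up their members and invoking Proposition~\ref{repres-determines-char-GL}(ii), there are a bijection $\rho$ of $I$ with $n_{\rho(i)}=n_i$ and elements $\gamma_i\in\Gamma_{E_i/F}$ (via the identification of $E_{\rho(i)}$ with $E_i$ as unramified extensions of $F$ of degree $n_i$) such that
$$\tilde\xi_{\rho(i)}\,\tilde\nu^{(2)}_{\rho(i)}\,\tilde\chi^{E_i}_{(-1)^{n-1}}\,\tilde\nu^{y}_{\rho(i),x,\tilde\xi}={}^{\gamma_i}\!\bigl(\tilde\xi_{i}\,\tilde\nu^{(1)}_{i}\,\tilde\chi^{E_i}_{(-1)^{n-1}}\,\tilde\nu^{y}_{i,x,\tilde\xi}\bigr)$$
for every $i\in I$. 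Restricting this equality to $U_{E_i}^{2d}$ annihilates all the tamely ramified factors and leaves $\tilde\xi_{\rho(i)}|_{U_{E_i}^{2d}}={}^{\gamma_i}\bigl(\tilde\xi_i|_{U_{E_i}^{2d}}\bigr)$; since by hypothesis the restrictions $\tilde\xi_j|_{U_{E_j}^{2d}}$, $j\in I$, lie in pairwise distinct $\Gamma_{E_j/F}$-orbits, this forces $\rho=\mathrm{id}$, and then the $d$-regularity of $\tilde\xi_i$ forces $\gamma_i=1$ for each $i$. Substituting $\rho=\mathrm{id}$ and $\gamma_i=1$ into the displayed identity and cancelling the common factors $\tilde\xi_i\tilde\chi^{E_i}_{(-1)^{n-1}}\tilde\nu^{y}_{i,x,\tilde\xi}$ gives $\tilde\nu^{(2)}_i=\tilde\nu^{(1)}_i$ for all $i$, that is, $\tilde\nu^{(1)}=\tilde\nu^{(2)}$.

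The step requiring care is the computation of the extended cuspidal support in the second paragraph: it rests on Corollary~\ref{type-ECS} for one inclusion and on M{\oe}glin's rank inequality to promote it to an equality, and it is there that the transversality of the Galois orbits $\tilde\xi_j|_{U_{E_j}^{2d}}$ is first used, to guarantee that the multiset has $\#I$ distinct members of total degree $n$. The comparison argument itself is then short bookkeeping; its only delicate feature is the clean division of labour between the two regularity hypotheses --- the ``across $j$'' condition pinning down the permutation $\rho$, and the ``within $E_i$'' $d$-regularity killing the residual Galois twist $\gamma_i$ --- together with the fact, crucial in the first paragraph, that $d>0$ makes all tamely ramified characters disappear upon restriction to $U_{E_i}^{2d}$.
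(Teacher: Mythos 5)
Your proof is correct and follows essentially the same route as the paper's: both identify the cuspidal support of the base change of $\pi_{x,\tilde\xi\tilde\nu}$, match its components up to a permutation of $I$ and Galois twists (the paper cites Zelevinski's classification where you cite Proposition~\ref{repres-determines-char-GL}(ii)), and then restrict to a wild congruence subgroup --- $U^{2d}_{E_i}$ for you, $U^{1}_{E_i}$ in the paper --- to annihilate the tame characters and invoke the two regularity hypotheses to force the permutation and the $\gamma_i$ to be trivial. The only difference of any substance is that the paper also disposes of the level-zero case separately, which you tacitly exclude by assuming $d>0$.
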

\proof
If the level $d$ of $\tilde{\xi}$ is 0, then $\tilde{\nu}_{x,\tilde{\xi}}$ is the trivial character. The proposition is an easy consequence of Lemma \ref{translation property} and Proposition \ref{equiv-relation-U}. We now assume that $d>0$. Suppose that the base changes of $\pi_{x,\tilde{\xi}\tilde{\nu}_{x,\tilde{\xi}}}$ and $\pi_{x,\tilde{\xi}\tilde{\nu}'_{x,\tilde{\xi}}}$ are both $\tilde{\pi}_{\tilde{\xi}}$ for two tamely ramified characters $\tilde{\nu}_{x,\tilde{\xi}}=\tilde{\nu}'_{x,\tilde{\xi}}$. By Proposition \ref{tame-twist}, the base change of $\pi_{x,\tilde{\xi}}$ is $$\prod_{i\in I}\tilde{\pi}_{\tilde{\xi}_i\tilde{\nu}_{i,x,\tilde{\xi}}^{-1}}=\prod_{i\in I}\tilde{\pi}_{\tilde{\xi}_i(\tilde{\nu}_{i,x,\tilde{\xi}}')^{-1}}.$$
By Zelevinzki's classification of representations of GL${}_n$ \cite{Zelevinsky-GLn-II}, there is a permutation $\sigma$ of the set $I$ such that for all $i\in I$,
\begin{equation}\label{sigma(xi-nu)}
  \tilde{\xi}_{\sigma(i)}\tilde{\nu}_{\sigma(i),x,\tilde{\xi}}^{-1}={}^{\gamma_i}({\tilde{\xi}_i(\tilde{\nu}_{i,x,\tilde{\xi}}')^{-1}})={}^{\gamma_i}{\tilde{\xi}_i(\tilde{\nu}_{i,x,\tilde{\xi}}')^{-1}}
\end{equation}
 for some $\gamma_i\in \Gamma_{E_i/F}$ . Now the restriction
$$\tilde{\xi}_{\sigma(i)}({}^{\gamma_i}{\tilde{\xi}_i}^{-1})|_{U^1_{E_i}}=\tilde{\nu}_{\sigma(i),x,\tilde{\xi}}(\tilde{\nu}_{i,x,\tilde{\xi}}')^{-1}|_{U^1_{E_i}}\equiv 1.$$
The right side is trivial since both ${\nu}_{\sigma(i),x,\tilde{\xi}}$ and $\tilde{\nu}_{i,x,\tilde{\xi}}'$, but the left side is non-trivial by the regularity of $\tilde{\xi}$ unless $\sigma=1$ and all $\gamma_i=1$. Therefore, (\ref{sigma(xi-nu)}) implies that $\tilde{\nu}_{i,x,\tilde{\xi}}=\tilde{\nu}_{i,x,\tilde{\xi}}'$.
\qed

\subsection{Base change in general}\label{section base change H}

We follow \cite[Section 4.7]{Row-u3} (or \cite[Section 2.4]{Mok-unitary}) to describe the elliptic twisted endoscopic data for $\tilde{\mathbf{G}}_V$. Let $n=n_1+n_2$, where $n_1,n_2\in\mathbb{Z}_{\geq 0}$, be a partition of $n$ and, for $j=1,2$, let $V_j$ be a Hermitian space of $F$-dimension $n_j$ such that $\mathbf{G}_{V_j}$ is a quasi-split unitary group over $\Fo$. The elliptic twisted endoscopic data for $\tilde{\mathbf{G}}_V$ are the pairs
$$(\mathbf{H},\iota^\mathbf{H})=(\mathbf{G}_{V_1}\times \mathbf{G}_{V_2},\iota^\mathbf{H}_{\epsilon_1,\epsilon_2}),$$
where $\epsilon_1, \epsilon_2\in\{\pm1\}$ such that
\begin{equation*}
  (\epsilon_1, \epsilon_2)=\begin{cases}
  (+,-)\text{ or }(-,+)&\text{ if }n_1\equiv n_2\mod 2,
  \\
  (+,+)\text{ or }(-,-)&\text{ if }n_1\not\equiv n_2\mod 2,
\end{cases}
\end{equation*}
 and $\iota^\mathbf{H}_{\epsilon_1,\epsilon_2}$ is the composition of the embeddings
$${}^L\mathbf{H}\xrightarrow{\iota^{\mathbf{G}_{V_1}}_{\epsilon_1}\times\iota^{\mathbf{G}_{V_2}}_{\epsilon_2}}{}^L(\tilde{\mathbf{G}}_{V_1}\times \tilde{\mathbf{G}}_{V_2})\xrightarrow{\begin{smallmatrix}
  \text{diagonal}
  \\
  \text{embedding}
\end{smallmatrix}} {}^L\tilde{\mathbf{G}}_V,$$
where $\iota^{\mathbf{G}_{V_j}}_{\epsilon_j}$ is defined in (\ref{stable-embedding-rules}).

We refer the notion of the equivalence of endoscopic data to \cite[Section 2.1]{KS}, and only remark that the data above are inequivalent to each other except when $n_1=n_2$, in which case
$$(\mathbf{G}_{V_1}\times \mathbf{G}_{V_2},\iota^\mathbf{H}_{+,-})\text{ and }(\mathbf{G}_{V_1}\times \mathbf{G}_{V_2},\iota^\mathbf{H}_{-,+})$$
are equivalent.

The purpose of defining elliptic twisted endoscopic data is to stabilize the $\sigma$-discrete spectrum, in the following sense.
\begin{prop}[\text{\cite[6.1 Theorem]{Moeg-base-change}, \cite[Theorems 6.4.1, 6.4.2]{Moeg-endosc-L-param}}]
  If $\pi$ is a $\sigma$-discrete representation of $\tilde{G}_V$, then there exists a unique elliptic endoscopic datum $(\mathbf{H},\iota^\mathbf{H}_{\epsilon_1,\epsilon_2})$ of  $\tilde{G}_V$ such that $\pi$ is $(\mathbf{H},\iota^\mathbf{H}_{\epsilon_1,\epsilon_2})$-stable, which means that the $\sigma$-twisted trace of $\pi$ is the endoscopic transfer \cite{LS}, \cite{KS} of a sum of characters in a stable packet of $\mathbf{H}$.
\end{prop}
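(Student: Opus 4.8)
Since this proposition is quoted from \cite[6.1 Theorem]{Moeg-base-change} and \cite[Theorems 6.4.1, 6.4.2]{Moeg-endosc-L-param}, the plan is not to reprove it from scratch but to record how it is assembled from the material recalled above, isolating the one deep input. First I would pass to the dual side. Writing $\tilde{\phi}:\mathcal{WD}_F\to\GL_n(\mathbb{C})$ for the Langlands parameter of $\pi$ under the local Langlands correspondence for $\GL_n$ (\cite{HT}, \cite{Hen-simple}, \cite{Scholze-LLC}), and using that this correspondence intertwines $\pi\mapsto\pi\circ\sigma$ with $\tilde{\phi}\mapsto{}^\sigma\tilde{\phi}$, the hypothesis that $\pi$ is $\sigma$-invariant becomes $\tilde{\phi}\cong{}^\sigma\tilde{\phi}$, i.e. $\tilde{\phi}$ is conjugate-self-dual. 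Since $\pi$ is $\sigma$-discrete it is in particular tempered and multiplicity-free on the Galois side, so $\tilde{\phi}$ admits a unique decomposition $\tilde{\phi}=\bigoplus_i\tilde{\phi}_i$ into pairwise inequivalent irreducible conjugate-self-dual summands. By the dichotomy of Proposition \ref{dicho-on-sing} — equivalently, by Proposition \ref{L-function-pole} applied to the simple pole of $L(\tilde{\phi}_i\otimes{}^\sigma\tilde{\phi}_i,s)$ at $s=0$ — each $\tilde{\phi}_i$ is either conjugate-orthogonal or conjugate-symplectic, but not both; I would record this sign as $\varepsilon_i\in\{\pm\}$.

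Next I would construct the endoscopic datum. Group the conjugate-orthogonal constituents into one block and the conjugate-symplectic ones into the other, let $n_1,n_2$ be the two resulting dimensions, and pick Hermitian spaces $V_1,V_2$ over $F$ of those dimensions so that $\mathbf{G}_{V_1},\mathbf{G}_{V_2}$ are quasi-split. For the signs $\epsilon_1,\epsilon_2$ forced by requiring that the sub-parameter of $\tilde{\phi}\times{}^\sigma\tilde{\phi}$ attached to each block factor through $\iota_{\epsilon_j}^{\mathbf{G}_{V_j}}$ (the criterion of \cite[Lemma 2.2.1]{Mok-unitary}), one obtains parameters $\phi_j:\mathcal{W}_{\Fo}\to{}^L\mathbf{G}_{V_j}$, hence a bounded parameter $\phi:\mathcal{W}_{\Fo}\to{}^L\mathbf{H}$ with $\mathbf{H}=\mathbf{G}_{V_1}\times\mathbf{G}_{V_2}$ satisfying $\iota^{\mathbf{H}}_{\epsilon_1,\epsilon_2}\circ\phi=\tilde{\phi}\times{}^\sigma\tilde{\phi}$ up to $\hat{\tilde{\mathbf{G}}}_V$-conjugacy, exactly as in \cite[Proposition 5.2.3]{Moeg-endosc-L-param}. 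I expect the parity condition on $(\epsilon_1,\epsilon_2)$ listed in Section \ref{section base change H} to emerge automatically here, as the compatibility forced between the two sign-data $\tilde{\chi}_{\epsilon_j}$ and the skew-types $\varepsilon_i$ of the constituents of $\tilde{\phi}$.

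The remaining, and hardest, step is the character identity itself. By Mok's classification \cite[Theorem 2.5.1]{Mok-unitary} the bounded parameter $\phi$ attaches to $\mathbf{H}(\Fo)$ a tempered stable packet $\Pi^{\mathbf{H}}_\phi$, and what must be shown is that, for a suitable normalization of the self-intertwining operator of $\pi$, the $\sigma$-twisted trace of $\pi$ equals the endoscopic transfer of \cite{LS}, \cite{KS} of $\sum_{\rho\in\Pi^{\mathbf{H}}_\phi}\tr\,\rho$. This is precisely the local consequence of the stabilization of the $\sigma$-twisted trace formula for $\Res_{F/\Fo}\GL_n$ carried out in \cite{Moeg-base-change} and \cite{Moeg-endosc-L-param}, building on \cite{Arthur-new-book} and \cite{Mok-unitary}; I would invoke it as a black box, since I do not see a purely local route to it, and it is the substance of the proposition.

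Finally, uniqueness would follow formally. The decomposition $\tilde{\phi}=\bigoplus_i\tilde{\phi}_i$ and the signs $\varepsilon_i$ are canonical attached to $\pi$, so $n_1,n_2$ and $(\epsilon_1,\epsilon_2)$ are determined, the only coincidence being $n_1=n_2$, where $\iota^{\mathbf{H}}_{+,-}$ and $\iota^{\mathbf{H}}_{-,+}$ define equivalent endoscopic data — matching the equivalence recorded in Section \ref{section base change H}. Combined with linear independence of twisted characters, which forces the transfer to recover $\phi$ and hence $\iota^{\mathbf{H}}$ via $\iota^{\mathbf{H}}\circ\phi=\tilde{\phi}\times{}^\sigma\tilde{\phi}$, this yields the uniqueness of the datum $(\mathbf{H},\iota^{\mathbf{H}}_{\epsilon_1,\epsilon_2})$.
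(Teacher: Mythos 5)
The paper gives no proof of this proposition: it is stated as a direct citation of M{\oe}glin's results, with the references placed in the theorem header itself. Your reconstruction — reducing to the conjugate-self-dual decomposition of the parameter, sorting constituents by the conjugate-orthogonal/conjugate-symplectic dichotomy to build $(\mathbf{H},\iota^{\mathbf{H}}_{\epsilon_1,\epsilon_2})$, and isolating the character identity from the stabilized twisted trace formula as the irreducible black box — is a faithful account of how the cited references establish the statement and is consistent with how the paper itself uses the dichotomy in the paragraph following the proposition.
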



We now use the dichotomy (\ref{dicho-on-sing}) to describe the parameter $\phi^H$ of the $(\mathbf{H},\iota^\mathbf{H}_{\epsilon_1,\epsilon_2})$-stable packet $\Pi_{\phi^H}$ of representations of $H=\mathbf{H}(\Fo)$. Suppose that  $\tilde{\xi}=\boxtimes_{i\in I}\tilde{\xi}_i$ is a $d$-regular character, where each component is skew. Define
$\tilde{\phi}=\bigoplus_{i\in I}\tilde{\phi}_i$ as in (\ref{parameter-induced}). We now partition $I=I_1\sqcup I_2$ such that, for $j=1,2$, we have $i\in I_j$ if and only if $\tilde{\xi}_i\text{ is }\epsilon_j$-skew, or equivalently, the Asai L-functions $L(\tilde{\phi}_i,\Asai_{\epsilon_j},s)$ has a pole at $s=0$. Let $n_j=\sum_{i\in I_j}n_i$, and define $\mathbf{H}=\mathbf{G}_{V_1}\times \mathbf{G}_{V_2}$, then $\tilde{\phi}$ factors through the endoscopic embedding $\iota_{\epsilon_1,\epsilon_2}^\mathbf{H}$, such that
$$\tilde{\phi}:\mathcal{W}_F\xrightarrow {\phi^H}{}^L\mathbf{H}\xrightarrow {\iota_{\epsilon_1,\epsilon_2}^\mathbf{H}}{}^L{\tilde{\mathbf{G}}}_V$$
The parameter ${\phi^H}$ corresponds to a stable packet $\Pi_{\phi^H}$ of $H=\mathbf{H}(\Fo)$.

We temporarily write $\tilde{\xi}_+=\tilde{\xi}_{1,+}\boxtimes \tilde{\xi}_{2,+}$, where $\tilde{\xi}_{j,+}=\boxtimes_{i\in I_j}\tilde{\xi}_{i,+}$, and each component $\tilde{\xi}_{i,+}=\tilde{\xi}_i\tilde{\chi}^{E_i}_{\epsilon_j}$ is $+$-skew. Write $\tilde{\nu}_{x_j,\tilde{\xi}_{j,+}}^{y}=\boxtimes_{i\in I_j}\tilde{\nu}^{y}_{i,x_j,\tilde{\xi}_{j,+}}$, for $j=1,2$.
\begin{cor}\label{theorem-H-stable-packet}
The set
  $$\{\pi_{{x_1},\tilde{\xi}_{1,+}\tilde{\nu}_{x_1,\tilde{\xi}_{1,+}}^{y}}\boxtimes \pi_{{x_2},\tilde{\xi}_{2,+}\tilde{\nu}_{x_2,\tilde{\xi}_{2,+}}^{y}}\}_{(x_1,x_2)\in\mathcal{D}_1\times \mathcal{D}_2}$$
  is a stable packet of $H={G}_{V_1}\times {G}_{V_2}$, whose base-change (relative to the elliptic endoscopic datum $(\mathbf{H},\iota^\mathbf{H}_{\epsilon_1,\epsilon_2})$) is the representation $$\tilde{\pi}_{\tilde{\phi}_1}\times \tilde{\pi}_{\tilde{\phi}_2},$$
  understood as the parabolic induction from the representation $\tilde{\pi}_{\tilde{\phi}_1}\boxtimes\tilde{\pi}_{\tilde{\phi}_2}$ of the Levi subgroup $\tilde{{G}}_{V_1}\times \tilde{{G}}_{V_2}$ of $\tilde{{G}}_{V}$.
\end{cor}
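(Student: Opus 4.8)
The statement to prove is Corollary \ref{theorem-H-stable-packet}, which generalizes Theorem \ref{theorem-stable-packet} from the case $\mathbf{H}=\mathbf{G}_V$ to an arbitrary elliptic twisted endoscopic datum $(\mathbf{H},\iota^\mathbf{H}_{\epsilon_1,\epsilon_2})$ of $\tilde{\mathbf{G}}_V$. The key observation is that $\mathbf{H}=\mathbf{G}_{V_1}\times\mathbf{G}_{V_2}$ is itself a product of two quasi-split unitary groups, and that the index set $I$ splits as $I=I_1\sqcup I_2$ according to whether each component $\tilde{\phi}_i$ is $\epsilon_1$-skew or $\epsilon_2$-skew (equivalently, by which Asai L-function has a pole at $s=0$, using Proposition \ref{L-function-pole}). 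The first step I would take is to set up this partition carefully: given a $d$-regular character $\tilde{\xi}=\boxtimes_{i\in I}\tilde{\xi}_i$ with each $\tilde{\xi}_i$ skew, for each $i$ exactly one of $L(\tilde{\phi}_i,\Asai_+,s)$ and $L(\tilde{\phi}_i,\Asai_-,s)$ has a pole at $s=0$ by the dichotomy in Proposition \ref{dicho-on-sing}, so the assignment $i\mapsto j$ with $\tilde{\xi}_i$ being $\epsilon_j$-skew is well-defined, and $\tilde{\phi}=\bigoplus_{i\in I_j}\tilde{\phi}_i$ restricted to each piece factors through $\iota_{\epsilon_j}^{\mathbf{G}_{V_j}}$. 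This shows that $\tilde{\phi}$ factors through $\iota^\mathbf{H}_{\epsilon_1,\epsilon_2}$, producing the parameter $\phi^H$.

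\textbf{Main argument.} The second and central step is to observe that $\tilde{\xi}_{j}:=\boxtimes_{i\in I_j}\tilde{\xi}_i$ is, after the tame twist $\tilde{\xi}_{j,+}=\boxtimes_{i\in I_j}\tilde{\xi}_i\tilde{\chi}^{E_i}_{\epsilon_j}$, a $d$-regular $+$-skew character for the group $\mathbf{G}_{V_j}$ in the sense of Section \ref{section regular char}: the $\epsilon_j$-skewness is converted to $+$-skewness by the quadratic twist (using Proposition \ref{wild-part-tilde} and the remark that $\tilde{\chi}^{E_i}_-$ is unramified quadratic, so $\epsilon$-skewness is governed by the tame part), and the regularity conditions for $\tilde{\xi}_{j,+}$ follow from those for $\tilde{\xi}$. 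Hence Theorem \ref{theorem-stable-packet} applies directly to each factor $\mathbf{G}_{V_j}$ with the very cuspidal parameter $\tilde{\phi}^{(j)}=\bigoplus_{i\in I_j}\tilde{\phi}_i$ (which is $(-1)^{n_j+1}$-skew componentwise after the appropriate reindexing — here one must be careful that the relevant sign for $\mathbf{G}_{V_j}$ is dictated by $n_j$, not $n$, and match it with $\epsilon_j$). This gives that
$$\{\pi_{x_j,\tilde{\xi}_{j,+}\tilde{\nu}_{x_j,\tilde{\xi}_{j,+}}^{y}}\}_{x_j\in\mathcal{D}_j}$$
is the stable packet of $G_{V_j}$ with parameter $\phi^{H}_j$, whose base change (relative to $\iota^{\mathbf{G}_{V_j}}_{\epsilon_j}$) is $\tilde{\pi}_{\tilde{\phi}^{(j)}}$. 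Taking the product over $j=1,2$ and using that stable packets, base change, and parabolic induction all behave multiplicatively over the direct product $\mathbf{H}=\mathbf{G}_{V_1}\times\mathbf{G}_{V_2}$ — which is exactly how packets of $\mathbf{H}(\Fo)$ are defined in \cite{Mok-unitary} — yields the desired stable packet of $H$ and identifies its base change with $\tilde{\pi}_{\tilde{\phi}_1}\times\tilde{\pi}_{\tilde{\phi}_2}$, understood as parabolic induction from $\tilde{{G}}_{V_1}\times\tilde{{G}}_{V_2}$.

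\textbf{The main obstacle.} The step I expect to require the most care is verifying that the sign bookkeeping is internally consistent: namely, that the embedding $\iota^\mathbf{H}_{\epsilon_1,\epsilon_2}$ with the sign rule from Section \ref{section base change H} (which forces $(\epsilon_1,\epsilon_2)$ to depend on the parities of $n_1,n_2$) is compatible with applying Theorem \ref{theorem-stable-packet} to each $\mathbf{G}_{V_j}$ with its own base-change embedding $\iota_{(-1)^{n_j-1}}^{\mathbf{G}_{V_j}}$. Concretely, the packet of $\mathbf{G}_{V_j}$ produced by Theorem \ref{theorem-stable-packet} is the one whose components are $(-1)^{n_j+1}$-skew, and one must check that the partition $I=I_1\sqcup I_2$ defined via $\epsilon_1,\epsilon_2$ assigns to $I_j$ precisely those $i$ for which $\tilde{\phi}_i$ has the right skewness after the twist $\tilde{\chi}^{E_i}_{\epsilon_j}$ is applied — this is where the identity $L(\tilde{\phi}_i,\Asai_{\epsilon_j},s)$ having a pole translates, via Proposition \ref{L-function-pole} and the twist relation (\ref{Asai-twist}), into $\tilde{\phi}_i\tilde{\chi}^{E_i}_{\epsilon_j}$ being $+$-conjugate-orthogonal. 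I would also need to invoke the estimates \cite[4. Proposition]{Moeg-base-change} and \cite[7.1 Th{\'e}or{\`e}me]{Moeg-base-change} on each factor separately to conclude exhaustion of the packet, exactly as in the proof of Theorem \ref{theorem-stable-packet}; the cardinality count $\#\Pi_{\phi^H}=2^{\#I_1-1}\cdot 2^{\#I_2-1}=\#\mathcal{D}_1\times\#\mathcal{D}_2$ then closes the argument, with the mild caveat that when $n_1=n_2$ the two data $\iota^\mathbf{H}_{+,-}$ and $\iota^\mathbf{H}_{-,+}$ are equivalent and the statement should be read accordingly.
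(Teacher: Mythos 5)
Your proposal is correct and is essentially the paper's intended argument: the paper's entire proof is the remark that the statement is ``completely analogous to Theorem \ref{theorem-stable-packet},'' and your write-up simply unpacks that analogy — partition $I=I_1\sqcup I_2$ by skewness via the Asai dichotomy, apply the theorem (in its sign-twisted variant) to each factor $\mathbf{G}_{V_j}$ with the $+$-skew character $\tilde{\xi}_{j,+}$, and use multiplicativity of packets, base change, and the M{\oe}glin counting estimates over the product. The sign-bookkeeping caveat you flag ($\epsilon_j$ need not equal $(-1)^{n_j-1}$) is real, but it is exactly the issue the paper itself defers to the twist $\tilde{\chi}^{E_i}_{\epsilon_j}$ in the definition of $\tilde{\xi}_{j,+}$, so your treatment matches the source.
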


This is completely analogous to Theorem \ref{theorem-stable-packet}.

\newpage

\bibliographystyle{alpha}
\bibliography{unitary}

\end{document}